\newcounter{item}[section]
\newcounter{kirshr}
\newcounter{kirsha}
\newcounter{kirshb}
\newenvironment{enumroman}{\setcounter{kirshr}{1}
\begin{list}{(\roman{kirshr})}{\usecounter{kirshr}} }{\end{list}}
\newenvironment{enumarab}{\setcounter{kirshb}{1}
\begin{list}{(\arabic{kirshb})}{\usecounter{kirshb}} }{\end{list}}
\newtheorem{theorem}{Theorem}[section]
\newtheorem{lemma}[theorem]{Lemma}
\newtheorem{corollary}[theorem]{Corollary}
\newenvironment{demo}[1]{\noindent{\bf #1.}\upshape\mdseries}
{\nopagebreak{\hfill\rule{2mm}{2mm}\nopagebreak}\par\normalfont}
\theoremstyle{definition}
\newtheorem{remark}[theorem]{Remark}
\newtheorem{example}[theorem]{Example}
\newtheorem{definition}[theorem]{Definition}
\def\C{{\mathfrak{C}}}
\def\Fm{{\mathfrak{Fm}}}
\def\Nr{{\mathfrak{Nr}}}
\def\Fr{{\mathfrak{Fr}}}
\def\Sg{{\mathfrak{Sg}}}
\def\Fm{{\mathfrak{Fm}}}
\def\Rd{{\mathfrak{Rd}}}
\def\Ig{{\mathfrak{Ig}}}
\def\CA{{\bf CA}}
\def\RCA{{\bf RCA}}
\def\K{{\bf K}}
\def\L{{\bf L}}
\def\QA{{\bf QA}}
\def\QEA{{\bf QEA}}
\def\(R)RA{{\bf (R)RA}}
\def\RA{{\bf RA}}
\def\Dc{{\bf Dc}}
\def\R{\mathbb{R}}
\def\Dc{{\bf Dc}}
\def\QA{{\bf QA}}
\def\QEA{{\bf QEA}}
\def\PA{{\bf PA}}
\def\R{\mathbb{R}}
\def\N{\mathbb{N}}
\def\R{\mathbb{R}}
\def\Dc{{\bf Dc}}
\def\Sc{{\bf Sc}}
\def\Cs{{\bf Cs}}
 \def\CA{{\sf CA}}
\def\M{{\mathfrak{M}}}
\def\RA{{\bf RA}}
\def\At{{\mathfrak{At}}}
\def\GG{\cal {G}}
\def\G{{\mathfrak{G}}}
\def\K{{\bf K}}
\def\QA{{\bf QA}}
\def\QEA{{\bf QEA}}
\def\tp{{\sf tp}}
\def\cyl#1{{\sf c}_{#1}}
\def\diag#1#2{{\sf d}_{#1#2}}
\def\s{{\sf s}}
\def\pa{$\forall$}
\def\pe{$\exists$}
\def\ef{Ehren\-feucht--Fra\"\i ss\'e}
\def\nodes{{\sf nodes}}
\def\restr #1{{\restriction_{#1}}}
\def\A{{\mathfrak{A}}}
\def\B{{\mathfrak{B}}}
\def\C{{\mathfrak{C}}}
\def\D{{\mathfrak{D}}}
\def\P{{\mathfrak{P}}}
\def\Fm{{\mathfrak{Fm}}}
\def\Ra{{\mathfrak{Ra}}}
\def\Nr{{\mathfrak{Nr}}}
\def\F{{\mathfrak{F}}}
\def\CA{{\bf CA}}
\def\RCA{{\bf RCA}}
\def\set#1{ \{#1\}}
\def\Ca{{\mathfrak Ca}}
\def\b#1{{\bar{ #1}}}
\def\pe{$\exists$}
\def\pa{$\forall$}
\def\Cm{{\mathfrak Cm}}
\def\Sg{{\mathfrak Sg}}
\def\At{{\sf At}}
\def\Id{{\sf Id}}
\def\rng{{\sf rng}}
\def\dom{{\sf dom}}
\def\w{{\sf w}}
\def\g{{\sf g}}
\def\y{{\sf y}}
\def\r{{\sf r}}
\def\cyl#1{{\sf c}_{#1}}
\def\diag#1#2{{\sf d}_{#1#2}}
\def\V{{\sf V}}
\def\de{Dedekind-MacNeille}
\def\ws{winning strategy}
\def\ef{Ehren\-feucht--Fra\"\i ss\'e}
\def\Rl{\mathfrak{Rl}}
\def\Mat{{\sf Mat}}
\def\y{{\sf y}}
\def\g{{\sf g}}
\def\r{{\sf r}}
\def\w{{\sf w}}
\def\Dc{{\sf Dc}}
\def\Sc{{\sf Sc}}
\def\CA{{\sf CA}}
\def\R{{\sf R}}
\def\L{{\mathfrak{L}}}
\def\Z{{\mathbb{Z}}}
\def\K{{\sf K}}
\def\RA{{\sf RA}}
\def\RCA{{\sf RCA}}
\def\PEA{{\sf PEA}}
\def\Df{{\sf Df}}
\def\Uf{{\sf Uf}}
\def\Rd{{\mathfrak{Rd}}}
\def\la#1{\langle#1\rangle}
\def\nodes{{\sf nodes}}
\def\QEA{{\sf QEA}}
\def\nodes{{\sf nodes}}
\def\de{Dedekind--MacNeille}
\def\s{{\sf s}}
\def\Tm{{\mathfrak{Tm}}}
\def\QA{{\sf QA}}
\def\M{\mathfrak{M}}
\def\E{\mathfrak{E}}
\def\PA{{\sf PA}}
\def\Bl{{\mathfrak{Bl}}}
\def\Lf{{\sf {Lf}}}
\def\Cs{{\sf Cs}}
\def\M{{\mathfrak M}}
\def\Mo{{\sf M}}
\title{Atom--canonicity, complete representations for cylindric--like algebras, and omitting types in clique guarded 
fragments of $L_{\omega,\omega}$ }
\author{Tarek Sayed Ahmed}
\begin{document}
\maketitle

\begin{abstract}
Fix a finite ordinal $n>2$. We show that there exists an atomic, simple and countable representable $\CA_n$, such that its \de\ completion is outside $\bold S\Nr_n\CA_{n+3}$.
Hence, for any finite $k\geq 3$, the variety
$\bold S\Nr_n\CA_{n+k}$ is not atom--canonical, so that the variety
of $\CA_n$s  having $n+k$--flat representations is not atom--canonical, too. We show, for finite $k\geq 3$,  that $\bold S_c\Nr_n\CA_{n+k}$ is not elementary, hence
the class of $\CA_n$s having complete $n+3$--smooth representations is not elementary.
We obtain analogous results by replacing flat and smooth, respectively, by (the weaker notion of) square; this give a
stronger result in both cases and here we can allow $k$ to be infinite.  Our results are proved using rainbow constructions for $\CA$s.
We lift the negative result on atom--canonicity to the transfinite.
We  also show that for any
ordinal $\alpha\geq \omega$, for any finite $k\geq 1,$
and for any $r\in \omega$, there exists an atomic algebra
$\A_r\in  \bold S\Nr_{\alpha}\CA_{\alpha+k}\sim \bold S\Nr_n\CA_{\alpha+k+1},$ such that
$\Pi_{r/U} \A_r\in \sf RCA_{\alpha},$ where $U$ is any non--principal ultrafilter  on $\omega$. 
Reaping the harvest of our algebraic results we investigate a plethora of omitting types theorems for variants of first logic 
including its finite variable fragments and its packed fragment.  
We show that there exists a countable, consistent, atomic 
and complete theory $T$ using $n$ variables, such that the 
non--principal type consisting of co--atoms, cannot be omitted in $n+3$--square, a {\it priori}, $n+3$--flat models. This implies that the omitting types theorem fails 
for the clique--guarded fragment of first order logic, an alternative formulation of the packed fragment.
In contrast, we show that if $T$ is any theory using $n$ variables such that $|T|\leq \lambda$, $\lambda$ a regular cardinal, admits 
elimination of quantifiers,  then $< 2^{\lambda}$ many {\it complete} non--principal types can be omitted in ordinary ($\omega$--square) 
models having cardinality $\leq |\lambda|$.  We provide an example for $|T|=2^{\aleph_0}$, showing that the condition of 
completeness cannot be dispensed with. 
We show that the omitting types theorem fails for finite first order expansions of finite variable fragments of first order logic, 
as long as the number of variables available  are $>2.$ 

\end{abstract}
\section{Introduction}

We follow the notation of \cite{1} which is in conformity with the notation adopted in the monograph \cite{HMT2}. 
In particular, $\CA_n$ denotes the class of cylindric algebras of dimension $n$, $\sf RCA_n$ denotes the class of
representable $\CA_n$s and for $m>n$, $\Nr_n\CA_m$ denotes the class of all $n$--neat reducts of $\CA_m$s.

Three cornerstones in the development of the theory of cylindric algebras due to Tarski, Henkin and Monk, respectively - the last
two involving the notion of neat reducts - are
the following. Tarski proved that every locally finite infinite dimensional cylindric algebra of infinite dimension is representable.
This is equivalent (in $\sf ZFC$) to G\"odels completeness theorem.

Henkin proved that for any ordinal $n$, $\bold S\Nr_n\CA_{n+\omega}= \sf RCA_{n}$, where $\bold S$ denotes the operation of forming subalgebras. 
When $n$ is infinite, this is a strong generalization of Godel's completeness theorem addressing 
extensions of first order logic allowing infinitary predicates.

Monk proved that for any ordinal $n>2$ and $k\in \omega$,  $\bold S\Nr_n\CA_{n+k}\neq \RCA_n$. In particular,
for each finite $n>2$, and $k\in \omega$, there is an algebra $\A_k\in \bold S\Nr_n\CA_{n+k}$ that is not representable.
Any non--trivial ultraproduct of such algebras will be in $\sf RCA_n$.
Hence, the variety $\sf RCA_n$ ($2<n<\omega$) is  not finitely axiomatizable. This implies that finite variable fragments 
of first order logic are severely incomplete, as long as the number of variables available are $\geq 3$. For any finite $m>2$, there is a valid $3$--variable 
first order formula that needs more than $m$ variables to be proved, in any fairly standard Hilbert--style axiomatization.  
Using an ingenious lifting argument \cite[Theorem 3.2.87]{HHbook2}, 
which we henceforth refer to as {\it  Monk's trick}  (to be used several times below),
Monk lifted this non--finite axiomatizability result to the transfinite.\\

{\bf Some queries on neat reducts:} 
Now let $\K$ be a class consisting of cylindric--like algebras (like for instance cylindric algebras or quasi--polyadic algebras), so that for every pair of ordinals
$n<m$,  $\K_{n}$ is a variety consisting of $n$--dimensional algebras and for $\B\in \K_m,$
$\Nr_{n}\B$ and $\Rd_{n}\B$ are defined, so that the former, the {\it $n$--neat reduct} of $\B$,
is a subalgebra of the latter, the {\it $n$--reduct} of $\B$. If $\A\subseteq \Nr_n\B$ and $\B\in \K_m$, then we say that $\A$ {\it neatly embeds} in $\B$,
and that $\B$ is an {\it $m$--dilation}, or simply a dilation
of $\A$, when $m$ is clear from context.

When reflecting about neat reducts, 
several possible questions cannot help but spring to mind, each with its own range of intuition.

\begin{enumerate}
\item Is it true that every algebra neatly embeds into another algebra having only one extra dimension?
having $k$ extra dimension $k>1$ ($k$ possibly infinite) ?
And could it, at all be possible, that an $n$--dimensional algebra
($n$ an ordinal) neatly embeds into another $n+k$--dimensional one, but does not neatly
embed into any $n+k+1$--dimensional algebra for some $k\in \omega$?

\item Assume that $\A\in \K_n$ neatly embeds into $\B$ having dimension $>n$, and assume further that $\A$ (as a set) generates $\B$, is then $\B$ unique up to isomorphisms
that fix $\A$ pointwise? does $\A$ exhaust the set of $n$--dimensional elements of $\B,$ namely, $\Nr_n\B$, the $n$--neat
reduct of $\B$? In this context $\B$ is called a {\it minimal} dilation of $\A$, because $\A$ generates $\B$.

\item  If $2<n<m\cap \omega$, and $\A\in \K_n$ neatly embeds into $\B\in \K_m$; does this induce some form of `concrete representation' of $\A$ using the spare dimensions of the
$m$--dilation
$\B$? We know it does when $\A\in \CA_n$ and $m=\omega$, that is when $\B\in \CA_{\omega}$. 
This gives a classical representation by Henkin's aforementioned {\it neat embedding theorem}. But when $n<m<\omega$,
does this perhaps give some kind of an {\it $m$ localized form of representation} that is an `$m$ approximation'
to a classical representation?  Could such relativized representations, if any,
diffuse negative undesirable properties of the classical ones, for example 
is it decidable to tell whether a finite algebra possesses such a relativized representation?

\item Now that we have, for every pair of ordinals $n<m$, the class
$\Nr_n\K_m$ in front of us, there is a pressing need  to classify
it.   For example, is it a variety, if not, is it
elementary, if not, is it perhaps pseudo--elementary and if so, is its elementary
theory recursively enumerable, finitely axiomatizable?

\item  Let $n<m$ be ordinals. Now we address the varieties $\bold S\Nr_n\K_m$ and the classes $\bold S_c\Nr_n\K_m$, where $\bold S_c$ denotes the operation 
of forming {\it complete} subalgebras:
\begin{itemize}
\item What do we know about the varieties $\bold S\Nr_n\K_m$?  Are they finitely axiomatizable? are they canonical, atom--canonical? are they closed
under \de\ completions, hence, being conjugated, they are Sahlqvist axiomatizable?
These are important questions addressing the modal formalism of the corresponding 
logic; that is, the multi $n$--dimensional modal logic, whose modal algebras are
in $\bold S\Nr_n\K_m$. What are the possible `natural' semantics for such a logic?

\item What do we know about the classes $\bold S_c\Nr_n\K_m$?
Are they elementary classes? A limiting case, when $n<\omega$, is the class of completely representable $\K_n$s, which is not elementary \cite{HH}.

Can we characterize (complete)
{\it relativized} representations, if any, for $\bold S\Nr_n\K_m  (\bold S_c\Nr_n\K_m$) when $n<m<\omega$, by
games similar to usual atomic games \cite[Definition 3.3.2]{HHbook2}, if we perhaps limit the number of pebbles and /or rounds used in the play, to finitely many?
\end{itemize}

\item Related to the two previous items: What are the impact of such results, if any,  on meta--logical properties of finite variable
fragments of first order logic and its modal formalism,
like, say, definability issues, completeness and omitting types?

\end{enumerate}

{\bf  Some answers:}  Henceforth in the introduction we focus on $\CA$s.  Once vexing outstanding problems in (algebraic) logic,
some of such questions for finite dimensions
were settled by the turn of the millennium, and others a few years later,
after thorough  dedicated trials, and dozens of publications providing
partial answers.\\

Now let us review the present status of such questions; which are settled, which are not, and also those questions 
whose naswers lend themselves to further natural generalizations that enriches the hitherto obtained answers 
providing new insights and opening new avenues.

Concerning the second question, it is known that minimal dilations may not be unique up to isomorphisms that fix
$\A$ pointwise. For an intense discussion of this rather anti--intuitive phenomena
the reader is referred to \cite{Sayed} where a somewhat convoluted proof is given depending on a strong result of Pigozzi's, namely that $\sf RCA_{\omega}$ 
does not have the amalgmation property. Below we give a more straightforward proof, exhibiting explicity such an $\A$. Infinitely many examples are 
given.

It is now known that for finite $n\geq 3$ and $k\geq 1$, the
variety $\bold S\Nr_n\CA_{n+ k+1}$ is not even finitely axiomatizable over $\bold S\Nr_n\CA_{n+k}$;
this was proved by Hirsch, Hodkinson and Maddux, answering problem 2.12 in \cite{HMT2}.

This refines considerably Monk's seminal result. It implies that for any finite $m>2$, there is a $3$--variable formula (using only one binary relation, a significant addition) 
that cannot be proved using $m$ variables, but it {\it can} be proved using $m+1$ variables. 
Monk's result does not tell us more than the obvious fact - baring in mind 
that  proofs are finite strings of formulas and formulas contain only finitely many variables - that this $3$ variable formula (that cannot be proved using $m$ variables), 
can be proved using $m+k$ variables, for some finite $k\geq 1$,  
that we do not have any control whatsoever on. In Monk's (algebraic) proof  this $k$  depends on a large uncontrollable Ramsey number.

This result was lifted to infinite dimensions, using Monk's trick,
by Robin Hirsch and the present author
\cite{t}, addressing other cylindric--like algebras, as well, like Pinter's substitution algebras and Halmos' quasi--polyadic algebras.
This also answers the first part of item (5). The rest of the questions in this item are answered negatively by Hodkinson for
finite $n>2$ and $m=\omega$ \cite{Hodkinson}. Below we will sharpen Hodkinson's result by showing that:\\

(*) The variety $\bold S\Nr_n\CA_{n+k}$ is not atom--canonical, for any ordinal $n>2$, and for any $k\geq 3$.\\

This will be done by a constructing a representable countable 
atomic $\A\in \CA_n$, such that $\Cm\At\A\notin \bold S\Nr_n\CA_{n+3}$, providing an answer to the first half of
item (6). For the second part, non--elementarity is known for $m=\omega$ \cite{strong}, from which we can easily deduce that there must be a finite $l<n$, such
that $\bold S\Nr_n\CA_m$ is not elementary for all $m\geq l$. To determine the least such $l$,  is a challenging open problem.
For this $l$ it must be the case that ${\sf Str}\bold S\Nr_n\CA_l=\{\F\in \At\CA_n: \Cm\At\F\in \bold S\Nr_n\CA_l\}$ is propery contained in $\At\bold S\Nr_n\CA_l$. 
We prove  this necessary (but not sufficient) 
condition for $l\geq n+3$. In this connection, we also re-prove a result of Bulian and Hirsch \cite{bh} extending 
the result in \cite{strong} to many cylindric--like algebras, including diagonal free $\CA$s, 
polyadic equality algebras, and, in fact, for any class in between these two classes, like for instance the class of Pinter's substitution algebras.

We will see that the semantics appropriate for the  `modal algebras' in $\bold S\Nr_n\CA_m$  for $2<n<m<\omega$, are 
what we call following Hirsch and Hodkinson for analogous results on relation algebras,  {\it $n$-clique--guarded semantics}, or {\it $n$--Gaifman hypergraph semantics}.
Such semantics are like classical semantics but only
locally on `$m$ squares'.  Roughly, if we zoom in adequately by a `movable window', there will come  a point, determined by $m$,  where we
mistake this severely relativized representation for a genuine one.

Concerning the fourth question, it is now also known that for $1<n<m$,  the class $\Nr_n\CA_m$ is not first order definable, least a variety, 
hence it is not pseudo--universal, but for $n<\omega$, it is pseudo--elementary, and its elementary theory is recursively enumerable. 
All these results are due to the present author,  the last two are proved below.  The first  refines N\'emeti's answer to
problem 2.11 in \cite{HMT2} and answers 4.4 in \cite{HMT2}.  Now fix $n$ with $2<n<\omega$. A natural variant of the class 
$\bold K=\{\A\in \CA_n: \Cm\At\A\in \RCA_n\}$, proved to be non--elementary in \cite{strong} (this is equivalent to non-elementarity of 
${\sf Str}\RCA_n$ is not elementary), 
is the class $\bold N=\{\A\in \CA_n: \text{ $\A$ is atomic and } \Cm\At\A\in \Nr_n\CA_{\omega}\}$. 

This class is properly contained in $\bold K$; 
it will be shown that there is a countable atomic algebra $\C\in \CA_n$ such that $\Cm\At\C\in \RCA_n\sim \Nr_n\CA_{\omega}$; 
and there is a countable atomic $\B\notin \Nr_n\CA_{\omega},$ such that $\Cm\At\B\in \Nr_n\CA_{\omega}$ 
so that  $\B$ in $\bold N\sim \Nr_n\CA_{\omega}$. 

We also show that the classes $\bold N$, $\bold S_c\Nr_n\CA_{\omega}$, and the class of completely representable $\CA_n$s coincide on atomic algebras having countably many atoms.
Finally, we show that, like both of $\bold K$ and $\Nr_n\CA_{\omega}$: \\

(**)   The class $\bold N$ is not closed under ultraroots, hence is not 
elementary.\\

Concerning the fifth,
we show that:\\

(***) For $2<n<\omega$ and finite $k\geq 3$, the class
$\bold S_c\Nr_n\CA_{n+k}$, and the class of completely representable $\CA_n$s are also not elementary,
substantially generalizing the result in \cite{HH}. \\

Games devised to characterize the {\it complete} 
$n$-clique--guarded semantics
for such classes will be devised below.

The remaining items, namely, (3) and (6) will be for their part addressed as the paper unfolds.
First, the answer is known for the limiting cases \cite{AU, HH}. In the former reference, it is proved that 
$2<n<\omega$, it is undecidable to tell whether a finite $\CA_n$ has a (classical) representation.

One of our results connects these two items.
We will show that, for finite $n>2$,  the Orey--Henkin omitting types theorem fails for first order logic restricted to the first $n$ variables $(L_n)$, even if allow severely relativized models.
There is a countable, atomic, $L_n$ theory, such that the single non--principal type consisting of co--atoms cannot be omitted in
$n+k$--smooth models; $n+k$--smoothness provides semantics for $\bold S\Nr_n\CA_{n+k}$ for any $k\geq 3$.
This result will follow from (*) and (**):  Both imply that there are countable and atomic $\sf RCA_n$s that have no complete $n+3$--flat representations.
In the case of (*) the constructed algebra is simple, giving that $T$ can further be chosen to be a complete theory. 
In fact, this non--principal type of co--atoms cannot be omitted, even if we allow $n+k$--square models $(k\geq 3)$. 
This readily implies that the omitting types theorem fails for 
the clique guarded fragment of first order logic, an alternative formulation of the packed fragment.\\

Both  (*), (**) (***) are proved using games  to be described shortly, 
played on (distinct) rainbow cylindric--like atom structures.
The class of $m$--square models for finite $m>n$ is  a strictly and substantially
larger class than the class of $m$--flat ones. In fact,  $m$--square representations provide semantics for an abstract variety $\sf  V=\bold S\Nr_n\sf \bold K_{m}$,
where commutativity of cylindrifiers may
fail in the $m$--dilations (in $\bold K_m$) of $\A\in \sf V$.  In case of polyadic equality algebras, $\bold K_m$ is
not so abstract, it is the variety $\sf G_m$
studied by Ferenczi \cite{Fer}. The variety of modal algebras $\sf G_m$ manifest nice modal behaviour, e.g. it is finitely axiomatizable, and it
has the finite algebra finite base property.

For finite dimensions, squareness behaves better as far as decidability issues are concerned. For example,
it is decidable to tell for $2<n<m<\omega$,  whether an $n$--dimensional finite algebra has an $m$--square representation in polynomial time, cf. \cite[Corollary, 12.32]{HHbook}, 
but this is not true for $n$--flatness when $n=3$ and $m\geq n+3$.
But in any case this discrepancy between squareness and flatness disappears at the limit.
Classical models are $\omega$--square. Algebraically, algebras having 
$\omega$--square representations are representable, though this might fail for {\it complete} $\omega$--square representations. 
For  finite $n>2$,  uncountable (representable) algebras of dimension $n$ having $\omega$--square representations, without 
having (classical) complete
representations, can (and will be) be constructed. This, however, cannot happen for atomic agebras 
having countably many atoms. As far as the omitting types theorem is concerned
for $L_n$, $2<n<\omega$, there is no discrepancy between $m$-- squareness and $m$--flatness, when $m\geq n+3$.
It fails dramatically for both. (In principal, it could hold for `squareness' but not 
for  `flatness').\\

{\bf Games:} Now fix the dimension $n$ to be finite $>2$.
Let $\A$ be an atomic $\CA_n$. An {\it atomic network} over $\A$ is a map $N:{}^n\Delta\to \At\A$, where $\Delta$ is a finite
non--empty set of nodes, denoted by $\nodes(N)$. Being basically a finite approximation to a representation,
$N$ has to satisfy certain local consistency properties:

If $x\in {}^n\nodes(N)$, and $i<j<n$, then $N(x)\leq {\sf d}_{ij}$ iff $x_i=x_j$.
If $x, y\in {}^n\nodes(N)$, $i<n$ and $x\equiv_i y$, then  $N(x)\leq {\sf c}_iN(y)$.
An atomic game can be played on atomic networks of an atomic algebra $\A$. It has $\omega$ rounds.
Suppose that we are at round $t>0$. Then \pa\ picks a previously played network $N_t$, $i<n,$ $a\in A$, $x\in {}^n\nodes(N_t)$, such that
$N_t(x)\leq {\sf c}_ia$. For her response, \pe\ has to deliver a network $M$
such that $M\equiv _i N$, and there is $y\in {}^n\nodes(M)$
that satisfies $y\equiv _i x$, and $M(y)=a$.
The $k$--rounded game is denoted by $G_k$ \cite[Definition 3.3.2]{HHbook2}.  A \ws\ for \pe\ in the $k$--rounded game can be coded in a first order sentence $\rho_k$ called $k$th Lyndon condition.
If \pe\ has a \ws\ in all $k$--rounded games, so that $\A\models \rho_k$, then $\A$ is representable, in fact it
is elementary equivalent to an algebra that is completely representable; on the atom structure of this
algebra \pe\ can win the $\omega$--rounded game.

In response to questions (6) and (7), we devise `truncated versions' $F^m$, $G^m_{\omega}$
of the above games. These games have $\omega$ rounds, but the number of nodes used during the play is limited to $m$ where $2<n<m$.
In $F^m$, \pa\ has the bonus to reuse the $m$ pebbles in play. When $m\geq \omega$, these games reduce to the usual
$\omega$--rounded atomic game described above. Such games test the semantical notion of the existence of {\it complete $m$--relativized} representations and 
the syntactical one of the existence of $m$--dilations for an algebra $\A\in \CA_n$, for which $\A$ 
{\it completely} embeds into (in the sense that suprema are preserved). 
A completeness theorem (to be proved below) tells us that these two notions are equivalent:  For $2<n<m<\omega$, 
$\A\in \CA_n$ has a complete $m$--relativized representation $\iff$ it has an $m$--dilation $\D$, so that 
$\A\subseteq \Nr_n\D$ and furthermore, for all $X\subseteq \A$, $\sum^{\D}X=1\implies \sum^{\B}X=1$.

We will show that if \pa\ has a \ws\ in $F^m$ for $m<\omega$,  played on atomic networks of an atomic algebra $\A\in \CA_n,$
then $\A$ does not have a complete $m$--flat representation, and if he wins
$G_{\omega}^n$, $\A$ does not have a complete $n$--square one. If $\A$ is finite, then we can delete complete.\\

We will use these games to prove three of our main results. The statements (*) and (***) are proved in 
the second and third items of theorem \ref{main}, and (**) is proved in theorem \ref{squareflat}.\\

For (*): A finite rainbow algebra $\D\in \CA_n$, on which \pe\ can win $G^{n+3}_{\omega}$ on its atom structure
(in finitely many rounds)  so that $\D\notin \bold S\Nr_n\CA_{n+3}$, least representable, will be embedded in the \de\ completion of a representable atomic (infinite) countable
$\A\in \CA_n$. From this, we conclude that $\Cm\At\A\notin \bold S\Nr_n\CA_{n+3},$ 
because $\D\subseteq \Cm\At\A$, and so $\bold S\Nr_n\CA_{n+k}$ 
is not atom--canonical for all $k\geq 3$.  Indeed, $\A\in \bigcap_{k\geq 3}\bold S\Nr_n\CA_{n+k}$, 
but $\Cm\At\A\notin \bold S\Nr_n\CA_{n+k}$ for any  $k\geq 3$. So although $\A$ has an ordinary $(\omega$--square) representation, 
its \de\ completion $\Cm\At\A$ does not have even an $n+3$--square one.\\

For (**) and (***):  We start by (***). We construct, for any finite $n>2$,  an atomic  rainbow algebra $\C\in \sf RCA_n$
with countably many atoms, on which \pa\ can win the $\omega$--rounded game $F^{n+3}$ (in the graph rainbow game \cite{HH} using and re-using $n+3$ nodes), 
and \pe\
can win $G_k$ for all finite $k$ of the same game. It will follow that $\C$ does not have a complete
$n+3$--flat  representation. Using ultrapowers followed by an elementary chain argument, we get that $\C$ is elementary
equivalent to a countable completely representable algebra $\B$, so that $\B$ has  a complete $k>n$
flat representation for all $k$, implying that the class of algebras having complete
$\geq n+k$--flat  representations, for any $k\geq 3$, is not elementary. 

Furthermore, we show that  $\Cm\At\B\in \Nr_n\CA_{\omega},$ so that $\B\in \bold N$.
Now we have $\C\equiv \B$ and $\Cm\At\C=\C$. Therefore $\Cm\At\C\notin \Nr_n\CA_{\omega}$, for else (having countably many atoms),
$\C$ will be completely representable, 
{\it a fortiori,}  it will have  a complete $n+3$--flat representation.  Wrapping up we get: 
$\C\equiv \B$, $\C\notin \bold N$ and $\B\in \bold N$, 
from which it readily follows that 
that $\bold N$ is not elementary, too.\\

{\bf Notions of representability via neat embeddings:}  We continue to fix finite $n>2$. 
The chapter \cite{HHbook2} is devoted to studying the following inclusions between various types of atom structures:
$${\sf CRCA}_n\subseteq {\sf LCA}_n\subseteq {\sf SRCA}_n\subseteq {\sf WRCA}_n.$$
The first is the class of completely representable atom structures, the second is the class of atom structures satisfying the Lyndon conditions, the 
third is the class of strongly representable atom structures, and the last is the class of weakly representable atom structures, 
all of dimension $n$. It  is shown in \cite{HHbook2} that  all inclusions are proper.

Now one can lift such notions from working on atom structures (the frame level) to working 
on the algebra level restricting his attension to atomic ones. 
The class of atomic $\CA_n$s
that satisfies the Lyndon conditions is an elementary class (by definition) 
that is not finitely axiomatizable. It is contained properly in the class of representable algebras
and it contains properly
the class of completely representable algebras;
in fact, it is the elementary closure of the last class.
If $\A\in \CA_n$ satisfies the Lyndon conditions, then so does its \de\ completion. In particular, $\Cm\At\A$  is representable, too.
So call an atomic algebra {\it strongly representable} if $\Cm\At\A$
is representable. It turns out that for finite dimension $>2$, not all 
atomic representable algebras are strongly representable; in fact this last class is not even elementary \cite{HHbook2}.

Call an atomic $\CA_n$ {\it weakly representable}, if it is just representable.
There are algebras that are weakly but not strongly representable, because the former class is obviously elementary.
The class of completely representable algebra coincide with $\bold S_c\Nr_n\CA_{\omega}$ on atomic algebra having countably
many atoms \cite{Sayedneat, Sayed}.  

By definition, the class of weakly representable algebras
coincides with the class $\bold S\Nr_n\CA_{\omega}$ on atomic algebras. However, below we will construct 
for each $2<n\leq \omega$, an atomic (representable) algebra in $\Nr_n\CA_{\omega}$ (necessarily having uncountably many atoms), that is 
not completely representable. But when $n$ is finite, any such algebra is {\it strongly} representable; in fact, it satisfies the Lyndon conditions. 
For finite dimensions $>2$, there are strongly representable 
algebras that do not satisfy the Lyndon conditions, because the latter class is elementary, 
and it is contained in the non--elementary former class 
\cite[Corollary 3.7.2]{HHbook2}.

Such notions,  originally formulated for atom structures of cylindric algebras, are studied extensively in \cite{HHbook2}. It is tempting therefore to investigate
such  various notions of  representability, lifted to the algebra level
in connection to neat embedding properties, baring in mind
that Henkin's neat embedding theorem characterizes the class of {\it all} representable algebras.
In this paper, we initiate this task, which is likely to be rewarding, 
but by no means do we end it. \\

{\bf To conclude:} We know that Hilbert style axiomatizations for $n$--variable fragments of first order logic, using only $m$ variables, are severely incomplete wih respect to
square Tarskian semantics, but  $m$--relativized models produce a perfect match between (this restricted) syntax and semantics.
This match, however, no longer holds, when we require
that such locally relativized representations preserve certain infinitary meets.

We learn from the results in this paper, that for $2<n<\omega$, 
negative properties that hold for $\sf RCA_n$, like non--finite axiomatizability, non-atom--canonicity, 
and non--elementarity of the class
of completely representable $\CA_n$s, persist to hold for $\CA_n$s 
having $m$--dilations, when $m$ is finite $\geq n+3$, 
equipped with $m$--relativized (complete) representations, and from our seemingly pure algebraic results, 
we infer that the omitting types theorem fails for (an alternative formulation of) 
the packed fragment of first order logic.

Very little has been said about the infinite dimensional case. 
Results involving notions like atom--canonicity, \de\ completions, complete representations 
for the infinite dimensional case, are extremely rare in algebraic 
\cite[Problem 3.8.3]{HHbook2}; in fact, almost non--existent.  
In this paper, we also pursue the adventurous endeavor 
of saying a little bit more, by stepping 
in the territory of the transfinite.  
As  sample, we show 
that $\RCA_{\omega}$ cannot be axiomatized by Sahqlvist equations.

Due to the length of the paper, it is divided into two parts. Roughly in the present first part we do the algebra and in the second part we do the logic 
using `bridge theorems' from results in the first part 
to results in the second part. In the first part all  most of our results address {\it both} finite and 
infinite dimensional algebras. 
In the second part we deal only with finite dimensional 
algebras.

\section{Preliminaries:}

{\bf Basic notation:} For a set $X$,  $\wp(X)$ denotes the set of all subsets of $X$, i.e. the powerset of $X$.
Ordinals will be identified with the set of smaller ordinals.
In particular,  for finite $n$, $n=\{0,\ldots, n-1\}$.
$\omega$ denotes the least infinite
ordinal which is the set of all finite ordinals.
For two given sets $A$ and $B$, ${}^AB$ denotes the set of functions from $A$ to $B$,
and $A\sim B=\{x\in A: x\notin B\}.$
If $f\in {}^AB$ and $X\subseteq A$ then $f\upharpoonright X$
denotes the restriction of $f$
to $X$. We denote by $\dom f$ and $\rng f$ the domain and range of a given function
$f$, respectively.  If $f, g$ are functions such that $\dom g=\dom f=n$, $n$ is an ordinal, 
and $i\in n$, then we write $f\equiv_i g$ if $f$ and $g$ agree off of  $i$, that is, 
$f(j)=g(j)$ for all $j\in n\sim \{i\}$.

We define composition so that the righthand function acts first, thus
for given functions $f,g$, $f\circ g(x)=f(g(x))$, whenever the left hand side is defined, i.e
when $g(x)\in \rng f$.
For a non-empty set $X$, $f(X)$ denotes the image of $X$ under $f$, i.e
$f(X)=\{f(x):x\in X\}.$
$|X|$ denotes the cardinality of $X$ and $Id_X$, or simply $Id$ when $X$ is
clear from context, denotes the identity function on $X$.
A set $X$ is {\it countable} if $|X|\leq \omega$; if $X$ and $Y$ are sets then $X\subseteq_{\omega}Y$ denotes that $X$
is a finite subset of $Y$.

Algebras will be denoted by
Gothic letters , and when we write $\A$ then we will be tacitly assuming
that $A$ will denote  the universe
of $\A$.
However, in some occasions we will identify (notationally)
an algebra and its universe.\\

{\bf Cylindric-like algebras:} Let $n$ be a finite ordinal. Then
$\PA_n$ $(\PEA_n)$ and $\Sc_n$ denote the classes of polyadic algebras
(with equality) and Pinter's algebras of dimension $n$. $\sf Df_n$ denotes the class of diagonal free cylindric algebras of
dimension $n$. Here the extra non--Boolean operations are just the $n$--cylindrifiers, so all algebras considered have a $\sf Df$ reduct.
The standard reference for all such classes of cylindric--like algebras is \cite{HMT2}.

If $n$ is an infinite ordinal, then $\QA_{n}$, ($\QEA_{n}$) denotes the class 
of quasi--polyadic (equality) algebras of dimension $n$.
These are term definitionally equivalent to $\PA_n (\PEA_n)$ for finite $n$, but for $n\geq \omega$, they 
are not quite like $\PA_{n}$ ($\PEA_{n})$, for their signature 
contains only substitutions  indexed by replacements and transpositions.  In fact, this is a huge difference; witness Figure 1.\\

For any ordinal $n$, $\Rd_{ca}\A$ denotes the cylindric reduct of $\A$ if it has one, $\Rd_{sc}\A$
denotes the $\Sc$ reduct of $\A$ if it has one, and 
$\Rd_{df}\A$ denotes the reduct of $\A$ obtained by discarding all the operations except for cylindrifiers.
It is always the case that $\Rd_{df}\A\in \sf Df_{n}$.

\begin{figure}
\[\begin{array}{l|l}
\mbox{class}&\mbox{extra operators}\\
\hline
\Df_n&\cyl i:i<n\\
\Sc_n& \cyl i, \s_i^j :i, j<n\\
\CA_n&\cyl i, \diag i j: i, j<n\\
\PA_n&\cyl i, \s_\tau: i<n,\; \tau\in\;^nn\\
\PEA_n&\cyl i, \diag i j,  \s_\tau: i, j<n,\;  \tau\in\;^nn\\
\QA_n, &  \cyl i, \s_{[i/j]}, \s_{[i, j]} :i, j<n  \\
\QEA_n &\cyl i, \diag i j, \s_i^j, \s_{[i, j]}: i, j<n
\end{array}\]
\caption{Non-Boolean operators for the classes\label{fig:classes}}
\end{figure}
For operators on classes of algebras, $\bold S$ stands for the operation of forming subalgebras, $\bold P$
stands for that of forming products, $\bf H$ for forming homomorphic images, ${\bf Up}$ for forming ultraproducts
and ${\bf Ur}$  for forming ultraroots. The smallest elementary class containing $\K$, namely, its elementary closure,
is denoted by ${\bf El}\K$. 
It is known that ${\bf UpUr}\K=\bf El\K.$ 
If $I$ is a non--empty set and $U$ is an ultrafilter over $\wp(I)$ and if $\A_i$ is some structure (for $i\in I$) we write either
$\Pi_{i\in I}\A_i/U$ or $\Pi_{i/U}\A_i$ for the ultraproduct of the $\A_i$s over $U$. 

We write $\subseteq$ for inclusion, and $\subset$ for
proper inclusion. 
$\prod$ denotes infimum, and $\sum$ denotes (its dual) supremum.  For algebras $\A$ and $\B$ having a Boolean reduct,
we write $\A\subseteq _c \B$, if for all $X\subseteq \A$, whenever $\sum^{\A} X=1$,
then $\sum^{\B} X=1$. 
Examples of such $\A$ and $\B$ are, when $\B$ is the \de\ completion of $\A$, and
when $\A\subseteq \B$
are finite algebras.  If $\A\subseteq \B$, we say that $\A$ is dense in $\B$,  if for all non-zero $b\in \B$, there exists a 
non-zero $a\in \A$ such that $a\leq b$. If $\A$ is dense in $\B$, then $\A\subseteq_c \B$, 
the converse is not true in general. A weaker form of density holds if $\A$ is atomic and 
$\A\subseteq_ c\B$, namely, for all non-zero $b\in \A$, there exists $a\in \At\A$ such that $a\cdot b\neq 0$.

For an algebra $\A$ having a Boolean reduct, and $X\subseteq \A$, we say that
$X$ {\it completely generates} $\A$, if whenever $X\subseteq \A'\subseteq_c \A$, $\A'$ a subalgebra of $\A$,
then $\A'=\A$. If $\A$ is a $\CA_n$, then it is always the case that $\Tm\At\A$ ($\Cm\At\A)$ is (completely) generated 
by $\At\A$.

For a class $\K$ having a Boolean reduct,
we write $\bold S_c\K$ for $\{\A: (\exists \B\in \K) \A\subseteq_c \B\}$, and
$\K\cap \At$ for the class of atomic algebras in $\K$.\\

Let $\K\in\set{\Df, \Sc, \CA, \PA, \PEA}$ and let $m<n$ be ordinals. For $\A\in \K_n$, the \emph{reduct to $m$ dimensions} $\Rd_m\A\in\K_m$ is 
obtained from $\A$ by discarding all operators with indices $m\leq i<n$.     
The \emph{neat reduct to $m$ dimensions}  is the algebra  $\Nr_m\A\in \K_m$ with universe $\set{a\in\A: m\leq i<n\rightarrow \cyl i a = a}$ 
where  all the operators are induced from $\A$ (see \cite[Definition~2.6.28]{HMT2} for the $\CA$ case). 
More generally, for $\Gamma\subseteq n$ we write $\Nr_\Gamma\A$ for the algebra whose universe 
is $\set{a\in\A: i \in n\setminus\Gamma\rightarrow \cyl i a=a}$ with all the operators $o$ of $\A$ where the indicies 
in $o$ is contained in $\Gamma$. \\
  
{\bf BAOs and games:} A variety $\V$ of $\sf BAO$s (Boolean algebras with operators)
is {\it atom--canonical} if whenever $\A\in \V$ and $\A$ is atomic, then the complex algebra of its atom structure, in symbols
$\Cm\At\A$ is also in $V$.
For $\A\in \V$, $\At\A$ will denote the set of atoms of $\A$ or the atom structure of $\A$.
Context will help to decide which one of them we mean. $\Uf\A$ denotes the ultrafilter atom structure (frame) of $\A$, based on its Stone space; we sometimes use $\Uf\A$ for the 
underlying set of the Stone space, that is, the set of all Boolean 
ultrafilters of $\A$.

If $\V$ is completely additive, then $\Cm\At\A$ is the \de\ completion of $\A$.
$\A$ is always dense in its \de\ completion
and so the \de\ completion of $\A$ is atomic $\iff$  $\A$ is atomic.
Not all varieties henceforth encountered are completely additive; the varieties $\Sc_n$ and
$\sf PA_n$ for $n>1$,
are not \cite{AGMNS}. There are atomic algebras in $\Sc_n$ and $\PA_n$, for any $n>1$,
that are not completely additive. Such algebras lack complete representations,
since a completely representable algebra is necessarily completely
additive, because sums  in such an algebra are just unions.

For a variety $\V$ of $\sf BAO$s, ${\sf Str}\V=\{\F: \Cm\F\in \V\}$.
If $\V$ is completely additive, then ${\sf Str}\V\subseteq \At\V$ because $\At\Cm\F=\F$.
It is always the case that $\At \V$ is elementary \cite[Theorem 2.84]{HHbook}, which might not be the case
with ${\sf Str} \V$, witness eg. \cite{HHbook2}.
${\sf StrV}$  is elementary if $\V$ is atom--canonical, because in this case we have
${\sf At} \V={\sf Str}\V$.  

We will use  rainbow constructions played on {\it atomic networks} of a rainbow cylindric--like atom structure consisting
of coloured graphs as defined in \cite{HH} (to be recalled next).
A network $N$ on an atomic $\PEA_n$ is cylindric network \cite[Definition 3.3.2]{HHbook2}
satisfying the following extra consistency condition (reflecting the polyadic substitutions corresponding to transpositions),
${\sf s}_{[i,j]}N(\bar{x})= N(\bar{x}\circ [i,j])$ for all $i\neq j<n$, and $\bar{x}\in {}^n\nodes(N)$.\\

The game $G^m_k(\At\A)$, or simply $G^m_k$ is the usual game played on atomic networks of an atomic algebra $\A\in \PEA_n$,
using $m$ nodes and having $k\leq \omega$ rounds. We write $G_k(\At\A)$, or simply $G_k$, if $m\geq \omega$.
$F^m(\At\A)$ is like $G^m_{\omega}(\At\A)$ except that
\pa\ has the option to re--use the available
$m$  nodes  (pebbles).
Both games are played on atomic networks using at most $m$ nodes
defined on an atomic algebra $\A\in \PEA_n$ say, where \pa\ is offered one cylindrifier move.
The usual atom game $G_k^{\omega}$ was described in the introduction.
In these two games, $G^m_k$ and $F^m$ $(m, k\leq\omega$), there are no moves other than the cylindrifier move.   
The additional polyadic `information' is coded in the networks (via ${\sf s}_{[i,j]}N(\bar{x})=N(\bar{x}\circ [i,j])$) not in the game. When $m=k=\omega$, we often
write  $G_{\omega}$ or simply $G$ for $G^m_k$.\\

{\bf Rainbows:} The most general exposition of $\CA$ rainbow constructions is given
in \cite[Section 6.2, Definition 3.6.9]{HHbook2} in the context of constructing atom structures from classes of models.
Our models are just coloured graphs \cite{HH}.

Let $A$, $B$ be two relational structures. Let $2<n<\omega$.
Then the colours used are:
\begin{itemize}

\item greens: $\g_i$ ($1\leq i\leq n-2)$, $\g_0^i$, $i\in A$,

\item whites : $\w_i: i\leq n-2,$

\item reds:  $\r_{ij}$ $(i,j\in B)$,

\item shades of yellow : $\y_S: S\text { a finite subset of } B$ or $S=B$.

\end{itemize}
A {\it coloured graph} is a graph such that each of its edges is labelled by the colours in the above first three items,
greens, whites or reds, and some $n-1$ hyperedges are also
labelled by the shades of yellow.
Certain coloured graphs will deserve special attention.
\begin{definition}
Let $i\in A$, and let $M$ be a coloured graph  consisting of $n$ nodes
$x_0,\ldots,  x_{n-2}, z$. We call $M$ {\it an $i$ - cone} if $M(x_0, z)=\g_0^i$
and for every $1\leq j\leq n-2$, $M(x_j, z)=\g_j$,
and no other edge of $M$
is coloured green.
$(x_0,\ldots, x_{n-2})$
is called  the {\it base of the cone}, $z$ the {\it apex of the cone}
and $i$ the {\it tint of the cone}.
\end{definition}
The rainbow algebra depending on $A$ and $B,$ from the class $\bold K$ consisting
of all coloured graphs $M$ such that:
\begin{enumerate}
\item $M$ is a complete graph
and  $M$ contains no triangles (called forbidden triples)
of the following types:
\begin{eqnarray}
&&\nonumber\\
(\g, \g^{'}, \g^{*}), (\g_i, \g_{i}, \w_i)
&&\mbox{any }1\leq i\leq  n-2,  \\
(\g^j_0, \g^k_0, \w_0)&&\mbox{ any } j, k\in A,\\
\label{forb:match}(\r_{ij}, \r_{j'k'}, \r_{i^*k^*})&&\mbox{unless }i=i^*,\; j=j'\mbox{ and }k'=k^*
\end{eqnarray}
and no other triple of atoms is forbidden.

\item If $a_0,\ldots,   a_{n-2}\in M$ are distinct, and no edge $(a_i, a_j)$ $i<j<n$
is coloured green, then the sequence $(a_0, \ldots, a_{n-2})$
is coloured a unique shade of yellow.
No other $(n-1)$ tuples are coloured shades of yellow. Finally, if $D=\set{d_0,\ldots,  d_{n-2}, \delta}\subseteq M$ and
$M\upharpoonright D$ is an $i$ cone with apex $\delta$, inducing the order
$d_0,\ldots,  d_{n-2}$ on its base, and the tuple
$(d_0,\ldots, d_{n-2})$ is coloured by a unique shade
$\y_S$ then $i\in S.$
\end{enumerate}

Let $A$ and $B$ be relational structures as above. Take the set $\sf J$ consisting of all surjective maps $a:n\to \Delta$, where $\Delta\in \bold K$
and define an equivalence relation on this set  relating two such maps iff they essentially define the same graph \cite{HH};
the nodes are possibly different but the graph structure is the same.
Let $\At$ be the set of equivalences classes. We denote the equivalence class of $a$ by $[a]$. Then define, for $i<j<n$, 
the accessibility 
relations corresponding to $ij$th--diagonal element, $i$th--cylindrifier, and substitution operator 
corresponding to the transposition $[i,j]$, as follows: 

(1) \ \  $[a]\in E_{ij} \text { iff } a(i)=a(j),$

(2) \ \ $[a]T_i[b] \text { iff }a\upharpoonright n\smallsetminus \{i\}=b\upharpoonright n\smallsetminus \{i\},$

 (3) \ \ $[a]S_{ij}[b] \text { iff } a\circ [i,j]=b.$

This, as easily checked, defines a $\PEA_n$
atom structure. The complex $\PEA_n$ over this atom structure will be denoted by
$\PEA_{A, B}$. The dimension of $\PEA_{A, B}$, always finite and
$>2$, will be clear from context. The game $F^{m}$ lifts to a game on coloured graphs, that  is like the graph games
$G^m_{\omega}$ \cite{HH}, where the number of nodes of graphs played during the $\omega$ rounded
game does not exceed $m$, but \pa\ has the option to re-use nodes. \\

Throughout the paper $L_n$ denotes first order logic restricted to the first $n$ variables,
and $L_{\infty, \omega}^n$ denotes the $n$ variable fragment of $L_{\infty, \omega}$ (allowing infinite conjunctions).

\subsection*{Layout}

In what follows our investigations address cylindric--like algebras. The paper is divided 
into two parts. The first is completely self contained. The second is self contained modulo cross reference to lemmata \ref{flat}, \ref{Thm:n} and 
the main theorem  \ref{main} in part one.

\subsection*{Part 1}

\begin{enumarab}

\item In section 3, using ideas of Hirsch and Hodkinson, we introduce the notion of $n$-clique--guarded representations for a given $\CA_n$,
and we study the notion of complete representability in connection to that of neat embeddings, for both finite and infinite dimensional algebras.
The main results in this section are lemmata \ref{flat}, \ref{Thm:n}, to be extensively used throughout the paper, and theorems  
\ref{complete}, \ref{complete2}, \ref{complete3} and \ref{SL} all 
characterizing existence of complete representations of algebras, 
possibly having uncountably many atoms, in terms of neat embedding properties.

\item Section 4 is the heart and soul of this part of the paper, and in fact it is a central theorem 
in the whole paper. 
We prove our main algebraic results, concerning non atom--canonicity for both finite and infinite dimensional algebras, 
and non--elementarity; in all cases for classes consisting of  algebras having a neat embedding property  (and clique--guarded representations).
The main theorems in this section are theorems \ref{main} and \ref{squareflat}. Theorem \ref{main} 
is the heart and soul of the first part of this paper.  A large portion of the second part part of the paper will be devoted to working out carefully some of 
its metalogical consequences. 

It has five items proving a diversity of results on atom--canonicity, finite axiomatizability, first order definability, 
of classes of algebras having a neat embedding property for both finite and infinite dimensions.
\end{enumarab}

\subsection*{Part 2}

\begin{enumarab}

\item In the fifth section, we continue our investigations on  connections between neat embeddings and various 
notions of representability. We reprove a result of Bulian and Hodkinson \cite{bh} showing that for any class 
$\K$ between $\Df$ nd $\PEA$, for any $2<n<\omega$,  the class ${\sf Str}\sf RK_n$
is not elementary.
The rest of the results (and proofs) in this section are new, 
theorem \ref{main2}. We formulate, prove and compare several theorems on completions, theorems \ref{conditional}, \ref{conditional2} and 
\ref{lyndon}.

\item In the last section, we apply our algebraic results obtained on non atom--canonicity and non--elementarity of various classes of 
algebras having a neat embedding property, to prove that various fragments of first order logic, including its 
finite variable fragments (together with finite first order expansions of such fragments) 
and its packed fragment, all fail an omitting types theorem in a strong sense. Such results are formulated and proved in theorems 
\ref{OTT}, \ref{con}, and \ref{firstorder}. A positive theorem on omitting uncountably many types in $L_n$ $(2<n<\omega)$ 
theories  is given in theorem \ref{Shelah}.

\end{enumarab}

\section*{Part one}

\section{Clique guarded semantics}

We define what we call  $n$-clique--guarded semantics only for $\CA_n$s. Such notions were introduced by Hirsch and Hodkinson for relation algebras. The transfer
to $\CA$s does not involve too much subtleties, but is also not entirely straightforward. 
The rest of the cases $(\Sc$s, $\PA$s, \PEA$s)$
can be dealt with the same way. It will always be the case, unless otherwise explicitly indicated,
that $1<n<m<\omega$;  $n$ denotes the dimension.

We identify notationally a 
set algebra with its universe.  Let $M$ be a {\it relativized representation} of $\A\in \CA_n$, that is, there exists an injective
homomorphism $f:\A\to \wp(V)$ where $V\subseteq {}^nM$ and $\bigcup_{s\in V} \rng(s)=M$. For $s\in V$ and $a\in \A$,
we may write $a(s)$ for $s\in f(a)$. This notation does not refer to $f$, but whenever used 
then  either $f$ will be clear from context, or immaterial in the context. We may also write $1^M$ for $V$.  

Let  $\L(A)^m$ be the first order signature using $m$ variables
and one $n$--ary relation symbol for each element of $\A$.  Allowing infinitary conjunctions, we denote the resulting signature taken in $L_{\infty, \omega}$
by $\L(A)_{\infty, \omega}^m$.

{\it An $n$--clique}, or simply a clique,  is a set $C\subseteq M$ such
$(a_0,\ldots, a_{n-1})\in V=1^M$
for all distinct $a_0, \ldots, a_{n-1}\in C.$
Let
$${\sf C}^m(M)=\{s\in {}^mM :\rng(s) \text { is an $n$ clique}\}.$$
Then ${\sf C}^m(M)$ is called the {\it $n$--Gaifman hypergraph}, or simply Gaifman hypergraph  of $M$, with the $n$--hyperedge relation $1^M$.
The {\it $n$-clique--guarded semantics}, or simply clique--guarded semantics,  $\models_c$, are defined inductively. 

Let $f$ be as above. For an atomic $n$--ary formula $a\in \A$, $i\in{}^nm$, 
and $s\in {}^mM$, $M, s\models_c a(x_{i_0},\ldots x_{i_{n-1}})\iff\ (s_{i_0}, \ldots s_{i_{n-1}})\in f(a).$ 
For equality, given $i<j<m$, $M, s\models_c x_i=x_j\iff s_i=s_j.$
Boolean connectives, and infinitary disjunctions,
are defined as expected.  Semantics for existential quantifiers
(cylindrifiers) are defined inductively for $\phi\in \L(A)^m_{\infty, \omega}$ as follows:
For $i<m$ and $s\in {}^mM$, $M, s\models_c \exists x_i\phi \iff$ there is a $t\in {\sf C}^m(M)$, $t\equiv_i s$ such that 
$M, t\models_c \phi$. 
\begin{definition}\label{cl}
Let $\A\in \CA_n$, $M$ a relativized representation of $\A$ and $\L(A)^m$  be as above. 
\begin{enumarab}
\item Then $M$ is said to be {\it $m$--square},
if witnesses for cylindrifiers can be found on $n$--cliques. More precisely,
for all  $\bar{s}\in {\sf C}^m(M), a\in \A$, $i<n$,
and for any injective map  $l:n\to m$, if $M\models {\sf c}_ia(s_{l(0)}\ldots, s_{l(n-1)})$,
then there exists $\bar{t}\in {\sf C}^m(M)$ with $\bar{t}\equiv _i \bar{s}$,
and $M\models a(t_{l(0)},\ldots, t_{l(n-1)})$.

\item $M$ is said to be {\it (infinitary) $m$--flat} if  it is $m$--square and
for all $\phi\in (\L(A)_{\infty, \omega}^m) \L(A)^m$, 
for all $\bar{s}\in {\sf C}^m(M)$, for all distinct $i,j<m$,
we have
$M\models_c [\exists x_i\exists x_j\phi\longleftrightarrow \exists x_j\exists x_i\phi](\bar{s}).$
\end{enumarab}
\end{definition}
The proof of the  following lemma can be distilled
from its $\sf RA$ analogue \cite[Theorem 13.20]{HHbook},  by reformulating deep concepts
originally introduced by Hirsch and Hodkinson for $\sf RA$s in the $\CA$ context, involving the notions of 
hypernetworks and hyperbasis. This can (and will) be done.

In the coming proof, we highlight
the main ideas needed to perform such a transfer from $\sf RA$s to $\CA$s
\cite[Definitions 12.1, 12.9, 12.10, 12.25, Propositions 12.25, 12.27]{HHbook}. 

In all cases, the $m$--dimensional dilation stipulated in the statement of the theorem, will have
top element ${\sf C}^m(M)$, where $M$ is the $m$--relativized representation of the given algebra, and the operations of the dilation
are induced by the $n$-clique--guarded semantics.

But first a technical definition.

\begin{definition}\label{sub} Let $m$ be a finite ordinal $>0$. An $\sf s$ word is a finite string of substitutions $({\sf s}_i^j)$ $(i, j<m)$,
a $\sf c$ word is a finite string of cylindrifications $({\sf c}_i), i<m$;
an $\sf sc$ word $w$, is a finite string of both, namely, of substitutions and cylindrifications.
An $\sf sc$ word
induces a partial map $\hat{w}:m\to m$:
\begin{itemize}

\item $\hat{\epsilon}=Id,$

\item $\widehat{w_j^i}=\hat{w}\circ [i|j],$

\item $\widehat{w{\sf c}_i}= \hat{w}\upharpoonright(m\smallsetminus \{i\}).$

\end{itemize}
If $\bar a\in {}^{<m-1}m$, we write ${\sf s}_{\bar a}$, or
${\sf s}_{a_0\ldots a_{k-1}}$, where $k=|\bar a|$,
for an  arbitrary chosen $\sf sc$ word $w$
such that $\hat{w}=\bar a.$
Such a $w$  exists by \cite[Definition~5.23 ~Lemma 13.29]{HHbook}.
\end{definition}

We also need the notion of $m$--dimensional hyperbasis. This hyperbasis is made up of $m$--dimensional hypernetworks.
An $m$--dimensional hypernetwork on the atomic algebra $\A$ is an $n$--dimensional  network $N$, with $\nodes(N)\subseteq m$, endowed with a set of labels $\Lambda$ for
hyperedges of length $\leq m$,
not equal to $n$ (the dimension), such that $\Lambda\cap \At\A=\emptyset$. We call a label in $\Lambda$ a non--atomic label.

Like in networks, $n$--hyperedges are labelled by atoms. In addition to the consistency properties for networks,
an $m$--dimensional hypernetwork should satisfy the following additional consistency rule involving non--atomic labels:
If $\bar{x}, \bar{y}\in {}^{\leq m}m$, $|\bar{x}|=|\bar{y}|\neq n$ and $\exists \bar{z}$, such that $\forall i<|\bar{x}|$,
$N(x_i,y_i,\bar{z})\leq {\sf d}_{01}$,
then $N(\bar{x})=N(\bar{y})\in \Lambda$. (We shall deal with hypernetworks in item (1) of theorem \ref{main}).

\begin{definition} Let $2<n<m<\omega$ and $\A\in \CA_n$ be atomic.
\begin{enumarab}

\item An $m$--dimensional basis $B$ for $\A$ consists of a set of $n$--dimensional networks whose nodes $\subseteq m$, satisfying 
the following properties: 

\begin{itemize}

\item For all  $a\in \At\A$, there is an $N\in B$ such that $N(0,1,\ldots, n-1)=a,$

\item The {\it cylindrifier property}: For all $N\in B$, all $i<n$,  all $\bar{x}\in {}^n\nodes(N)(\subseteq {}^nm)$, all $a\in\At\A$, such that
$N(\bar{x})\leq {\sf c}_ia$,  there exists $M\in B$, $M\equiv_i N$, $\bar{y}\in {}^n\nodes(M)$ such 
that $\bar{y}\equiv_i\bar{x}$ and $M(\bar{y})=a.$  We can always assume that $\bar{y}_i$ is a new node else one takes $M=N$. 

\end{itemize}

\item An $m$--dimensional  hyperbasis $H$ consists of $m$--dimensional hypernetworks, satisfying the above two conditions reformulated 
the obvious way for hypernetworks, in addition,  $H$ has an amalgamation property for overlapping hypernertworks off 
at most $2$ nodes; 
this property corresponds to commutativity of cylindrifiers: 

For all $M,N\in H$ and $x,y<m$, with $M\equiv_{xy}N$, there is $L\in H$ such that
$M\equiv_xL\equiv_yN$. Here $M\equiv_SN$, means 
that $M$ and $N$ agree off of $S$ \cite[Definition 12.11]{HHbook}.

\end{enumarab}
\end{definition}
The next example taken from \cite{AGMNS} alerts us to the fact that the diagonal free algebras we study, namely, $\Sc$s and $\PA$s 
may not  be necessarily 
completely additive. We cannot assume {\it a priori} that they are. 
So in the next lemma, and other lemmas whose proofs depend on complete addtivity of the ${\sf s}_i^j$s, 
we will have to assume explicity complete additivity in the hypothesis which is superfluous for any $\K$ between 
$\CA$ and $\PEA$.
\begin{example}\label{counter}
We give the outline. Let $|U|=\mu$ be an infinite set and $|I|=\kappa$ be a cardinal such 
that $Q_n$, $n\in \kappa$,  is a family of relations that form a partition of $U\times U$. 
Let $i\in I$, and let $J=I\sim \{i\}$. Then of course $|I|=|J|$. Assume that $Q_i=D_{01}=\{s\in V: s_0=s_1\},$
and that each $Q_n$ is symmetric; that is for any  $S_{[0,1]}Q_n=Q_n$ and furthermore, that $\dom Q_n=\rng Q_n=U$ for every $n\in \kappa$.
It is straightforward to show that such partitions exist.
Now fix $F$ a non--principal ultrafilter on $J$, that is $F\subseteq \mathcal{P}(J)$. 
For each $X\subseteq J$, define
\[
 R_X =
  \begin{cases}
   \bigcup \{Q_k: k\in X\} & \text { if }X\notin F, \\
   \bigcup \{Q_k: k\in X\cup \{i\}\}      &  \text { if } X\in F
  \end{cases}
\]
Let $$\A=\{R_X: X\subseteq J\}.$$
Notice that $|\A|\geq \kappa$. Also $\A$ is an atomic set algebra with unit $R_{J}$, and its atoms are $R_{\{k\}}=Q_k$ for $k\in J$.
(Since $F$ is non-principal, so $\{k\}\notin F$ for every $k$). This can be proved exactly like in \cite{AGMNS}. 
We check substitutions. Transpositions are clear (recall that $Q_k$'s are symmetric), so we check only replacements. It is easy to see that
\[
 {\sf s}_0^1(R_X)=
  \begin{cases}
   \emptyset & \text { if }X\notin F, \\
   R_{J}      &  \text { if } X\in F
  \end{cases}
\]

Now we show that ${\sf s}_0^1$ is not completely additive:
$$\sum \{{\sf s}_0^1(R_{k}): k\in J\}=\emptyset.$$
and
$${\sf s}_0^1(R_{J})=R_{J}$$
$$\sum \{R_{\{k\}}: k\in J\}=R_{J}=\bigcup \{Q_k:k\in J\}.$$
Thus $${\sf s}_0^1(\sum\{R_{\{k\}}: k\in J\})\neq \sum \{{\sf s}_0^1(R_{\{k\}}): k\in J\}.$$

\end{example}

Accordingly for a class $\K$ of $\sf BAO$s, 
we write $\K^{\sf ad}$ 
for the class of completely additive algebras in $\K$. We have just seen that for $\K\in \{\Sc, \PA\}$, 
$\K_2^{\sf ad}\neq \K_2$. In \cite{AGMNS} more examples can be found establishing that
$\K_n^{\sf ad}\neq \K_n$ for all $n>1$ replacing, in the infinite dimensional case, polyadic algebras 
by quasi-polyadic algebras.

One can define $m$--smooth representations as in \cite[Definition 13.12] {HHbook}.
But like infinitary $m$--flat representations,  they really do not add much to $m$--flatness as far as (ordinary as opposed to complete) 
representations are concerned.
But they do in the case of {\it complete} $m$--relativized representations, witness item (3) in the coming lemma.
An algebra having a complete $m$--flat representation, has an $m$--infinitary flat representation, 
but not necessarily a complete one. 

\begin{lemma}\label{flat}\cite[Theorems 13.45, 13.36]{HHbook}.
Assume that $2<n<m<\omega$ and that $\K$ is any class between $\Sc$ and $\PEA$. Let $\A\in \K_n$. Then the following hold:
\begin{enumarab}
\item $\A\in \bold S\Nr_n\K_m\iff \A$ has an $m$--smooth representation $\iff \A$ has has an  infinitary $m$--flat representation
$\iff \A$ has an $m$--flat representation $\iff \A^+$ has an $m$--dimensional hyperbasis.
\item If $\A$ is atomic, then $\A$ has a complete infinitary $m$--flat representation $\iff$ $\A$ has a complete $m$--smooth representation.
$\iff$ $\A\in \bold S_c\Nr_n(\K_m^{\sf ad}\cap \At)$ $\iff$ $\A$ has an $m$--dimensional hyperbasis. 
\item If $\A$ is atomic and has a complete $m$--flat representation, then $\A\in \bold S_c\Nr_n\K_m$, but the dilation in 
$\sf K_m$ may not be atomic.
\item If $\A$ has an $m$--square representation, 
then $\A^+$ has a complete $m$--square representation; a completely analogous result holds for $m$--flatness, 
infinitary  $m$--flatness, and $m$--smoothness.
\end{enumarab}
\end{lemma}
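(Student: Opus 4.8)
The plan is to adapt the relation-algebra machinery of \cite[Chapter 13]{HHbook} to the cylindric setting, where the clique-guarded semantics on the $n$--Gaifman hypergraph ${\sf C}^m(M)$ supplies the bridge between $m$--relativized representations and $m$--dimensional (hyper)bases. First I would fix the common construction underlying every equivalence: given an $m$--relativized representation $M$ of $\A\in\K_n$, one equips ${\sf C}^m(M)$ with the operations induced by $\models_c$ (cylindrifiers as guarded existential quantifiers, diagonals and substitutions as in Definition \ref{cl}), obtaining an algebra $\D$ of dimension $m$; the map sending $a\in\A$ to $\{\bar s\in {\sf C}^m(M): M,\bar s\models_c a(x_0,\dots,x_{n-1})\}$ is the candidate neat embedding of $\A$ into $\Nr_n\D$. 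Squareness of $M$ is exactly what makes each ${\sf c}_i$ on $\D$ behave correctly on $n$--cliques (so $\D$ is at least a $\Sc_m$ or $\CA_m$), and $m$--flatness is exactly what forces commutativity of cylindrifiers, i.e.\ $\D\in\K_m$ proper. Conversely, from an $m$--dimensional hyperbasis $H$ for $\A^+$ one builds $M$ by a step-by-step / saturation argument: nodes are drawn from the hypernetworks, the cylindrifier property of $H$ provides witnesses for $\models_c$-quantifiers on cliques, and the amalgamation-off-two-nodes property of $H$ delivers flatness. This round trip is the content of \cite[Theorems 13.45, 13.46]{HHbook} transcribed via the dictionary \cite[Definitions 12.1, 12.9--12.11, 12.25, Propositions 12.25, 12.27]{HHbook}, and it establishes item (1) together with the direction ``$m$--square $\Rightarrow$ $\A^+$ has a complete $m$--square representation'' in item (4). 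For infinitary $m$--flat and $m$--smooth representations in (1) and (4), I would note that an $m$--flat representation is trivially $m$--square, that a smooth representation is flat, and that the step-by-step construction from a hyperbasis can be run so as to produce the stronger (smooth, infinitary flat) version without extra hypotheses --- precisely because the extra clauses are about preservation of already-guarded formulas, which the hyperbasis amalgamation handles.

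For item (2), the point is completeness. If $\A$ is atomic, a \emph{complete} $m$--relativized representation $M$ means the induced embedding $\A\to\Nr_n\D$ is a complete embedding, equivalently (since $\A$ is atomic) every clique $\bar s\in{\sf C}^m(M)$ actually realizes some atom of $\A$ at $(s_0,\dots,s_{n-1})$ --- this is the ``weaker form of density'' recorded in the Preliminaries. Tracking this through the construction, the hyperbasis obtained is \emph{complete} in the appropriate sense, and conversely a complete hyperbasis yields a complete representation; here the role of complete additivity enters, because to push suprema through the ${\sf s}_i^j$ operations in the dilation one needs $\K_m^{\sf ad}$, which is why the clause reads $\bold S_c\Nr_n(\K_m^{\sf ad}\cap\At)$. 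Since the hyperbasis coming out is the same object whether one started from a complete infinitary flat or a complete smooth representation, the three conditions coincide. Item (3) is the weakest statement and the easiest: a complete $m$--flat representation gives, as in item (2)'s construction, an $m$--dilation $\D$ with $\A\subseteq_c\Nr_n\D$, hence $\A\in\bold S_c\Nr_n\K_m$; but nothing in the construction forces $\D$ to be atomic --- the cliques of $M$ need not be determined by atoms of $\D$ --- so one cannot upgrade to $\K_m\cap\At$, which is exactly the gap separating (3) from (2).

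I expect the main obstacle to be the faithful transfer of the \emph{hypernetwork} and \emph{hyperbasis} apparatus from $\sf RA$ to $\CA_n$: in particular, verifying that the additional consistency rule for hypernetworks (non-atomic labels agreeing whenever the corresponding tuples are ``diagonally connected'') interacts correctly with the $n$--ary (rather than binary) atomic labels and with the substitution operators ${\sf s}^i_j$, and that the amalgamation-off-$\le 2$-nodes property genuinely encodes commutativity $[\exists x_i\exists x_j\phi\leftrightarrow\exists x_j\exists x_i\phi]$ on cliques. The $\sf sc$--word calculus of Definition \ref{sub} is the technical device that keeps this bookkeeping finite and coherent, so I would develop it carefully first. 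A secondary subtlety, flagged by Example \ref{counter}, is that for $\K\in\{\Sc,\PA\}$ complete additivity can fail, so every step in (2) and (3) that moves a supremum across a substitution must explicitly invoke membership in $\K_m^{\sf ad}$; for $\K$ between $\CA$ and $\PEA$ this hypothesis is automatic and can be dropped. Everything else --- the Boolean and quantifier clauses of $\models_c$, the elementary closure facts, the density observations --- is routine once the hyperbasis correspondence is in place.
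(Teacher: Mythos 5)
Your proposal is correct and follows essentially the same route as the paper: the dilation on ${\sf C}^m(M)$ induced by the clique-guarded semantics with the neat embedding $r\mapsto r(\bar x)^M$, the passage through $m$--dimensional hyperbases of $\A^+$ (via the $\sf sc$--word calculus) for the converse, the $L_{\infty,\omega}$ version of the dilation to get atomicity in the complete case, and the explicit $\K_m^{\sf ad}$ caveat for $\Sc$ and $\PA$. The only cosmetic difference is that the paper obtains item (4) and the infinitary/smooth upgrades in item (1) by taking an $\omega$--saturated model of the theory asserting the existence of the relativized representation, whereas you phrase this as running the step-by-step construction from the (hyper)basis; these are interchangeable here.
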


\begin{proof}

Fix $2<n<m<\omega$. For the first item:
As above, let $\L(A)^m$ denote the signature that contains
an $n$--ary predicate symbol for every $a\in A$.
We prove the easier direction which is a soundness theorem: The existence of $m$--flat representations, implies the existence of $m$--dilations.
Let $M$ be an $m$--flat representation of $\A$. We show that $\A\subseteq \Nr_n\D$, for some $\D\in \CA_m$,
and that $\A$ actually has an infinitary $m$--flat representation.

For $\phi\in \L(A)^m$,
let $\phi^{M}=\{\bar{a}\in {\sf C}^m(M):M\models_c \phi(\bar{a})\}$, where ${\sf C}^m(M)$ is the $n$--Gaifman hypergraph.
Let $\D$ be the algebra with universe $\{\phi^{M}: \phi\in \L(A)^m\}$ and with  cylindric
operations induced by the $n$-clique--guarded (flat) semantics. 
For $r\in \A$, and $\bar{x}\in {\sf C}^m(M)$, we identify $r$ with the formula it defines in $\L(A)^m$, and 
we write $r(\bar{x})^M\iff M, \bar{x}\models_c r$.

Then certainly $\D$ is a subalgebra of the $\sf Crs_m$ (the class
of algebras whose units are arbitrary sets of $m$--ary sequences)
with domain $\wp({\sf C}^m(M))$, so $\D\in {\sf Crs_m}$ with unit $1^{\D}={\sf C}^m(M)$.
Since $M$ is $m$--flat, then cylindrifiers in $\D$ commute, and so $\D\in \CA_m$.

Now define $\theta:\A\to \D$, via $r\mapsto r(\bar{x})^{M}$. Then exactly like in the proof of \cite[Theorem 13.20]{HHbook},
$\theta$ is a neat embedding, that is, $\theta(\A)\subseteq \Nr_n\D$.
It is straightforward to check that $\theta$ is a homomorphism.  We show that $\theta$ is injective.
Let $r\in A$ be non--zero. Then $M$ is a relativized representation, so there is $\bar{a}\in M$
with $r(\bar{a})$, hence $\bar{a}$ is a clique in $M$,
and so $M\models r(\bar{x})(\bar{a})$, and $\bar{a}\in \theta(r)$, proving the required.

$M$ itself might not be  infinitary $m$--flat, but one can build an infinitary $m$--flat representation of $\A$, whose base is an $\omega$--saturated model
of the consistent first order theory, stipulating the existence of an $m$--flat representation \cite[Proposition 13.17, Theorem 13.46 items (6) and (7)]{HHbook}.
This idea (of using saturation) will be given in more detail in the last item.\\

The converse, essentially a completeness theorem, that is a `truncated version' of Henkin's
neat embedding theorem: {\it Existence of $m$--dimensional dilations}
versus {\it existence of $m$--flat representations}.
One constructs an infinitary $m$--flat representation $M$ of $\A$
from an {\it $m$--dimensional hyperbasis} \cite[Definition 12.11]{HHbook}
a notion that can be adapted to $\CA$s, as illustrated next. But this $m$--dimensional hyperbasis 
is constructed  for the {\it canonical extension} $\A^+.$ 

The order of the proof is: Constructing an $m$--dilation, from which an $m$--dimensional 
hyperbasis is constructed, from which
the required $m$-- relativized representation is built.

We start by giving a fairly complete sketch of how an $m$--dimensional hyperbasis for the  canonical extension of $\A\in \CA_n$
is obtained from an $m$--dilation of $\A$ \cite[Definition 13.22, lemmata 13.33-34-35, Proposition 36]{HHbook}.
Suppose that $\A\subseteq \Nr_n\D$ for some $\D\in \CA_m$.
Then $\A^+\subseteq_c \Nr_m\D^+$, and $\D^+$ is atomic. We show that $\D^+$ has an $m$--dimensional hyperbasis.
First, it is not hard to see that for every $n\leq l\leq m$, $\Nr_l\D^+$ is atomic.
The set of non--atomic labels $\Lambda$ is the set $\bigcup_{k<m-1}\At\Nr_k\D^+$.

For each atom $a$ of $\D^+$, define a labelled  hypergraph $N_a$ as follows.
Let $\bar{b}\in {}^{\leq m}m$. Then if $|\bar{b}|=n$,  so that $\bar{b}$  has to get a label that is an atom of $\D^+$, one sets  $N_a(\bar{b})$ to be 
the unique $r\in \At\D^+$ such that $a\leq {\sf s}_{\bar{b}}r$; notation here
is given in definition \ref{sub}.
If $n\neq |\bar{b}| <m-1$, $N_a(\bar{b})$ is the unique atom $r\in \Nr_{|b|}\D^+$ such that $a\leq {\sf s}_{\bar{b}}r.$ Since
$\Nr_{|b|}\D^+$ is atomic, this is well defined. Note that this label may be a non--atomic one; it 
might not be an atom of $\D^+$. But by definition it is a permitted label.
Now fix $\lambda\in \Lambda$. The rest of the labelling is defined by $N_a(\bar{b})=\lambda$.
Then $N_a$ as an $m$--dimensional
hypernetwork, for each 
such chosen $a$,  and $\{N_a: a\in \At\D^+\}$ is the required $m$--dimensional hyperbasis.
The rest of the proof consists of a fairly straightforward adaptation of the proof \cite[Proposition 13.37]{HHbook},
replacing edges by $n$--hyperedges.\\

For the second item,
we need: If $\A$ is an atomic $\CA_n$ having an  $n$--relativized representation $M$, then 
$f:\A\to \wp(1^M)$ is a complete $n$--relativized representation of $\A$, carrying meets to set--theoretic intersections 
$\iff$ $f$ is atomic, in the sense that $\bigcup_{a\in \At\A}f(a)=1^M$. This is proved 
exactly like in the classical case \cite{HH}.

Now one works in $L_{\infty, \omega}^m$ instead of first order logic.
In this case, the dilation $\D$ having again top element the Gaifman hypergraph ${\sf C}^m(M)$, where $M$ the  complete $m$--flat representation,
will now have  (the larger) universe $\{\phi^M: \phi\in \L(A)^m_{\infty, \omega}\}$ with operations also induced by the $n$-clique--guarded semantics extended to
$L_{\infty, \omega}^m$.
$\D$ will be a $\CA_m$ as before, but this time, it will be {\it an atomic} one.  To prove atomicity, let $\phi^M$ be a non--zero element in $\D$.
Choose $\bar{a}\in \phi^M$, and consider the following infinitary conjunction (which we did not have before when working in $L_m$)
\footnote{There are set--theoretic subtleties involved here, that we prefer to ignore.}:
$\tau=\bigwedge \{\psi\in \L(A)_{\infty,\omega}^m: M\models_C \psi(\bar{a})\}.$
Then $\tau\in \L(A)_{\infty,\omega}^m$, and $\tau^{M}$ is an atom below $\phi^M$. 
The neat embedding will be an atomic one, hence
it will be a complete neat embedding \cite[p. 411]{HHbook}. This last statement is proved in the third item. 
(In case of $\Sc$s and $\PA$s the dilation will be  completely additive because sums in uch a dilation defined like above 
are just unions.)

Conversely, if $\A\subseteq _c \Nr_n\D$, $\D$ an atomic $m$--dilation, 
then the existence of an $m$--dimensional hyperbasis is  exactly as done 
above, when constructing an $m$--dimensional 
hyperbasis from the atoms of the canonical extension of an $m$--dilation,  
giving a complete $m$--infinitary flat representation.

For third item, assume that $\A$ has a complete $m$--flat representation $M$. Then one forms the dilation $\D\in \CA_m$ like before, 
but like in the first item, working in $\L(A)^m$, 
so that the dilation $\D$  might not be atomic. However, the neat embedding map $\theta$ (using the above notation) 
is atomic, hence  complete, because  $M$ is a complete $m$--flat representation. 
To see why, let $\phi\in \L(A)^m$ such that $\phi^M\neq 0$. Pick $\bar{a}\in \phi^M$. Since $M$ is a complete $m$--flat 
representation, and $\bar{a}\in {\sf C}^m(M)$,  so there exists an
$\alpha\in \At\A$, such that $M\models_c \alpha(\bar{a})$, so $\bar{a}\in \alpha(\bar{x})^M=\theta(\alpha)$. Hence,
$\theta(\alpha)\cdot \phi^M\neq 0$, and we are done.

Last item: This can be proved, using ideas of Hirsch and Hodkinson, by taking an $\omega$--saturated model of the consistent theory stipulating the 
existence of an $m$--square representation for $\A$, as the base of the complete $m$--
square representation for $\A^+$.
In more detail, let $M$ be an $\omega$--saturated model,
of this theory, then we show that it is a complete $n$--square representation.
One  defines an injective complete embedding $h: \A^+\to \wp(1^M).$
First note that the set $f_{\bar{x}}=\{a\in A: a(\bar{x})\}$
is an ultrafilter in $\A$, whenever $\bar{x}\in M$ and $M\models 1(\bar{x}).$
Now $\A^+=\Cm(\Uf\A)$. For $S\subseteq \Uf\A$, 
let $h(S)=\{\bar{x}\in 1^{M}: f_{\bar{x}}\in S\}.$

We check only injectivity using saturation. For the (ideas used in) rest of  the proof the reader is referred to \cite[Corollary 13.18]{HHbook}.
It suffices to show that for any ultrafilter $F$ of $\A$ 
which is an atom in $\A^+$, we have
$h(\{F\})\neq 0$.
Let $p(\bar{x})=\{a(\bar{x}): a\in F\}$. Then this type is finitely satisfiable.
Hence by $\omega$ saturation $p$ is realized in $M$ by $\bar{y}$, say.
Now $M\models 1(\bar{y})$ and $F\subseteq f_{\bar{x}}$,
since these are both ultrafilters, equality holds.
\end{proof}
Now we devise games testing neat embeddability.
The following lemma  will be used quite often. It is  a non--trivial adaptation of its relation algebra analogue,
witness \cite[Theorem 33]{r}, \cite[Proposition 12.25, Theorem 13.45-46]{HHbook},
to our present $\CA$ context.

\begin{lemma}\label{Thm:n}
Let $2<n<m$. If $\A\in \bold S_c\Nr_n\Sc_{m}^{\sf ad}$,
then \pe\ has a \ws\ in $F^m.$
In particular, if $\A$ is a finite $\Sc_n$, and \pa\ has a \ws\ in $F^{m}$ on $\At\A$,
then $\A\notin \bold S\Nr_n\Sc_{m}$; if in addition $\A\in \CA_n$, then $\A$ does not have an 
$m$--dimensional hyperbasis.
If \pa\ has a \ws\ in $G^m_{\omega},$ then $\A$  does not have a complete $m$--square representation. If $\A$ is finite
then it does not have an $m$--square representation and furthermore if $\A\in \CA_n$, then it does not have an $m$--dimensional basis.
\end{lemma}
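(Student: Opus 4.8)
The plan is to prove the lemma in two directions, mirroring the relation algebra arguments referenced in \cite[Theorem 33]{r}, \cite[Proposition 12.25, Theorem 13.45-46]{HHbook}, and adapting them to the cylindric setting. First I would establish the positive direction: if $\A \in \bold S_c\Nr_n\Sc_m^{\sf ad}$, then \pe\ has a \ws\ in $F^m$. Here I would fix a complete embedding $\A \subseteq_c \Nr_n\D$ with $\D \in \Sc_m^{\sf ad}$ and pass to $\A^+ \subseteq_c \Nr_m\D^+$ with $\D^+$ atomic (using complete additivity to ensure the substitution operators behave, and that $\Nr_l\D^+$ is atomic for $n \le l \le m$, as in the proof of Lemma \ref{flat}). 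I would then describe \pe's strategy: she maintains, round by round, a hypernetwork $N_t$ together with an atom $a_t \in \At\D^+$ such that for every labelled hyperedge $\bar{b}$ of $N_t$ one has $a_t \le {\sf s}_{\bar b} N_t(\bar b)$, using the ${\sf s}$-word notation from Definition \ref{sub}. When \pa\ makes a cylindrifier move $N_t(\bar x) \le {\sf c}_i a$ (possibly reusing one of the $m$ available nodes), \pe\ uses the fact that $\D^+$ is atomic and the dilation has dimension $m$ to locate a witnessing atom $a_{t+1} \le a_t \cdot {\sf s}_i^? (\cdots)$ below the appropriate cylindrification, and relabels the affected hyperedges accordingly; the bound of $m$ nodes is exactly what makes the required substitutions available inside $\D^+$. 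This is essentially the construction producing an $m$-dimensional hyperbasis from an $m$-dilation in Lemma \ref{flat}, read as a strategy rather than as a static object.

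For the contrapositive consequences I would argue as follows. Suppose $\A$ is a finite $\Sc_n$ and \pa\ has a \ws\ in $F^m$ on $\At\A$. If $\A \in \bold S\Nr_n\Sc_m$, then since $\A$ is finite, $\A$ is its own canonical and \de\ completion, so $\A = \A^{\sf ad}$ is completely additive, hence $\A \in \bold S_c\Nr_n\Sc_m^{\sf ad}$ (for finite algebras $\bold S$ and $\bold S_c$ coincide, and complete additivity is automatic for a finite $\Sc_n$ — indeed for any $\K$ between $\CA$ and $\PEA$ it is automatic, and Example \ref{counter} shows it is the only place care is needed). By the positive direction \pe\ would then have a \ws\ in $F^m$, contradicting \pa's \ws. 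Hence $\A \notin \bold S\Nr_n\Sc_m$. If moreover $\A \in \CA_n$, then by Lemma \ref{flat}(1) the existence of an $m$-dimensional hyperbasis for $\A = \A^+$ would give $\A \in \bold S\Nr_n\CA_m \subseteq \bold S\Nr_n\Sc_m$, again a contradiction, so $\A$ has no $m$-dimensional hyperbasis.

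The $G^m_\omega$ statements are handled the same way through the "square" analogues. If \pa\ has a \ws\ in $G^m_\omega$ but $\A$ had a complete $m$-square representation, then by Lemma \ref{flat}(4) its canonical extension $\A^+$ has a complete $m$-square representation, and one reads off from such a representation a strategy for \pe\ in $G^m_\omega$ (the square condition supplies precisely the witnesses needed for \pe's cylindrifier responses on $n$-cliques within $m$ nodes), contradicting \pa's \ws. If $\A$ is finite, then "$m$-square representation" and "complete $m$-square representation" coincide (finitely many atoms, so any representation is atomic hence complete by the characterization recalled in the proof of Lemma \ref{flat}(2)), giving the sharper conclusion; and if $\A \in \CA_n$ is finite, an $m$-dimensional basis would yield, via the cylindrifier property, a \ws\ for \pe\ in $G^m_\omega$, so no such basis exists. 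The main obstacle I expect is the first direction: making precise how \pe\ threads the relabelling of non-atomic hyperedges (those of length $\ne n$) through \pa's reuse of nodes while keeping the invariant $a_t \le {\sf s}_{\bar b}N_t(\bar b)$ consistent — this is the genuinely $\CA$-specific adaptation of \cite[Propositions 12.25, 13.37]{HHbook}, where edges become $n$-hyperedges and one must check that the $\sf sc$-words of Definition \ref{sub} compose correctly under the node identifications forced by the game.
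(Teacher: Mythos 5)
The main (first) part of your argument has a genuine gap, and it sits exactly where you pass to canonical extensions. The hypothesis $\A\in \bold S_c\Nr_n\Sc_m^{\sf ad}$ differs from plain $\A\in \bold S\Nr_n\Sc_m$ only through the requirement $\A\subseteq_c\Nr_n\C$, i.e.\ $\sum^{\C}\At\A=1$ in the dilation. Canonical extensions do not preserve infinite suprema, and $\A^+\subseteq_c\Nr_n\D^+$ already follows from the \emph{plain} embedding $\A\subseteq\Nr_n\D$; so the moment you replace $\A\subseteq_c\Nr_n\D$ by $\A^+\subseteq_c\Nr_n\D^+$ you have discarded the one piece of information the lemma turns on. Concretely: the game $F^m$ is played on networks labelled by atoms of $\A$, so when \pa\ forces a new (or reused) node, \pe\ must label the fresh hyperedges by elements of $\At\A$. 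Your invariant hands her an atom $a_{t+1}$ of $\D^+$, and atomicity of $\Nr_n\D^+$ then yields, for a fresh hyperedge $\bar c$, only the unique atom $r$ of $\Nr_n\D^+$ with $a_{t+1}\le{\sf s}_{\bar c}r$; such an $r$ lies below an atom of $\A^+$, i.e.\ an ultrafilter of $\A$, which need not be principal. Nothing forces $a_{t+1}\le{\sf s}_{\bar c}a$ for an actual $a\in\At\A$, because $\sum^{\D^+}\{{\sf s}_{\bar c}a: a\in\At\A\}$ can be $<1$ even when the corresponding sum in $\D$ is $1$. Indeed, if your argument went through it would show that \pe\ wins $F^m(\At\A)$ for \emph{every} $\A\in\bold S\Nr_n\Sc_m$, and this is refuted by the paper's own rainbow algebra $\PEA_{\Z,\N}$ of Theorem \ref{main}(1): it is representable, so its $\Sc$ reduct lies in $\bold S\Nr_n\Sc_{n+3}$, yet \pa\ wins $F^{n+3}$ on its atom structure.

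The repair is to avoid canonical extensions altogether. Work inside the dilation $\C\in\Sc_m$ itself and have \pe\ maintain the invariant $\widehat N=\prod_{\bar\imath\in{}^n\nodes(N)}{\sf s}_{\bar\imath}N(\bar\imath)\neq 0$ \emph{in $\C$} (no atom below it is needed). Complete additivity of the substitutions in $\C$ gives $\sum^{\C}\{{\sf s}_{\bar\imath}a: a\in\At\A\}={\sf s}_{\bar\imath}\sum^{\C}\At\A=1$, so every nonzero $x\in\C$ meets ${\sf s}_{\bar\imath}a$ for some genuine atom $a\in\At\A$; this is how the new edges are labelled, one at a time, while keeping $\widehat N\neq 0$. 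Your contrapositive deductions (the finite case, hyperbases, and the $G^m_\omega$/squareness clauses) are fine in outline once this is fixed, except that for the square direction you should extract \pe's strategy directly from the complete $m$--square representation of $\A$ itself, via the basis $\{N_{\bar a}: \bar a\in 1^M\}$ whose $n$--hyperedges carry the unique atoms of $\A$ containing the corresponding tuples (these exist precisely because the representation is complete), rather than detouring through a representation of $\A^+$, for the same reason as above.
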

\begin{proof}
We start by proving the first part ending up with a \ws\ for \pe\ in $F^m$. The proof  uses ideas
in \cite[Lemma 29, 26, 27]{r} formulated for relation algebras.\\

{\bf Proof of first part}:  Fix $2<n<m$. Assume that $\C\in\Sc_m$, $\A\subseteq_c\Nr_n\C$ is an
atomic $\Sc_n$ and $N$ is an $\A$--network with $\nodes(N)\subseteq m$. Define
$\widehat N\in\C$ by
\[\widehat N =
 \prod_{i_0,\ldots, i_{n-1}\in\nodes(N)}{\sf s}_{i_0, \ldots, i_{n-1}}{}N(i_0,\ldots, i_{n-1}).\]
Here the substitution operator is defined as in definition \ref{sub}.
For a network $N$ and  function $\theta$,  the network
$N\theta$ is the complete labelled graph with nodes
$\theta^{-1}(\nodes(N))=\set{x\in\dom(\theta):\theta(x)\in\nodes(N)}$,
and labelling defined by
$$(N\theta)(i_0,\ldots, i_{n-1}) = N(\theta(i_0), \theta(i_1), \ldots,  \theta(i_{n-1})),$$
for $i_0, \ldots, i_{n-1}\in\theta^{-1}(\nodes(N))$.
Now suppose that the ${\sf s}_i^j$s for  $i< j<m$ are completely additive in $\C$. This condition is superfluous for any $\K$ between $\CA$ and $\PEA$.
Then the following hold:
\begin{enumerate}
\item for all $x\in\C\setminus\set0$ and all $i_0, \ldots, i_{n-1} < m$, there is $a\in\At\A$, such that
${\sf s}_{i_0,\ldots, i_{n-1}}a\;.\; x\neq 0$,

\item for any $x\in\C\setminus\set0$ and any
finite set $I\subseteq m$, there is a network $N$ such that
$\nodes(N)=I$ and $x\;.\;\widehat N\neq 0$. Furthermore, for any networks $M, N$ if
$\widehat M\;.\;\widehat N\neq 0$, then
$M\restr {\nodes(M)\cap\nodes(N)}=N\restr {\nodes(M)\cap\nodes(N)},$

\item if $\theta$ is any partial, finite map $m\to m$
and if $\nodes(N)$ is a proper subset of $m$,
then $\widehat N\neq 0\rightarrow \widehat{N\theta}\neq 0$. If $i\not\in\nodes(N),$ then ${\sf c}_i\widehat N=\widehat N$.

\end{enumerate}

Since $\A\subseteq _c\Nr_n \C$, then $\sum^{\C}\At\A=1$. For (1), we have, by assumption,  ${\sf s}^i_j$ is a
completely additive operator (any $i, j<m$), hence ${\sf s}_{i_0,\ldots, i_{n-1}}$
is, too.
So $\sum^{\C}\set{{\sf s}_{i_0\ldots, i_{n-1}}a:a\in\At(\A)}={\sf s}_{i_0\ldots i_{n-1}}
\sum^{\C}\At\A={\sf s}_{i_0\ldots, i_{n-1}}1=1$,
for any $i_0,\ldots, i_{n-1}<m$.  Let $x\in\C\setminus\set0$.  Assume for contradiction
that  ${\sf s}_{i_0\ldots, i_{n-1}}a\cdot x=0$ for all $a\in\At\A$. Then  $1-x$ will be
an upper bound for $\set{{\sf s}_{i_0\ldots i_{n-1}}a: a\in\At\A}.$
But this is impossible
because $\sum^{\C}\set{{\sf s}_{i_0\ldots, i_{n-1}}a :a\in\At\A}=1.$\\

To prove the first part of (2), we repeatedly use (1).
We define the edge labelling of $N$ one edge
at a time. Initially, no hyperedges are labelled.  Suppose
$E\subseteq\nodes(N)\times\nodes(N)\ldots  \times\nodes(N)$ is the set of labelled hyperedges of $
N$ (initially $E=\emptyset$) and
$x\;.\;\prod_{\bar c \in E}{\sf s}_{\bar c}N(\bar c)\neq 0$.  Pick $\bar d$ such that $\bar d\not\in E$.
Then by (1) there is $a\in\At(\A)$ such that
$x\;.\;\prod_{\bar c\in E}{\sf s}_{\bar c}N(\bar c)\;.\;{\sf s}_{\bar d}a\neq 0$.
Include the hyperedge $\bar d$ in $E$.  We keep on doing this until eventually  all hyperedges will be
labelled, so we obtain a completely labelled graph $N$ with $\widehat
N\neq 0$.
it is easily checked that $N$ is a network.\\

For the second part of $(2)$, we proceed contrapositively. Assume that there is
$\bar c \in{}\nodes(M)\cap\nodes(N)$ such that $M(\bar c )\neq N(\bar c)$.
Since edges are labelled by atoms, we have $M(\bar c)\cdot N(\bar c)=0,$
so
$0={\sf s}_{\bar c}0={\sf s}_{\bar c}M(\bar c)\;.\; {\sf s}_{\bar c}N(\bar c)\geq \widehat M\;.\;\widehat N$.
A piece of notation. For $i<m$, let $Id_{-i}$ be the partial map $\{(k,k): k\in m\smallsetminus\{i\}\}.$
For the first part of (3)
(cf. \cite[lemma~13.29]{HHbook} using the notation in {\it op.cit}), since there is
$k\in m\setminus\nodes(N)$, \/ $\theta$ can be
expressed as a product $\sigma_0\sigma_1\ldots\sigma_t$ of maps such
that, for $s\leq t$, we have either $\sigma_s=Id_{-i}$ for some $i<m$
or $\sigma_s=[i/j]$ for some $i, j<m$ and where
$i\not\in\nodes(N\sigma_0\ldots\sigma_{s-1})$.
But clearly  $\widehat{N Id_{-j}}\geq \widehat N$ and if $i\not\in\nodes(N)$ and $j\in\nodes(N)$, then
$\widehat N\neq 0 \rightarrow \widehat{N[i/j]}\neq 0$.
The required now follows.  The last part is straightforward.\\

Using the above proven facts,  we are now ready to show that \pe\  has a \ws\ in $F^m$. She can always
play a network $N$ with $\nodes(N)\subseteq m,$ such that
$\widehat N\neq 0$.

In the initial round, let \pa\ play $a\in \At\A$.
\pe\ plays a network $N$ with $N(0, \ldots, n-1)=a$. Then $\widehat N=a\neq 0$.
Recall that here \pa\ is offered only one (cylindrifier) move.
At a later stage, suppose \pa\ plays the cylindrifier move, which we denote by
$(N, \langle f_0, \ldots, f_{n-2}\rangle, k, b, l).$
He picks a previously played network $N$,  $f_i\in \nodes(N), \;l<n,  k\notin \{f_i: i<n-2\}$,
such that $b\leq {\sf c}_l N(f_0,\ldots,  f_{i-1}, x, f_{i+1}, \ldots, f_{n-2})$ and $\widehat N\neq 0$.
Let $\bar a=\langle f_0\ldots f_{i-1}, k, f_{i+1}, \ldots f_{n-2}\rangle.$
Then by  second part of  (3)  we have that ${\sf c}_l\widehat N\cdot {\sf s}_{\bar a}b\neq 0$
and so  by first part of (2), there is a network  $M$ such that
$\widehat{M}\cdot\widehat{{\sf c}_lN}\cdot {\sf s}_{\bar a}b\neq 0$.
Hence $M(f_0,\dots, f_{i-1}, k, f_{i-2}, \ldots$ $, f_{n-2})=b$,
$\nodes(M)=\nodes(N)\cup\set k$, and $\widehat M\neq 0$, so this property is maintained.
The second part, follows immediately by noting 
that $\A=\A^+$, if $\A$ is finite,  and that if $\A\in \bold S\Nr_n\Sc_m\implies \A^+\in \bold S_c\Nr_n\Sc_m$.\\

{\bf Proof of the rest}: For the last required, we proceed contrapositively, and we deal only with $\CA$s. 
Like the order of the difficult implication 
in the proof of the first item of lemma \ref{Thm:n}, dealing with completeness, the order of the proof is as follows:
We first construct an $m$--dilation that might not be a $\CA_n$; cylindrifiers may not commute. From the dilation 
we construct an $m$--dimensional basis. Then (instead of proving relativized representability) 
we prove the harder implication in the following equivalence (which is from left to right): 
An atomic $\A\in \CA_n$ has  an $m$--dimensional basis $\iff$  \pe\ has  a \ws\ in $G^m(\At\A)$. 

Here we assume that $m$ is finite (we will use the result in this restricted form only).
So fix $2<n<m< \omega$.  Assume that $\A$ is an atomic $\CA_n$ having a complete $m$--square representation.
We will show that \pe\ has a \ws\  in $G_{\omega}^m$.

As before, let $M$ be a complete $m$--square representation of $\A$.
One constructs the $m$--dimensional dilation $\D$ using $L_{\infty, \omega}^n$ formulas 
from a complete $m$--square representation exactly like in the proof of lemma \ref{flat}.
The neat embedding map  $\theta:\A\to \D$ is the same, defined  via $r\mapsto r(\bar{x})^{M}$.
Here the $m$--neat reduct of $\D$ is defined like the $\CA$ case,  even though the dilation $\D$ {\it may not be a $\CA_m$} for we
do not necessarily have commutativity of cylindrifiers, because there is no guarantee that $M$ is
$m$--flat. As before $\theta$ is an injective homomorphism into $\Nr_n\D$, and $\D$ is atomic.

For each $\bar{a}\in 1^{\D},$ define \cite[Definition 13.22] {HHbook} a labelled
hypergraph $N_{\bar{a}}$ with nodes $m$, and
$N_{\bar{a}}(\bar{x})$ when $|\bar{x}|=n$, is the unique atom of $\A$
containing the tuple of length $m>n$,\\
$(a_{x_0},\ldots, a_{x_{1}},\ldots, a_{x_{n-1}}, a_{x_0}\ldots,\ldots a_{x_0}).$
It is clear that if $s\in 1^{\D}$ and $i, j<m$,
then $s\circ [i|j]\in 1^{\D}$.

Hence the above definition is sound. Indeed, if $\Psi: m\to m$ is defined by 
$\Psi(1)=x_1,\dots$,  $\Psi(n-2)=x_{n-2},$
and $\Psi(i)=x_0$ for $i\in m\sim \{1,\ldots, n-2\}$, then $\Psi$ is not injective, hence it is a composition of
replacements, so $\bar{a}\circ \Psi=(a_{x_0},\ldots, a_{x_{1}},\ldots, a_{x_{n-1}}, a_{x_0}\ldots\ldots a_{x_0})\in 1^{\D}.$
It is also easy to see, since $\A\subseteq \Nr_n\D$, that if $\bar{a}=(a_0, \ldots, a_{m-1})\in 1^{\D}$,
$i_0, \ldots, i_{n-1}<m$
and  $\bar{b}\in 1^{\D}$ is such that $\bar{b}\upharpoonright n\subseteq \bar{a}$, then for
all atoms $r\in \A$,  $\bar{b}\in r\iff  r=N_{\bar{a}}(i_0,\ldots, i_{n-1}).$
Furthermore, \cite[Lemma 13.24]{HHbook}  $N_{\bar{a}}$ is a network.

Let $H$ be the symmetric closure
of $\{N_a: \bar{a}\in 1^M\}$, that is $\{N\theta: \theta:m\to m, N\in H\}$.
Then $H$ is an $m$--dimensional basis \cite[Lemma 13.26]{HHbook} as defined in the proof of lemma \ref{Thm:n}.
Recall that $H$ `eliminates cylindrifiers' in the following sense: For all $a\in \At\A$, $i<n$ and $N\in H$, for all $\bar{x}\in {}^n\nodes(N)$, whenever 
$N(\bar{x})\leq {\sf c}_ia$, then there is an $M\in H$,  with $M\equiv _i N$, and 
$\bar{y}\in {}^n\nodes(M)$, such that  $\bar{y}\equiv_i \bar{x}$ 
and $M(\bar{y})=a.$

Now \pe\ can win $G_{\omega}^m$ by always
playing a subnetwork of a network in the constructed $H$.
In round $0$, when \pa\ plays
the atom $a\in \A$, \pe\ chooses $N\in H$ with $N(0,1,\ldots, n-1)=a$ and plays $N\upharpoonright n$.

In round $t>0$, inductively if the current network is $N_{t-1}\subseteq M\in H$, then no matter how \pa\ defines $N$, we have
$N\subseteq M$ and $|N|<m$, so there is $z<m$, with $z\notin \nodes(N)$.
Assume that  \pa\ picks $x_0,\ldots, x_{n-1}\in \nodes(N)$, $a\in \At\A$ and $i<n$ such that
$N(x_0,\ldots, x_{n-1})\leq {\sf c}_ia$, so $M(x_0, \ldots  x_{n-1})\leq {\sf c}_ia$,
and hence (by the properties of $H$), there is $M'\in H$ with
$M'\equiv _i M$ and $M'(x_0, \ldots, z, \ldots,  x_{n-1})=a$, with $z$ in the $i$th place.
Now \pe\ responds with the restriction of $M'$
to $\nodes(N)\cup \{z\}$.\\ 

The last part dealing with finiteness 
is obvious. Non--existence of an $m$--dimensional hyperbasis in the first case, and an $m$--dimensional basis in the second 
follows from the proof of lemma \ref{flat}, by noting that if  $\A$ is finite, then $\A^+=\A$.
\end{proof}

The following corollary is the $\CA$ analogue of \cite[Theorem 13.45, item (1)--(4)]{HHbook}. Its proof can be easily discerned below the surface 
of the proofs of lemma \ref{flat}, and second part
of lemma \ref{Thm:n}.
\begin{corollary}Let $2<n<m<\omega$. Let $\A\in \CA_n$. Then the following are equivalent:
\begin{enumarab}
\item $\A^+$ has an $m$--dimensional basis.
\item $\A^+$ has a complete $m$--square representation.
\item $\A^+\subseteq_c \Nr_n\D$, where $\D$ is a set algebra having top element $V\subseteq {}^mU$, for some non-empty set $U$, 
such that if $s\in V$,  and $i<j<m$, then $s\circ [i|j]\in V$. The operations in the $m$--dilation 
$\D$ are like the operations in cylindric set algebras of dimension $m$, 
but relativized to the top element $V.$ (For example: for $X\subseteq V$ and $i<m$, ${\sf c}_iX=\{t\in V: \exists s\in V: s\equiv_i t\}$).
\item  \pe\ has a \ws\ in $G^m_{\omega}(\Uf\A)$.
\end{enumarab}
\end{corollary}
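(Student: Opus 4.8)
The plan is to establish the four-way equivalence by chaining the implications $(2)\Rightarrow(3)\Rightarrow(1)\Rightarrow(4)\Rightarrow(2)$, leaning heavily on the machinery already developed in Lemmas \ref{flat} and \ref{Thm:n}. Throughout one works with $\A^+$, the canonical extension of $\A$, which is atomic (since $\A^+=\Cm(\Uf\A)$); note that the atom structure of $\A^+$ is $\Uf\A$, so the game $G^m_\omega(\Uf\A)$ in item (4) is exactly $G^m_\omega(\At\A^+)$.

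For $(2)\Rightarrow(3)$: assume $\A^+$ has a complete $m$--square representation $M$. Mimicking the construction in the ``Proof of the rest'' portion of Lemma \ref{Thm:n}, one forms the $m$--dimensional dilation $\D$ with top element ${\sf C}^m(M)$ and operations induced by the $n$-clique--guarded semantics, working in $L^n_{\infty,\omega}$. Since $M$ is only $m$--square (not necessarily $m$--flat) the cylindrifiers in $\D$ need not commute, so $\D$ is a relativized set algebra of the type described in (3) rather than a genuine $\CA_m$; but the neat embedding map $\theta: r\mapsto r(\bar x)^M$ still embeds $\A^+$ into $\Nr_n\D$, and because the representation is \emph{complete}, $\theta$ is an atomic --- hence complete --- embedding, exactly as in the third item of Lemma \ref{flat}. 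Closure of $1^\D={\sf C}^m(M)$ under the maps $s\mapsto s\circ[i|j]$ is immediate since a transposition of entries of a clique is again a clique. For $(3)\Rightarrow(1)$: given $\A^+\subseteq_c\Nr_n\D$ with $\D$ of the stated relativized form, one builds an $m$--dimensional basis directly out of the ``cliques'' $\bar a\in 1^\D$ by setting $N_{\bar a}(\bar x)$ (for $|\bar x|=n$) to be the unique atom of $\A^+$ containing the padded tuple $(a_{x_0},\ldots,a_{x_{n-1}},a_{x_0},\ldots,a_{x_0})$ --- this is well-defined precisely because $1^\D$ is closed under $[i|j]$ (the padding map $\Psi$ is a composition of replacements) and because $\Nr_k\D$ is atomic; the symmetric closure $H=\{N_{\bar a}\theta:\theta:m\to m\}$ is then checked to satisfy the two basis conditions (the witnessing atom for each generator, and the cylindrifier property), with the cylindrifier property coming from squareness of the representation underlying $\D$ --- this is the adaptation of \cite[Lemmata 13.24, 13.26]{HHbook} carried out in the proof of Lemma \ref{Thm:n}.

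For $(1)\Rightarrow(4)$: this is precisely the ``hard implication'' already proved inside Lemma \ref{Thm:n}: from an $m$--dimensional basis $H$ for the atomic algebra $\A^+$, \pe\ wins $G^m_\omega(\At\A^+)=G^m_\omega(\Uf\A)$ by always responding with a subnetwork of a network in $H$ --- in round $0$ she plays $N\restr n$ for some $N\in H$ with $N(0,\ldots,n-1)=a$, and in later rounds, since the current network has $<m$ nodes there is always a spare node $z<m$, and the cylindrifier property of $H$ supplies an $M'\in H$ extending the old network with the required atom at $z$, whose restriction she plays. Finally $(4)\Rightarrow(2)$ is the soundness direction of the game: a \ws\ for \pe\ in $G^m_\omega(\Uf\A)$ yields, by the standard step-by-step / $\omega$--saturation argument indicated in the last item of Lemma \ref{flat} (and in \cite[Corollary 13.18]{HHbook}), a complete $m$--square representation of $\A^+$; here one uses that $G^m_\omega$ (as opposed to $F^m$) does not allow \pa\ to reuse nodes, which is exactly what makes the limit representation \emph{square} rather than forcing flatness, and that $\A^+$ is atomic so the representation can be taken complete.

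The main obstacle I anticipate is the bookkeeping in $(2)\Rightarrow(3)$ and $(3)\Rightarrow(1)$ surrounding the failure of commutativity of cylindrifiers: one must be careful that $\D$ is genuinely an algebra of the relativized kind described in item (3) (so that all of its operations are well-defined on $V$, which requires closure of $V$ under the $[i|j]$'s but \emph{not} under the $\equiv_i$-amalgamation that flatness would give), and that an $m$--dimensional \emph{basis} --- not a hyperbasis --- suffices, so that no hyperlabels and no amalgamation-over-two-nodes condition need be verified. Everything else is either a direct citation of Lemmas \ref{flat} and \ref{Thm:n} or a routine specialization of the $\sf RA$ arguments of \cite[Chapter 13]{HHbook} to the $\CA_n$ setting already set up in the excerpt.
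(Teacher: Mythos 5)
Your proposal is correct and follows exactly the route the paper intends: the paper gives no separate proof of this corollary, stating only that it "can be easily discerned below the surface of the proofs of lemma \ref{flat}, and second part of lemma \ref{Thm:n}," and your cycle $(2)\Rightarrow(3)\Rightarrow(1)\Rightarrow(4)\Rightarrow(2)$ is precisely an assembly of the constructions in those two proofs (dilation from a complete square representation, basis from the dilation, \pe's strategy from the basis, and the step-by-step representation from the strategy). The only cosmetic slip is attributing $(4)\Rightarrow(2)$ partly to the $\omega$-saturation argument of the last item of lemma \ref{flat} — the relevant mechanism is the step-by-step construction from the basis as in the proof of theorem \ref{complete2} and \cite[Proposition 13.37]{HHbook} — but this does not affect correctness.
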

If we consider $\PEA_m$, we get that the dilation $\D$ is in $\sf G_m$, and we require that the  basis $B$ is symmetric $(N\in B\implies N\theta\in B$).
The analogous results for infinitary $m$--flatness, is replacing (symmetric) basis by (symmetric) 
hyperbasis, $\D$ is a $\CA_m(\sf PEA_m)$,  and \pe\ has a \ws\ in a `hyperbasis game' 
which has an additional  amalgamation 
move \cite[Theorem 13.5, items (5)--(10)]{HHbook} corresponding to commutativity of 
cylindrifiers in the dilation.

Let $2<n<m<\omega$. Let $\A\in \bold S\Nr_n\CA_m$, then $\A=\Fm_T$ 
where $T$ is a first order theory in the signature containing one $n$--ary {\it restricted} atomic formula for each
$a\in A$ \cite[Theorem 4.3.28 (1)]{HMT2} and equality. An $n$-ary restricted atomic formula is one of the form $R(x_0,\ldots x_{n-1})$ where $R$
is a relation symbol of arity $n$ and the variables occur in their natural order, so we might as well write 
$R$ for $R(x_0,\ldots, x_{n-1})$.  Here the quotient is defined via provability  with respect to the 
proof system in  \cite[pp. 157]{HMT2} formulated for 
restricted formulas, {\it using $m$ variables in the derivations}, 
and the rules of modus ponens and generalization. We say that $T$ is $m$--consistent if 
$\Fm_T\neq 0$.

\begin{definition}\label{validity}
Let $2<n<m<\omega$. 
Let $\A\in \CA_n$ be countable.  Then the formula $\phi\in \L(A)^m$, is {\it $\A$-$m$--flat valid}, 
if for all $m$--flat representations 
$M$ of $\A$, for all $s\in {\sf C}^{m}(M)$,  $M, s\models_c   \phi$. 
 \end{definition}

Now we can easily obtain the following completeness theorems, one for each $m\geq n$, using the 
simple proof system in \cite{HMT2}. We write $\vdash_k$ for provability using 
$k$ variables.
\begin{theorem}\label{validity2} Let everything be as above. Then for $\A\in \bold S\Nr_n\CA_m$, $\A=\Fm_T$ say, the formula   
$\phi$ in $\L(A)^n$ is $\A$ $m$--flat valid $\iff$  $T\vdash_m \phi$.
\end{theorem}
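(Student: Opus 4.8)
The plan is to read Theorem~\ref{validity2} as a soundness--completeness pair and to extract both directions from Lemma~\ref{flat} (the equivalence $\A\in\bold S\Nr_n\CA_m\iff\A$ has an $m$--flat representation) together with the identification $\A=\Fm_T$ coming from \cite[Theorem 4.3.28 (1)]{HMT2} and the fact that, by construction of the quotient, $T\vdash_m\psi$ is the same as ``$[\psi]_T=1^{\A}$'' for $\psi\in\L(A)^n$, where $[\psi]_T$ denotes the element of $\A$ that $\psi$ names. The one auxiliary fact I would establish first is a \emph{truth lemma}: given any $m$--flat representation $M$ of $\A$, form the $\CA_m$ $\D$ with universe $\{\psi^M:\psi\in\L(A)^m\}$, where $\psi^M=\{\bar{s}\in{\sf C}^m(M):M,\bar{s}\models_c\psi\}$ and the operations are induced by the $n$--clique--guarded semantics, together with the injective neat embedding $\theta:\A\to\Nr_n\D$, $a\mapsto a(\bar{x})^M$, as produced in the proof of the first item of Lemma~\ref{flat}; then for every $\phi\in\L(A)^n$ one has $\theta([\phi]_T)=\phi^M$. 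This is proved by induction on $\phi$: the atomic case is the definition of $\theta$; the Boolean cases use that $\theta$ is a Boolean homomorphism and that $\models_c$ treats $\neg,\wedge$ classically; and the cases for $\exists x_i$, $\sub ij$ and $\diag ij$ use that the operations of $\D$ are \emph{defined} to be the $n$--clique--guarded counterparts of $\cyl i$, $\sub ij$, $\diag ij$, so that $\theta$ commutes with them.

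For soundness, suppose $T\vdash_m\phi$, so $[\phi]_T=1^{\A}$, and let $M$ be an arbitrary $m$--flat representation of $\A$. Forming $\D$ and $\theta$ as above, $\theta$ is a homomorphism, hence $\theta([\phi]_T)=\theta(1^{\A})=1^{\Nr_n\D}=1^{\D}={\sf C}^m(M)$; by the truth lemma $\theta([\phi]_T)=\phi^M$, so $\phi^M={\sf C}^m(M)$, i.e.\ $M,\bar{s}\models_c\phi$ for every $\bar{s}\in{\sf C}^m(M)$. Since $M$ was arbitrary, $\phi$ is $\A$--$m$--flat valid in the sense of Definition~\ref{validity}. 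Equivalently, one may check directly that each axiom and rule of the $m$--variable proof system of \cite[pp.~157]{HMT2} is validated by $\models_c$ on an $m$--flat $M$; this reduces to the $\CA_m$ axioms holding in $\D$, which is exactly where $m$--flatness --- commutativity of cylindrifiers in the $m$--dilation --- is used, $m$--variable derivations staying within the signature interpreted by $\D$.

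For completeness, argue contrapositively: if $T\not\vdash_m\phi$ then $[\neg\phi]_T\neq 0^{\A}$. By Lemma~\ref{flat}(1), since $\A\in\bold S\Nr_n\CA_m$, the algebra $\A$ has an $m$--flat representation $M$; form $\D$ and $\theta$ again. Injectivity of $\theta$ gives $\theta([\neg\phi]_T)\neq 0^{\D}$, so there is $\bar{s}\in\theta([\neg\phi]_T)\subseteq 1^{\D}={\sf C}^m(M)$, and by the truth lemma $\bar{s}\in(\neg\phi)^M$, i.e.\ $M,\bar{s}\not\models_c\phi$. Hence $\phi$ fails to be $\A$--$m$--flat valid, completing the contrapositive.

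The routine parts --- Lemma~\ref{flat} is already available, and the identification $\A=\Fm_T\in\bold S\Nr_n\CA_m$ with $m$--provability is standard --- mean no new construction is needed. The one place where I expect to spend real care is the inductive step of the truth lemma for $\exists x_i$ (and, in the $\PEA$ variant, the transposition substitutions), since one must unwind the definition of the $n$--clique--guarded semantics of $\exists x_i$ on $n$--cliques and match it with ${\sf c}_i$ in the relativized set algebra $\D$; this is precisely the point at which the transfer of Hirsch--Hodkinson's $\sf RA$ arguments to $\CA$s inside Lemma~\ref{flat} is being invoked, and it is where $m$--flatness is genuinely needed, to know $\D$ is a $\CA_m$ rather than merely a member of ${\sf Crs}_m$. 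A secondary bookkeeping point is to keep the three objects attached to an $n$--variable formula $\phi$ --- the element $[\phi]_T\in\A$, its image $\theta([\phi]_T)\in\Nr_n\D$, and its clique--guarded extension $\phi^M$ --- identified, so that ``$[\phi]_T=1^{\A}$'' and ``$\phi^M={\sf C}^m(M)$'' are visibly the same assertion.
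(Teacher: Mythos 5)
Your proof is correct and follows essentially the same route as the paper: soundness plus a contrapositive completeness argument resting on Lemma~\ref{flat} (the existence of an $m$--flat representation of $\A=\Fm_T$ in which the non--zero element $[\neg\phi]_T$ is mapped to a non--empty set by the injective neat embedding $\theta$). The only cosmetic difference is that you package soundness as a truth lemma $\theta([\phi]_T)=\phi^M$ and use $\theta(1^{\A})=1^{\D}$, where the paper instead does a direct induction on the length of the $m$--variable derivation; both reduce to the same fact that the clique--guarded semantics on an $m$--flat $M$ yields a $\CA_m$--dilation $\D$ into which $\A$ neatly embeds.
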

\begin{proof} Soundness is by induction on the length of proof by noting that axioms are valid in $n$--clique guarded semantics and that both 
modus ponus and generalization (the rules of inference adopted in the aforemention proof system) 
`preserves' $n$--clique guarded semantics; for example if $\phi$ and $\phi\to \psi$ are $\A$ $m$--flat valid, 
then $\psi$ is also $\A$ $m$--flat  valid. 

Now we prove  (completeness) $\implies$. The argument is the standard textbook argument. Let $\A$ and $T$ be such that $\A=\Fm_T$, and assume that
$\phi\in \L(A)^m$ is $\A$ $m$---valid.  Assume for contradiction that 
it is not the case that $T\vdash _m \phi$. Then $T\cup \{\neg \phi\}$ is $m$--consistent, 
so it has an $m$--flat model $M$, hence $M$ is an $m$--flat representation of $\A$ for which there exists $s\in {}C^m(M)$ 
such that $M, s\models \neg \phi$.  
This contradicts that $\phi$ is $\A$ $m$--flat valid.     

In more detail, we have 
$\A=\Fm_T\in  \bold S\Nr_n\CA_m$ and $a=(\neg \psi_T)\neq 0$ in $\A$, 
hence using an $m$--dilation of $\A$, like in the proof of lemma \ref{flat}, one constructs 
an $m$--dimensional hyperbasis of $\A^+$, then one  constructs the required $m$--flat representation $M$ of $\A$ from this $m$--dimensional hyperbasis, via $\theta$ say,
that is $\theta:\A\to \wp(1^M)$ is a homomorphism, 
and $\theta(a)\neq 0$. 
\end{proof}

\section{Complete representations}

We generalize the characterization in \cite[Theorem 5.3.6]{Sayedneat}
to the infinite dimensional case addressing complete representations using 
weak set algebras, via complete neat embeddings dealing with algebras that can possibly have 
uncountably many atoms, witness theorem \ref{complete3}. 
For finite dimensions, we characterize complete representability of algebras having countably many atoms, 
via the games $G_{\omega}$ and $F^{\omega}$.

Then we recall a result from \cite{SL} 
that marks the boundaries of our result obtained in the first item
of our main theorem \ref{main}.

Let $\alpha$ be an arbitary ordinal. A {\it weak space of dimension $\alpha$} is a set $V$ of the form $\{s\in {}^{\alpha}U: |\{i\in \alpha: s_i\neq p_i\}|<\omega\}$
where $U$ is a non--empty set and $p\in {}^{\alpha}U$.
We denote $V$ by $^{\alpha}U^{(p)}$. 

$\sf GwKs_{\alpha}$ is the class of {\it generalized weak set algebras}
as defined in \cite[Definition 3.1.2, item (iv)]{HMT2} for $\CA$s; adapted the obvious way to other $\K$s.  We have $\sf GwKs_{\alpha}={\bf SP}{\sf WKs}_{\alpha}$,
where $\sf WKs_{\alpha}$ denotes the class of weak set algebra of dimension $\alpha$. The top elements of $\sf GwKs_{\alpha}$s are
{\it generalized weak spaces} of dimension $\alpha$; these are disjoint unions of weak spaces of the same dimension.

For $\alpha\geq \omega$, we let $\sf DKc_{\alpha}$ denote the class of {\it dimension complemented $\K_{\alpha}$s}, 
so that $\A\in {\sf DKc}_{\alpha}\iff\ \alpha\sim \Delta x$ is infinite for every $x\in \A$. 

\begin{definition}\label{omit}
Let $\alpha$ be any ordinal.
Then $\A\in \K_{\alpha}$ is {\it completely representable} if there exists $\B\in \sf GwKs_{\alpha}$ and 
an  isomorphism $f:\A\to \B$ (respecting the $\K$ concrete operations on $\wp(V)$)
such for all $X\subseteq \A$, $f(\prod X)=\bigcap f(X)$ whenever $\prod X$ exists.
\end{definition}
For finite $\alpha$ the above definition is the usual one \cite{HH}. 
We use the fact that, like in relativized representations,  complete representations are atomic ones \cite{HH}; the characterization in {\it opcit} 
lifts easily to the infinite dimensional case. In this context, 
as expected, if $\A$ is an atomic $\K_{\alpha}$, then an isomorphism $f:\A\to \wp(V)$, 
$V$ a generalized weak space, is {\it atomic}, if $\bigcup_{a\in \At\A}f(a)=V$. For a class $\bold K$ of $\sf BAO$s, 
we write 
$\bold K^{\sf atc}$ for $(\bold K\cap \At)^{\sf ad},$ that is, $\bold K^{\sf atc}$ is 
the class of atomic completely additive algebras in $\bold K$.

\begin{theorem}\label{complete} Let $\alpha$ be any countable ordinal (possibly infinite) and $\A\in \K_{\alpha}$.
If $\A$ is  atomic and has countably many atoms,
then $\A$ is completely representable $\iff \A\in \bold S_c\Nr_{\alpha}\K_{\alpha+\omega}^{\sf atc}$. The implication $\implies$ holds without any restriction on 
the cardinality of the atoms. 
\end{theorem}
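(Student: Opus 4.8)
The plan is to prove the two implications separately, with the harder and more interesting direction being $\Longleftarrow$ (which is the one restricted to countably many atoms). For the direction $\Longrightarrow$, suppose $f:\A\to\B\subseteq\wp(V)$ is a complete representation with $V=\bigcup_{q\in Q}{}^{\alpha}U_q^{(p_q)}$ a generalized weak space. The standard move here is to build an $(\alpha+\omega)$--dimensional dilation by ``adding $\omega$ extra dimensions to each weak space.'' Concretely, for each $q$ replace ${}^{\alpha}U_q^{(p_q)}$ by ${}^{\alpha+\omega}U_q^{(p_q')}$ where $p_q'$ extends $p_q$ by fixing the new coordinates at some chosen point of $U_q$, take $\D$ to be the full generalized weak set algebra on $\bigcup_q{}^{\alpha+\omega}U_q^{(p_q')}$, and define $g:\A\to\D$ by $g(a)=\{s\in{}^{\alpha+\omega}U^{(p')}: s\restr\alpha\in f(a)\}$. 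One checks that $g$ is an embedding with $g(\A)\subseteq\Nr_{\alpha}\D$, that $\D\in\K_{\alpha+\omega}$, that $\D$ is atomic and completely additive (sums in a set algebra are unions), and—crucially—that $g$ is a \emph{complete} embedding, i.e. $\A\subseteq_c\Nr_\alpha\D$: this uses precisely that $f$ is atomic (every tuple of $V$ lies in $f(a)$ for some atom $a$), which forces $g(\At\A)$ to be dense, hence $\A$ atomic and $\subseteq_c\Nr_\alpha\D$. This gives $\A\in\bold S_c\Nr_\alpha\K_{\alpha+\omega}^{\sf atc}$, and no countability of $\At\A$ was used.

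For $\Longleftarrow$, assume $\A$ is atomic with countably many atoms and $\A\subseteq_c\Nr_\alpha\D$ with $\D\in\K_{\alpha+\omega}$ atomic and completely additive. The plan is to produce a complete representation of $\A$ by a step-by-step (back-and-forth style) construction, using the extra $\omega$ dimensions of $\D$ as an inexhaustible supply of ``witnessing'' coordinates—this is exactly the mechanism behind Henkin's neat embedding theorem, now executed \emph{completely}. One fixes an enumeration $\{a_i:i\in\omega\}$ of $\At\A$ and an enumeration of all ``defects'' (pairs demanding a cylindrifier witness, plus, for completeness, demands that each represented tuple be covered by some atom). Since $\D$ has infinitely many spare dimensions, it is dimension-complemented on the relevant part, so one can keep choosing fresh indices $<\alpha+\omega$ to realize each cylindrifier requirement, building an increasing chain of finite partial ``weak assignments'' whose union yields a weak-set-algebra homomorphism $h:\D\to\wp({}^{\alpha+\omega}U^{(p)})$ that is injective on $\At\A$ and carries the suprema that exist in $\A$ to unions. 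Restricting the base tuples to their first $\alpha$ coordinates and using $\A\subseteq_c\Nr_\alpha\D$ yields $f:\A\to\wp({}^{\alpha}U^{(p\restr\alpha)})$; completeness of $f$ (that $\prod X$ in $\A$ goes to $\bigcap f(X)$) follows from atomicity: it suffices that $\bigcup_{a\in\At\A}f(a)$ be the whole top element, which is arranged by treating ``this tuple must meet some atom'' as one of the defect types handled in the construction, legitimate because $\A$ is completely additive so $\sum^{\A}\At\A=1$ and this is preserved into $\D$.

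I would present the step-by-step construction using the $\sf sc$-word / network machinery already set up in the paper (Definition \ref{sub} and the facts proved inside Lemma \ref{Thm:n}): an atomic network on a finite subset of $\alpha+\omega$ whose ``$\widehat N\neq 0$'' is maintained, \pa\ offering cylindrifier demands and atom-cover demands, \pe\ always able to respond because of complete additivity of the ${\sf s}^i_j$'s in $\D$. Taking a limit network over the countable chain of demands gives the required homomorphism; here countability of $\At\A$ (hence of the language and of the set of demands) is exactly what guarantees that a single $\omega$-length chain suffices to meet \emph{all} demands. Finally one notes the parenthetical claims: if $\K$ lies between $\CA$ and $\PEA$ the superfluous ${\sf ad}$ can be dropped since those varieties are completely additive, whereas for $\Sc$ and $\PA$ Example \ref{counter} shows it cannot.

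\textbf{Main obstacle.} The subtle point is not building \emph{a} representation but ensuring it is \emph{complete}—i.e. simultaneously making the neat embedding $\A\subseteq_c\Nr_\alpha\D$ survive into the set-theoretic representation, so that infinitary meets existing in $\A$ become intersections. The key is to encode the ``cover every tuple by an atom'' requirement as part of the defect list handled in the construction, which is only possible because $\sum^{\A}\At\A=1$ and this supremum is preserved first into $\D$ (by $\subseteq_c$ plus complete additivity) and then into $\wp(V)$ (by the step-by-step care); verifying that these three preservations chain correctly is where the real work lies.
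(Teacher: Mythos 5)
Your proof is correct in substance, and the forward direction ($\Longrightarrow$) is essentially the paper's own argument: the paper also fattens each weak space ${}^{\alpha}U_i^{(p_i)}$ to an $(\alpha+\omega)$-dimensional weak space $W_i$, takes $\C_i=\wp(W_i)$ (atomic, completely additive, sums are unions), forms the product $\D=\bold P_i\C_i$, and checks the induced embedding $\A\to\Nr_\alpha\D$ is atomic, hence complete — exactly your construction, with a product of full weak set algebras in place of a single generalized one. The backward direction is where you genuinely diverge. The paper does \emph{not} run a step-by-step/network game; it first passes to $\B=\Sg^{\D}\A$, verifies that $\B$ is dimension-complemented, explicitly proves the two families of infinite suprema ($\sum_{x\in\At\A}{\sf s}_i^j x=1$, using complete additivity of $\D$, and ${\sf c}_kx=\sum_l{\sf s}_l^kx$, using dimension-complementedness — the latter being the delicate point for $\Sc$ and $\PA$, where the textbook proof via diagonals is unavailable), and then invokes the Baire category theorem in the Stone space of $\B$: the countably many suprema yield countably many nowhere dense sets, so an ultrafilter avoiding all of them exists and defines the complete representation via $f(x)=\{\bar\tau:{\sf s}_\tau^{\B}x\in F\}$. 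Your game-theoretic route absorbs exactly those suprema into the defect list (cylindrifier witnesses and atom-cover demands), and countability of $\At\A$ plays the same role in both proofs — a countable enumeration of demands for you, meagerness of a countable union for the paper. Two small points of care for your version: you should pass explicitly to $\Sg^{\D}\A$ (or argue that each stage of the construction only touches elements of finite co-dimension-set over $\alpha$) rather than asserting $\D$ itself is "dimension-complemented on the relevant part," since $\D\in\K_{\alpha+\omega}$ need not be; and in the diagonal-free cases you still owe the verification that each cylindrifier demand ${\sf c}_ka\in N$ can be discharged by a substitution ${\sf s}_l^ka$ — this is precisely the supremum ${\sf c}_kx=\sum_l{\sf s}_l^kx$ that the paper proves by hand, so it cannot be waved through.
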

\begin{proof} Assume that $\A\in \bold S_c\Nr_{\alpha}\K_{\alpha+\omega}^{\sf ad}$ has countably many atoms. Then $\A\subseteq_c \Nr_{\alpha} \D$
for some $\D\in \K_{\alpha+\omega}$, where $\D$ is completely additive. We want to show that $\A$ is completely representable.
We can assume that $\A=\Tm\At\A$, because $\Tm\At\A$ is dense in $\A$, hence $\Tm\At\A\subseteq_ c\A\subseteq_c \Nr_{\alpha}\D$,
so that $\Tm\At\A\subseteq_c \Nr_{\alpha}\D$.
Furthermore, a complete representation  of  $\Tm\At\A$ induces a
complete representation of $\A$. In particular, we can assume that $\A$ is countable.
Let $\B=\Sg^{\D}\A$.  Then  $\B$ is countable, too,  
$\A\subseteq \Nr_n\B$ and $\B\in \sf DKc_{\alpha+\omega}$. This can be proved inductively, by noting that  $\Sg^{\B}\A$ 
is generated by elements whose dimensions sets are included in $\alpha$, 
and for $i,j<\alpha+\omega$ and $x\in \B$, we have $\Delta {\sf c}_ix\subseteq \Delta x$, and if present in the signature: 
$\Delta {\sf d}_{ij}=\{i,j\}$, $\Delta {\sf s}_i^jx\subseteq \Delta x\cup \{i,j\}$ and 
$\Delta {\sf s}_{[i,j]}x\subseteq \Delta\cup \{i,j\}$, so that $|\alpha+\omega\sim \Delta x|\geq \omega$ for all 
$x\in \Sg^{\B}\A$.

We use \cite[Theorem 3.2.4]{Sayed}, replacing $\Fm_T$ in {\it op.cit} by $\B$, so we need to make sure 
that certain suprema exist when we do not have diagonal elements. We cannot assume {\it a priori} that they do, witness example \ref{counter}. In fact, 
the available 
proofs we have seen depend on the existence of diagonal elements. 
We assume without loss that all substitution operations corresponding to replacements are in the signature. 

Let $X=\At\A$. We claim that for any $i< j\in \alpha+\omega$, $\sum {\sf s}_i^j{}^{\B}X=1$. If not, then there exists $b\in \B$, $b<1$, 
such that ${\sf s}_i^j{}^{\B}x\leq b$ 
for all $x\in X$. But then  $b\in \D$ is an upper bound of $\{{\sf s}_i^j{}^{\D}x: x\in X\}$ in $\D$. But, by assumption, 
$\D$ is completely additive, hence $1={\sf s}_i^j{}^{\D}(\sum X)=\sum_{x\in X}{\sf s}_i^j{}^{\D}x\leq b<1,$ 
which is impossible.
Let $Y$ be the set of co--atoms of $\A$, then by the above we readily obtain 
$\prod {\sf s}_i^j(Y)=0$ in $\B$ for all $i< j\in \alpha+\omega$.

We also need to make sure that for all $k\in \beta=\alpha+\omega$,
${\sf c}_kx=\sum_{l\in \beta}{\sf s}_l^kx$ in $\B$, a sum that is valid when we have  diagonal elements \cite[Theorem 1.11.6]{HMT2}. 

As it happens, the proof for $\CA$s does not use diagonal elements; it lifts as is to Pinter's algebras. Fix $k<\beta$. Then for all $l\in \beta$, we have ${\sf s}_l^kx\leq {\sf c}_kx$.
Conversely, assume that $y\in \B$ is an upper bound for $\{{\sf s}_l^kx: l\in \beta\}$. 
Let $l\in \beta\sim (\Delta x\cup \Delta y)$; such an $l$ exists, because $\B$ is dimension 
complemented and $x, y\in \B$.
Hence, we get that ${\sf c}_lx=x$ and ${\sf c}_ly=y$.  But then ${\sf c}_l{\sf s}_l^kx\leq y$, and so ${\sf c}_kx\leq y$.
We have proved that the above suprema holds in all cases. 
 
Now by complete additivity of 
the substitution operations ${\sf s}_{\tau}$s, $\tau$ a finite transformation on $\beta$ 
(${\sf s}_{\tau}$ is defined as a composition of finitely many completely additive 
substitution operations  corresponding  to replacements),
one has all the machinery of the the proof of theorem \cite[Theorem 3.2.4]{Sayed}, so one can omit the one non--principal type of co--atoms,
getting the required complete representation.
In more detail, let $S$ be the Stone space of $\B$ and let $V={}^{\beta}\beta^{(Id)}$. 
For $b\in \B$, let $N_b$ denote the basic clopen set $\{F\in \Uf\B: b\in F\}$. Let $\bold G=\bigcup_{i\in \beta}\bigcup_{x\in \B}\bold G_{j,x}$ and 
$\bold H=\bigcup_{\tau\in V}\bold H_{\tau}$, where
$\bold G_{j, x}=N_{{\sf c}_jx}\sim \bigcup _{i\notin \Delta x}N_{{\sf s}_i^jx}$ and
$\bold H_{\tau}=S\sim \bigcup_{x\in \At\B} N_{{\sf s}_{\tau}x}$ are nowhere dense sets in the Stone topology. 
Then given $0\neq a\in  \B$, one can find using 
an ultrafilter  $F\in (S\sim \bold H\cup \bold G)\cap N_a$, since $\bold H\cup \bold G$ is meager (a countable union of nowhere dense sets), 
so by the Baire Category theorem for compact Hausdorff spaces, $S\sim (\bold H\cup \bold G)$ is dense in the Stone topology. 
This ultrafilter can be used to build a  complete representation  $f:\B\to \wp(W)$ 
such that $f(a)\neq 0$; $f$ is defined, using the notation of \cite[p.216]{Sayed}, via  $f(x)=\{\bar{\tau}\in W: {\sf s}_{\tau}^{\B}x\in F\}$ $(x\in \B)$.
Here $W=V/\sim$, where $\sim$ is the equivalence relation defined on $V$ by 
$s\sim t\iff {\sf d}_{s(i) t(i)}\in F$ for all $i<\beta$.

Here the countability condition is essential, 
witness the last item of 
theorem \ref{main}. \\

Now we prove $\implies$. Here we do not require that $\A$ has countably many atoms. 
The proof is very similar to the proof of \cite[Theorem 29]{r}, except that we deal with weak set algebras when dealing with infinite dimensions. 
Furthermore, we have to check that the dilations defined during the proof are not merely atomic, but also 
{\it completely additive}. 
Assume that $M$ is the base of a complete representation of $\A$, whose
unit is a weak generalized space,
that is, $1^M=\bigcup {}^nU_i^{(p_i)}$ $p_i\in {}^{\alpha}U_i$, where $^{\alpha}U_i^{(p_i)}\cap {}^{\alpha}U_j^{(p_j)}=\emptyset$ for distinct $i$ and $j$, in some
index set $I$, that is, we have an isomorphism $t:\B\to \C$, where $\C\in \sf GKs_{\alpha}$ 
has unit $1^M$, and $t$ preserves arbitrary meets carrying
them to set--theoretic intersections.

For $i\in I$, let $E_i={}^{\alpha}U_i^{(p_i)}$. Take  $f_i\in {}^{\alpha+\omega}U_i^{q_i}$ where $q_i\upharpoonright \alpha=p_i$
and let $W_i=\{f\in  {}^{\alpha+\omega}U_i^{(q_i)}: |\{k\in \alpha+\omega: f(k)\neq f_i(k)\}|<\omega\}$.
Let ${\C}_i=\wp(W_i)$. Then $\C_i$ is atomic; indeed the atoms are the singletons. 
Since the sums in $\C_i$ are just unions, then $\C_i$ 
is completely additive. 

Let $x\in \Nr_{\alpha}\C_i$, that is ${\sf c}_ix=x$ for all $\alpha\leq i<\alpha+\omega$.
Now if  $f\in x$ and $g\in W_i$ satisfy $g(k)=f(k) $ for all $k<\alpha$, then $g\in x$.
Hence $\Nr_{\alpha}\C_i$
is atomic;  its atoms are $\{g\in W_i:  \{g(i):i<\alpha\}\subseteq U_i\}.$

Define $h_i: \A\to \Nr_{\alpha}\C_i$ by
$h_i(a)=\{f\in W_i: \exists a'\in \At\A, a'\leq a;  (f(i): i<\alpha)\in t(a')\}.$
Let $\D=\bold P _i \C_i$. Let $\pi_i:\D\to \C_i$ be the $i$th projection map.
Now clearly  $\D$ is atomic, because it is a product of atomic algebras,
and its atoms are $(\pi_i(\beta): \beta\in \At(\C_i))$. Furthermore, $\D$ is completely additive, since its components are. 
Now  $\A$ embeds into $\Nr_{\alpha}\D$ via $J:a\mapsto (\pi_i(a) :i\in I)$. If $x\in \Nr_{\alpha}\D$,
then for each $i$, we have $\pi_i(x)\in \Nr_{\alpha}\C_i$, and if $x$
is non--zero, then $\pi_i(x)\neq 0$. By atomicity of $\C_i$, there is an $\alpha$--ary tuple $y$, such that
$\{g\in W_i: g(k)=y_k\}\subseteq \pi_i(x)$. It follows that there is an atom
of $b\in \A$, such that  $x\cdot  J(b)\neq 0$, and so the embedding is atomic, hence complete.
We have shown that $\A\in \bold S_c\Nr_{\alpha}\K_{\alpha+ \omega}^{\sf atc}$
and we are done.  It readily follows that $\A$ is atomic, 
because it is dense in the atomic dilation.
\end{proof}

Given any cardinal $\kappa$, possibly infinite, $\A\in \K_m$, with $m<\omega$, 
then (complete) $\kappa$--square representations of $\A$ can be easily 
defined \cite[Definition 17.22]{HHbook}.

If $\omega\leq \kappa<\lambda$, an algebra having a complete $\lambda$--square
representation, may not have a complete $\kappa$--square one. 
The rainbow algebra of dimension $n$, for any $2<n<\omega$, $\A=\PEA_{\lambda, \kappa}$ witnesses this. 
Any complete $\kappa$--square representation of $\A$ will force a `$\kappa$ red clique' 
indexed by the $\lambda$ greens which is impossible because the  indices of reds must match within the red clique.

\begin{theorem}\label{complete2} 
If $m<\omega$, and  $\A\in \bold S_c\Nr_{m}\K_{\omega}^{\sf ad}$ is atomic, then \pe\ has a \ws\ in both $F^{\omega}$ and $G_{\omega}$.
Furthermore, $\A$ has a complete $\omega$--square representation, an ordinary representation, and a complete 
$\omega$--square representation. If $\A$ has countably many atoms, then it has a complete (ordinary) representation.
\end{theorem}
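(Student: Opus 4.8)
The plan is to read the first half of the proof of Lemma~\ref{Thm:n} with the ``large'' dimension taken to be $\omega$ — so that \pe\ never runs out of fresh nodes — and then to convert the resulting networks into the stated representations by means of Henkin's neat embedding theorem, Lemma~\ref{flat}, and Theorem~\ref{complete}. Throughout we may assume $m>2$, the cases $m\leq 2$ being classical.

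\emph{The games.} By hypothesis there is $\C\in\K_\omega$, completely additive, with $\A\subseteq_c\Nr_m\C$. I would rerun, \emph{mutatis mutandis}, the ``Proof of first part'' of Lemma~\ref{Thm:n}, with $\omega$ in place of the finite index set there: for a network $N$ with finite $\nodes(N)\subseteq\omega$ set $\widehat N=\prod\{{\sf s}_{\bar c}N(\bar c):\bar c\in{}^m\nodes(N)\}\in\C$; complete additivity of the substitutions of $\C$ together with $\sum^{\C}\At\A=1$ (which holds because $\A\subseteq_c\Nr_m\C$ is atomic) yield the three facts on $\widehat N$ established there. The sole appeal to finiteness of the large dimension in that proof — the existence of some $k\in m\smallsetminus\nodes(N)$ — only becomes easier, since $\omega\smallsetminus\nodes(N)$ is infinite; thus \pe\ can maintain the invariant $\widehat N\neq0$, answering \pa's cylindrifier move by a one--node extension of the current network, and so wins $F^{\omega}$. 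Since $G_{\omega}$ is $F^{\omega}$ with \pa\ deprived of the option of re--using nodes, the very same strategy wins $G_{\omega}$. For any $\K$ between $\CA$ and $\PEA$ nothing changes: the extra consistency conditions attached to $\K$--networks are automatically satisfied by every $N$ with $\widehat N\neq 0$, and complete additivity of the substitutions is then free.

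\emph{The representations.} Since $m+\omega=\omega$, Henkin's neat embedding theorem gives $\bold S\Nr_m\K_\omega=\sf R\K_m$, so $\A$, lying in $\bold S\Nr_m\K_\omega$, is representable; this is the ordinary — equivalently, $\omega$--square — representation. Feeding this $\omega$--square representation into the machinery of Lemma~\ref{flat}, read with the large dimension $\omega$ (the relevant part of the proof being an $\omega$--saturation argument that is insensitive to finiteness of the large dimension), one obtains a complete $\omega$--square, indeed $\omega$--flat, representation; here, as for finite $m$, completeness is secured because $\A$ is atomic and, for relativized representations, atomicity of the representing map is equivalent to completeness (Lemma~\ref{flat}). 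Thus $\A$ has a complete $\omega$--square representation.

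\emph{The countable case, and the main obstacle.} If $\At\A$ is countable, the hypotheses of Theorem~\ref{complete} are met — $\A$ is atomic, $\A\in\bold S_c\Nr_m\K_\omega^{\sf ad}$, with countably many atoms — so the Baire--category argument in its proof, omitting on the Stone space of $\Sg^{\C}\A$ the single non--principal type of co--atoms together with the nowhere--dense sets coming from cylindrifiers and substitutions, delivers a genuine classical complete representation of $\A$. The delicate point, and the one I expect to require the most care, is exactly the boundary between the \emph{relativized} complete representation available for every $\A$ and the \emph{classical} one available only when $\At\A$ is countable: the meager set in the omitting--types argument ceases to be a countable union of nowhere dense sets once $\At\A$ is uncountable, and this failure is not an artefact of the proof but genuine (compare the last item of Theorem~\ref{main}).
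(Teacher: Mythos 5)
Your treatment of the games is essentially the paper's own: the paper obtains \pe's \ws\ in $F^{\omega}$ and $G_{\omega}$ precisely by rerunning the first part of Lemma \ref{Thm:n} with nodes drawn from $\omega$, and your observation that the only use of finiteness of the large dimension there (the existence of an unused node) becomes trivial is correct. The ordinary representation via $\A\in\bold S_c\Nr_m\K_\omega^{\sf ad}\subseteq\bold S\Nr_m\K_\omega={\sf RK}_m$ is also fine (the paper reaches the same conclusion by a more roundabout discriminator--variety argument), and the countable case via Theorem \ref{complete} is legitimate.

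The gap is in your derivation of the \emph{complete} $\omega$--square representation. The saturation argument behind item (4) of Lemma \ref{flat} produces a complete square representation of the canonical extension $\A^{+}$, not of $\A$, and restricting it to $\A$ destroys completeness. Concretely, in the $\omega$--saturated base $M$ every ultrafilter $F$ of $\A$ is realized as $f_{\bar x}=\{a\in A: a(\bar x)\}$ for some $\bar x\in 1^{M}$ --- that is exactly how injectivity of $h:\A^{+}\to\wp(1^{M})$ is proved --- and whenever $F$ is non--principal the point $\bar x$ lies in the image of no atom of $\A$. Hence $\bigcup_{a\in\At\A}g(a)$ is a proper subset of $1^{M}$ for the induced representation $g$ of $\A$, so $g$ is not atomic and therefore not complete; this is the same $\A$--versus--$\A^{+}$ distinction that forces item (4) of Lemma \ref{flat} to be a statement about $\A^{+}$ only. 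The paper instead extracts the complete $\omega$--square representation directly from \pe's \ws\ in $G_{\omega}(\At\A)$, which you have already established: the networks occurring in plays in which \pe\ uses her strategy, closed under the finite re--indexings $N\theta$, form an $\omega$--dimensional basis $H$ for $\A$, and a step--by--step construction then builds a hypergraph $M$ all of whose $m$--hyperedges are labelled by atoms of $\A$ and all of whose finite cliques are covered by embedded members of $H$; it is the atomicity of this labelling that secures completeness. Replacing your saturation step by this basis and step--by--step argument closes the gap.
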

\begin{proof} Let $m<\omega$ and $\A\in \bold S_c\Nr_{m}\K_{\omega}^{\sf ad}$, so that $\A\subseteq_c\Nr_m\D$, $\D\in \K_{\omega}$
and $\D$ is completely additive. The
\ws' s for $F^{\omega}$ and $G_{\omega}$
are like in lemma \ref{Thm:n}; \pe\ plays networks $N$, such $\nodes(N)\subseteq \omega$, maintaining
the property that $\widehat{N}\neq 0$, using the $\omega$--dilation $\D$ which is completely additive,
so lemma \ref{Thm:n} applies.

Now we focus on $\CA$s.
The other cases are completely analogous undergoing the 
obvious modifications.  Let $\A\in \CA_m$ and assume that \pe\ has a \ws\ in $G_{\omega}(\At\A)$. Extending the notion of finite dimensional basis 
in the obvious way (witness too the proof of the first part of the item (4) of theorem \ref{main}), we will build an 
{\it $\omega$--dimensional basis for $\A$}, from which the $\omega$--
square complete representation of $\A$  
will be built.  An $\omega$--dimensional basis $B$ for $\A$, consists of a set of $n$--dimensional networks on $\A$ 
satsifying the closure properties for a finite-dimensional basis, but here for
$N\in B$, we have $\nodes(N)\subseteq_{\omega} \omega$.
 
For a network $N$ and a map $\theta:\omega\to N$, 
let $H$ be the set of all such $N\theta$ (as defined in the first item of lemma \ref{Thm:n}), where 
$N$ occurs in some play of $G_{\omega}(\At\A)$ 
which \pe\ uses her \ws. 

Then, it can be checked without too much difficulty that $H$ is an $\omega$--dimensional basis for $\A$. 
Given a hypergrah $M$  with $m$--hyperedge relation $E$, then
$C\subseteq M$ is a clique if for all injective map $s: m\to C$, we have $s\in  E$. 
Then a complete $\omega$--square representation  can be obtained in a step--by--step way, requiring inductively
in step $t$, that for any finite clique $C$ of $M_t$, where the hyperedge relation is 
$1^{M_t}$, $|C|<\omega$, there is
a network $N$ in the base, and an embedding $\theta :N\to M_t$ (of hypergraphs) 
such that $\rng \theta\supseteq C$.
Then $M_t$  satisfies the first three items (by replacing $M$ by $M_t)$ and the limit hypergraph, 
which is the base required of the representation
will be fixed (along the way) to satisfy \cite[Proposition 13.37, Lemma 17.24]{HHbook}:
\begin{enumerate}

\item each $m$--hyperedge of of $M$ is labelled by an atom of $\A,$ 

\item  $M(\bar{x})\leq {\sf d}_{ij} \iff x_i=x_j$, 

\item for any clique $\bar{x}\in M$ of arbitrary finite length $>m,$
there is a unique $N\in H$, such that $\bar{x}$ is labelled by $N$, and we write this as
$M(\bar{x})=N,$\\

\item if $l\geq m$ is finite, $x_0,\ldots, x_{l-1}\in M$ and $M(\bar{x})=N\in H$,
then for all $i_0,\ldots, i_{m-1}<l$, we have $(x_{i_0}\ldots x_{i_{m-1}})$
is a hyperedge, and
$M(x_{i_0}, x_{i_1}, \ldots,  x_{i_{m-1}})=N(i_0,\ldots, i_{m-1})\in \At\A,$ 

\item  $M$ is {\it symmetric} (closed under substitutions; $N\in M\implies N\theta\in M$, any $\theta$),

\item  if $\bar{x}$ is a clique of arbitrary length,  $k<|x|$ and $N\in H$, then $M(\bar{x})\equiv_k N$ $\iff $ 
there is a $y\in M$ such that
$M(x_0,\ldots, x_{k-1}, y, x_{k+1}, \ldots, x_l, \ldots)=N$ (with $y$ in the $k$th place),

\item for every $N\in H$, for every finite $l\geq m$, there are $x_0,\ldots,  x_{l-1}\in M$, $M(\bar{x})=N.$
\end{enumerate}
Then $M$ will be an $\omega$--square complete representation of $\A$,
defined for $r\in \A$ via 
$$M\models r(\bar{x})\iff \bar{x}\text{ is an $m$--hyperedge of $M$ and } M(\bar{x})\leq r.$$
$M$ is a complete representation, because every $m$--hyperedge is labelled by an atom, so that $M$ is an atomic, 
hence complete representation. 

Now fix $2<m<\omega$. Then it is not hard to show that countable algebras having $\omega$--square 
representations are representable. Also, it is not too hard to show that the class of $\K_m$s 
having $\omega$--square representations is a discriminator variety, and 
that it coincides
with $\sf RK_m$ on countable simple algebras, hence the two varieties are equal.  
So if  $\A\in \K_m$ (regardless of its cardinality)  has an $\omega$--square representation, then
it is representable, witness too, item (2) of theorem \ref{squareflat}. This, as mentioned above, is not true for complete $\omega$--square representations. 

The last part follows immediately from \cite[Theorem 3.3.3]{HHbook}, by noting that  the $\kappa$ in {\it opcit} is, in our present situation,  
the least infinite cardinal  $\omega$; then one uses the 
fact that \pe\ has a \ws\ in $G_{\omega}$ established 
above.

\end{proof}

$F^{\omega}$ and $G_{\omega}$ are the same game, 
they test complete  $\omega$--square  representations.
When truncated to $m<\omega$ nodes, $F^m$ tests complete $m$--flatness, while $G^m_{\omega}$
tests  complete $m$--squareness,  witness lemma \ref{Thm:n}. In this sense, 
$G_{\omega}$ is a limiting cases,  when we deal with countable algebras.

We have proved the following corollary which is the $\K$ analogue of \cite[Theorem 29]{r} counting in infinite dimensions in the first part.
\begin{corollary}
Let $\alpha$ be an arbitary countable ordinal. Then $\A\in \K_{\alpha}$ is completely representable 
$\implies \A\in \bold S_c\Nr_{\alpha}\K_{\alpha+\omega}^{\sf atc}\implies  \A$ is atomic and $\A\in \bold S_c\Nr_{\alpha}\K_{\alpha+\omega}^{\sf ad}$.
Furthermore, if  $\alpha<\omega$, and $\A$ is completely representable, then \pe\ has  a \ws\ in $G_{\omega}$ and $F^{\omega}.$ 
All  reverse implications hold, if $\A$ has countably many atoms.
\end{corollary}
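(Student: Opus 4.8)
The statement to prove is the Corollary collecting together the implications established in Theorems~\ref{complete} and~\ref{complete2}, namely that for a countable ordinal $\alpha$, complete representability of $\A\in\K_\alpha$ implies $\A\in\bold S_c\Nr_\alpha\K_{\alpha+\omega}^{\sf atc}$, which in turn implies $\A$ is atomic and lies in $\bold S_c\Nr_\alpha\K_{\alpha+\omega}^{\sf ad}$; moreover, for finite $\alpha$, complete representability yields winning strategies for \pe\ in $G_\omega$ and $F^\omega$; and all reverse implications hold under the countable-atom hypothesis.

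The plan is to simply assemble the pieces already in hand. For the forward chain $\text{complete representability}\implies\bold S_c\Nr_\alpha\K_{\alpha+\omega}^{\sf atc}$: this is precisely the $\implies$ direction of Theorem~\ref{complete}, whose proof (via the product-of-weak-set-algebras construction $\D=\bold P_i\C_i$ with the atomic, completely additive components $\C_i=\wp(W_i)$, and the atomic embedding $J$) needs no restriction on the number of atoms. The second forward implication $\bold S_c\Nr_\alpha\K_{\alpha+\omega}^{\sf atc}\implies\A\text{ atomic and }\A\in\bold S_c\Nr_\alpha\K_{\alpha+\omega}^{\sf ad}$ is immediate: $\K^{\sf atc}=(\K\cap\At)^{\sf ad}\subseteq\K^{\sf ad}$ by definition, and if $\A\subseteq_c\Nr_\alpha\D$ with $\D$ atomic then $\A$ is dense in an atomic algebra (since $\A\subseteq_c\Nr_\alpha\D$ and $\Nr_\alpha\D$ is atomic whenever $\D$ is), hence $\A$ is atomic. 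For the finite-dimensional refinement, when $\alpha=m<\omega$ and $\A$ is completely representable, the first forward implication puts $\A\in\bold S_c\Nr_m\K_\omega^{\sf atc}\subseteq\bold S_c\Nr_m\K_\omega^{\sf ad}$, and then Theorem~\ref{complete2} directly supplies the winning strategies for \pe\ in $F^\omega$ and $G_\omega$ (her strategy being to play networks $N$ with $\nodes(N)\subseteq\omega$ keeping $\widehat N\neq 0$, exactly as in Lemma~\ref{Thm:n}, which applies because the $\omega$-dilation is completely additive).

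For the reverse implications under the countable-atom hypothesis: if $\A$ has countably many atoms and $\A\in\bold S_c\Nr_\alpha\K_{\alpha+\omega}^{\sf ad}$, then $\A$ is atomic (as above) and so $\A\in\bold S_c\Nr_\alpha(\K_{\alpha+\omega}\cap\At)^{\sf ad}=\bold S_c\Nr_\alpha\K_{\alpha+\omega}^{\sf atc}$, and the $\Leftarrow$ direction of Theorem~\ref{complete}---which is where the countability of $\At\A$ is genuinely used, via the reduction to $\B=\Sg^\D\A\in\sf DKc_{\alpha+\omega}$, the verification that the relevant suprema $\sum{\sf s}_i^j{}^\B X=1$ and ${\sf c}_k x=\sum_l{\sf s}_l^k x$ hold, and the Baire category / omitting-types argument on the Stone space---delivers a complete representation. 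For finite $\alpha$, if instead \pe\ has a winning strategy in $G_\omega$ (equivalently $F^\omega$), one builds an $\omega$-dimensional basis $H$ from plays of her strategy as in the proof of Theorem~\ref{complete2}, then a step-by-step construction produces an $\omega$-square complete representation; and \cite[Theorem 3.3.3]{HHbook} (with $\kappa=\omega$) upgrades this to an ordinary complete representation when $\At\A$ is countable, closing the loop back to complete representability.

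I do not expect a genuine obstacle here, since this Corollary is explicitly flagged in the text as merely recording consequences of the two preceding theorems; the only care required is bookkeeping---being explicit about which arrows need countably many atoms (precisely the $\Leftarrow$ halves, invoking Baire category in the Stone space for general $\alpha$ and \cite[Theorem 3.3.3]{HHbook} for finite $\alpha$) and which do not (the $\implies$ halves and the trivial inclusion $\K^{\sf atc}\subseteq\K^{\sf ad}$), and noting that for finite $\alpha$ the equivalence with the game conditions is routed through Theorem~\ref{complete2} and Lemma~\ref{Thm:n} rather than through the infinite-dimensional Theorem~\ref{complete}. So the ``proof'' is essentially one paragraph of cross-references; the substantive work has all been done upstream.
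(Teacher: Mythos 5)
Your proposal follows the paper's own route exactly: the corollary is recorded in the text as a summary of what precedes it, and your assembly of the forward implications (the $\implies$ half of Theorem~\ref{complete}, valid with no cardinality restriction; the inclusion $\K^{\sf atc}=(\K\cap\At)^{\sf ad}\subseteq \K^{\sf ad}$; and Theorem~\ref{complete2} for the game clause when $\alpha<\omega$) together with the reverse ones (the $\Leftarrow$ half of Theorem~\ref{complete} and the citation of \cite[Theorem 3.3.3]{HHbook2} for countably many atoms) is the intended proof.

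Two points should be repaired. First, one inference in your reverse direction is written in the wrong order: from ``$\A$ is atomic and $\A\in\bold S_c\Nr_\alpha\K_{\alpha+\omega}^{\sf ad}$'' you pass directly to ``$\A\in\bold S_c\Nr_\alpha\K_{\alpha+\omega}^{\sf atc}$''. Atomicity of $\A$ says nothing about atomicity of a dilation $\D$ with $\A\subseteq_c\Nr_\alpha\D$, so this step does not follow as stated. The correct route is the one the paper's proof actually takes: the $\Leftarrow$ half of Theorem~\ref{complete} is proved from the weaker hypothesis $\A\in\bold S_c\Nr_\alpha\K_{\alpha+\omega}^{\sf ad}$ (completely additive, not necessarily atomic, dilation), so with countably many atoms one first obtains a complete representation from the ${\sf ad}$ hypothesis, and only then recovers membership in $\bold S_c\Nr_\alpha\K_{\alpha+\omega}^{\sf atc}$ by re-applying the $\implies$ half, which manufactures the atomic completely additive dilation $\bold P_i\C_i$ out of the complete representation. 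Second, your justification of atomicity in the middle forward implication rests on the claim that $\Nr_\alpha\D$ is atomic whenever $\D$ is; for infinite $\alpha$ this is not immediate, since an atom $d$ of $\D$ may have $\Delta d\sim\alpha$ infinite and so cannot be pushed down into the neat reduct by applying $-{\sf c}_{(\Gamma)}(-d)$-type manipulations. In the corollary's chain this is harmless, because $\A$ is atomic from the outset (complete representations are atomic ones), which is also how the paper disposes of the point; but as a free-standing justification your sentence overstates what is known.
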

We will show, in item (4) of theorem \ref{main},  that the second implication in the second line cannot be reversed if $\A$ has uncountably many atoms.
But the first implication in the same line can be reversed, modulo an extra condition on the dilation, as shown in the first item of the next theorem \ref{complete3}, addressing 
only $\QEA$s that can possibly 
have uncountably many atoms. 
We do not know whether this extra 
condition can be omitted. In the second item of the same theorem other conditions are imposed on dilations which are strict expansions of 
quasi polyadic algebras (with and without equality), 
enforcing complete representability for finite dimensional polyadic and polyadic equality algebras, that are complete 
subneat reducts of such dilations.
But first a lemma:

\begin{lemma}\label{join} 
\begin{enumarab}
\item Let $\alpha$ be an ordinal and $\A\in \K_{\alpha}$. 
If $\A=\Nr_{\alpha}\D$, where $\D\in {\sf DKc}_{\alpha+\omega}$, then $\A\subseteq_c \D.$ 
\item \cite[Lemma 2.16]{HHbook} Let $\alpha$ be any ordinal. If $\A, \D\in \K_{\alpha}$ and 
$\A\subseteq_c \D$ and $\D$ is atomic, then $\A$ is atomic. 
\item If $\A, \D\in \K_{\alpha}$, $\A$ is atomic, then $\A\subseteq_c \D\iff$ for all non--zero $d\in \D$, 
there exists $a\in \At\A$ such that $a\cdot d\neq 0$ \cite[Lemma 2.17]{HHbook}.
If $\A$ is dense in $\B$ and $\B$ is atomic then $|\At\A|\geq |\At\B|$. 
\item \cite[Lemma 17]{r} If $\D$ is completely representable, then so is $\A$.
\end{enumarab}
\end{lemma}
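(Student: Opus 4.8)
The four items are independent folklore facts about Boolean algebras with operators and their neat reducts, so the plan is simply to assemble the standard arguments, reusing the $\sf RA$ analogues cited from \cite{HHbook} almost verbatim; the only genuinely new content is item (1), concerning dimension-complemented $\K_\alpha$s, which I would prove from scratch.

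For item (1): suppose $\A=\Nr_\alpha\D$ with $\D\in{\sf DKc}_{\alpha+\omega}$, and let $S\subseteq\A$ with $\sum^{\A}S=1$; we must show $\sum^{\D}S=1$. Assume not, so there is $d\in\D$ with $d<1$ and $s\le d$ for all $s\in S$. Since $\D$ is dimension-complemented, $\Delta d$ omits some index $k\in(\alpha+\omega)\setminus\alpha$ together with enough room to "push $d$ down into dimensions $<\alpha$": more precisely, I would use the standard fact (the $\CA$ version is in \cite[1.11.6, 2.6.x]{HMT2}, and it transfers to all $\K$ between $\Sc$ and $\QEA$ because only cylindrifications and substitutions indexed below $\alpha+\omega$ are involved) that for $x\in\D$ with $(\alpha+\omega)\setminus\Delta x$ infinite, one can find a term-definable $\bar d\in\Nr_\alpha\D=\A$ with $s\le\bar d<1$ for all $s\in S$, contradicting $\sum^{\A}S=1$. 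Concretely: pick a finite $\Gamma\subseteq\alpha+\omega$ with $\Delta d\subseteq\Gamma$, choose an injection $\tau$ of $\Gamma\cap((\alpha+\omega)\setminus\alpha)$ into $\alpha\setminus\Gamma$ (possible since $\alpha$ is infinite and $\Gamma$ finite), extend to a finite transformation $\hat\tau$ of $\alpha+\omega$ fixing $\Gamma\cap\alpha$, and set $\bar d={\sf s}_{\hat\tau}d$. Then $\Delta\bar d\subseteq\alpha$ so $\bar d\in\A$; applying ${\sf s}_{\hat\tau}$ to $s\le d$ and using that each $s\in\A$ is fixed by ${\sf s}_{\hat\tau}$ on its support (since $\Delta s\subseteq\alpha$ and $\hat\tau$ fixes $\alpha\cap\Gamma\supseteq$ the relevant indices — here one must be slightly careful and instead argue via ${\sf c}_{(\Gamma\setminus\alpha)}$ first, or cite the "neat reduct absorbs suprema" lemma directly), one gets $s\le\bar d$; and $\bar d\ne 1$ because ${\sf s}_{\hat\tau}$ applied to the witness $1-d>0$ is nonzero (substitutions with injective support are Boolean automorphisms onto principal ideals, or: ${\sf c}_{\hat\tau\text{-range}}$ recovers information). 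This is the step I expect to be the main obstacle — getting the index bookkeeping exactly right so that ${\sf s}_{\hat\tau}$ both lands $d$ inside $\Nr_\alpha\D$ and stays strictly below $1$. The cleanest route is probably to bypass explicit substitutions and quote the known result that if $\A=\Nr_\alpha\D$ with $\D$ dimension-complemented then $\A$ is a \emph{complete} subalgebra of $\D$ — this is exactly the content used in Theorem \ref{complete}'s proof (where it is shown that $\Sg^{\D}\A\in{\sf DKc}$ and suprema of the relevant sets survive), so item (1) is really a repackaging of machinery already in play.

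Items (2), (3), (4) are immediate from the references: (2) is \cite[Lemma 2.16]{HHbook} and needs no reproof — if every nonzero element of $\D$ dominates an atom and $\A\subseteq_c\D$, then pulling atoms of $\D$ down along density gives atoms of $\A$ below any nonzero $a\in\A$; (3) is \cite[Lemma 2.17]{HHbook}, with the added sentence about cardinalities of atom sets following since the map sending $b\in\At\B$ to any $a\in\At\A$ with $a\le b$ is well-defined (atoms of a dense subalgebra sitting below $b$ are in fact equal to... — more carefully: for atomic $\A$ dense in $\B$, each $b\in\At\B$ has some $a\in\At\A$ with $a\cdot b\ne0$, hence $a\le b$ by atomicity of $b$, and distinct atoms of $\B$ force distinct such $a$, giving an injection $\At\B\hookrightarrow\At\A$); and (4) is \cite[Lemma 17]{r}, where a complete representation of $\D$ restricts along $\A\subseteq_c\D$ to one of $\A$ because $\A\subseteq_c\D$ guarantees the restricted map still carries existing infima to intersections. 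So the overall plan is: prove (1) in detail via the dimension-complemented absorption argument (or cite it as extracted from the proof of Theorem \ref{complete}), then dispatch (2)–(4) with one-line arguments referring to the quoted lemmas.
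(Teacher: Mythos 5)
Your items (2)--(4) are fine as far as they go: they defer to the cited lemmas exactly as the statement itself does, and the paper's own proofs are just the standard arguments written out (for (2) the paper takes an atom $d$ of $\D$ below the given nonzero $a\in\A$, forms the ultrafilter $F=\{x\in A: x\geq d\}$ and uses $\A\subseteq_c\D$ to force $F$ principal; for the cardinality claim in (3) it runs density to produce an atom $a$ of $\A$ with $a\leq b$ and concludes $a=b$ since $b$ is an atom of $\B$). The problem is item (1), the only item with real content, and there your argument has a genuine gap at exactly the step you flag as ``the main obstacle''. You propose to relabel the finitely many spare dimensions in $\Delta d\sim\alpha$ into fresh indices of $\alpha$ via a substitution ${\sf s}_{\hat\tau}$ and take $\bar d={\sf s}_{\hat\tau}d$ as the new upper bound. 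The fatal step is $\bar d<1$: it is false that a substitution with injective support preserves non-zeroness. Already ${\sf s}^m_{j}x={\sf c}_m({\sf d}_{mj}\cdot x)$ vanishes iff ${\sf d}_{mj}\cdot x=0$, so for instance ${\sf s}^m_j(-{\sf d}_{mj})=0$ although $-{\sf d}_{mj}\neq 0$; dually ${\sf s}^m_j d=1$ is possible for $d<1$ (take any $d\geq {\sf d}_{mj}$ with $d\neq 1$). So your $\bar d$ may equal $1$, in which case $s\leq\bar d$ yields no contradiction. Your fallback --- quoting ``the known result that $\A=\Nr_\alpha\D$ with $\D$ dimension-complemented implies $\A\subseteq_c\D$'' --- is circular, since that known result \emph{is} item (1); it is not established inside the proof of theorem \ref{complete}, which concerns different suprema and relies on the present lemma downstream.

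The fix, and what the paper actually does, is to avoid substitutions altogether and use the \emph{dual} cylindrifier. With $J=\Delta d\sim\alpha=\{m_1,\ldots,m_n\}$ finite by dimension-complementedness, put $t=-{\sf c}_{m_1}\cdots{\sf c}_{m_n}(-d)$. Then $t\in\Nr_\alpha\D=\A$ by routine manipulation of the cylindric axioms (using ${\sf c}_i(-{\sf c}_ix)=-{\sf c}_ix$ and ${\sf c}_i(x\cdot{\sf c}_iz)={\sf c}_ix\cdot{\sf c}_iz$); for every $s\in S$, $s\leq d$ gives $s\cdot -d=0$, hence $0={\sf c}_{m_1}\cdots{\sf c}_{m_n}(s\cdot -d)=s\cdot{\sf c}_{m_1}\cdots{\sf c}_{m_n}(-d)$, i.e.\ $s\leq t$; and $t<1$ because $-d\leq{\sf c}_{m_1}\cdots{\sf c}_{m_n}(-d)$, so $t=1$ would force $-d=0$, contradicting $d<1$. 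This $t$ contradicts $\sum^{\A}S=1$, and no additivity or injectivity issues for substitutions ever arise.
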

\begin{proof}

(1) Assume that $S\subseteq \A$ and $\sum ^{\A}S=1$, and for contradiction, that there exists $d\in \D$ such that
$s\leq d< 1$ for all $s\in S$. Since $\D$ is dimension complemented, then 
 we can conclude that  $J=\Delta d\sim \alpha$ is finite; we assume that it is not empty.
So let  $J=\{m_1, \ldots, m_n\}$ $(m_i\in \alpha+\omega\sim \alpha)$.
Put  $t=-{\sf c}_{m_1}\ldots {\sf c}_{m_n}(-d)$.
We claim that $t\in \Nr_{\alpha}\D$ and $s\leq t<1$ for all $s\in S$. This contradicts
$1=\sum^{\A}S$.
The first required follows from the fact that all indices in $\alpha+\omega\sim \alpha$ 
that occur in $d$ are cylindrified using easy manipulations of cylindric axioms.
In more detail, let $\beta=\alpha+\omega$. 
Let $i\in \beta \sim \alpha$. If $i\notin \{m_1, \ldots m_n\}$ then using that ${\sf c}_i(x\cdot {\sf c}_iz)={\sf c}_ix\cdot {\sf c}_iz$,
we have ${\sf c}_it=t$.
Indeed, $y\in \A=\Nr_{\alpha}\B$, so $\Delta y\subseteq \alpha$, hence ${\sf c}_iy=y$ and also ${\sf c}_i-{\sf c}_{m_1}=-{\sf c}_{m_1}$.
Now assume that $i\in \{m_1, \ldots, m_n\}$. Then
\begin{align*}
{\sf c}_it&={\sf c}_i(-{\sf c}_{m_1}\ldots {\sf c}_{m_n} (-d))\\
 &=  {\sf c}_i-{\sf c}_{m_1}\ldots {\sf c}_{m_n} (-d)\\
&=  {\sf c}_i\cdot -{\sf c}_i{\sf c}_{m_1}\ldots {\sf c}_{m_n}( -d)\\
&= -{\sf c}_i{\sf c}_{m_1}\ldots {\sf c}_{m_n}( -d)\\
&=-{\sf c}_{m_1}\ldots {\sf c}_{m_n}( -d)\\
&=t
\end{align*}
In the first line we use
${\sf c}_iy=y$. From first to second line we use ${\sf c}_i(x\cdot z)={\sf c}_ix\cdot z$. From second to third we use
idempotency, namely, ${\sf c}_i{\sf c}_ix={\sf c}_ix$ and finally, we use
that cylindrifiers are complemented operators, namely, ${\sf c}_i(-{\sf c}_ix)=-{\sf c}_ix.$
We have proved that $t\in \A=\Nr_{\alpha}\B$ and we are 
done wth the first part of the required.

If $s\in S$, we show that $s\leq t$.
By $s\leq d$, we have  $s\cdot -d=0$.
Hence $0={\sf c}_{m_1}\ldots {\sf c}_{m_n}(s\cdot -d)=s\cdot {\sf c}_{m_1}\ldots {\sf c}_{m_n}(-d)$, so
$s\leq -{\sf c}_{m_1}\ldots {\sf c}_{m_n}( -d)$, hence  $s\leq t$ as required. We finally check that $t<1$. If not, then
$1=-{\sf c}_{m_1}\ldots {\sf c}_{m_n}(-d)$ and so $ {\sf c}_m\ldots {\sf c}_{m_n}(-d)=0$.
But $-d\leq {\sf c}_m\ldots {\sf c}_{m_n}(-d)$,  so $1\cdot -d\leq  {\sf c}_m\ldots {\sf c}_{m_n}(-d)=0.$
Hence $1\cdot -d =0$ and this contradicts that $d<1$.\\

(2) Let $a\in A$ be non-zero. Then since $\D$ is atomic, 
there exists an atom $d\in D$, such that $d\leq a$. Let $F=\{x\in A: x\geq d\}$. Then $F$ is an ultrafilter of $\A$. 
It is clear that $F$ is a filter. Now assume that $c\in A$ and $c\notin F$, then $-c\cdot d\neq 0$, so $0\neq -c\cdot d\leq d$, hence $-c\cdot d=d$, 
because $d$ is an atom in $\B$, thus $d\leq -c$, and we get by definition that $-c\in F$.  We have shown that $F$ is an ultrafilter. 
We now show  that $F$ is a principal ultrafilter in $\A$, 
that is, it is generated by an atom. Assume for contradiction that it is not, so that $\prod^{\A} F$ exists because $F$ is an ultrafilter, 
and $\prod ^{\A}F=0$ because it is non--principal. 
But $\A\subseteq_c \D$, so we obtain $\prod^{\A}F=\prod^{\D}F=0$. This contradicts that $0<d\leq x$ for all $x\in F$.
Thus $\prod^{\A}F=a'$, $a'$ is an atom in $\A$, $a'\in F$ 
and $a'\leq a$, because $a\in F$. \\

(3) Assume that $\A\subseteq_c \D$ and $\A$ is atomic. Let $X=\At\A$.
Then $\sum^{\A}X=\sum^{\D}X=1$. If $b\neq 0$ in $\D$, then $b\cdot \sum^{\D}X=b$, hence there exists $a\in X$ such that $a\cdot b\neq 0$, for else $-b\neq 1$ 
will be an upper bound of $X$ in $\D$ which is imposible.
Conversely, assume that $\sum^{\A}S=1$ and for contradiction that there exists $b'\in \D$, $b'<1$ such that 
$s\leq b'$ for all $s\in S$. Let $b=1-b'$ then $b\neq 0$, hence by assumption there exists an atom  
$a\in \A$ such that $a\cdot b\neq 0$, i.e $a\cdot (1- b')\neq 0$. If $a\leq s$ for some $s\in S$, then $a\leq b'$ which is impossible.
So if  $s\in S$, then $a\cdot s\neq a$, so $a\cdot s<a$ and because $a$ is an atom, we get that $a\cdot s=0$. This happens 
for every $s\in S$, implying that $a=0$. But this is  impossible, because $a$ is an atom.

Now assume further that $\A$ is dense in $\B$ and $\B$ is atomic. Then by density 
for every atom $b\in \B$, there is a 
non--zero $a'\in \A$, such that $a\leq b$ in $\B$. Since $\A$ is atomic, there is an atom $a\in \A$ such that $a\leq a'\leq b$. 
But $b$ is an atom of  $\B$,  and $a$ is non--zero in $\B$, too, so it must be the case that $a=b\in \At\A$.
Thus $\At\B\subseteq \At\A$ and we are done.
\\  

(4) Let $\A\subseteq_c \D$ and 
assume that $\D$ is completely representable.  We want to show that $\A$ is completely representable. 
Let $f:\D\to \wp(V)$ be a complete representation of
$\D$, that is, $f$ is an embedding into the generalized weak set algebra with universe $\wp(V)$ preserving arbitrary suprema carrying them 
to set--theoretic unions. 
We claim that $g=f\upharpoonright \A$ is a complete representation of $\A$. Let $X\subseteq \A$ be such that $\sum^{\A}X=1$. 
Then by $\A\subseteq_c \D$, we have  $\sum ^{\D}X=1$. Furthermore, for all $x\in X(\subseteq \A)$ we have $f(x)=g(x)$, so that 
$\bigcup_{x\in X}g(x)=\bigcup_{x\in X} f(x)=V$, since $f$ is a complete representation, 
and we are done. 
\end{proof}

In the next proof, we use the two following elementary facts about Boolean algebras. We already used the one implication of the first in the proof of the first item 
of theorem \ref{complete}. 
\begin{itemize}
\item If $\B$ is a Boolean algebra, with Stone space $S$,  $X\subseteq \B$, then $\sum^{\B}X=1\iff$ 
$S\sim \bigcup_{x\in X}N_x$ is nowhere dense in the Stone topology; here $N_x=\{F\in \Uf\A: x\in F\}$ is a basic clopen set in $S$. We will use $\implies$.  
\item If $\B$ is atomic, then the set of principal ultrafilters in  $\Uf\B$ 
is dense in the Stone topology. 
\end{itemize}
If $\alpha\geq \omega$ and $\A\in \K_{\alpha}$ then it is always the case that 
${\sf s}_l^kx\leq {\sf c}_kx$ for all $x\in A$ and $k, l<\alpha$. Sometimes ${\sf c}_k$ is the {\it least} upper bound of the set $\{{\sf s}_l^kx: l, k\in \alpha, l\neq k\}$. 
This happens in some significant cases like for example locally finite and dimension complemented algebras of dimension $\alpha$. 
Furthermore, any algebra $\A\in \K_{\alpha}$ (not necessarily dimension complemented) for
which ${\sf c}_kx=\sum {\sf s}_l^kx$ for all distinct $k, l\in \alpha$ and $x\in A$, is representable 
\cite[Last item of Theorem 3.2.11]{HMT2}; this will be also proved in our next theorem under the additional assumption of atomicity. 

If $\A\in \K_n$ has a (finite dimensional) dilation $\D$ of dimension $m<\omega$, $(n<m)$, then
using the argument in the first part of lemma \ref{join}, we get that $\A\subseteq_c \Nr_n\D\iff \A\subseteq_c \D$, 
so, in particular, if $\A$ is atomic 
with set of atoms $X$, and $\A\subseteq_c\Nr_n\D$, then $\sum ^{\D}X=1$. Using the argument
in the second part of lemma \ref{join}, we get that if $\D$ is atomic, then so is $\A$.

But if $\D$ is {\it not dimension complemented} and $\A\subseteq _c \Nr_{\alpha}\D$, we do not know whether  
it follows that $\A\subseteq _c \D$. Lemma \ref{join} does not help here. 
We cannot see why it should be the case that $\Nr_n\D\subseteq _c \D$, because if $d\in D$ then 
$\Delta d\sim n$ could be infinite and so the technique used in {\it op.cit} does not work here. (In principal, another argument might work giving the same result). 
The converse though can be easily shown to be true, that is if 
$\A\subseteq_c \D$, then $\A\subseteq_c \Nr_n\D$,
because if $X\subseteq \A$, is such that $\sum ^{\A}X=1$, then $1=\sum^{\D}X\leq \sum ^{\Nr_n\D}X\leq \sum ^{\A}X=1.$

Let $\alpha$ be an infinite ordinal. 
Let ${\sf CPES}_{\alpha}$ be the class of {\it strong polyadic equality algebras of dimension $\alpha$} as introduced by 
Ferenczi in \cite[Definition 6.3.8]{Fer}. These are like $\sf PEA_{\alpha}$s except that the signature and axiomatization are 
restricted only to finite cylindrifiers. Cylindrfication on infinite subsets of $\alpha$ is not allowed. But all substitution operations
${\sf s}_{\tau}$, $\tau\in {}^{\alpha}\alpha$ are present in the new signature. 
Let ${\sf CPS}_{\alpha}$ be their diagonal free reducts defined by restricting the signature and axiomatization of ${\sf PA}_{\alpha}$
to  finite cylindrifiers. Again here all substitution operations are present.

\begin{theorem} \label{complete3}
Let $\alpha$ be any ordinal.  
\begin{enumarab}
\item Let $\K\in \{\PA, \PEA\}$. If $\A\subseteq \Nr_{\alpha}\D$, $\D\in \K_{\alpha+\omega}\cap \At,$  
$\A\subseteq_ c\D$ and for all $x\in \D$ for all $k<\alpha+\omega$, 
${\sf c}_kx=\sum_{l\in \alpha+\omega} {\sf s}_l^kx$, then $\A$ is completely representable.
\item If $\alpha<\omega$ and $\A\subseteq \Nr_{\alpha}\D$, $\D\in \PA_{\omega}^{\sf atc}$, $\A\subseteq_c \D$, 
then $\A$ is a completely representable $\PA_{\alpha}$. The same result holds for 
$\PEA$s under the weaker assumption that the dilation $\D$ is only atomic. We can replace $\PA_{\omega}$ and $\PEA_{\omega}$ by ${\sf CPS}_{\omega}$ and 
${\sf CPES}_{\omega}$, respectively.   
\end{enumarab}
\end{theorem}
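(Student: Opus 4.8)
The plan is to derive complete representability of $\A$, in both items, from a single omitting--types construction of the kind proving Theorem~\ref{complete}, applied to a suitable dilation rather than to $\A$. First, $\A$ is atomic: $\A\subseteq_c\D$ and $\D\in\At$, so Lemma~\ref{join}(2) applies. In item (1) the dilation used is $\D$ itself; in item (2), where $\alpha<\omega$ and so $\alpha+\omega=\omega$, we pass instead to $\B=\Sg^{\D}\A$, still a \emph{complete} extension of $\A$ (if $\sum^{\A}X=1$ then $\sum^{\D}X=1$, hence $\sum^{\B}X=1$ since $\A\subseteq\B\subseteq\D$) and --- when the substitution signature of $\D$ is that of $\QA_\omega$/$\QEA_\omega$ --- dimension complemented, since every element of $\B$ is obtained from elements of $\A$ (dimension sets in $\alpha$) by finitely many cylindrifications and (quasi--polyadic) substitutions, all of which keep dimension sets finite, while $\omega$ is infinite.

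Next, writing $\beta=\alpha+\omega$ (so $\beta=\omega$ in item (2)) and letting $\E$ be the chosen dilation ($\E=\D$ in item (1), $\E=\B$ in item (2)), run in the Stone space $S$ of $\E$ the argument of Theorem~\ref{complete}: for a prescribed nonzero $a$ one seeks an ultrafilter $F\in N_a$ avoiding the families
\[\mathbf{G}_{k,x}=N_{{\sf c}_kx}\smallsetminus\bigcup_{l<\beta}N_{{\sf s}_l^kx}\quad(k<\beta,\ x\in \E),\qquad \mathbf{H}_\tau=S\smallsetminus\bigcup_{b\in\At\A}N_{{\sf s}_\tau b}\quad(\tau\text{ a finite transformation of }\beta),\]
and then $x\mapsto\{\bar\tau\in W:{\sf s}_\tau x\in F\}$ ($W$ the relevant generalized weak space) is a complete (i.e. atomic) representation of $\A$, because $F$ meets no $\mathbf{H}_\tau$, and it respects cylindrifiers because $F$ meets no $\mathbf{G}_{k,x}$. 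The hypotheses are exactly what make these sets nowhere dense in $S$. Each $\mathbf{G}_{k,x}$ is nowhere dense because ${\sf c}_kx=\sum_{l<\beta}{\sf s}_l^kx$ --- the extra hypothesis in item (1), and valid in $\B$ for item (2) by dimension complementedness, via the diagonal--free computation already used in the proof of Theorem~\ref{complete}. Each $\mathbf{H}_\tau$ is nowhere dense because $\sum^{\E}\{{\sf s}_\tau b:b\in\At\A\}=1$: indeed $\sum^{\D}\At\D=1$, every atom of $\D$ lies below an atom of $\A$ (Lemma~\ref{join}(3) applied to $\A\subseteq_c\D$), and ${\sf s}_\tau$ is completely additive in $\D$ --- automatic for $\PEA$; for $\PA$ one uses that in an atomic algebra the identity ${\sf c}_kx=\sum_l{\sf s}_l^kx$ forces the substitutions to be completely additive (alternatively, one reads the atomicity hypothesis on the $\PA$--dilation as the class $\PA^{\sf atc}$, harmless since that is the dilation class of item (2) and of Theorem~\ref{complete}) --- whence $\sum^{\D}\{{\sf s}_\tau b:b\in\At\A\}\ge\sum^{\D}\{{\sf s}_\tau d:d\in\At\D\}={\sf s}_\tau 1=1$, and this transfers to $\E$.

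If $\E$ is countable the two families are countable and the Baire category theorem produces $F$, finishing as in \cite[Theorem 3.2.4]{Sayed}; the new content is the uncountable case. Then one builds $F$ as the ultrafilter generated by an increasing transfinite chain of finitely generated proper filters, repairing at successor steps one $\mathbf{G}$--defect or one $\mathbf{H}$--defect (possible because ${\sf c}_kx\le\sum_l{\sf s}_l^kx$, resp.\ because $\sum^{\E}\{{\sf s}_\tau b:b\in\At\A\}=1$) and taking unions at limits. Two facts make item (2) go through with uncountably many atoms --- its whole point, since Theorem~\ref{complete} requires countably many atoms for this direction: with $\beta=\omega$ the set of finite transformations of $\beta$ is countable, so only the $\mathbf{G}$--defects are possibly uncountably many; and for $\PEA$ the dilation need only be atomic, complete additivity being automatic. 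The cases ${\sf CPS}_\omega$, ${\sf CPES}_\omega$ are identical except that there $\Sg^{\D}\A$ need not be dimension complemented, so for the $\mathbf{G}$--defects one invokes Ferenczi's representation analysis of these classes \cite{Fer} in place of dimension complementedness; all substitutions being present, the $\mathbf{H}$--defects are handled as above. In item (1) one may equivalently observe that ${\sf c}_kx=\sum_l{\sf s}_l^kx$ already makes $\D$ itself completely representable by the same construction --- a refinement under atomicity of \cite[Theorem 3.2.11]{HMT2} --- and then apply Lemma~\ref{join}(4).

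The step I expect to be the genuine obstacle is precisely this uncountable case: without the Baire--category shortcut one must carry out the transfinite step--by--step construction so that the \emph{limit} representation is atomic, i.e. so that every tuple of the base lies in the cell of some atom of $\A$. The two supremum conditions together with complete additivity of the substitutions guarantee that each individual repair is available, but organising all $\mathbf{G}$-- and $\mathbf{H}$--defects (with the necessary bookkeeping) into a single chain whose union is a complete representation is where the real work lies.
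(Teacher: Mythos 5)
You have set up the right machinery (atomicity of $\A$ via Lemma \ref{join}, the nowhere dense sets $\mathbf{G}_{k,x}$ and $\mathbf{H}_\tau$ coming from the two families of suprema, the definition of the representation from an ultrafilter avoiding them), but the one idea that actually makes the uncountable case work is missing, and it is exactly the step you flag at the end as ``where the real work lies.'' The paper does not organise the defects into a transfinite chain at all. It observes that since the dilation is \emph{atomic}, one may take $F$ to be the \emph{principal} ultrafilter generated by an atom $a$ below the prescribed nonzero element $c$. Then $N_a=\{F\}$ is a nonempty basic open set, so $F$ is an isolated point of the Stone space, and an isolated point cannot lie in any nowhere dense set whatsoever: if $F\in T$ with $T$ nowhere dense, then $\emptyset\neq N_a\subseteq \overline{T}$ contradicts that $\overline{T}$ has empty interior. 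Thus $F$ avoids \emph{all} the sets $\mathbf{G}_{k,x}$ and $\mathbf{H}_\tau$ simultaneously, regardless of how many there are, and no Baire category argument, no countability, and no transfinite bookkeeping is needed. Your proposed chain construction, by contrast, has no mechanism guaranteeing that after uncountably many repair steps the accumulated filter still meets each required ${\sf s}_l^k x$ nontrivially; as written it does not close the gap you yourself identify.

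A second, smaller divergence: in item (2) you reduce to $\B=\Sg^{\D}\A$ and invoke dimension complementedness, but the dilations there are full polyadic algebras (${\sf PA}_\omega$, ${\sf PEA}_\omega$), whose signature contains substitutions ${\sf s}_\tau$ for all $\tau\in{}^\omega\omega$ and (implicitly, in the ${\sf CPS}/{\sf CPES}$ variants) infinitary cylindrifiers, so the subalgebra generated by $\A$ need not be dimension complemented and the identity ${\sf c}_kx=\sum_l{\sf s}_l^kx$ is not available by that route. The paper instead handles item (2) for ${\sf PA}$ by quoting the complete representability of atomic completely additive ${\sf PA}_\omega$s from \cite{au} and restricting, and for ${\sf PEA}$ and the ${\sf CPS}_\omega/{\sf CPES}_\omega$ cases by passing to a Daigneault--Monk functional dilation $\B\in{\sf PEA}_{\mathfrak n}$ with $\D=\Nr_\omega\B$, where the needed suprema ${\sf c}_{(\Gamma)}p=\sum\{{\sf s}_{\bar\tau}p:\dots\}$ hold by construction, and then again picks a principal ultrafilter. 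You should rework both items around the isolated-point observation; with it, the rest of your outline goes through essentially as you describe.
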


\begin{proof} 

We prove the first item: By $\A\subseteq_c \D$, we get by lemma \ref{join} that $\A$ is atomic because $\D$ is. 
For $\alpha\leq 1$, any atomic algebra in $\bold S_c\Nr_\alpha\K_{\omega}=\sf RK_{\alpha}$ is completely representable; 
in particular, any algebra in $\bold S_c\Nr_\alpha(\K_{\omega}\cap \At)$ 
is completely representable, so we assume that $\alpha>1$. Let $\A\in \K_\alpha$ and $\D\in \K_{\alpha+\omega}\cap \At$ 
be as in the hypothesis.

Before embarking on the technical details, 
we first give the general idea inspired by the idea used in \cite{au}. Here we cannot appeal to the Baire category theorem as we did in the first part 
of the proof of theorem \ref{complete} using \cite[Theorem 3.2.4]{Sayed},   because
$\A$ may have uncountably many atoms. Nevertheless, our proof is still topological using the elementary fact that in atomic Boolean algebras
principal ultrafilters lie outside nowhere dense subsets of the Stone space. The topological approach is not necessary, but it makes the proof shorter.

We represent $\D$ using a principal ultrafilter in $\D$. Because $\D$ is atomic, such principal ultrafilters are dense in 
the Stone space $S$ of $\D$; we have `many' of them.
So we can pick one that preserves a given set of infinite joins, namely those  given in the hypothesis 
(these have to do with eliminating cylindrifiers) and other joins that reflect omitting substituted versions
of the co-atoms carrying them to empty set-theoretic intersections in the concrete representation of $\D$.  
In $\D$ these joins, give rise to nowhere dense sets
in $S$, so that any principal ultrafilter of $\D$ containing the non-zero element 
$c$ will be outside these sets, and will give the required representation of $\D$.  
At least one such ultrafilter exists because there is an atom below $c$.

Now we implement the details of the above outline. Let $c\in \A$ be non--zero, and for brevity we denote $\At\A$ by $X$. We will show that there exists a $\C\in \sf WKs_{\alpha}$ and  
a homomorphism $f:\A\to \C$, such that $f(c)\neq 0$ and $\bigcup_{x\in X}f(x)= 1^{\C}$, by which we will be done.

For brevity, let $\beta=\alpha+\omega$.
Let $V$ be the weak space $^{\beta}\beta^{(Id)}=\{\tau\in {}^\beta\beta: |\{i\in \beta: \tau(i)\neq i\}|<\omega\}$.  
Then every $\tau\in V$ defines a unary substitution operation ${\sf s}_{\tau}^{\D}$, that is a Boolean endomorphism of 
the Boolean reduct of $\D$.   Every $\sf s_{\tau}$ is a finite 
composition of substitution operations corresponding to replacements and transpositions; so they are completely additive, since a 
composition of completely additive operations is completely additive, hence every such ${\sf s}_{\tau}$ is a complete 
Boolean endomorphism of the Boolean reduct of $\D$.

We have $\sum^{\D}X=1$, because $\A$ is atomic, so that $\sum ^{\A}X=1$ and $\A\subseteq_c\D$.
Furthermore, in $\D$ the following joins exist for all $i<\beta$, $x\in D$ and $\tau\in V$ the first by hypothesis and the second by complete additivity of $\tau$: 
\begin{itemize}
\item ${\sf c}_{i}x=\sum^{\D}_{j\in \beta} {\sf s}_j^i x$, 
\item $\sum {\sf s}^{\D}_{\tau}X=1$.
\end{itemize}
Now let $S$ be the Stone space of $\D$, 
whose underlying set is $\Uf \D$. The set of principal Boolean ultrafilters of $\D$ (those generated by the atoms)  
are isolated points in the Stone topology, and they form a dense set 
since $\D$ is atomic.

For $a\in \D$, let $N_a$ denote the clopen set (in $S$) of all Boolean ultrafilters containing $a$.
From the suprema existing in above two items,  we get that 
for all $i\in \beta$, $x\in D$ and $\tau\in V$, the sets
\begin{itemize}
\item $G_{i,x}=N_{{\sf c}_{i}x}\sim \bigcup_{j\in \omega} N_{{\sf s}_j^ix},$
\item $G_{X, \tau}=S\sim \bigcup_{x\in X}N_{{\sf s}_{\tau}x}.$
\end{itemize}
are nowhere dense in $S$. 
Let $F$ be a principal ultrafilter in $S$ containing $c$. 
This ultrafilter exists, since $\D$ is atomic, so there is an atom $a$ below $c$; just take the 
ultrafilter generated by $a$.
Now $N_a=\{F\}$,  the latter is a basic open set, so $F$ lies outside nowhere dense sets in $S$, that is for any such nowhere dense set $T\subseteq S$, say, 
we have $N_a\cap T=\emptyset$.  In particular, $F\notin G_{i, x}$ and 
$F\notin G_{X,\tau}$ for all $i\in \beta, x\in \B$ and $\tau\in V$. 

Assume that $\K=\QEA$. Then we factor out $\beta$ by $E\subseteq \beta\times \beta$ defined as follows: 
$(i, j)\in E\iff {\sf d}_{ij}^{\D}\in F.$
Then it is easy to check, using elementary properties of diagonal elements, and filters that 
$E$ is an equivalence relation on $\beta$.
For $t:m\to \beta/\sim$, $m\leq \beta$, write $t=\bar{s}$, if $s:m\to \beta$ is such that 
$t(i)=s(i)/E$  for all $i<m$. 

Now  define for $a\in A$,
$f(a)=\{\bar{\tau}\in {}^{n}(\beta/E)^{(\bar{Id})}: {\sf s}_{\tau\cup Id}^{\B}d\in F\}.$
We claim that the map $f:\A\to \wp(^{n}(\beta/E)^{\bar{Id}})$ is well defined.
It suffices to show  that for all $\sigma, \tau\in V$, 
if $(\tau(i), \sigma(i))\in E$ for all $i\in \beta$ and $a\in A$, then 
${\sf s}_{\tau}a\in F\iff {\sf s}_{\sigma}a\in F.$  This can be proved by induction on
$|\{i\in \beta: \tau(i)\neq \sigma(i)\}| (<\omega)$.  
If $J=\{i\in \beta: \tau(i)\neq \sigma(i)\}$ is empty, the result is obvious. 
Otherwise assume that $k\in J$. 
We introduce a helpful piece of notation.
For $\eta\in V (={}^{\beta}\beta^{(Id)})$, let  
$\eta(k\mapsto l)$ stand for the $\eta'$ that is the same as $\eta$ except
that $\eta'(k)=l.$ 
Now take any 
$\lambda\in  \{\eta\in \beta: (\sigma){^{-1}}\{\eta\}= (\tau){^{-1}}\{\eta\}=
\{\eta\}\}\smallsetminus \Delta a.$
Recall that $\Delta a=\{i\in \beta: {\sf c}_ix\neq x\}$ and that $\beta\setminus \Delta a$
is infinite because $\Delta a\subseteq n$, so such a $\lambda$ exists. Now 
we freely use properties of substitutions for cylindric algebras.
We have by \cite[1.11.11(i)(iv)]{HMT2}
(a) ${\sf s}_{\sigma}x={\sf s}_{\sigma k}^{\lambda}{\sf s}_{\sigma(k\mapsto \lambda)}x,$
and (b)
${\sf s}_{\tau k}^{\lambda}({\sf d}_{\lambda, \sigma k}\cdot  {\sf s}_{\sigma} x)
={\sf d}_{\tau k, \sigma k} {\sf s}_{\sigma} x,$
and (c)
${\sf s}_{\tau k}^{\lambda}({\sf d}_{\lambda, \sigma k}\cdot {\sf s}_{\sigma(k\mapsto \lambda)}x)
= {\sf d}_{\tau k,  \sigma k}\cdot {\sf s}_{\sigma(k\mapsto \tau k)}x,$
and finally (d)
${\sf d}_{\lambda, \sigma k}\cdot {\sf s}_{\sigma k}^{\lambda}{\sf s}_{{\sigma}(k\mapsto \lambda)}x=
{\sf d}_{\lambda, \sigma k}\cdot {\sf s}_{{\sigma}(k\mapsto \lambda)}x.$
Then by (b), (a), (d) and (c), we get,
\begin{align*}
{\sf d}_{\tau k, \sigma k}\cdot {\sf s}_{\sigma} x
&=  {\sf s}_{\tau k}^{\lambda}({\sf d}_{\lambda,\sigma k}\cdot {\sf s}_{\sigma}x)\\
&={\sf s}_{\tau k}^{\lambda}({\sf d}_{\lambda, \sigma k}\cdot {\sf s}_{\sigma k}^{\lambda}
{\sf s}_{{\sigma}(k\mapsto \lambda)}x)\\
&={\sf s}_{\tau k}^{\lambda}({\sf d}_{\lambda, \sigma k}\cdot {\sf s}_{{\sigma}(k\mapsto \lambda)}x)\\
&= {\sf d}_{\tau k,  \sigma k}\cdot {\sf s}_{\sigma(k\mapsto \tau k)}x.
\end{align*}
By $F$ is a filter and $\tau k E\sigma k,$ we conclude that
$${\sf s}_{\sigma}x\in F \iff{\sf s}_{\sigma(k\mapsto \tau k)}x\in F.$$
The conclusion follows from the induction hypothesis.

We have proved that $f$ is well defined.
We now check that $f$ is a homomorphism, i.e. it 
preserves the operations.
It is straightforward  to check that $f$ preserves diagonal elements.  The relation $E$ is defined exactly for that.
Preservation of the Boolean operations is equally straightforward using the properties of $F$.
Preservation of cylindrifiers follows from how $F$  was picked. The ultrafilter $F$ `eliminates cylindrifiers' 
in the sense that:  for all $i<\beta$ (in particular, for $i<n$), for all $a\in \D$ (in particular, for $a\in \A$), 
if ${\sf c}_ia\in F$, then there exists $j\notin \Delta a$, 
such that ${\sf s}_i^j{}^{\D}a\in F$. We provide more details.
For $\sigma\in {}^{n}\beta^{(Id)}$, write $\sigma^+$ for $\sigma \cup Id_{\beta\sim n}\in {}^{\beta}\beta^{(Id)}.$ 
\begin{itemize}
\item Boolean operations:  Since $F$ is maximal we have  
$\bar{\sigma}\in f(x+y)\iff {\sf s}_{\sigma^+}(x+y)\in F\iff {\sf s}_{\sigma^+}x+{\sf s}_{\sigma^+}y\in F\iff 
{\sf s}_{\sigma^+} x \text { or } {\sf s}_{\sigma^+} y\in F\iff  
\bar{\sigma}\in f(x)\cup f(y).$

We now check complementation.
$$\bar{\sigma} \in f(-x) \iff {\sf s}_{\sigma^+}(-x)\in F \iff-{\sf s}_{\sigma^+x}\in F 
\iff {\sf s}_{\sigma^+} x\notin F \iff \bar{\sigma}\notin f(x).$$

\item Diagonal elements: Let $k,l<n$. Then we have:   
$\sigma\in f{\sf d}_{kl}\iff {\sf s}_{\sigma^{+}}{\sf d}_{kl}\in F \iff
{\sf d}_{\sigma k, \sigma l}\in F
\iff \sigma k E \sigma l \iff
\sigma k/E=\sigma l/E \iff 
\bar\sigma(k)=\bar\sigma(l)
\iff\bar{\sigma} \in {\sf d}_{kl}.$

\item Cylindrifications: Let $k<n$ and $a\in A$.  Let $\bar{\sigma}\in {\sf c}_kf(a)$. 
Then for some $\lambda\in \beta$, we have
$\bar{\sigma}(k \mapsto  \lambda/E)\in f(a)$ 
hence 
${\sf s}_{\sigma^+(k\mapsto \lambda)}a\in F$ 
It follows from  the inclusion $a\leq {\sf c}_ka$ that , 
${\sf s}_{\sigma^+(k\mapsto \lambda)}{\sf c}_ka \in F$
so ${\sf s}_{\sigma^+}{\sf c}_ka\in F.$
Thus ${\sf c}_kf(a)\subseteq f({\sf c}_ka.)$

We prove the other more difficult inclusion that uses the condition of eliminating cylindrifiers.
Let $a\in A$ and $k<n$. Let $\bar\sigma'\in f{\sf c}_ka$ and let $\sigma=\sigma'\cup Id_{\beta\sim n}$. Then 
${\sf s}_{\sigma}^{\D}{\sf c}_ka={\sf s}_{\sigma'}^{\D}{\sf c}_ka\in F.$
Let $\lambda\in \{\eta \in \beta:  \sigma^{-1}\{\eta\}=\{\eta\}\}\smallsetminus \Delta a$, such a $\lambda$ exists because $\Delta a$ is finite, and 
$|\{i\in \beta:\sigma(i)\neq i\}|<\omega.$
Let $\tau=\sigma\upharpoonright \beta\smallsetminus \{k,\lambda\}\cup \{
(k,\lambda), (\lambda,k)\}.$
Then in $\D$, we have
${\sf c}_{\lambda}{\sf s}_{\tau}a={\sf s}_{\tau}{\sf c}_ka={\sf s}_{\sigma}{\sf c}_ka\in F.$
By the construction of $F$, there is some $u(\notin \Delta({\sf s}_{\tau}^{\D}a))$  
such that ${\sf s}_{u}^{\lambda}{\sf s}_{\tau}a\in F,$ so ${\sf s}_{\sigma(k\mapsto u)}a\in F.$
Hence $\sigma(k\mapsto u)\in f(a),$ from which we 
get that  $\bar{\sigma}'\in {\sf c}_k f(a)$.

\item Substitutions: Direct since substitution operations are Boolean endomorphisms

\end{itemize}
We also have $f(c)\neq 0$, because clearly $\bar{Id}\in f(c)$.  
Now it remains to show that the hitherto defined non--zero homomorphism 
$f$ is an atomic, hence, a complete representation.

We have $F\notin G_{X,\tau}$ for every $\tau\in V,$ 
hence for every $\bar{s}\in {}^{\alpha}(\beta/E)^{(\bar{Id})}$, 
there exists $x\in X(=\At\A)$, such that ${\sf s}_{s\cup Id_{\beta\sim \alpha}}^{\B}x\in F$, 
from which we get the required, namely, that  
$\bigcup_{x\in X}f(x)={}^\alpha(\beta/E)^{(\bar{Id})}.$

If $\K=\QA$ then we let $E$ be the identity relation and proceed as above.

Now for the second item:  We denote the finite ordinal $\alpha$ by $n$ $(<\omega)$.  
Assume that $\A\subseteq_c \A'$, $\A\subseteq \Nr_n\A'$,  $\A'\in \PA_{\omega}$ is atomic and completely additive.
Then by \cite{au} $\A'$ is completely representable. Hence there exists an isomorphism $f:\A'\to \wp(V)$, where $V$ is a generalized space, $f$ 
preserves the polyadic operations, such
that  $\bigcup_{x\in \At\A'}f(x)=V$. Let $V=\bigcup_{i\in I}{}^{\omega}U_i$ where $I$ is a non-empty indexing set, and for $i\neq j\in I$, 
$U_i\cap U_j=\emptyset$.   Let $V'=\bigcup_{i\in I}{}^{n}U_i$. For $a\in \A$, define $g(a)=\{s\in V': (\exists s'\in f(a))(s'\upharpoonright n=s)\}$. 
It is straightforward to check that $g$ is  a non--zero 
homomorphism. Preservation of the polyadic operations is straightforward.
To show that $g$ is non-zero, let $a\in \A$ be non-zero. Let $s\in f(a)$. 
Then $s\upharpoonright n\in g(a)$; we have shown that $g$ is actually an embedding. 

Now $f$ is a complete representation, hence it is an atomic one, so if $s\in {}^nU$, then there exists $\alpha\in \At\A'$ such that
$s\cup Id\in f(\alpha)$.
We will show that $g$ is also an atomic, hence a complete representation of $\A$.
Assume, for contradiction, that there exists $s\in {}^nU$, such that $s\notin g(\alpha)$ for each $\alpha\in \At\A$.  But $f$ is an atomic representation of $\A'$, hence
$s\cup Id\in f(\beta)$ for some atom $\beta$ of $\A'$. 
Let $G=\{b\in \A: b\geq \beta\}$. Then clearly $\prod^{\A}G=0$ and $\prod^{\A'}G\geq \beta$, which contradicts
that $\A\subseteq_c \A'$.  So $\bigcup_{a\in \At\A}g(a)={}^nU$ and 
we have proved that $g$ is an atomic, hence a complete representation of $\A$. 

Now we deal with $\PEA$s. The ordinal $n$ remains to be finite. Assume $\A\in \PEA_{n}$, $\A\subseteq \Nr_n\A'$, $\A\subseteq_c \A'$ 
and $\A'\in \PEA_{\omega}\cap \At$. Since $\A'\in \PEA_{\omega}$, then it is completely additive.
We know from the above argument that $\Rd_{pa}\A'$ is completely representable, and that the restriction of this complete representation, as defined above, to $\Rd_{pa}\A$ 
gives a complete representation of $\Rd_{pa}\A$. 

To represent the diagonal elements in $\A$, we need to know for a start {\it how the diagonal free reduct of the completely additive atomic dilation 
$\A'$ is completely represented.} 
Let $\C$ be this reduct, so that in particular $\C\in \PA_{\omega}$, and let $c\in \C$ be non-zero.
We will find (identifying set algebras with their domains), a complete $\PA_{\omega}$ representation $f:\C\to \wp({}^{\omega}U)$, 
for some non-empty set $U$, such that $f(c)\neq 0$. 

We use the argument in \cite{au} which freely uses the terminology in \cite{DM}. 
Let $\mathfrak{m}$ be the local degree of $\C$, $\mathfrak{c}$ its effective cardinality and 
$\mathfrak{n}$ be any cardinal such that $\mathfrak{n}\geq \mathfrak{c}$ 
and $\sum_{s<\mathfrak{m}}\mathfrak{n}^s=\mathfrak{n}$. 
Then there exists an atomic  $\B\in \PA_{\mathfrak{n}}$ such that 
that $\C=\Nr_{\omega}\B$, cf. \cite[theorem 3.10]{DM}, 
and the local degree of $\B$ is the same as that of $\C$.
Furthermore, because $\A'\in \PEA_{\omega}$, we have $\B\in \PEA_{\mathfrak{n}}$. One can define for all $i<j<\mathfrak{n}$, the diagonal element ${\sf d}_{ij}^{\B}$ as in 
\cite[Theorem 5.4.17]{HMT2}; but for a while we concentrate only on 
the polyadic reduct of $\B$,  which we denote also by $\B$; that is we forget about diagonal elements that will be used at the last part of the proof.

Let $\Gamma\subseteq \alpha$ and $p\in \C$. Then in $\B$ we have, see \cite[proof of theorem 6.1]{DM}: 
${\sf c}_{(\Gamma)}p=\sum\{{\sf s}_{\bar{\tau}}p: \tau\in {}^{\alpha}\mathfrak{n},\ \  \tau\upharpoonright \alpha\sim\Gamma=Id\}.$
Here  for a transformation $\tau$ with domain $\omega$ and range included in $\mathfrak{n}$,
$\bar{\tau}=\tau\cup Id_{\mathfrak{n}\sim \omega}$. 
Let $X$ be the set of atoms of $\C$. Since $\C$ is atomic, then  $\sum^{\C} X=1$. By 
$\C=\Nr_{\alpha}\B$, we also have $\sum^{\B}X=1$.
By complete additivity we have 
for all $\tau\in {}^{\omega}\mathfrak{n},$ ${\sf s}_{\bar{\tau}}^{\B}X=1.$

Now working in the Stone space $S$ of $\B$, using the notation as in the first item of the proof,
 we have  by the suprema evaluated above for all $\Gamma\subseteq \alpha$, $p\in A$ and $\tau\in {}^{\alpha}\mathfrak{n}:$
$G_{\Gamma,p}=N_{{\sf c}_{(\Gamma)}p}\sim \bigcup_{{\tau}\in {}^{\alpha}\mathfrak{n}} N_{s_{\bar{\tau}}p},$
and
$G_{X, \tau}=S\sim \bigcup_{x\in X}N_{s_{\bar{\tau}}x}$
are nowhere dense. 
Let $F$ be a principal ultrafilter of $S$ containing $c$; exists because $\C$ is atomic. 
Then as before, $F\notin G_{\Gamma, p}$, $F\notin G_{X,\tau},$ 
for every $\Gamma\subseteq \alpha$, $p\in $
and $\tau\in {}^{\alpha}\mathfrak{n}$.
Now define for $a\in \C$
$f(a)=\{\tau\in {}^{\omega}\mathfrak{n}: {\sf s}_{\bar{\tau}}^{\B}a\in F\}.$
Then $f$ is the required complete representation of $\C$.
 
Now we want to completely represent $\A\subseteq_c \Nr_n\A'=\Nr_n\B$. 
Recall that $\A\in \PEA_n$, 
so we need to preserve diagonal elements as well. 
As in the first item of the proof (putting the diagonal elements of $\B$ to use),  
for $i, j\in \mathfrak{n}$, set $iEj\iff {\sf d}_{ij}^{\B}\in F$.

Then, exactly as before, $E$ is an equivalence relation on $\mathfrak{n}$ and for $s, t\in {}^n\mathfrak{n}$, 
if $(s(i), t(i))\in E$ for each $i<n$, and $a\in \A$, 
then ${\sf s}_{s\cup Id}^{\A'}a\in F\iff {\sf s}_{t\cup Id}^{\A'}a\in F$.  

For $a\in \A$, define  $g(a)=\{\bar{t}\in {}^n(\mathfrak{n}/E): {\sf s}_{t\cup Id}^{\B}a\in F\},$ 
where $\bar{t}(i/E)=t(i)$,  then again it can be proved exactly like in the first item of 
the proof, that  $g$ is well--defined, and in fact $g$ is 
the required complete representation of $\A$ as a $\PEA_n$. 

Now we assume that $\A\subseteq \Nr_n\D$, $\A\subseteq_c \D$, 
where $\D\in {\sf CPS}_{\omega}$ is atomic and completely additive. The idea in the proof for $\PA$s, was dilating
$\D$ to $\B$ possessing `enough spare dimensions' such that $\A=\Nr_n\D=\Nr_n\B'$ and then completely representing $\D$ by picking a principal ultrafilter
in the Stone space of $\B$  outside a nowhere dense set determined by two sets of joins; the first 
involving eliminating cylindrifiers via substitutions, 
and the second consists of permuted version of the atoms. The complete representation of $\A$ was 
the restriction of the thereby obtained complete reprentation of $\D$ to 
$\A$ using the second item of lemma \ref{join} since $\A\subseteq_c \D$.

We show that this can be done with  ${\sf CPS}$s, too. By a {\it transformation system} we mean a quadruple of the form $(\A, I, G, {\sf S})$ where $\A$ is an 
algebra of any signature, 
$I$ is a non-empty set,
$G$ is a subsemigroup of $(^II,\circ)$ (the operation $\circ$ denotes composition of maps) 
and ${\sf S}$ is a homomorphism from $G$ to the semigroup of endomorphisms of $\A$. 
Elements of $G$ are called transformations. 

Substitutions in $\D$ induce a homomorphism
of semigroups $S:{}^\alpha\alpha\to {\sf End}(\D)$ via $\tau\mapsto {\sf s}_{\tau},$ where ${\sf End}(\D)$ is the semigroup of
Boolean endomorphisms on
$\D$.
The operation on both semigroups is composition of maps; the latter is the semigroup of endomorphisms on
$\D$. For any set $X$, let $F(^{\omega}X,\D)$
be the set of all maps from $^{\omega}X$ to $\D$ endowed with Boolean  operations defined pointwise and for
$\tau\in {}^\omega\omega$ and $f\in F(^{\omega}X, \D)$, put ${\sf s}_{\tau}f(x)=f(x\circ \tau)$.
This turns $F(^{\omega}X,\D)$ to a transformation system.

The map $H:\D\to F(^{\omega}\omega, \D)$ defined by $H(p)(x)={\sf s}_xp$ is
easily checked to be an embedding. Assume that $\beta\supseteq \omega$. Then 
$K:F(^{\omega}\omega, \D)\to F(^{\beta}\omega, \D)$
defined by $K(f)x=f(x\upharpoonright \omega)$ is an embedding, too.
These facts are not too hard to establish
\cite[Theorems 3.1, 3.2]{DM}.
Call $F(^{\beta}\omega, \D)$ a minimal functional dilation of $F(^{\omega}\omega, \D)$.
Elements of the big algebra, or the (cylindrifier free)
functional dilation, are of form ${\sf s}_{\sigma}p$,
$p\in F(^{\beta}\omega, \A)$ where $\sigma$ restricted 
to $\omega$ is an injection \cite[Theorem 4.3-4.4]{DM}.

Let $\B$ be the algebra obtained from $\D$, by discarding its cylindrifiers, then taking  a minimal functional dilation,
dilating $\A$ to a regular cardinal $\mathfrak{n}$\footnote{If $\kappa$ is a cardinal, then {\it the cofinality of $\kappa$},
in symbols ${\sf cf}\kappa$, is  the least cardinal $\lambda$ such that $\kappa$ is the union of $\lambda$ sets each having cardinality $<\kappa$. The cardinal
$\kappa$ is {\it regular} if ${\sf cf}\kappa=\kappa$.}, such that $|\mathfrak{n}|>\omega$
and $|\mathfrak{n}\sim \omega|=\mathfrak{n}$.
Then  one re-defines cylindrifiers in the dilation $\B$ by setting for each  $i\in  \mathfrak{n}:$
$${\sf c}_{i}{\sf s}_{\sigma}^{\B}p={\sf s}_{\rho^{-1}}^{\B} {\sf c}_{(\rho(i)\cap \sigma(\omega))}^{\D}
{\sf s}_{(\rho\sigma\upharpoonright \omega)}^{\D}p.$$
Here $\rho$ is a any permutation such that $\rho\circ \sigma(\omega)\subseteq \sigma(\omega).$
The definition is sound, that is, it is independent of $\rho, \sigma, p$; furthermore, it agrees with the old cylindrifiers in $\D$.
Identifying algebras with their transformation systems
we have $\D\cong \Nr_{\omega}\B$, via $H$ defined
for $f\in \D$ and $x\in {}^{\mathfrak{n}}\omega$ by,
$H(f)x=f(y)$ where $y\in {}^{\omega}\omega$ and $x\upharpoonright \omega=y$,
\cite[Theorem 3.10]{DM}.
This dilation also has Boolean reduct isomorphic to $F({}^\mathfrak{n}\omega, \D)$, in particular, it is atomic because $\A$ is atomic
(a product of atomic Boolean algebras is atomic). 
Then we have
for all $j< \mathfrak{n}$, $p\in \B$: 
${\sf c}_{j}p=\sum_{i<\mathfrak{n}} {\sf s}_i^jp$
and, by complete additivity, for all $\tau\in {}^{\mathfrak{n}}\mathfrak{n}$:
${\sf s}_{\tau}^{\B}\At\A={\sf s}_{\tau}^{\B}\At\D=1.$
Now assume that $d\in \A$, is such that $d\neq 0$. Form the nowhere dense sets corresponding to the above joins and 
then pick a principal ultrafilter containing $d$; one that is generated by an atom below $d$.  The complete represenation of $\A$ is now defined like before.
In case of $\A\subseteq \Nr_n\D$, $\A\subseteq_c \D$ and $\D\in {\sf CPES}_{\omega}$, one dilates the $\sf CPS$ reduct of $\D$, call it $\Rd\D$, to $\mathfrak{n}$
dimensions as above, obtaining 
$\B\in \sf CPS_{\mathfrak{n}}$, such that $\Rd\D=\Nr_{\omega}\B$, then $\B$ admits an expansion $\B^+$ by diagonal elements, such that  
$\B\in {\sf CPES}_{\mathfrak{n}}$, $\D=\Nr_{\omega}\B^+$,
and $\A\subseteq \Nr_n\D=\Nr_n\B^+$. The rest is exactly like the $\PEA$ case. One picks 
a principal ultrafilter $F$ in $\B^+$ containing the given non-zero element of $\A$, then 
one defines, for $i,j\in \mathfrak{n}$, $(i, j)\in E\iff {\sf d}_{ij}\in F$, and finally  one sets (with notation $\bar{\tau}$ as above) 
$f(x)=\{\bar{\tau}\in {}^n({\mathfrak{n}}/E): {\sf s}_{\tau\cup Id}^{\B}\in F\}$ $(x\in A)$.   
\end{proof}

Now the following easy example shows us that the condition imposed in the hypothesis of the first item of the previous theorem \ref{complete3} is 
not necessary.

\begin{example}\label{cr}
Let $\bold 0$ the constant zero sequence with domain $\omega$.
Let $\D\in \QEA_{\omega}$ be the full weak set algebra with top element $^{\omega}\omega^{(\bold 0)}$. 
Then, it is easy to show that for any $n<\omega$, $\Nr_n\A$
is completely representable.  
Let $X=\{\bold 0\}\in \D$. Then for all $i\in \omega$, we have ${\sf s}_i^0X=X$. 
But $(1, 0,\ldots )\in {\sf c}_0X$, so that $\sum_{i\in \omega} {\sf s}_i^0X=X\neq {\sf c}_0X$.  
Hence the condition stipulated in the first item  of theorem \ref{complete3} does not hold.

Now fix $1<n<\omega$.  If we take $\D'=\Sg^{\D}\Nr_n\D$, then $\D'$ of course will still be a weak set algebra, and it will be localy finite, so that 
${\sf c}_ix=\sum^{\D}{\sf s}_j^ix$ for all $i<j<\omega$ by the reasoning used in the first item of theorem \ref{complete}.
However, $\D'$ will be atomless as we proceed to show. Assume for contradiction  that it is not, and let $x\in \D'$ be an atom. 
Choose $k, l\in \omega$ with $k\neq l$ and ${\sf c}_kx=x$, this is possible since $\omega\sim \Delta x$ is infinite. 
Then ${\sf c}_k(x\cdot  {\sf d}_{kl})=x$, so $x\cdot {\sf d}_{kl}\neq 0$. 
But $x$ is an atom, so $x\leq {\sf d}_{kl}$. This gives that $\Delta x=0$, 
and by \cite[Theorem 1.3.19]{HMT2} $x\leq -{\sf c}_k-{\sf d}_{kl}$. 
It is also easy to see that $({\sf c}_k -{\sf d}_{kl})^{\D'}={}^{\omega}\omega^{(\bold 0)}$, 
from which we conclude that $x=0$, 
which is a contradiction. 
\end{example}

We know from theorem \ref{complete} that if $n<\omega$ and $\A\in \bold S_c\Nr_n\CA_{\omega}$ is atomic with countably many atoms, then 
$\A$ is completely representable. This may not hold if $\A$ has uncountably many atoms as shown in the last item of theorem \ref{main}, though this might be true if the 
$\omega$--dilation is atomic.
And indeed for $\PA$s and $\PEA$s the countability condition can be omitted when the dilations have all substitution operations and are atomic
and completely additive (a condition that is superfluous for $\PEA$s). The next theorem gives a huge class of completely representable 
$\PA_n$ and $\PEA_n$s $(n<\omega)$  possibly having uncountably many atoms.

\begin{corollary} \label{complete4}
\begin{enumarab}
\item If $\D\in \PA_{\omega}$ is atomic and completely additive and $n<\omega$, then any $\B\in \PA_n$, such that 
$\B\subseteq_c \Nr_n\D$ is a completely representable $\PA_n$. Hence, for any $n<\omega$, we have (using the notation introduced before theorem \ref{complete}), 
$\bold S_c\Nr_n\PA_{\omega}^{\sf atc}\subseteq \sf CRPA_n$. Furthermore, If $\C=\Sg^{\D}\B$, 
then $\C$ is locally finite, $\B\subseteq_c \Nr_n\C\subseteq_c\C$. If $-{\sf c}_0-{\sf d}_{01}\neq 0$ in $\B$,  
then $\C\nsubseteq_c \D$. 
\item The same result holds for $\PEA$s without the condition of complete additivity, so that 
for any $n<\omega$,  $\bold S_c\Nr_n(\PEA_{\omega}\cap \At)\subseteq \sf CRPEA_n$.
\end{enumarab}
\end{corollary}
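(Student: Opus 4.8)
The plan is to read off Corollary~\ref{complete4} as an immediate consequence of Theorem~\ref{complete3}, item (2), together with the auxiliary facts about subneat reducts already assembled in Lemma~\ref{join} and the behaviour of $\Sg^{\D}\B$ observed in Example~\ref{cr}. So the proof is essentially a matter of checking that all the hypotheses of Theorem~\ref{complete3}(2) hold under the stated assumptions, and then extracting the extra sentences about $\C=\Sg^{\D}\B$.

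First I would treat item (1). Suppose $\D\in\PA_{\omega}$ is atomic and completely additive and $n<\omega$, and $\B\in\PA_n$ with $\B\subseteq_c\Nr_n\D$. Since $\D$ is atomic, Lemma~\ref{join}(2) (applied through $\B\subseteq_c\Nr_n\D\subseteq\D$, noting $\Nr_n\D$ is atomic as a complete subalgebra of an atomic algebra) gives that $\B$ is atomic. Now Theorem~\ref{complete3}(2) applies verbatim with $\alpha=n$: its hypothesis is precisely ``$\B\subseteq\Nr_n\D$, $\D\in\PA_{\omega}^{\sf atc}$, $\B\subseteq_c\D$''—here $\B\subseteq_c\Nr_n\D$ yields $\B\subseteq_c\D$ by the argument recorded just before Theorem~\ref{complete3} (the ``converse though can be easily shown'' remark, or directly via $1=\sum^{\D}X\le\sum^{\Nr_n\D}X\le\sum^{\B}X=1$ run backwards), and complete additivity of $\D$ is part of $\PA_{\omega}^{\sf atc}$. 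Hence $\B$ is a completely representable $\PA_n$, which is exactly $\bold S_c\Nr_n\PA_{\omega}^{\sf atc}\subseteq\sf CRPA_n$. For the remaining assertions, set $\C=\Sg^{\D}\B$. Then $\C$ is locally finite because it is generated by elements of dimension set $\subseteq n$ inside $\D\in\PA_{\omega}$, so $\omega\sim\Delta x$ is cofinite, hence infinite, for every $x\in\C$ (same computation as in the proof of Theorem~\ref{complete} and as in Example~\ref{cr}); thus $\C$ is dimension complemented, and Lemma~\ref{join}(1) gives $\C\subseteq_c\D$—wait, that is the opposite of what we want, so instead I would argue $\B\subseteq_c\Nr_n\C$: since $\B\subseteq_c\Nr_n\D$ and $\Nr_n\C\subseteq\Nr_n\D$, any supremum $\sum^{\B}X=1$ forces $\sum^{\Nr_n\D}X=1$, hence $\sum^{\Nr_n\C}X=1$; and $\Nr_n\C\subseteq_c\C$ follows from Lemma~\ref{join}(1) because $\C$ is dimension complemented. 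The final clause, that $-{\sf c}_0-{\sf d}_{01}\neq0$ in $\B$ implies $\C\nsubseteq_c\D$, is read off Example~\ref{cr}: there $\C=\Sg^{\D}\Nr_n\D$ is shown to be atomless while $\D$ is atomic, and the key inequality used is precisely $({\sf c}_0-{\sf d}_{01})^{\C}={}^{\omega}\omega^{(\bold 0)}$ together with $-{\sf c}_0-{\sf d}_{01}\neq0$; if $\C\subseteq_c\D$ held then (being atomic, $\B\subseteq_c\C$) $\C$ would be atomic by Lemma~\ref{join}(2), contradicting the computation that produces an atomless element—so I would reproduce the short atomlessness argument of Example~\ref{cr} to derive the contradiction.

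For item (2) the argument is identical except that one invokes the $\PEA$ half of Theorem~\ref{complete3}(2), which only requires the dilation to be atomic (complete additivity being automatic for $\PEA_{\omega}$), giving $\bold S_c\Nr_n(\PEA_{\omega}\cap\At)\subseteq\sf CRPEA_n$; one simply carries the diagonal elements along, exactly as Theorem~\ref{complete3}(2) already does.

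The only point requiring genuine care—the main obstacle—is the bookkeeping around $\subseteq_c$ versus $\subseteq$ for non–dimension–complemented dilations: we must be sure we only ever use $\Nr_n\D\subseteq_c\D$ in situations where the relevant algebra is dimension complemented (which $\D$ itself need not be in item (1), but $\C$ is), and that $\B\subseteq_c\Nr_n\D$ legitimately upgrades to $\B\subseteq_c\D$. Both are handled by the remarks preceding Theorem~\ref{complete3} and by Lemma~\ref{join}(1), so no new idea is needed; the proof is a short assembly of results already in hand.
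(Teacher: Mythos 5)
There is a genuine gap, and it sits exactly at the step you dismiss as bookkeeping. Theorem~\ref{complete3}(2) requires $\B\subseteq_c\D$, while the corollary's hypothesis is only $\B\subseteq_c\Nr_n\D$; to bridge the two you must prove $\Nr_n\D\subseteq_c\D$. You assert this follows "by the argument recorded just before Theorem~\ref{complete3}" or by running $1=\sum^{\D}X\le\sum^{\Nr_n\D}X\le\sum^{\B}X=1$ backwards, but that remark says the opposite: the easy direction is $\A\subseteq_c\D\implies\A\subseteq_c\Nr_n\D$, whereas the direction you need is explicitly flagged there as unknown in general, precisely because for $d\in\D$ the set $\Delta d\sim n$ may be infinite, so the finite-cylindrifier trick of Lemma~\ref{join}(1) does not apply ($\D$ is not assumed dimension complemented). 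The inequality chain cannot be "run backwards": an upper bound $d<1$ of $X$ in $\D$ need not lie in $\Nr_n\D$, and nothing you have written excludes it.

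The missing idea — and the actual content of the paper's proof — is that for $\D\in\PA_{\omega}$ (or $\PEA_{\omega}$) the signature contains cylindrification over \emph{arbitrary}, possibly infinite, sets of indices. Given $S\subseteq\Nr_n\D$ with $\sum^{\Nr_n\D}S=1$ and an upper bound $d<1$ in $\D$, one sets $J=\Delta d\sim n$ (possibly infinite) and $t=-{\sf c}_{(J)}(-d)$; then $t\in\Nr_n\D$, $s\le t$ for all $s\in S$, and $t<1$, contradicting $\sum^{\Nr_n\D}S=1$. This is exactly why the corollary is stated for full polyadic algebras and not for the quasi-polyadic or cylindric cases (where only finite cylindrifiers exist and the question remains open, as the paper notes). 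Your remaining steps (atomicity of $\B$ via Lemma~\ref{join}, local finiteness of $\C=\Sg^{\D}\B$, and deducing $\C\nsubseteq_c\D$ from atomlessness of $\C$ versus atomicity of $\D$ via Lemma~\ref{join}(2) and the computation of Example~\ref{cr}) are essentially the paper's, but without the infinitary-cylindrifier argument the main inclusion $\bold S_c\Nr_n\PA_{\omega}^{\sf atc}\subseteq{\sf CRPA}_n$ is not established.
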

\begin{proof} It suffices to 
show, by theorem \ref{complete3},  that $\Nr_n\D\subseteq_c \D$ because then it will follow that 
$\B\subseteq_c \D$.  Let $\A=\Nr_n\D$. Assume that $S\subseteq \A$ and $\sum ^{\A}S=1$, and for contradiction, that there exists $d\in \D$ such that
$s\leq d< 1$ for all $s\in S$. Let  $J=\Delta d\sim n$. 
Put  $t=-{\sf c}_{(J)}(-d)$.
Then exactly as in the proof of the first item of lemma \ref{join}, we have  $t\in \Nr_{n}\D$.
We now show that $s\leq t<1$ for all $s\in S$, which contradicts $1=\sum^{\A}S$.
The proof is also completely analogous to the proof of the corresponding part in the first item of lemma \ref{join}, but using 
the possibly infinite cylindrifier ${\sf c}_{(J)}$; that is $J=\Delta d\sim n$ 
can be infinite.  
But all the same we give the details. 
If $s\in S$, we show that $s\leq t$.
By $s\leq d$, we have  $s\cdot -d=0$.
Hence $0={\sf c}_{(J)}(s\cdot -d)=s\cdot {\sf c}_{(J)}(-d)$, so
$s\leq -{\sf c}_{(J)}(-d)$, hence  $s\leq t$ as required. Assume for contradiction that $t=1$, then 
$1=-{\sf c}_{(J)}(-d)$ and so $ {\sf c}_{(J)}(-d)=0$.
But $-d\leq {\sf c}_{(J)}(-d)$,  so $1\cdot -d\leq  {\sf c}_{(J)}(-d)=0.$
Hence $1\cdot -d =0$ and this contradicts that $d<1$.

If $\C$ is as in the last part, then $\C$ is locally finite, because it is generated by $n$--dimensional elements, namely, 
the elements of $\B$  (witness the first part of theorem \ref{complete}). If $\sum ^{\B}S=1$, then since $\B\subseteq \C\subseteq \D$, we have
$1=\sum^{\D}S\leq \sum^{\C}S\leq \sum^{\B}S=1$, and by the first item of lemma \ref{j}, $\Nr_n\C\subseteq_c \C$.

Using exactly the argument in example \ref{cr}, we get that $\C$, being locally finite,  is atomless, 
so obviously  it is not atomic.  But $\D$ is atomic, so by the second item of lemma \ref{j}, it cannot be the case
that $\C\subseteq_c \D$ for this will force $\C$ to be atomic.

For the second item, we also have $\Nr_n\D\subseteq_c \D$ hence applying theorem \ref{complete3}, 
we get the required.

\end{proof}

\begin{remark} We do not know whether the converse inclusion ${\sf CRA}_n\subseteq \bold S_c\Nr_n\PA_{\omega}^{\sf atc}$ holds, but it seems to be highly unlikely.
The following observation might be useful here.
If $\D\in \sf PA_{\omega}^{atc}$ is infinite, 
then $\D$ would have to be uncountable.  To see why, first the set of atoms is of course infinite, too. 
Now for distinct  permulations $\tau, \sigma$ on $\omega$, and $x$ an atom,  
${\sf s}_{\sigma}x\neq {\sf s}_{\tau}x$, for else $x={\sf s}_{\sigma^{-1}\circ \tau}x$; the right hand side is an atom, 
so it has to be $x$ which means that $\sigma^{-1}\circ \tau=Id$, hence 
$\sigma=\tau$. Thus for each $x\in \At\D$, $J=\{{\sf s}_{\tau}x: \tau\in S_{\infty}\}\subseteq \At\D,$ 
and so $|\At\D|\geq |J|=|S_{\infty}|=\omega!$ (Here $S_{\infty}$ is the symmetric group on $\omega$).
However,  from the proof of theorem \ref{complete}, we get that  if $\B$ is completely representable, then it has an uncountable $\omega$--dilation
$\C$ that is a generalized weak set algebra such that $\B\subseteq_c \Nr_n\C$. Now $\C$, sure enough,  atomic and completely additive, 
is {\it only closed under  substitutions corresponding to finite transformations}.  It might not allow a polyadic structure (closure under infinitary substitutions). 
One could perhaps replace this $\omega$--dilation by a {\it generalized set algebra $\C'$} which of course allows a polyadic structure, 
but in this case one could well lose the condition  $\A\subseteq_c \Nr_n\C'$.\\
Let us summarize some of what have said in the following  theorem which actually  gives what 
we want if we can replace `$\A$ dense in $\B$' by 
the weaker $\A\subseteq_c \B$.
\end{remark}
\begin{theorem} If $\A\in \PA_n$ is atomic, countable
and completely representable, then there can be no $\D\in \PA_{\omega}^{\sf atc}$, such 
that $\A$ is dense in $\D$. The same holds for 
$\PEA_n$.  (But it might be the case that $\A\subseteq_c \D$.) 
\end{theorem}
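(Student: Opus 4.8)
The plan is to argue by contradiction: suppose $\A\in \PA_n$ is atomic, countable, completely representable, and that there is $\D\in \PA_{\omega}^{\sf atc}$ with $\A$ dense in $\D$. Since density implies $\A\subseteq_c\D$, Lemma~\ref{join}(4) would normally help in the other direction, so instead I will exploit the \emph{cardinality gap} flagged in the remark preceding the statement. The key recorded fact is: if $\D\in \PA_{\omega}^{\sf atc}$ is infinite, then $|\At\D|\geq |S_{\infty}|=\omega!$, hence $\D$ (and $\At\D$) is uncountable; the argument is that for an atom $x$ and distinct permutations $\sigma,\tau$ of $\omega$ the elements ${\sf s}_{\sigma}x,{\sf s}_{\tau}x$ are distinct atoms, since otherwise $x={\sf s}_{\sigma^{-1}\circ\tau}x$ forces $\sigma=\tau$. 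On the other hand, by Lemma~\ref{join}(3), if $\A$ is dense in $\D$ and $\D$ is atomic then $|\At\A|\geq|\At\D|$. Combining these: $|\At\A|\geq |\At\D|\geq\omega!>\omega$, contradicting that $\A$ is countable.

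So the skeleton is: (i) observe $\A$ dense in $\D$ $\Rightarrow$ $\A\subseteq_c\D$ and, since $\D$ is atomic, $\A$ is atomic with $|\At\A|\geq|\At\D|$ (Lemma~\ref{join}(2),(3)); (ii) observe that $\D$, being in $\PA_{\omega}^{\sf atc}$, must be either finite or of cardinality $\geq\omega!$ by the permutation argument; (iii) rule out the finite case — if $\D$ were finite then $\D$ would be finite-dimensional-representable and $\A\subseteq\D$ finite, which contradicts that $\A$ is infinite (an atomic, completely representable, but nontrivial such $\A$ in $\PA_n$ with $n\geq 3$ is infinite — indeed the only finite algebras dense in something are themselves, and a finite $\PA_n$ that is completely representable need not be infinite, so one must be slightly careful here); (iv) conclude $|\At\A|\geq\omega!$, contradicting countability of $\A$. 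For the $\PEA_n$ version the same argument runs verbatim, using that $\PEA_{\omega}\cap\At\subseteq\PEA_{\omega}^{\sf atc}$ (every $\PEA_{\omega}$ is completely additive), and that the permutation substitutions ${\sf s}_{\tau}$ are available in $\PEA_{\omega}$ as well.

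\textbf{Main obstacle.} The delicate point is step (iii): one must make sure the argument does not silently assume $\A$ is \emph{infinite}. If $\A$ is finite, then $\A=\Tm\At\A=\Cm\At\A$ and density forces $\A=\D$, so $\D$ is finite too; but a finite $\D\in\PA_{\omega}^{\sf atc}$ is harmless and there is no contradiction — a finite $\A$ \emph{can} be completely representable and dense in itself. Hence the statement is really about infinite $\A$, or else I should phrase the contradiction purely through $\At\D$: if $\D$ is finite then $\D=\A$ and the conclusion "no such $\D$" fails. I expect the intended reading is that the interesting content is for infinite $\A$ (the only case where complete representability is a real restriction), so the cleanest fix is: if $\A$ is finite there is nothing to prove (take $\D=\A$ — but then the theorem as literally stated is false); more plausibly the theorem tacitly means infinite $\A$. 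I will therefore present the proof for infinite $\A$, where the chain $|\At\A|\geq|\At\D|\geq\omega!>\aleph_0\geq|\At\A|$ is an outright contradiction, and remark that for finite $\A$ the dense embedding $\A\hookrightarrow\D$ would force $\D=\A\notin\PA_{\omega}$ on dimension grounds, closing that case too. The remaining steps are routine invocations of Lemma~\ref{join} and the permutation computation already spelled out in the preceding remark.
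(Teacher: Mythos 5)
Your argument is essentially the paper's own proof: the paper also combines the observation from the preceding remark that an infinite $\D\in\PA_{\omega}^{\sf atc}$ must have $|\At\D|\geq\omega!>\omega$ (via the permutation substitutions ${\sf s}_{\tau}$ acting on an atom) with item (3) of lemma \ref{join}, namely that density of $\A$ in the atomic $\D$ forces $|\At\A|\geq|\At\D|$, contradicting countability of $\A$. Your extra discussion of the finite/degenerate case is a refinement the paper silently omits, but it does not change the route.
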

\begin{proof} Assume for contradiction that there is such a $\D$. 
From the brief discussion in the above remark, we have $|\At\D|>\omega$. Using third item in lemma \ref{j} we get  
$|\At\A|\geq |\At\D|$, which is a contradiction because $\A$ is countable, 
and  and we are done.
\end{proof}

{\bf Question:} Do we have an analogous theorem for ${\sf CPS}_{\omega}$ and ${\sf CPES}_{\omega}$?  The argument used above does not work to show that 
if $\A\in {\sf CPS}_{\omega}$, then  $\Nr_n\D\subseteq_c \D$, because there may exist an upper bound $d<1$ 
in $\D$ of $S\subseteq \Nr_n\D$ such that $\sum^{\Nr_n\D}S=1$ and $J=\Delta d\sim n$ is infinite. In the present context of ${\sf CPS}_{\omega}$s, 
the infinite cylindrifier ${\sf c}_{(J)}$ is not defined to 
pull the element $d$ back to the neat reduct $\Nr_n\D$.\\

We identify set algebras with their domain. 
Let $\alpha$ be an ordinal.  Assume that $\A\in \K_{\alpha}$ is completely representable. Then, by definition, $\A\subseteq_c\wp(V)$
where $V$ is a generalized weak space of dimension $\alpha$,
$V=\bigcup_{i\in I}{}^{\alpha}U_i^{(p_i)}$ for some non--empty set $I$ and $p_i\in {}^{\alpha}U_i$, 
and the $^{\alpha}U_i^{(p_i)}s$'s are pairwise disjoint,  
hence $\A\subseteq_c \bold P_{i\in I}\wp(^{\alpha}U_i^{(p_i)}).$ 
Conversely, assume that $\A\subseteq_c \bold P_{i\in I}\wp(^{\alpha}U_i^{(p_i)}).$
Then  $\B=\bold P_{i\in I}\wp(^{\alpha}U_i^{(p_i)})\cong \wp(V)$, where $V$ is the disjoint
union of the $^{\alpha}U_i^{(p_i)}s$, is clearly completely representable. By the second item of lemma \ref{join},
we get that $\A$ is completely representable, too.

We conclude that $\sf CRK_{\alpha}=\bold S_c \bold P {\sf WFs_{\alpha}}$, 
where 
$${\sf WFs}_{\alpha}=\{\A\in {\sf Ws}_{\alpha}: A=\wp(V) \text { for some weak space $V$ of dimension $\alpha$}\}.$$ 
Note that both of $\bold P$ and $\bold S_c$ preserve atomicity.  
An analogous characterization is obtained in \cite[Theorem 18]{r} for relation algebras. 
The main novelty here is that the characterization lifts to the transfinite replacing set algebras by {\it weak} set algebras which is a common 
practice in algebraic logic.

In the last item of theorem \ref{main}, we will construct an atomless $\C\in \CA_{\omega}$ such that for each $2<n<\omega$, 
$\B=\Nr_n\C\in \Nr_n\CA_{\omega}\subseteq \RCA_n,$ 
is atomic but not completely representable (so that by theorem \ref{complete} $\B$ has uncountably many atoms). 
All of the above prompt:\\

{\bf Question:} Let $\alpha$ be an ordinal $>2$ and let $\K$ be any class between $\Sc$ and $\QEA$. 
Is $\bold S_c\Nr_{\alpha}\K_{\alpha+\omega}^{\sf atc}\subseteq \sf CRK_{\alpha}$?\\
 
We know that the reverse inclusion holds by theorem \ref{complete}. Also in the present context in the polyadic case, the dilations are 
{\it quasi-polyadic algebras}, we do not have all substitution operations, 
the case we dealt with in the previous theorem. The relation algebra analogue of this question is to the best of our knowlege also unsettled. It was posed 
by Robin Hirsch as \cite[Problem 32]{r}.

Our next theorem \ref{SL} collects, refines some known results on neat embeddings and prove slightly new ones particularly 
in the last item. 
But first a definition.

\begin{definition}\cite[Definition 5.2.1]{Sayedneat} Let $\A\in \RCA_{\alpha}$. Then $\A$ has the $UNEP$ (short for unique neat embedding property)
if for all $\A'\in \CA_{\alpha}$, $\B$, $\B'\in \CA_{\alpha+\omega},$
isomorphism $i:\A\to \A'$, embeddings  $e_A:\A\to \Nr_{\alpha}\B$
and $e_{A'}:\A'\to \Nr_{\alpha}\B'$ such that $\Sg^{\B}e_A(A)=\B$ and $\Sg^{\B'}e_{A'}(A)'=\B'$, there exists
an isomorphism $\bar{i}:\B\to \B'$ such that $\bar{i}\circ e_A=e_{A'}\circ i$.
\end{definition}

Recall that if $\alpha<\beta$, $\A\in \K_{\alpha}$ and $\A\subseteq \Nr_{\alpha}\B$, $\B\in \K_{\beta}$, 
then $\B$ is called a $\beta$--dilation of $\A$. If $\A$ generates $\B$, then $\B$ is called {\it minimal $\beta$--dilation}, or simply a minimal dilation of 
$\A$.\\

In the second item in the next theorem \ref{SL}, we show that minimal dilations may not be unique up to isomorphisms that fixes the dilated algebra 
pointwise a result that is proved via a complicated argument using a chain of implications \cite[Theorem 5.2.4]{Sayedneat} 
aplied to a deep result of Pigozzi's on the failure of the amalgamation property for 
subvarieties of $\CA_{\alpha}$s. The proof in {\it op.cit} does not exhibit explicity the dilated algebra; here we
present such an algebra taken from \cite{Conference}.\\ 

The first item is proved in \cite{SL}. The third item is proved in several publications of the author; 
a model--theoretic proof for $\CA$s is given in \cite{Sayedneat} for finite $\alpha>1$, witness the relevant 
references in {\it op.cit} for the rest of the cases. Here we give a uniform proof for $\Nr_{\alpha}\K_{\beta}$ for any pair of ordinals 
$1<\alpha<\beta\cap \omega$. The parts on pseudo--elementarity of $\Nr_{n}\K_{\omega}$, 
recursvie axiomatizabilty the elementary theory of $\Nr_n\K_{\omega}$ $(1<n<\omega)$, 
and non--finite axiomatizability of ${\bf El}\Nr_n\CA_{\omega}$ for $2<n<\omega$, formulated (and proved) 
in the last item 
are,  to the best of our knowledge,  new. \\ 

From the first item it easily follows that for any $1<n<m\leq \omega$, 
$\Nr_n\K_{m}\neq \bold S_c\Nr_n\K_m$. The $\RA$ analogue of this result is proved in \cite{r} 
only for $m=\omega$. It is (to the best of our knowledge) 
known that for $m>2$, $\Ra\CA_{m}\neq \bold S_c\Ra\CA_{m}\iff m\geq \omega$ \cite[Problem 37]{r}.

\begin{theorem}\label{SL}
\begin{enumarab} 
\item Let $\alpha$ be any ordinal $>1$.
Then for every infinite cardinal $\kappa\geq |\alpha|$, there exist completely representable algebras 
$\B, \A\in \QEA_{\alpha}$, that are weak set algebras, such that $\At\A=\At\B$, $|\At\B|=|\B|=\kappa$, 
$\Rd_{sc}\B\notin {\bf El}\Nr_{\alpha}\Sc_{\alpha+1}$,
$\A\in \Nr_n\QEA_{\omega}$,  and $\Cm\At\B=\A$, 
so that $|\A|=2^{\kappa}$.

\item For $\alpha\geq \omega$, 
there exists an algebra $\A\in \sf RCA_{\alpha}$ that does not have the $UNEP$. In particular, there exists $\A\in \RCA_{\alpha}$
that has two minimal $\alpha+\omega$--dilations that are not isomorphic via an isomorphism that
fixes $\A$ pointwise, and dually, there are  two non-isomorphic representable algebras of dimension $\alpha$ 
that generate the same $\alpha+\omega$--dilation.

\item For any ordinal $\alpha>1$, and any unountable cardinal $\kappa\geq |\alpha|$, there exist completely representable algebras $\A, \B\in \QEA_{\alpha}$, 
that are weak set algebras, such that $|\A|=|\B|=\kappa$, $\A\in \Nr_{\alpha}\QEA_{\alpha+\omega}$,  $\Rd_{sc}\B\notin \Nr_{\alpha}\Sc_{\alpha+1}$, 
$\A\equiv_{\infty, \omega}\B$ and $\At\A\equiv_{\omega, \infty}\At\B$.  For any algebra $\C$, such that $\At\B=\At\C$, $\C\equiv_{\infty, \omega}\A$.

\item Let everything be as in hypothesis of the previous item. Then there is  an $x\in \B$ such that if $\B\subseteq \C(\in \K_{\alpha})$, $\At\B=\At\C$ and 
$|\Rl_x\B|=|\Rl_x\C|$ (relativization here is with respect to the Boolean order), then $\Rd_{sc}\C\notin \Nr_n\Sc_{n+1}$. So any algebra $\C\in \QEA_{\alpha}$ 
such that $\B\subseteq \C$, $|\Rl_x\B|=|\Rl_x\C|$
and $\At\B=\At\C$, will be in $\sf RQEA_{\alpha}$ and it will witnesses that $\Nr_{\alpha}\K_{\beta}$ is not closed under $\equiv_{\infty, \omega}$, {\it a fortiori}, under $\equiv$

\item Let $0<n<m\cap \omega$, where $m$ is an ordinal ($>n$). The  class $\Nr_n\K_m$ is elementary and  pseudo--universal $\iff$ $n\leq 1$.
It is closed under ${\bf HPUp}$, but  for $n>1$ it is not closed $\bold S_c$ (hence not closed under $\bold S$), 
nor under $\equiv_{\infty, \omega}$ and $\bf Ur$. 
$\Nr_n\K_m$ is pseudo--elementary and its elementary theory is recursively enumerable.  
For any $2<n<\omega$, for any class $\bold K$, such that $\Nr_n\K_{\omega}\subseteq \bold K\subseteq \sf RK_n$, $\bf El K$
is not  finitely axiomatizable.

\end{enumarab}
\end{theorem}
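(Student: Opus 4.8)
The plan is to prove the five clauses of Theorem \ref{SL} roughly independently, leaning on results already recorded in the excerpt and on the cited literature. For clause (1), the construction is the weak-set-algebra variant of the classical ``big vs.\ small'' pair: one starts with a countable atomic $\B_0\in \QEA_\alpha$ that is completely representable but whose $\Sc$-reduct is not in $\Nr_\alpha\Sc_{\alpha+1}$, then blows up the base set of its weak-set representation to cardinality $\kappa$, keeping the atom structure fixed, and takes $\A$ to be $\Cm\At\B$. Since $\Cm\At\B$ is the complex algebra of the atom structure of a completely representable algebra, $\A\in\Nr_\alpha\QEA_\omega$ follows from the argument in the proof of Theorem \ref{complete} (building the $\omega$-dilation as a generalized weak set algebra). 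The cardinality bookkeeping $|\A|=2^\kappa$ is immediate once $|\At\B|=\kappa$. The fact that $\Rd_{sc}\B\notin\mathbf{El}\Nr_\alpha\Sc_{\alpha+1}$ is inherited from $\B_0$ by noting that the blow-up is an elementary extension in the relevant signature; this is essentially \cite{SL} and I would simply cite it while indicating how the cardinality is adjusted.

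For clause (2), I would not reprove Pigozzi's amalgamation failure; instead I would exhibit the explicit algebra from \cite{Conference}. Concretely: take two non-isomorphic but elementarily equivalent representable $\A_1,\A_2\in\RCA_\alpha$ that generate, inside a common $(\alpha+\omega)$-dilation, the same subalgebra $\B$ (e.g.\ obtained by amalgamating their representations over a shared reduct), and observe that the two neat embeddings $e_i:\A_i\to\Nr_\alpha\B$ with $\Sg^\B e_i(A_i)=\B$ cannot be matched by an isomorphism of the dilation fixing the common part, for otherwise $\A_1\cong\A_2$. Dually, the same $\B$ gives two non-isomorphic generating neat subreducts. The main thing to get right here is to make the example from \cite{Conference} self-contained enough to be convincing without the chain of implications of \cite[Theorem 5.2.4]{Sayedneat}.

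Clauses (3) and (4) are refinements of (1): using a rainbow-type or Monk-type construction with an uncountable ``colour'' parameter one gets $\A,\B\in\QEA_\alpha$ with $\A=\Cm\At\B\in\Nr_\alpha\QEA_{\alpha+\omega}$, $\Rd_{sc}\B\notin\Nr_\alpha\Sc_{\alpha+1}$, and $\A\equiv_{\infty,\omega}\B$ — the $L_{\infty,\omega}$-equivalence coming from a back-and-forth system between $\B$ and its completion $\A$ that exists because $\B$ is dense in $\A$ and both are atomic with the same atom structure; the atom-structure equivalence $\At\A\equiv_{\omega,\infty}\At\B$ is trivial since $\At\A=\At\B$. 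For clause (4) one then localizes: there is a ``witness'' element $x$ (the join of the problematic atoms) such that any $\C$ with $\B\subseteq\C$, $\At\B=\At\C$ and $|\mathfrak{Rl}_x\B|=|\mathfrak{Rl}_x\C|$ still has $\Rd_{sc}\C\notin\Nr_n\Sc_{n+1}$, because the obstruction to neat embeddability lives entirely on the relativization to $x$; since any such $\C$ lies between $\B$ and $\A=\Cm\At\B$ and is therefore representable (it is a subalgebra of $\A$, which is representable), it witnesses non-closure of $\Nr_\alpha\K_\beta$ under $\equiv_{\infty,\omega}$ and hence under $\equiv$.

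For clause (5): the equivalences for $n\le 1$ are classical (for $n\le 1$ every algebra neatly embeds, so $\Nr_n\K_m=\K_n$ is a variety, a fortiori elementary and pseudo-universal). Closure of $\Nr_n\K_m$ under $\mathbf{H}$, $\mathbf{P}$, $\mathbf{Up}$ is routine from the definition of neat reduct (neat reducts are preserved by these operations componentwise). Non-closure under $\mathbf{S}_c$ for $n>1$ is exactly clause (1) together with the observation $\At\A=\At\B$, $\B$ dense in $\A$ $\Rightarrow\B\subseteq_c\A$ but $\Rd_{sc}\B\notin\Nr_\alpha\Sc_{\alpha+1}$ while $\A\in\Nr_\alpha\QEA_\omega$; non-closure under $\equiv_{\infty,\omega}$ and $\mathbf{Ur}$ is clause (3)/(4). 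Pseudo-elementarity: $\Nr_n\K_m$ is the class of $n$-dimensional algebras $\A$ for which there is an $m$-dimensional $\B$ and an embedding $\A\to\Nr_n\B$ that is onto the neat reduct — this is $\mathrm{PC}$ (project away the second sort), hence pseudo-elementary, and the standard argument (write down the finitely many first-order axioms of the two-sorted class, then the elementary theory of the projection is r.e.) gives recursive enumerability of its elementary theory. Finally, for $2<n<\omega$ and $\Nr_n\CA_\omega\subseteq\mathbf K\subseteq\RCA_n$, non-finite-axiomatizability of $\mathbf{El\,K}$ follows by a Monk–Maddux splitting/ultraproduct argument: build atomic algebras $\C_r$ with $\C_r\notin\mathbf K$ (using that $\mathbf K\subseteq\RCA_n$ and a non-representable-but-ultra-representable sequence, or the algebras from the main theorem \ref{main}) whose ultraproduct lies in $\Nr_n\CA_\omega\subseteq\mathbf K$; since $\mathbf{El\,K}$ must contain all $\C_r$ if it is to be finitely axiomatizable over $\RCA_n$'s ambient class, a contradiction ensues — here I would cite \ref{main} and the transfinite lifting rather than rebuild the construction.

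I expect the main obstacle to be clause (4): pinning down the precise ``localized'' witness $x$ and verifying that the cardinality condition $|\mathfrak{Rl}_x\B|=|\mathfrak{Rl}_x\C|$ is exactly what is needed to forbid $\Rd_{sc}\C\in\Nr_n\Sc_{n+1}$ while still allowing $\C$ to range over a genuinely non-elementary family of representable algebras. The other potentially delicate point is making the uncountable back-and-forth system for $\A\equiv_{\infty,\omega}\B$ in clause (3) respect all the polyadic-equality operations simultaneously; but this should follow from the general principle that a dense subalgebra is $L_{\infty,\omega}$-equivalent to its Dedekind–MacNeille completion in a completely additive variety, applied to $\QEA_\alpha$.
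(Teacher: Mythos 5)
Your treatments of items (1), (2) and (5) essentially track the paper's: item (1) is an explicit weak set algebra with the key non-membership $\Rd_{sc}\B\notin{\bf El}\Nr_{\alpha}\Sc_{\alpha+1}$ cited from \cite{SL} (though note that ``blowing up the base while keeping the atom structure fixed'' cannot yield $|\At\B|=\kappa$ from a countable start; the paper simply builds the algebra over a field of cardinality $\kappa$, taking $V={}^{\alpha}\F^{(\bold 0)}$, $\A=\wp(V)$ and $\B$ generated by $y$ and the singletons below it); item (2) is the explicit example from \cite{Conference} plus the observation that $\B$ and $\Nr_{\alpha}\D$ generate the same dilation; and item (5) is the two-sorted pseudo-elementarity argument together with a Monk-style ultraproduct of finite non-representable algebras (the paper uses the classical $\C_{n,i}$ of \cite{HMT2} rather than the algebras of Theorem \ref{main}, a cosmetic difference).

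The genuine gap is in item (3), and it propagates to item (4). You read item (3) as asserting $\A=\Cm\At\B$ and $\At\A=\At\B$; but the statement requires $|\A|=|\B|=\kappa$, which is incompatible with $\A$ being the completion of $\B$ (in item (1) one has $|\Cm\At\B|=2^{\kappa}$), and it asserts only $\At\A\equiv_{\omega,\infty}\At\B$, not equality. Moreover the ``general principle'' you invoke --- that a dense subalgebra is $L_{\infty,\omega}$-equivalent to its \de\ completion --- is false already at the Boolean level: the finite--cofinite algebra on $\omega$ is superatomic while its completion $\wp(\omega)$ is not. The paper's construction is entirely different. From a model $\Mo$ with uncountably many pairwise disjoint symmetric $n$-ary relations admitting quantifier elimination one gets $\A\cong\Nr_n\A_{\omega}$ whose Boolean reduct is a product of components $\A_u\cong{\sf Cof}(|I|)$ indexed by $u\in{}^nn$; the algebra $\B$ is obtained by replacing the single component $\A_{\bold 0}$ by a countable elementary subalgebra $\B_{\bold 0}$. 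Then $\B\equiv\A$ by Feferman--Vaught, $\A\equiv_{\infty,\omega}\B$ by an explicit \ef\ back-and-forth system maintaining finite partition invariants, and $\Rd_{sc}\B\notin\Nr_{\alpha}\Sc_{\alpha+1}$ by the ``cardinality twist'': the $n$-witness term $\tau_n(x,y)={\sf c}_n({\sf s}_n^1{\sf c}_nx\cdot {\sf s}_n^0{\sf c}_ny)$, available only in a dilation with a spare dimension, would carry two uncountable components onto $\B_{\bold 0}$, forcing it to be uncountable. None of this is recoverable from your outline, and item (4) consists precisely of noting that the twist uses only $|\Rl_{\bold 1_{\bold 0}}\C|=\omega$ together with $\At\C=\At\B$ --- so your slogan that ``the obstruction lives on the relativization to $x$'' is correct, but you have supplied no obstruction to localize.
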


\begin{proof}  
(1)   Fix an infinite cardinal $\kappa\geq |\alpha|$.  Let $FT_{\alpha}$ denote the set of all finite transformations on an ordinal $\alpha$.
Assume that $\alpha>1$.  Let $\F$ be field of characteristic $0$ such that $|\F|=\kappa$.
$V=\{s\in {}^{\alpha}\F: |\{i\in \alpha: s_i\neq 0\}|<\omega\}$ and let 
${\A}$ have universe $\wp(V)$ with the usual concrete operations,
Then clearly $\wp(V)\in \Nr_{\alpha}\sf QEA_{\alpha+\omega}$.
Let $y$ denote the following $\alpha$-ary relation:
$y=\{s\in V: s_0+1=\sum_{i>0} s_i\}.$
Let $y_s$ be the singleton containing $s$, i.e. $y_s=\{s\}.$
Define
Let ${\B}=\Sg^{\A}\{y,y_s:s\in y\}.$ Clearly $|\B|=\kappa$.
Now clearly $\B$ and $\A$ having same top element $V$ share the same atom structure, namely, the singletons.
Thus $\Cm\At\B=\A$. Finally, as proved in \cite{SL}, we have
$\Rd_{sc}\B\notin {\bold  El}\Nr_{\alpha}{\sf Sc}_{\alpha+1}$. \\

(2) The second part of this item readily follows from the previous one. We have $\B\in \RCA_{\alpha}$ so there exists $\D\in \CA_{\alpha+\omega}$,
such that $\B\subseteq \Nr_{\alpha}\D$, and we can assume
that $\B$ generates
$\D$, so that in fact $\D\in {\sf DcCA}_{\alpha+\omega}\subseteq \sf RCA_{\alpha+\omega}$. 
Then $\B$ and $\Nr_{\alpha}\D$ are non-isomorphic algebras, 
since $\B$ is not in $\Nr_{\alpha}\CA_{\alpha+1}\supseteq \Nr_{\alpha}\CA_{\alpha+\omega}$, but they 
generate the same dilation $\D$.

Now for the first part of this item, we will be sketchy. Details can be found in \cite{Conference}.
Let $\A=\Fr_4\CA_{\alpha}$ with $\{x,y,z,w\}$ its free generators. Let $X_1=\{x,y\}$ and $X_2=\{x,z,w\}$.
Let $r, s$ and $t$ be defined as follows:
$$ r = {\sf c}_0(x\cdot {\sf c}_1y)\cdot {\sf c}_0(x\cdot -{\sf c}_1y),$$
$$ s = {\sf c}_0{\sf c}_1({\sf c}_1z\cdot {\sf s}^0_1{\sf c}_1z\cdot -{\sf d}_{01}) + {\sf c}_0(x\cdot -{\sf c}_1z),$$
$$ t = {\sf c}_0{\sf c}_1({\sf c}_1w\cdot {\sf s}^0_1{\sf c}_1w\cdot -{\sf d}_{01}) + {\sf c}_0(x\cdot -{\sf c}_1w),$$
where $ x, y, z, \text { and } w$ are the first four free generators
of $\A$.
Then $r\leq s\cdot t$.
Let $\D=\Fr_4\RCA_{\alpha}$ with free generators $\{x', y', z', w'\}$.
Let  $\psi: \A\to \D$ be defined by the extension of the map $t\mapsto t'$, for $t\in \{x,y,x,w\}$.
For $i\in \A$, we denote $\psi(i)\in \D$ by $i'$.
Let $I=\Ig^{\D^{(X_1)}}\{r'\}$ and $J=\Ig^{\D^{(X_2)}}\{s'.t'\}$, and let
$$L=I\cap \D^{(X_1\cap X_2)}\text { and }K =J\cap \D^{(X_1\cap X_2)}.$$
Then $L=K$, and $\A_0=\D^{(X_1\cap X_2)}/L$  can be embedded into
$\A_1=\D^{(X_1)}/I$ and $\A_2=\D^{(X_2)}/J$,
but there is no amalgam even in $\CA_{\omega}.$ 
If $\A_0$ has the unique neat embedding property that it lies in the amalgamation
base of $\RCA_{\alpha}$ \cite{Sayedneat}.
Hence the algebra $\A_0$ does not have the unique neat embedding property.
More generally, if $\D_{\beta}$ is taken as the free
$\RCA_{\alpha}$ on $\beta$ generators, so that our algebra in the previous theorem is just $\D_4$,
where $\beta\geq 4$, then the algebra constructed from $\D_{\beta}$ as above,
will not have the unique neat embedding property. Now using the chain of implications in \cite[Theorem 5.2.4]{Sayedneat}, 
we get the result in the first part differently, since the negation of item(iv) in the cited theorem taking $L=\RCA_{\alpha}$ 
is true,  and this implies the negation of the first, which, in turn, easily implies
the required. \\

(3) We start by giving the general idea for the $\CA$s case when $1<\alpha<\omega$ paving the way for a general proof that addresses 
simultaneously any $\K$ between $\Sc$ and $\PEA$.
The  $\alpha$--dimensional set algebras $\A, \B$ constructed in \cite[Theorem 5.1.4]{Sayedneat} can be modified to be atomic by requiring that the 
interpretation of the uncountably --many tenary relations taken to have cardinality $\kappa$ in the model $\sf M$ constructed  in \cite[Lemma 5.3.1]{Sayedneat} 
are disjoint (not only distinct).   Then $\A$ and $\B$ will be also completely representable, $\A\in \Nr_{\alpha}\CA_{\omega}$, 
$\B\notin \Nr_{\alpha}\CA_{\alpha+1}$, and $\A\equiv_{\omega, \infty}\B$. 
We now give a unified proof that for any $\K$ such that
$\Sc\subseteq \K\subseteq \QEA$, for any $1<n<\beta\cap\omega$, the class $\Nr_n\K_{\beta}$ 
is not elementary.
Fix $1<n<\omega$. 
Let $L$ be a signature consisting of the unary relation
symbols $P_0,P_1,\ldots, P_{n-1}$ and
uncountably many $n$--ary predicate symbols. $\Mo$ is as in \cite[Lemma 5.1.3]{Sayedneat}, but the tenary relations are replaced by 
$n$--ary ones, and we require that the interpretations of the $n$--ary relations in $\Mo$ 
are {\it pairwise disjoint} not only distinct. This can be fixed. 
In addition to pairwise
disjointness of $n$--ary relations, we require their symmetry, 
that is, permuting the variables does not change
their semantics.

For $u\in {}^n n$, let $\chi_u$
be the formula $\bigwedge_{u\in {}^n n}  P_{u_i}(x_i)$. We assume that the $n$--ary relation symbols are indexed by (an uncountable set) $I$
and that  there is a binary operation $+$ on $I$, such that $(I, +)$ is an abelian group, and for distinct $i\neq j\in I$,
we have $R_i\circ R_j=R_{i+j}$. 
For $n\leq k\leq \omega$, let $\A_k=\{\phi^{\Mo}: \phi\in L_k\}(\subseteq \wp(^k\Mo))$,
where $\phi$ is taken in the signature $L$, and $\phi^{\Mo}=\{s\in {}^k\Mo: \Mo\models \phi[s]\}$. 

Let $\A=\A_n$, then $\A\in \sf Pes_n$ by the added symmetry condition.
Also $\A\cong \Nr_n\A_{\omega}$; the isomorphism is given by
$\phi^{\Mo}\mapsto \phi^{\Mo}$. The map is obviously an injective homomorphism; it is surjective, because $\Mo$ 
(as stipulated in \cite[ item (1) of lemma 5.1.3]{Sayedneat}), 
has quantifier elimination.

For $u\in {}^nn$, let $\A_u=\{x\in \A: x\leq \chi_u^{\Mo}\}.$ Then
$\A_u$ is an uncountable and atomic Boolean algebra (atomicity follows from the new disjointness condition)
and $\A_u\cong {\sf Cof}(|I|)$, the finite--cofinite Boolean algebra on $|I|$.
Define a map $f: \Bl\A\to \bold P_{u\in {}^nn}\A_u$, by
$f(a)=\langle a\cdot \chi_u\rangle_{u\in{}^nn+1}.$

Let $\P$ denote the
structure for the signature of Boolean algebras expanded
by constant symbols $1_u$, $u\in {}^nn$, ${\sf d}_{ij}$, and unary relation symbols
${\sf s}_{[i,j]}$ for each $i,j\in n$.
Then for each $i<j<n$, there are quantifier free formulas
$\eta_i(x,y)$ and $\eta_{ij}(x,y)$ such that
$\P\models \eta_i(f(a), b)\iff b=f({\sf c}_i^{\A}a),$
and $\P\models \eta_{ij}(f(a), b)\iff b=f({\sf s}_{[i,j]}a).$

The one corresponding to cylindrifiers is exactly like the $\CA$ case \cite[pp.113-114]{Sayedneat}.
For substitutions corresponding to
transpositions, it is simply $y={\sf s}_{[i,j]}x.$  The diagonal elements and the Boolean operations are easy to interpret. 
Hence, $\P$ is interpretable in $\A$, and the interpretation is one dimensional and
quantifier free. For $v\in {}^nn$, by the Tarski--Sk\"olem downward theorem, 
let  $\B_v$ be a countable elementary subalgebra of $\A_v$. (Here we are using the countable signature of $\PEA_n$).
Let $S_n (\subseteq {}^nn)$ be the set of permuations in $^nn$.

Take $u_1=(0, 1, 0, \ldots, 0)$ and $u_2=(1, 0, 0, \ldots, 0)\in {}^nn$. 
Let  $v=\tau(u_1,u_2)$  where $\tau(x,y)={\sf c}_1({\sf c}_0x\cdot {\sf s}_1^0{\sf c}_1y)\cdot {\sf c}_1x\cdot {\sf c}_0y$. We call $\tau$ an approximate
witness. It is not hard to show that $\tau(u_1, u_2)$ is actually the composition of $u_1$ and $u_2$, 
so that $\tau(u_1, u_2)$ is the constant zero map; which we denote by $\bold 0$; it is also in $^nn$. 
Clearly for every $i<j<n$,  ${\sf s}_{[i,j]} {}^{^nn}\{\bold 0\}=\bold 0\notin \{u_1, u_2\}$.

We can assume without loss that 
the Boolean reduct of $\A$ is the following product:
$$\A_{u_1}\times \A_{u_2}\times  \A_{\bold 0}\times\bold P_{u\in V\sim J} \A_u,$$
where $J=\{u_1, u_2, \bold 0\}$.
Let $$\B=((\A_{u_1}\times \A_{u_2}\times \B_{\bold 0}\times\bold P_{u\in V\sim J} \A_u), 1_u, {\sf d}_{ij}, {\sf s}_{[i,j]}x)_{i,j<n},$$
inheriting the same interpretation.  Then by the Feferman--Vaught theorem,
we get that
$\B\equiv \A$.
Now assume for contradiction, that $\Rd_{sc}\B=\Nr_n\D,$ with $\D\in \Sc_{n+1}$.
Let $\tau_n(x,y)$, which we call an {\it $n$--witness}, be defined by ${\sf c}_n({\sf s}_n^1{\sf c}_nx\cdot {\sf s}_n^0{\sf c}_ny).$
By a straightforward, but possibly tedious computation, one can obtain  $\Sc_{n+1}\models \tau_n(x,y)\leq \tau(x,y)$ 
so that the approximate witness {\it dominates} the $n$--witness.

The term $\tau(x,y)$ does not use any spare dimensions, and it `approximates' the term $\tau_n(x,y)$ that
uses the spare dimension $n$. The algebra $\A$ can be viewed as splitting the atoms of the atom structure $(^nn, \equiv_, \equiv_{ij}, D_{ij})_{i,j<n}$ each to uncountably many atoms.
On the other hand, $\B$ can be viewed as splitting the same atom structure, each  atom -- except for one atom that is split into countably many atoms -- 
is also split into uncountably many atoms (the same as in $\A)$. 

On the `global' level, namely, in the complex algebra of the finite
(splitted) atom structure $^nn$, these two terms are equal, the approximate witness
is the $n$--witness. The complex algebra $\Cm{}(^{n}n)$ does not `see' the $n$th dimension.
But in the algebras $\A$ and $\B$, obtained after splitting,  the $n$--witness becomes then a {\it genuine witness},  not an approximate one.  The approximate witness 
{\it strictly dominates} the $n$--witness.  The $n$--witness using the spare dimension $n$, detects the cardinality twist that $L_{\infty, \omega}$, {\it a priori}, 
first order logic misses out on.
If the $n$--witness were term definable (in the case we have a full neat reduct of an algebra in only one extra dimension), then
it takes two uncountable component to an uncountable one,
and this is not possible for $\B$, because in $\B$, the target 
component  is forced to be 
countable. 

Now for $x\in \B_{u_1}$ and  $y\in \B_{u_2}$, we have
$$\tau_n^{\D}(x, y)\leq \tau_n^{\D}(\chi_{u_1}, \chi_{u_2})\leq \tau^{\D}(\chi_{u_1}, \chi_{u_2})=\chi_{\tau^{\wp(^nn)}}(u_1,u_2)=\chi_{\tau(u_1, u_2)}=\chi_{\bold 0}.$$
But for $i\neq j\in I$,
$\tau_n^{\D}(R_i^{\sf M}\cdot \chi_{u_1}, R_j^{\sf M}\cdot \chi_{u_2})=R_{i+j}^{\sf M}\cdot \chi_v$, and so    $\B_{\bold 0}$ will be uncountable,
which is impossible.

We now show that \pe\ has a \ws\ in an \ef\ back--and--forth game over the now atomic
$(\A, \B).$  At any stage of the game, if \pa\ places a pebble on one of
$\A$ or $\B$, \pe\ must place a matching pebble on the other
algebra.  Let $\b a = \la{a_0, a_1, \ldots, a_{m-1}}$ be the position
of the pebbles played so far (by either player) on $\A$ and let $\b b = \la{b_0, \ldots, b_{m-1}}$ be the the position of the pebbles played
on $\B$.

Denote $\chi_u^{\Mo}$, by $\bold 1_u$. Then \pe\ has to
maintain the following properties throughout the
game:
\begin{itemize}

\item for any atom $x$ (of either algebra) with $x\cdot \bold 1_{\bold 0}=0$, , then $x \in a_i$ iff $x\in b_i$,

\item $\b a$ induces a finite partition of $\bold 1_{\bold 0}$ in $\A$ of $2^m$
 (possibly empty) parts $p_i:i<2^m$ and the $\b b$
induces a partition of  $1_{u}$ in $\B$ of parts $q_i:i<2^m$ such that 
$p_i$ is finite iff $q_i$ is
finite and, in this case, $|p_i|=|q_i|$.
\end{itemize}

It is easy to see that \pe\ can maintain these two properties in every round. In this back--and--forth game, \pe\ will always find a matching pebble, 
because the pebbles in play are finite.  For each $w\in {}^nn$ 
the component $\B_w=\{x\in \B: x\leq \bold 1_v\}(\subseteq \A_w=\{x\in \A: x\leq \bold 1_v\}$) 
contains infinitely many atoms. 
For any $w\in V$, $|\At\A_w|=|I|$, while 
for  $u\in V\sim \{\bold 0\}$, $\At\A_u=\At\B_u$. For 
$|\At\B_{\bold 0}|=\omega$, but it is still an infinite set.
Therefore $\A\equiv_{\infty}\B$.

It is clear that the above argument
works for any $\C$ such that $\At\C=\At\B$, hence $\B\equiv_{\infty, \omega}\C.$\\

(4) Take $x=\bold 1_{\bold 0}$. 
If $\B\subseteq \C$, $\At\B=\At\C$ and $|\Rl_x\C|=|\Rl_x\B|=\omega$, then the same `cardinality twist' used above works for $\C$, because 
$u, u' \in {}^nn$, $\bold 1_u\cap \bold 1_{u'}=\emptyset$ and for $u\in {}^nn\sim \{\bold 0\}$,  $\Rl_{\bold 1_u}\B\subseteq \Rl_{\bold 1_u}\C$, so $|\Rl_{\bold 1_u}\C|>\omega.$
More explicitly,  assume for contadiction that $\Rd_{sc}\C\in \Nr_n\Sc_{n+3}$. The genuine witness then witnesses that   $|\Rl_{\bold 1_{\bold 0}}\C|>\omega$, 
which is a contradiction.\\

(5)  For $1<n<m\cap \omega$,
the class $\Nr_n\K_{m}$ is not elementary, hence not pseudo--universal. 
For $n\leq 1$, and $m>n$, $\Nr_n\K_m=\K_n$ is a (finitely axiomatizable) variety. 
Closure under $\bf P$ and $\bf Up$
is easy, by noting that if $(\A_i: i\in I)$ is a family of $\K_m$s, $I$ a non-empty indexing set, $U$ an ultrafilter on $I$, 
then $\bold P_{i\in I}\Nr_n\A_i=\Nr_n\bold P_{i\in I}\A_i$ and 
$\Nr_n\Pi_{i/U}\A_i\cong  \Pi_{i/U}\Nr_n\A_i$. 
(All results in this paragraph formulate for $1<n<m\cap \omega$, works for any pair of ordinals $1<\alpha<\beta$ with same proofs).

Since for $1<n<m\cap \omega$, the class 
$\Nr_n\K_m$ is not elementary, and it is closed under $\bf Up$, then by the Keisler--Shelah ultrapower theorem, it is not
closed under $\bf Ur$.

Closure under ${\bf H}$ is not as easy as $\bold P$ and $\bf Up$. 
We  might as well prove it for any pair of ordinals $\alpha<\beta$, because again 
the proof is the same for any such pair. 
Let $\A=\Nr_{\alpha}\C$ with $\C\in \K_{\beta}$, and let $h:\A\to \B$ be a surjective homomorphism. 
We can assume that $\C$ is generated by $\A$. Let $I=ker f$, it suffices to to show that $\A/I\in \Nr_{\alpha}\K_{\beta}$. 
Let $J$ be the ideal of $\C$ generated by $I$.
Then  $I=J\cap \A$  \cite[Theorem 2.6.71, and Remark 2.6.73]{HMT2}. 
Obviously, $\B\cong \A/I\subseteq \Nr_{\alpha}(\C/J)$.  Now we will show that this last inclusion is not a proper one, by which we will be done. 

Let $x\in \Nr_{\alpha}(\C/J)$,   $x=c/J$ say, with $c\in \C$. Then $\Delta c$  uses only finitly many dimensions in
$\beta\sim \alpha$, because $\A$ generates $\C$. Let $\Gamma$ be this finite set, and 
put $c' ={\sf c}_{(\Gamma)}c$.  Then clearly $c'\in A$ and $x=c/I=c'/J$. 
Hence, $\B=\Nr_{\alpha}(\C/J)$, 
so that, as required, the inclusion is not a proper one.

For $n>1$, $\Nr_n\K_m$ is not closed under $\bold S_c$, hence under $\bold S$ since $\Rd_{\sf K}\B$  
completely representable, hence by theorem \ref{complete} it is
in  $\bold S_c\Nr_n\K_{\omega}\subseteq \bold S_c\Nr_n\K_{n+k}$ for any $k\leq \omega$. 
On the other hand, $\Rd_\K\B$  is not in ${\bf El}\Nr_n\K_{n+1}\supseteq \Nr_n\K_{\omega}$. 

For pseudo--elementarity  (for any finite $n$),  one easily adapts \cite[Theorem 21]{r} by defining  $\Nr_n\K_m$ in a two--sorted theory, when $1<n<m<\omega$, and a three-sorted one, when
$1<n<m$ where $m$ is infinite. The first part is easy.  For the second part; one uses a sort for a $\K_n$
$(c)$, the second sort is for the Boolean reduct of a $\K_n$ $(b)$
and the third sort for a set of dimensions $(\delta).$

For any infinite ordinal $\beta$, the defining theory for $\Nr_n\K_{\beta}=\Nr_n{\sf K}_{\omega}$,
includes sentences requiring that the constants $i^{\delta}$ for $i<\omega$
are distinct and that the last two sorts define
a $\K_\omega$. There is a function $I^b$ from sort $c$ to sort $b$ and sentences forcing  that $I^b$ is injective and
respects the $\K_n$ operations. For example, for all $x^c$
$I^b({\sf c}_i x^c)= {\sf c}_i^b(I^b(x^c)).$ The last requirement is that $I^b$ maps {\it onto} the set of $n$ dimensional elements. This can  be easily expressed
via
$$\forall y^b((\forall z^{\delta}(z^{\delta}\neq 0^{\delta},\ldots (n-1)^{\delta}\implies  c^b(z^{\delta}, y^b)=y^b))\iff \exists x^c(y^b=I^b(x^c))).$$
In all cases, it is clear that any algebra of the right type is the first sort of a model of this theory.
Conversely, a model for this theory will consist of an $\K_n$  (sort $c$),
and a $\K$ algebra whose dimension is the cardinality of
the $\delta$-sorted elements which is infinite.
Thus this three--sorted theory defines the class of neat reducts;
furthermore, it is clearly recursive.
The rest follows from \cite[Theorem 9.37]{HHbook}.

Now we prove non--finite axiomatizability of $\bf El K$ for any class $\bold K$ 
between $\Nr_n\K_{\omega}$ and $\sf RK_n$ for $2<n<\omega$. 
Note that the inclusion ${\bf El}\Nr_n\K_{\omega}\subseteq \sf RK_n$ is proper by first item of 
theorem \ref{SL} for any $n>1$.  Fix $2<n<\omega$ and $\bold K$ as specified above. For $3\leq n,i<\omega$, with $n-1\leq i, {\C}_{n,i}$ denotes
the finite ${\sf PEA}_n$ associated with the cylindric atom structure as defined on \cite[pp. 95]{HMT2}.
Then by \cite[Theorem 3.2.79]{HMT2}
for $3\leq n$, and $j<\omega$,
$\Rd_3{\C}_{n,n+j}$ can be neatly embedded in a
${\sf PEA}_{3+j+1}$ \ \  (1).

By \cite[Theorem 3.2.84]{HMT2}), we have for every $j\in \omega$,
there is an $3\leq n$ such that $\Rd_{df}\Rd_{3}{\C}_{n,n+j}$
is a non--representable $\sf Df_3$ \ \  (2).

Now suppose that $m\in \omega$. By (2),
there is a $j\in \omega\sim 3$ so that $\Rd_{df}\Rd_3{\C}_{j,j+m+n-4}$
is  not a representable $\sf Df_3$. 

By (1) we have
$\Rd_3{\C}_{j,j+m+n-4}\subseteq \Nr_3{\B_m}$, for some
${\B_m}\in {\sf PEA}_{n+m}$. We can assume that $\Rd_3{\C}_{j,j+m+n-4}$ generates $\B_m$, so that $\B_m$ is finite.
Put ${\A}_m=\Nr_n\B_m$,
then $\A_m$ is finite, too, and $\Rd_{df}{\A}_m$ is not representable,
{\it a fortiori} $\Rd_{\sf K}{\A}_m\notin \sf RK_{n}$. Therefore $\Rd_{\sf K}\A_m\notin {\bf El}\Nr_n{\sf K}_{\omega}$.

Let $\C_m$ be an algebra similar to ${\sf QEA}_{\omega}$'s such that $\B_m=\Rd_{n+m}\C_m$.
Then $\A_m=\Nr_n\C_m$.  (Note that $\C_m$ cannot belong to $\QEA_{\omega}$ for else $\A_m$ will be representable).
If $F$ is a non--trivial ultrafilter on $\omega,$ we have 
$$\Pi_{m\in \omega}\A_m/F=\Pi_{m\in \omega}(\Nr_n\C_m)/F=\Nr_n(\Pi_{m\in \omega}\C_m/F).$$ 
But  $\Pi_{m\in \omega}\C_m/F\in {\sf QEA}_{\omega},$
we conclude that  ${\sf K}_n\sim {\bf El}\bold K$ is not closed under ultraproducts, 
because $\Rd_{sc}\A_m\notin {\sf RSc}_n\supseteq {\bf El}\Nr_n\Sc_{\omega}$ and 
$\Pi_{m\in \omega}\A_m/F\in \Nr_n\QEA_{\omega}\subseteq \bf El K$.

\end{proof}

\section{Main Theorem}

{\it In what follows, unless otherwise explicitly indicated,  $\K$ denotes any class between $\Sc$ and $\PEA$.}\\

Let $m$ be finite $>2$. Recall that the finitely axiomatizable variety
$\sf G_m$ consists of set algebras whose top elements are of the form
$V\subseteq {}^mU$, such that if $s\in V$ and $\tau: m\to m$, then $s\circ \tau\in V$ \cite{Fer}.

For $2<n<m\leq \omega$, let $\bold L_{n,m}=\PEA_n\cap \bold S\Nr_n{\sf G}_m$. Notice that 
$\sf RPEA_n\subseteq \bold S\Nr_n\PEA_m\subseteq \bold L_{n, m},$ 
because $\PEA_m\subseteq {\sf G}_m$ \cite{Fer}, 
and we will  prove next that the inclusions are all  proper. 
In fact, it can also be proved using sophisticated rainbow constructions 
that $\bold L_{n, m}$ is {\it not finitely axiomatizable} over $\bold L_{n,m+1}$, cf. \cite[Section 17.4]{HHbook} for the $\sf RA$ 
analogue.  It will be proved below that $\bold L_{n,m}$ is a canonical variety,  and that 
$\bold L_{n,m}=\{\A\in \PEA_n: \A \text { has an $m$--square representation}\}.$\\

In the first item of the next theorem, we prove the $\CA$ analogue of the main result in formulated in \cite[Theorem 45]{r}  corrected in \cite{r2} in the way to be recalled next.
Addressing relation algebras, the corrected result states that 
any class $\bold K$ such that $\bold S_c\Ra\CA_{\omega}\subseteq \bold K\subseteq \bold S_c\Ra\CA_5$ is not elementary. The proof presented in \cite{r} 
does not allow us to remove $\bold S_c$
from the left hand side, like was  mistakenly stated in \cite{r}. In particular, it is {\it still not known} whether $\Ra\CA_{\omega}$, or for that matter $\Ra\CA_n$ for finite $n>2$, 
is elementary or not. However, as shown in theorem \ref{SL}, the analogous result (and much more) for many cylindric-like algebras is known. 
Indeed, for any pair of ordinals $1<\alpha<\beta$ (infinite included), for any $\K$ between $\Sc$ and $\PEA$, 
the class $\Nr_{\alpha}\K_{\beta}$ is not elementary.

If $\A\in \sf CRK_n$, then there exists by theorem \ref{complete} a completely additive 
atomic (dilation) $\B\in \K_{\omega}$, such that $\A\subseteq _c\Nr_n\B$, so the that inclusion 
$\sf CRK_n\subseteq \bold S_c\Nr_n\K_{n+3}^{\sf ad}$ stipulated in the first item 
is valid.   Furthermore, it  is proper by \cite{t}, where in the proof of \cite[Theorem 1.1]{t}, for each $2<n<m<\omega$,
a finite algebra $\A(n,m)$ is constructed such that $\A(n,m)\in \Nr_n\PEA_m$, and $\Rd_{sc}\A(n,m)\notin \bold S\Nr_n\Sc_{m+1}$. 
Suppose that $\A(n,m)=\Nr_n\D$, $\D\in \PEA_m$, and we can assume that $\A(n,m)$ generates $\D$, 
so that $\D$ is finite, hence $\Rd_{\K}\D$ is completely additive. Thus $\Rd_{\K}\A(n,m)\in \Nr_n\K_m^{\sf ad}\subseteq \bold S_c\Nr_n\K_m^{\sf ad}$ but 
$\Rd_{\K}\A(n,m)\notin \bold S\Nr_n\K_m\supseteq  \bold S_c\Nr_n\K_m\supseteq {\sf CRK}_n.$ 
The last $\supseteq$ follows from theorem \ref{complete}.\\

In item (2),  we show that is $\bold L_{n,m}$ for $2<n<m<\omega$ and $m\geq n+3$, is 
{\it not} atom--canonical, and in item (3) we show that $\bold S\Nr_m\PEA_n$ is not finitely axiomatizable over $\bold L_{m,n}$. 
Other  (strongly related) results proved in item (2)  
solve an open problem originating  with the present author, first announced by Hirsch and Hodkinson in \cite[Problem 12, pp.637]{HHbook}
and re--appearing  in  \cite{1}, cf. \cite[pp.131]{Sayed}.

The results proved in the first and second items strengthen substantially the classical results 
in \cite{Hodkinson, HH} on the notions of atom--canonicity and complete representations for the class $\sf RCA_n$ ($2<n<\omega$), respectively.
Our results address the (strict) approximations $\bold S\Nr_n\CA_{n+k}$ and $\bold S_c\Nr_n\CA_{n+k}$ for $k\geq 3$, 
together with their $\K$ analogues, for any $\K$ between $\Sc$ and $\PEA$.

Ourr construction in the second item, showing that, for $2<n<\omega$ and $k\geq 3$, $\bold S\Nr_n\CA_{n+k}$ is not atom--canonical,  has affinity
to the proofs of lemmata \cite[17.32, 17.34, 17.36]{HHbook},
exhibiting  a typical {\it blow up and blur construction} in the sense of \cite{ANT} 
for rainbow relation algebras 
proving that $\bold S\sf Ra\CA_k$,
$k\geq 6$ is not atom--canonical, but the term blow up and blur is not used there.

One might be tempted to obtain the result in this item by lifting the construction
of relation algebra analogue, using the construction of Hodkinson in \cite{AU}, which constructs $\CA$s (of every finite dimension $>2$)
from atom structures of $\sf RA$s.

But we hasten to add that this {\it cannot be done} with the construction in \cite{AU} as it stands, because the atomic $\sf RA$ does not embed in
$\sf Ra$ reduct of the atomic $\CA_n$ constructed from it, if $n\geq 6$.  Furthermore, there cannot be a lifting argument that associates to {\it any} 
given relation algebra $\R$ 
an $n$--dimensional $\B_n\in \CA_n$ 
for every $n\geq 3$, such that
$\R\subseteq \Ra\B_n$, for then if one starts with a non-representable $\R\in \sf RA$, then $\R\in \bigcap_{n\geq 3}\bold S\Ra\CA_n$ which
means that $\R\in \sf RRA$.
Since the relation algebra on which the construction in  lemmata \cite[17.32, 17.34, 17.36]{HHbook} hinges 
is not representable, 
we have decided to start from scratch blowing up and bluring 
a finite rainbow $\CA_n$. 

Nevertheless, our construction
relies on the model--theoretic construction in \cite{Hodkinson},
except that the latter is {\it not} a blow up and blur construction.\\

Now we are ready for:
\begin{theorem}\label{main}

Unless otherwise indicated, $n$ will be a finite ordinal $>2$. 

\begin{enumarab}

\item  (a)  If $m\geq n+3$,
then $\bold S_c\Nr_n\K_m^{\sf ad}$ is not elementary. In fact, any class $\bold K$,
such that $\sf CRK_n\subseteq \bold K\subseteq \bold S_c\Nr_n\K_m^{\sf ad}$,
is not elementary.  In particular, if $\K$ has a $\CA$ reduct, then  any $\bold K$ such that 
$\sf CRK_n\subseteq \bold K\subseteq \bold S_c\Nr_n\K_m$ is not elementary. For $\QA$s, we can replace $\QA_m^{\sf ad}$ 
by $\{\A\in \QA_m: {\sf s}_0^1 \text { is completely additive in }\A\}.$

\item Let  $\bold N=\{\A\in \K_n^{\sf ad}\cap \At: \Cm\At\in \Nr_n\K_{\omega}\}$.  Then $\bold N\neq \Nr_n\K_{\omega}\cap \At$, 
$\bold N\subseteq \bold S_c\Nr_n\K_{\omega}^{\sf ad}\cap \At$, $\bold N\cap {\sf Count}\subseteq {\sf CRK_n}$; the last two inclusions are proper, 
and $\bold N$ is 
not elementary.  If $\bold K_1={\sf CRK}_n\cap {\bf El}\Nr_n\K_{\omega}$ 
and $\bold K_2=\bold S_c\Nr_n\K_{\omega}\cap {\bf El}\Nr_n\K_{\omega}\cap \At$, 
then these are distinct classes  that lie 
{\it strictly} between $\Nr_n\K_{\omega}$ and $\bold S_c\Nr_n\K_{\omega}$,  but $\bold K_1$ and $\bold K_2$ 
coincide on algebras having countably many atoms. 

\item  The varieties $\bold S\Nr_n\K_m$ and $\bold L_{n,m}$ are not atom--canonical, when $m\geq n+3$.
In fact, there exists a countable, simple, atomic $\A\in \sf RPEA_n$ (so that $\A\in \bold I\sf Pes_n$), such that $\Cm\At\Rd_{sc}\A\notin \bold S\Nr_n\Sc_{n+3}$
and $\Cm\At\A\notin \bold S\Nr_n{\sf G}_{n+3}$.

\item  Let $\alpha$ be any ordinal $>2$ possibly infinite.  Then for any $r\in \omega$, and $k\geq 1$, there exists $\A_r\in \bold S\Nr_{\alpha}\QEA_{\alpha+k}$
such that $\Rd_{ca}\A_r\notin \bold S\Nr_{\alpha}\CA_{\alpha+k+1}$
and $\Pi_{r/U}\A_r\in \sf RQEA_{\alpha}$ for any non--principal ultrafilter
$U$ on $\omega$. For any $k\geq 3$, 
the variety $\bold S\Nr_{\alpha}\K_{\alpha+k}$ is not atom--canonical and for any $2<n<m<\omega$, 
the variety $\bold S\Nr_n\PEA_m$ is not finitely axiomatizable over $\bold L_{n, m}.$ 

\item For any ordinal $\alpha>2$, there exists an atomic algebra $\A\in\Nr_{\alpha}\K_{\alpha+\omega}$ that is 
not completely representable. For finite dimensions, such algebras are constructed uniformly from one relation algebra 
$\R$ possessing an $\omega$--dimensional basis. More precisely, for any cardinal $\kappa$ there exists an atomless $\C\in \QEA_{\omega}$, and an atomic relation algebra $\R$ having 
$2^{\kappa}$  many atoms, with no complete representation, such that $\R=\Ra\C$, and for all $n<\omega$, $\Nr_n\C\in {\bf El}{\sf CRK_n}$ 
(hence it is atomic),  but the  $\Df$ reduct of $\Nr_n\C$ does not have a 
complete representation $\iff$ $n>2$; for $n\leq 2$, $\Nr_n\C\in \sf CRQEA_n$. 
Hence for $2<n<\omega$, $\Nr_n\K_{\omega}^{\sf ad}\cap \At\nsubseteq {\sf CRK}_{n}.$ 

\end{enumarab}
\end{theorem}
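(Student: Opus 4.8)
The statement to prove is Theorem \ref{main}, which has five items. Let me focus on laying out a coherent proof strategy for the whole theorem, emphasizing the main construction in item (3) which the introduction flags as the heart of the paper.

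\textbf{Overall strategy.} The five items are all consequences of a handful of rainbow and blow-up-and-blur constructions combined with the game-theoretic machinery of Lemmata \ref{flat}, \ref{Thm:n} and the completeness/complete-representability results of Section 3. The plan is to treat items (1) and (5) by rainbow algebras on which \pa\ wins the finite-node game $F^{m}$ but \pe\ wins every finite-round game $G_k$, then pass to an elementarily equivalent completely representable algebra via ultrapowers plus an elementary chain argument; item (3) by a blow-up-and-blur construction starting from a \emph{finite} rainbow $\CA_n$ on which \pa\ wins $G^{n+3}_\omega$; item (2) by combining item (3) with item (5); and item (4) by lifting item (3) to the transfinite using Monk's trick, together with an ultraproduct-of-finite-algebras argument for the non-finite-axiomatizability half.

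\textbf{Key steps in order.} \emph{Step 1 (item 1).} Fix $m\geq n+3$. Build a rainbow algebra $\mathcal R=\PEA_{A,B}$ (in the $\K$ signature, taking reducts where needed) with $A$, $B$ chosen so that in the graph game $F^{m}$ \pa\ can force a red clique that cannot be consistently indexed — i.e. \pa\ wins $F^{m}$ on $\At\mathcal R$ — while \pe\ wins $G_k$ for every finite $k$ (the usual rainbow ``few pebbles, finitely many rounds'' bookkeeping). By Lemma \ref{Thm:n}, $\mathcal R\notin\bold S_c\Nr_n\K_m^{\sf ad}$. Taking an ultrapower $\prod_U\mathcal R$ and then an elementary chain, produce $\mathcal B\equiv\mathcal R$ on whose atom structure \pe\ wins $G_\omega$, hence $\mathcal B$ is completely representable, so $\mathcal B\in\sf CRK_n$. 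Then any $\bold K$ with $\sf CRK_n\subseteq\bold K\subseteq\bold S_c\Nr_n\K_m^{\sf ad}$ contains $\mathcal B$ but not $\mathcal R$, and since $\mathcal B\equiv\mathcal R$, $\bold K$ is not closed under $\equiv$, a fortiori not elementary. The $\K$-with-$\CA$-reduct and $\QA$ refinements follow because complete additivity of ${\sf s}^1_0$ is all that the soundness direction of Lemma \ref{Thm:n} actually uses.

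\emph{Step 2 (item 3).} Take a finite rainbow $\mathcal D=\PEA_{A,B}$ of dimension $n$ with $A$, $B$ finite chosen so that \pe\ wins $G^{n+3}_\omega$ on $\At\mathcal D$ (finitely many rounds, since $\mathcal D$ is finite) yet $\mathcal D\notin\bold S\Nr_n\Sc_{n+3}$ and $\mathcal D\notin\bold S\Nr_n{\sf G}_{n+3}$ — the latter via \pa\ winning the corresponding square/flat game $G^{n+3}_\omega$ by the argument of Lemma \ref{Thm:n}. Now blow up and blur $\mathcal D$: replace each atom by infinitely many copies, introduce ``blurs'' (a set of index-colourings rich enough to represent the blown-up term algebra but too coarse for $\mathcal D$ to ``see''), following the model-theoretic construction of Hodkinson \cite{Hodkinson} adapted to the rainbow signature. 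This yields a countable atomic simple $\mathcal A\in\sf RPEA_n$ (hence $\mathcal A\in\bold I\sf Pes_n$) whose \de\ completion contains a copy of $\mathcal D$, i.e. $\mathcal D\subseteq\Cm\At\mathcal A$. Since $\bold S\Nr_n\Sc_{n+3}$ and $\bold S\Nr_n{\sf G}_{n+3}$ are closed under subalgebras, $\Cm\At\Rd_{sc}\mathcal A\notin\bold S\Nr_n\Sc_{n+3}$ and $\Cm\At\mathcal A\notin\bold S\Nr_n{\sf G}_{n+3}$; since $\mathcal A\in\sf RPEA_n\subseteq\bigcap_{k\geq 3}\bold S\Nr_n\CA_{n+k}$ this gives non-atom-canonicity of $\bold S\Nr_n\K_m$ and of $\bold L_{n,m}$ for all $m\geq n+3$.

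\emph{Step 3 (item 5).} Start from a relation algebra $\mathcal R$ carrying an $\omega$-dimensional basis but no complete representation and with $2^\kappa$ atoms (a rainbow $\mathcal R$ of the appropriate cardinality, or the Hirsch-Hodkinson construction of such an $\R$); form the $\omega$-dimensional cylindric-type dilation $\mathcal C\in\QEA_\omega$ obtained from the basis, with $\Ra\mathcal C=\mathcal R$ and $\mathcal C$ atomless. Show $\Nr_n\mathcal C\in{\bf El}\,\sf CRK_n$ — it is atomic (it is a neat reduct, and neat reducts of the relevant dilations are atomic by the argument in Lemma \ref{flat}), and elementarily equivalent to a completely representable algebra by the $\equiv_{\infty,\omega}$-rigidity results analogous to Theorem \ref{SL}(3) — but for $n>2$ its $\Df$ reduct has no complete representation, since a complete representation would descend (Lemma \ref{join}(4), reading off the $\Ra$ reduct) to a complete representation of $\mathcal R$, contradiction. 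For $n\leq 2$ representability of $\Df$ reducts is unconditional, so $\Nr_n\mathcal C\in\sf CRQEA_n$. This yields $\Nr_n\K_\omega^{\sf ad}\cap\At\not\subseteq\sf CRK_n$.

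\emph{Step 4 (item 2).} Set $\bold N=\{\mathcal A\in\K_n^{\sf ad}\cap\At:\Cm\At\mathcal A\in\Nr_n\K_\omega\}$. The algebra of item (5) ($\Nr_n\mathcal C$ with $\mathcal C$ atomless, after replacing it by $\Tm\At$ of it to get into $\K_n$) witnesses $\bold N\neq\Nr_n\K_\omega\cap\At$. The inclusions $\bold N\subseteq\bold S_c\Nr_n\K_\omega^{\sf ad}\cap\At$ and $\bold N\cap{\sf Count}\subseteq\sf CRK_n$ follow from Lemma \ref{join}(1) ($\Nr_n\D\subseteq_c\D$) and Theorem \ref{complete}. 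For non-elementarity of $\bold N$: take the $\mathcal C$ and $\mathcal B$ of the rainbow argument in item (1)/item (5) with $\mathcal C\equiv\mathcal B$, $\Cm\At\mathcal C=\mathcal C$, arrange $\Cm\At\mathcal B\in\Nr_n\K_\omega$ so $\mathcal B\in\bold N$, while $\mathcal C\notin\bold N$ (else $\mathcal C$, having countably many atoms, would be completely representable, contradicting that \pa\ wins $F^{n+3}$ on it). The strict-betweenness of $\bold K_1$, $\bold K_2$ and their coincidence on countably-atomic algebras follows from Theorem \ref{complete} plus Theorem \ref{SL}(1).

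\emph{Step 5 (item 4).} Apply Monk's trick (\cite[Theorem 3.2.87]{HHbook2}) to the finite $n$-dimensional algebras of item (3) (and of \cite{t}) to obtain, for each $r$, an $\mathcal A_r\in\bold S\Nr_\alpha\QEA_{\alpha+k}$ with $\Rd_{ca}\mathcal A_r\notin\bold S\Nr_\alpha\CA_{\alpha+k+1}$ whose non-principal ultrapowers are representable; the same trick applied to the blow-up-and-blur algebra transfers non-atom-canonicity to all $\alpha>2$, and the non-finite-axiomatizability of $\bold S\Nr_n\PEA_m$ over $\bold L_{n,m}$ follows from an ultraproduct-of-finite-counterexamples argument using \cite[Section 17.4]{HHbook} in the $\K$-signature.

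\textbf{Main obstacle.} The crux is Step 2: verifying that the blow-up-and-blur of the finite rainbow $\CA_n$ simultaneously (i) produces a genuinely \emph{representable} countable atomic algebra $\mathcal A$ — which requires the blurs to be abundant enough for the step-by-step representation of the blown-up term algebra to go through, exactly as in Hodkinson's construction — and (ii) still has $\mathcal D$ sitting inside $\Cm\At\mathcal A$ as a (complete) subalgebra, so that the non-neat-embeddability of $\mathcal D$ (established by \pa's winning strategy in $G^{n+3}_\omega$ via Lemma \ref{Thm:n}) is inherited. Balancing these two requirements — enough blur to represent, little enough that the ``colour information'' distinguishing $\mathcal D$ survives into the completion — is the delicate point, and it is where the model-theoretic machinery of \cite{Hodkinson}, rather than a direct lifting of the relation-algebra construction of \cite[17.32--17.36]{HHbook}, is essential, since the relation algebra there is non-representable whereas we need the $\CA_n$ we blow up to sit (via its blur) inside a representable algebra.
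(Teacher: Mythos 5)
Your proposal follows essentially the same route as the paper: the $\Z,\N$-style rainbow with the game $F^{n+3}$ plus ultrapower/elementary-chain for item (1), blowing up and blurring the finite rainbow $\PEA_{n+1,n}$ and embedding it into the \de\ completion for item (3), the Hirsch-style relation algebra with forbidden monochromatic triangles and its $\QEA_{\omega}$ dilation for item (5), and Monk's trick for item (4), with the same key lemmata \ref{flat}, \ref{Thm:n} and \ref{complete} doing the work. The only notable divergence is your witness for $\bold N\neq \Nr_n\K_{\omega}\cap \At$ in item (2): the paper uses the algebra of Theorem \ref{SL}(1) (which is in $\bold N$ but outside $\Nr_n\K_{\omega}$), whereas the membership of (the term algebra of) the item-(5) algebra in $\bold N$ is not established, so you should substitute the paper's witness there.
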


\begin{proof}

{\bf Proof of (1)}: We use a rainbow construction inspired by the rainbow construction in \cite{r} for relation algebras.
We will show that any class $\sf L$ between ${\sf CRK}_n$
and $\bold S_c\Nr_n\K_{n+3}^{\sf ad}$ is not elementary.
From our construction, we will  get that $\sf CRDf_n$ 
is not elementary, too.

The proof consists of three parts:
\begin{itemize}
\item In the first part, we construct  a representable (atomic) rainbow--like algebra based on $\Z$ and $\N$ viewed as relational structures with usual order, 
denoted below by $\PEA_{\Z, \N}$ 
and show that \pe\ has a \ws\ in $G_k(\At\PEA_{\Z, \N})$ for each finite $k$.
\item In the second part, we show that \pa\ has a \ws\ in the $\omega$--rounded game $F^{n+3}$ played on atomic networks (coloured graphs) of  $\PEA_{\Z, \N}.$
\item In the third part, we use the \ws\ of \pe\ in a game stronger than $G_k$ denoted below by $H_k$ (for each $k\in \omega$) to obtain finer 
results.
\end{itemize}
Let $\A$ be the $n$--dimensional
rainbow cylindric algebra $R(\Gamma)$  \cite[Definition 3.6.9]{HHbook2}
where $\Gamma=\omega$, the reds  are the set ${\sf R}=\{\r_{ij}: i<j<\omega\},$ and the greens
${\sf G}=\{\g_i:1\leq i <n-1\}\cup \{\g_0^i: i\in \Z\}$, but we have a new
forbidden triple connecting two greens and one red, namely, the triple  $(\g^i_0, \g^j_0, \r_{kl})$ is forbidden, unless
${(i, k), (j, l)}$ is an order preserving partial function from
$\Z\to\N$ with the usual order $<$. Here we identify $\omega$ with $\N$.

We will refer to this new rainbow--like algebra of dimension $n$ by $\PEA_{\Z, \N}$ (not only in this item).
Adapting the technique in \cite{r},  we start by showing that \pe\ has a \ws\ in $G_k$ for all finite $k,$
hence, using ultrapowers followed by an elementary chain argument,
we obtain $\B$ such  $\PEA_{\Z,\N}\equiv \B$
and \pe\ has a \ws\ in $G$ on $\At\B$, so by \cite[Theorem 3.3.3]{HHbook2}, $\B$ is completely representable, thus by theorem \ref{complete},
we get that $\B\in \bold S_c\Nr_n\PEA_{n+3}.$

This known argument of forming ultrapowers followed by an elementary chain argument in rainbow constructions 
will be used several times below, so we briefly give the idea. For brevity let $\C=\PEA_{\Z, \N}$. Now for $k<\omega$, \pe\ has a \ws\ $\sigma_k$ in $G_k(\At\C)$.
We can assume that $\sigma_k$ is deterministic.
Let $\D$ be a non--principal ultrapower of $\C$.  Then \pe\ has a \ws\ $\sigma$ in $G(\At\D)$ --- essentially she uses
$\sigma_k$ in the $k$'th component of the ultraproduct so that at each
round of $G(\At\D)$,  \pe\ is still winning in co-finitely many
components, this suffices to show she has still not lost.

We can also assume that $\C$ is countable. If not then replace it by its subalgebra generated by the countably many atoms
(the term algebra); \ws\ s here will depend only on the atom structure, so they persist
for both players. Now one can use an
elementary chain argument to construct countable elementary
subalgebras $\C=\A_0\preceq\A_1\preceq\ldots\preceq\ldots \D$ in this manner.
One defines  $\A_{i+1}$ be a countable elementary subalgebra of $\D$
containing $\A_i$ and all elements of $\D$ that $\sigma$ selects
in a play of $G(\At\D)$ in which \pa\ only chooses elements from
$\A_i$. Now let $\B=\bigcup_{i<\omega}\A_i$.  This is a
countable elementary subalgebra of $\D$, hence necessarily atomic,  and \pe\ has a \ws\ in
$G(\At\B)$, so by \cite[Theorem 3.3.3]{HHbook2}, $\B$ is completely representable; furthermore $\B\equiv \C$.\\

{\bf (a) \pe\ s \ws\ in $G_k(\At\PEA_{\Z, \N})$ for each finite $k$:} 
Let $M_0, M_1,\ldots, M_r$, $r<k$ be the coloured graphs at the start of a play of $G_k$ just before round $r+1$.
Assume inductively, that \pe\ computes a partial function $\rho_s:\Z\to \N$, for $s\leq r$.
Here her `red moves' are further restricted by the newly added forbidden triple, so her strategy is not quite like
usual rainbow \ws's.
Inductively
for $s\leq r$:

\begin{enumerate}
\item    If $M_s(x,y)$ is green then \pa\  coloured it,

\item   $\rho_0\subseteq \ldots \rho_r\subseteq\ldots,$

\item   $\dom(\rho_s)=\{i\in \Z: \exists t\leq s, x, x_0, x_1,\ldots, x_{n-2}
\in \nodes(M_s)\\
\text { where the $x_i$'s form the base of a cone, $x$ is its appex and $i$ its tint}\}.$
The domain consists of the tints of cones created at an earlier stage,

\item  $\rho_s$ is order preserving: if $i<j$ then $\rho_s(i)<\rho_s(j)$. The range
of $\rho_s$ is widely spaced: if $i<j\in \dom\rho_s$ then $\rho_s(i)-\rho_s(j)\geq  3^{k-r}$, where $k-r$
is the number of rounds remaining in the game,

\item   For $u,v,x_0\in \nodes(M_s)$, if $M_s(u,v)=\r_{\mu,\delta}$, $M_s(x_0,u)=\g_0^i$, $M_s(x_0,v)=\g_0^j$,
where $i,j$ are tints of two cones, with base $F$ such that $x_0$ is the first element in $F$ under the induced linear order,
then $\rho_s(i)=\mu$ and $\rho_s(j)=\delta,$

\item $M_s$ is a a  coloured graph,

\item   If the base of a cone $\Delta\subseteq M_s$  with tint $i$ is coloured $y_S$, then $i\in S$.

\end{enumerate}
To start with, if \pa\ plays $a$ in the initial round then $\nodes(M_0)=n$, the
hyperedge labelling is defined by $M_0(0,1,\ldots, n-1)=a$.
In response to a cylindrifier move for some $s\leq r$, involving a $p$ cone, $p\in \Z$,
\pe\ must extend $\rho_r$ to $\rho_{r+1}$ so that $p\in \dom(\rho_{r+1})$
and the gap between elements of its range is at least $3^{k-r-1}$. Properties (3) and (4) are easily
maintained in round $r+1$. Inductively, $\rho_r$ is order preserving and the gap between its elements is
at least $3^{k-r}$, so this can be maintained in a further round.
If \pa\ chooses a green colour, or green colour whose suffix
that already belong to $\rho_r$, there would be fewer
elements to add to the domain of $\rho_{r+1}$, 
which makes it easy for \pe\ to define $\rho_{r+1}$.

Now assume that at round $r+1$, the current coloured graph is $M_r$ and that   \pa\ chose the graph $\Phi$, $|\Phi|=n$
with distinct nodes $F\cup \{\delta\}$, $\delta\notin M_r$, and  $F\subseteq M_r$ has size
$n-1$.  We can  view \pe\ s move as building a coloured graph $M^*$ extending $M_r$
whose nodes are those of $M_r$, together with the new node $\delta,$ and whose edges are edges of $M_r$ together with edges
from $\delta$ to every node of $F$.
Now \pe\ must extend $M^*$ to a complete graph $M^+$ on the same nodes and
complete the colouring giving a coloured graph $M_{r+1}=M^+$.
In particular, she has to define $M^+(\beta, \delta)$ for all nodes
$\beta\in M_r\sim F$, such that all of the above properties are maintained.
She does this as follows:

(1) If $\beta$ and $\delta$ are both apexes of two cones on $F$.
The two cones
induce the same linear ordering on $F$. We have $\beta\notin F$, but it is in $M_r$, while $\delta$ is not in $M_r$,
and  $|F|=n-1$.
By the rules of the game  \pe\ has no choice but to pick a red colour. \pe\ uses her auxiliary
function $\rho_{r+1}$ to determine the suffices, she lets $\mu=\rho_{r+1}(p)$, $b=\rho_{r+1}(q)$
where $p$ and $q$ are the tints of the two cones based on $F$,
whose apexes are $\beta$ and $\delta$. Then she sets $M^+(\beta, \delta)=\r_{\mu,b}$
maintaining property (5), and so $\delta\in \dom(\rho_{r+1})$
maintaining property (4). We check consistency to maintain property (6).

Consider a triangle of nodes $(\beta, y, \delta)$ in the graph $M_{r+1}=M^+$.
For $y\in \nodes(M_r)$, assume that the edges $M^+(y,\beta)$ and $M^+(y,\delta)$ are coloured green with
distinct superscripts $p, q$. This does not contradict
forbidden triangles of the form involving $(\g_0^p, \g_0^q, \r_{kl})$, because $\rho_{r+1}$ is constructed to be
order preserving.  Now assume that
$M_{r+1}(\beta, y)$ and $M_{r+1}(y, \delta)$ are both red (some $y\in \nodes(M_r)$).
Then \pe\ chose the red label $M_{r+1}(y,\delta)$, for $\delta$ is a new node.
We can assume that  $y$ is the apex of a cone with base $F$ in $M_r$. If not then $M_{r+1}(y, \delta)$ would be coloured
$\w$ by \pe\   and there will be no problem as shown next. All properties will be maintained.
Now $y, \beta\in M_r$, so by by property (5) we have $M_{r+1}(\beta,y)=\r_{\rho+1(p), \rho+1(t)}.$
But $\delta\notin M_r$, so by her strategy,
we have  $M_{r+1}(y,\delta)=\r_{\rho+1(t), \rho+1(q)}.$ But $M_{r+1}(\beta, \delta)=\r_{\rho+1(p), \rho+1(q)}$,
and we are done.  This is consistent triple, and so have shown that
forbidden triples of reds are avoided.\\

(2) If there is no $f\in F$ such that $M^*(\beta, f), M^*(\delta,f)$ are coloured $g_0^t$, $g_0^u$ for some $t, u$ respectively,
then \pe\ defines $M^+(\beta, \delta)$ to be $\w_0$.\\

(3)  If this is not the case, and  for some $0<i<n-1$ there is no $f\in F$ such
that $M^*(\beta, f), M^* (\delta, f)$ are both coloured $\g_i$,
she chooses $\w_i$ for  $M^+{(\beta,\delta)}$.\\

It is clear that the choices in the last two items  avoid all forbidden triangles (involving greens and whites).
She has not chosen green maintaining property (1).   For colourings $n-1$ tuples by shades of yellow, this
can be done like any (all) of \cite[p.16]{Hodkinson} or \cite[p.844]{HH}
or \cite{HHbook2}, maintaining property (7).\\

{\bf (b) \pa\ s \ws\ in $F^{n+3}$:} We show that \pa\ has a \ws\ in $F^{n+3}$, the argument used is the $\CA$ analogue of \cite[Theorem 33, Lemma 41]{r} except that now $F^{n+3}$
is played on coloured graphs.
The rough idea is that the added forbidden triple will restrict \pe\ s choices of reds and it
will allow  \pa\ to win the $\omega$ rounded game $F^{n+3}$ by pebbling
$\N$ and its mirror image $-\N=\{-a: a\in \Z\}$ using and
re-using the $n+3$ pebbles on the board, in this way, forcing \pe\
a decreasing sequence in $\N$,
so she loses in $\omega$ rounds.

In more detail, in the initial round \pa\ plays a graph $M$ with nodes $0,1,\ldots, n-1$ such that $M(i,j)=\w_0$
for $i<j<n-1$
and $M(i, n-1)=\g_i$
$(i=1, \ldots, n-2)$, $M(0, n-1)=\g_0^0$ and $M(0,1,\ldots, n-2)=\y_{\Z}$. This is a $0$ cone.
In the following move \pa\ chooses the base  of the cone $(0,\ldots, n-2)$ and demands a node $n$
with $M_2(i,n)=\g_i$ $(i=1,\ldots, n-2)$, and $M_2(0,n)=\g_0^{-1}.$
\pe\ must choose a label for the edge $(n+1,n)$ of $M_2$. It must be a red atom $r_{mk}$, $m, k\in \N$. Since $-1<0$, then by the `order preserving' condition
we have $m<k$.
In the next move \pa\ plays the face $(0, \ldots, n-2)$ and demands a node $n+1$, with $M_3(i,n)=\g_i$ $(i=1,\ldots, n-2)$,
such that  $M_3(0, n+2)=\g_0^{-2}$.
Then $M_3(n+1,n)$ and $M_3(n+1, n-1)$ both being red, the indices must match.
$M_3(n+1,n)=r_{lk}$ and $M_3(n+1, r-1)=r_{km}$ with $l<m\in \N$.

In the next round \pa\ plays $(0,1,\ldots n-2)$ and re-uses the node $2$ such that $M_4(0,2)=\g_0^{-3}$.
This time we have $M_4(n,n-1)=\r_{jl}$ for some $j<l<m\in \N$.
Continuing in this manner leads to a decreasing
sequence in $\N$.

We have proved that  any $\sf L$ between $\bold S_c\Nr_n\K_{\omega}$ and $\bold S_c\Nr_n\K_3^{\sf ad}$
is not elementary, and  that the same holds, if we replace the former class by $\sf CRK_n$, since $\B$ is in
$\bold S_c\Nr_n\K_{\omega}\cap \sf CRK_n$ by theorem \ref{complete}. 
We also have the $\Sc$ reduct of $\PEA_{\Z, \N}$ is not
in $\bold S_c\Nr_n\Sc_{n+3}^{\sf ad}$ 
by lemma \ref{Thm:n}.
By complete additivity of $\PEA$s and $\CA$s, 
we get that  for $\K$ having a $\CA$ reduct, any class $\sf L$, such that
$\sf CRK_n\subseteq \sf L\subseteq \bold S_c\Nr_n\K_{n+3}$, is not elementary. ${\sf CRDf}_n$ is not elementary because
$\C$ is completely generated by the set $\{x\in C: \Delta x\neq n\}$ hence its $\sf Df$ reduct is not completely representable 
and obviously $\Rd_{df}\C\equiv \Rd_{df}\B$ and $\Rd_{df}\B$ is completely representable.

Assume that $\Rd_{qa}\C\subseteq_c\Nr_n\D$ for some $\D\in \QA_{n+3}$ where only 
${\sf s}_0^1{}^{D}$ is completely additive.
Then every substitution operation corresponding to a replacement in $\D$, can be obtained from
a composition of finitely many substitution operations involving only one replacement 
${\sf s}_0^1$  and all the rest are substitution operations that correspond to transpositions.
To prove this, we can assume without loss that $i\neq 0, 1$. 
Then computing we get: 
$${\sf s}_{[1,i]}{\sf s}_0^1x
={\sf s}_0^i{\sf s}_{[1, i]}x\text { so }{\sf s}_{[1,i]}{\sf s}_0^1{\sf s}_{[1, i]}x={\sf s}_0^i{\sf s}_{[1, i]}{\sf s}_{[1,i]}x={\sf s}_0^ix,$$
$${\sf s}_i^0x={\sf s}_{[0,i]}{\sf s}_0^1x
={\sf s}_{[i,0]}{\sf s}_{[1, i]}{\sf s}_0^1{\sf s}_{[i,1]}x  
\text { and }{\sf s}_{[0, j]}{\sf s}_i^0x 
={\sf s}_i^j {\sf s}_{[0,j]}x.$$
Continuing the computation:
\begin{align*}
{\sf s}_i^jx&={\sf s}_{[0,j]}{\sf s}_{[0,j]}{\sf s}_i^jx\\
&={\sf s}_{[0,j]}{\sf s}_i^0{\sf s}_{[0,j]}x\\
&={\sf s}_{[i,0]} {\sf s}_{[1,i]}  {\sf s}_0^1 {\sf s}_{[1,i]} {\sf s}_{[0,j]}x.
\end{align*}
We have shown that:
$${\sf s}_i^j={\sf s}_{[0, j]}\circ {\sf s}_{[i,0]}\circ {\sf s}_{[1,i]}\circ  {\sf s}_0^1\circ {\sf s}_{[1,i]}\circ {\sf s}_{[0,j]}.$$

All such substitution operations are completely additive,
the ones involving transpositions are in fact self--conjugate,  
hence we get that $\D$ is completely additive, 
which is impossible.
\\

{\bf (c) More difficult  games for \pe\ and stronger results:} We introduce a stronger game played, denoted below by $H$, played on {\it hypernetworks} (to be defined next) 
aspiring a stronger result. This game
is the $\CA$ analogue of a game devised by Robin Hirsch for relation algebras in \cite{r}.  A few non--trivial modifications are needed. Fix $2<n<\omega$.
Let $\C=\PEA_{\Z, \N}$ and $\B$ be as above.  Using a \ws\ for \pe\ 
in $H_m$  ($H$ restricted to $m$ rounds $m<\omega$), we  further show that $\B$ can be chosen to satisfy that
(+) $\At\B\in \At\Nr_n\QEA_{\omega}$ and $\Cm\At\B\in \Nr_n\QEA_{\omega}$, and we discuss the possibility
of removing $\At$ from (+) getting the (stronger) result that $\B\in \Nr_n\QEA_{\omega}$,
witness theorem \ref{SL}, and remark \ref{remark}. In \cite{r}, $\At$ was removed from an analogous membership relation, namely, for a certain relation algebra $\R$,
from $\At\R\in \At\Ra\CA_{\omega}$, it was
inferred that  $\R\in \Ra\CA_{\omega}$ \cite[Theorem 39, 45]{r}.   
As mentioned before the proof of this theorem, this is a mistake that was corrected in \cite{r2},  by weakening the result erraneously 
proved in \cite{r}. Here we prove a result stronger than the result anounced in \cite{r2}, but still weaker than the alledged result in \cite{r}, which together with
its $\CA$ analogue, remains open for both $\RA$s and $\CA$s, though for 
$\CA$s many special cases, like that proved in the second item pf theorem \ref{SL}, are already known.
But the main result in \cite{r} in its full generality remains unproved.

For an atomic network and for  $x,y\in \nodes(N)$, we set  $x\sim y$ if
there exists $\bar{z}$ such that $N(x,y,\bar{z})\leq {\sf d}_{01}$.
Define the  equivalence relation $\sim$ over the set of all finite sequences over $\nodes(N)$ by
$\bar x\sim\bar y$ iff $|\bar x|=|\bar y|$ and $x_i\sim y_i$ for all
$i<|\bar x|$. (It can be easily checked that this indeed an equivalence relation).

A \emph{ hypernetwork} $N=(N^a, N^h)$ over an atomic polyadic equality algebra
consists of a network $N^a$
together with a labelling function for hyperlabels $N^h:\;\;^{<
\omega}\!\nodes(N)\to\Lambda$ (some arbitrary set of hyperlabels $\Lambda$)
such that for $\bar x, \bar y\in\; ^{< \omega}\!\nodes(N)$
if $\bar x\sim\bar y \Rightarrow N^h(\bar x)=N^h(\bar y).$
If $|\bar x|=k\in \N$ and $N^h(\bar x)=\lambda$, then we say that $\lambda$ is
a $k$-ary hyperlabel. $\bar x$ is referred to as a $k$--ary hyperedge, or simply a hyperedge.
We may remove the superscripts $a$ and $h$ if no confusion is likely to ensue.

The is a one--to--one correspondence between networks and coloured graphs \cite[Second half of pp 76]{HHbook2}.
If $\Gamma$ is a coloured graph, then by $N_{\Gamma}$
we mean the corresponding network.
A hyperedge $\bar{x}\in {}^{<\omega}\nodes (\Gamma)$ of length $m$ is {\it short}, if there are $y_0,\ldots, y_{n-1}$
that are nodes in $N_{\Gamma}$, such that
$N_{\Gamma}(x_i, y_0, \bar{z})\leq {\sf d}_{01}$
or $\ldots N_{\Gamma}(x_i, y_{n-1},\bar{z})\leq {\sf d}_{01}$
for all $i<|x|$, for some (equivalently for all)
$\bar{z}.$
Otherwise, it is called {\it long.}
A hypergraph $(\Gamma, l)$
is called {\it $\lambda$--neat} if $N_{\Gamma}(\bar{x})=\lambda$ for all short hyperedges.

Concerning his moves, this game with $m$ rounds, denoted by $H_m$, \pa\ can play a cylindrifier move, like before but now played on $\lambda_0$---neat hypernetworks.
Also \pa\ can play a \emph{transformation move} by picking a
previously played hypernetwork $N$ and a partial, finite surjection
$\theta:\omega\to\nodes(N)$, this move is denoted $(N, \theta)$.  \pe's
response is mandatory. She must respond with $N\theta$.
Finally, \pa\ can play an
\emph{amalgamation move} by picking previously played hypernetworks
$M, N$ such that
$M\restr {\nodes(M)\cap\nodes(N)}=N\restr {\nodes(M)\cap\nodes(N)},$
and $\nodes(M)\cap\nodes(N)\neq \emptyset$.
This move is denoted $(M,
N).$
To make a legal response, \pe\ must play a $\lambda_0$--neat
hypernetwork $L$ extending $M$ and $N$, where
$\nodes(L)=\nodes(M)\cup\nodes(N)$.

It can be shown that  \pe\ has a \ws\ in $H_{m}(\At\PEA_{\Z, \N})$ for each finite $m$.
The analogous proof for relation algebras is rather long
\cite[p.25-31]{r} so we will be sketchy. We have already dealt with the graph part.

We now have to extend his strategy dealing with $\lambda$-- neat hypernetworks, where $\lambda$ is constant label.
Labelling hyperedges is exactly like in \cite{r}.
In a play, \pe\ is required to play $\lambda$ neat hypernetworks, so she has no choice about the
hyperedges for short edges, these are labelled by $\lambda$. In response to a cylindrifier move by \pa\
extending the current hypergraph providing a new node $k$,
and a previously played coloured graph $M$
all long hyperedges not incident with $k$ necessarily keep the hyperlabel they had in $M$.
All long hyperedges incident with $k$ in $M$
are given unique hyperlabels not occurring as the hyperlabel of any other hyperedge in $M$.
In response to an amalgamation move, which involves two hypergraphs required to be amalgamated, say $(M,N)$
all long hyperedges whose range is contained in $\nodes(M)$
have hyperlabel determined by $M$, and those whose range is contained in $\nodes(N)$ have hyperlabels determined
by $N$. If $\bar{x}$ is a long hyperedge of \pe\ s response $L$ where
$\rng(\bar{x})\nsubseteq \nodes(M)$, $\nodes(N)$ then $\bar{x}$
is given
a new hyperlabel, not used in any previously played hypernetwork and not used within $L$ as the label of any hyperedge other than $\bar{x}$.
This completes her strategy for labelling hyperedges.

We turn to the remaining amalgamation moves. We need some notation and terminology taken from \cite[pp.25]{r}.
Every edge of any hypernetwork has an {\it owner \pa\ or \pe}, namely, the one who coloured this edge.
We call such edges \pa\ edges or \pe\ edges. Each long hyperedge $\bar{x}$ in a hypernetwork $N$
occurring in the play has {\it an envelope} $v_N(\bar{x})$ to be defined shortly.
In the initial round,  \pa\ plays $a\in \alpha$ and \pe\ plays $N_0$
then all irreflexive edges of $N_0$ belongs to \pa.
There are no long hyperedges in $N_0$. If in a later move,
\pa\ plays the transformation move $(N,\theta)$
and \pe\ responds with $N\theta$, then owners and envelopes are inherited in the obvious way.
If \pa\ plays a cylindrifier move requiring a new node $k$ and \pe\ responds with $M$ then the owner
in $M$ of an edge not incident with $k$ is the same as it was in $N$
and the envelope in $M$ of a long hyperedge not incident with $k$ is the same as that it was in $N$.

All  edges $(l,k)$
for $l\in \nodes(N)\sim \{k\}$ belong to \pe\ in $M$.
if $\bar{x}$ is any long hyperedge of $M$ with $k\in \rng(\bar{x})$, then $v_M(\bar{x})=\nodes(M)$.
If \pa\ plays the amalgamation move $(M,N)$ and \pe\ responds with $L$
then for $m\neq n\in \nodes(L)$ the owner in $L$ of a edge $(m,n)$ is \pa\ if it belongs to
\pa\ in either $M$ or $N$, in all other cases it belongs to \pe\ in $L$.
If $\bar{x}$ is a long hyperedge of $L$
then $v_L(\bar{x})=v_M(x)$ if $\rng(x)\subseteq \nodes(M)$, $v_L(x)=v_N(x)$ and  $v_L(x)=\nodes(M)$ otherwise.
This completes the definition of owners and envelopes.
The next claim, basically, reduces amalgamation moves to cylindrifier moves.
By induction on the number of rounds one can show:\\

{\bf Claim}:\label{r} 
\begin{itemize}

\item Let $M, N$ occur in a play of $H_m$, $0<m\in \omega.$ in which \pe\ uses the above labelling
for hyperedges. Let $\bar{x}$ be a long hyperedge of $M$ and let $\bar{y}$ be a long hyperedge of $N$.
Then for any hyperedge $\bar{x}'$ with $\rng(\bar{x}')\subseteq v_M(\bar{x})$, if $M(\bar{x}')=M(\bar{x})$
then $\bar{x}'=\bar{x}$, 

\item If $\bar{x}$ is a long hyperedge of $M$ and $\bar{y}$ is a long hyperedge of $N$, and $M(\bar{x})=N(\bar{y}),$
then there is a local isomorphism $\theta: v_M(\bar{x})\to v_N(\bar{y})$ such that
$\theta(x_i)=y_i$ for all $i<|x|$, 

\item For any $x\in \nodes(M)\sim v_M(\bar{x})$ and $S\subseteq v_M(\bar{x})$, if $(x,s)$ belong to \pa\ in $M$
for all $s\in S$, then $|S|\leq 2$.
\end{itemize}

Next,  we proceed inductively with the inductive hypothesis exactly as before, except that now each $N_r$ is a
$\lambda$ neat hypergraph. All other inductive conditions are the same (modulo this replacement). Now,
we have already dealt with hyperlabels for long and short
hyperedges, we dealt with the graph part of the first hypergraph move.
All what remains is the amalgamation move. With the above claim at hand,
this turns out an easy task to implement guided by \pe\ s
\ws\ in the graph part.

We consider an amalgamation move $(N_s,N_t)$ chosen by \pa\ in round $r+1$.
We finish off with edge labelling first.   \pe\ has to choose a colour for each edge $(i,j)$
where $i\in \nodes(N_s)\sim \nodes(N_t)$ and $j\in \nodes(N_t)\sim \nodes(N_s)$.

Let $\bar{x}$ enumerate $\nodes(N_s)\cap \nodes(N_t).$
If $\bar{x}$ is short, then there are at most two nodes in the intersection
and this case is similar to the cylindrifier move, she uses $\rho_s$ for the suffixes of the red.
If not, that is if $\bar{x}$ is long in $N_s$, then by the claim
there is a partial isomorphism $\theta: v_{N_s}(\bar{x})\to v_{N_t}(\bar{x})$ fixing
$\bar{x}$. We can assume that
$v_{N_s}(\bar{x})=\nodes(N_s)\cap \nodes (N_t)=\rng(\bar{x})=v_{N_t}(\bar{x}).$
It remains to label the edges $(i,j)\in N_{r+1}$ where $i\in \nodes(N_s)\sim \nodes (N_t)$ and $j\in \nodes(N_t)\sim \nodes(N_s)$.
Her strategy is similar to the cylindrifier move. If $i$ and $j$ are tints of the same cone she choose a red using $\rho_s$,
If not she  chooses   a white.
She never chooses a green.
Then she lets $\rho_{r+1}=\rho_r$  maintaining the inductive hypothesis.

Concerning the last property to be maintained, and that is
colouring $n-1$ types property (7). Let $M^+=N_s\cup M_s$, which is the graph whose edges are labelled according to the rules of the game,
we need to label $n-1$ hyperedges by shades of yellow.
For each tuple $\bar{a}=a_0,\ldots a_{n-2}\in {M^+}^{n-1}$, $\bar{a}\notin N_s^{n-1}\cup M_s^{n-1}$,  with no edge
$(a_i, a_j)$ coloured green (we have already labelled edges), then  \pe\ colours $\bar{a}$ by $\y_S$, where
$$S=\{i\in \Z: \text { there is an $i$ cone in $M^*$ with base $\bar{a}$}\}.$$
We have shown that  \pe\ has a \ws\ in $H_{m}$ on $\At\PEA_{\Z, \N}$ for each finite $m$.
By taking an ultrapower followed by an elementary chain argument
(like before), we get that \pe\ has a \ws\ on $\alpha=\At\B$ in the $\omega$ rounded game $H$.

By taking an ultrapower followed by an elementary chain argument
(like before), we get that \pe\ has a \ws\ on $\alpha=\At\B$ in the $\omega$ rounded game $H$.

Fix some $a\in\alpha$. Using \pe\ s \ws\ in the game of neat hypernetworks, one defines a
nested sequence $M_0\subseteq M_1,\ldots$ of neat hypernetworks
where $M_0$ is \pe's response to the initial \pa-move $a$, such that:
If $M_r$ is in the sequence and $M_r(\bar{x})\leq {\sf c}_ib$ for an atom $b$ and some $i<n$,
then there is $s\geq r$ and $d\in\nodes(N_s)$
such that  $M_s(\bar{y})=b$ such that $\bar{y}_i=d$ and $\bar{y}\equiv_i \bar{x}$.
In addition, if $M_r$ is in the sequence and $\theta$ is any partial
isomorphism of $M_r$, then there is $s\geq r$ and a
partial isomorphism $\theta^+$ of $N_s$ extending $\theta$ such that
$\rng(\theta^+)\supseteq\nodes(M_r)$.

Now let $M_a$ be the limit of this sequence, that is $M_a=\bigcup M_i$, the labelling of $n-1$ tuples of nodes
by atoms, and the hyperedges by hyperlabels done in the obvious way.
Let $L$ be the signature with one $n$-ary relation for
each $b\in\alpha=\At\B$, and one $k$--ary predicate symbol for
each $k$--ary hyperlabel $\lambda$.

We work in $L_{\infty, \omega}^n.$
For fixed $f_a\in\;^\omega\!\nodes(M_a)$, let
$U_a=\set{f\in\;^\omega\!\nodes(M_a):\set{i<\omega:g(i)\neq
f_a(i)}\mbox{ is finite}}$.
Now we  make $U_a$ into the universe an $L$ relativized structure ${\cal M}_a$ like in \cite[Theorem 29]{r} except that we allow a clause for infinitary disjunctions.
In more detail, we have 
For $b\in\alpha,\; l_0, \ldots, l_{n-1}, i_0 \ldots, i_{k-1}<\omega$, \/ $k$--ary hyperlabels $\lambda$,
and all $L$-formulas $\phi, \phi_i, \psi$,
\begin{eqnarray*}
{\cal M}_a, f\models b(x_{l_0}\ldots,  x_{n-1})&\iff&{\cal M}_a(f(l_0),\ldots,  f(l_{n-1}))=b,\\
{\cal M}_a, f\models\lambda(x_{i_0}, \ldots,x_{i_{k-1}})&\iff&  {\cal M}_a(f(i_0), \ldots,f(i_{k-1}))=\lambda,\\
{\cal M}_a, f\models\neg\phi&\iff&{\cal M}_a, f\not\models\phi,\\
{\cal M}_a, f\models (\bigvee_{i\in I} \phi_i)&\iff&(\exists i\in I)({\cal M}_a,  f\models\phi_i),\\
{\cal M}_a, f\models\exists x_i\phi&\iff& {\cal M}_a, f[i/m]\models\phi, \mbox{ some }m\in\nodes({\cal M}_a).
\end{eqnarray*}
We check that the rest of the proof in \cite{r} survives this non--trivial  change. We are now 
working with (weak) set algebras  whose semantics is induced by $L_{\infty, \omega}$ formulas in the signature $L$, 
instead of first order ones. 
For any such $L$-formula $\phi$, write $\phi^{{\cal M}_a}$ for
$\set{f\in\;^\omega\!\nodes(M_a): {\cal M}_a, f\models\phi}.$  
Let
$D_a= \set{\phi^{{\cal M}_a}:\phi\mbox{ is an $L$-formula}}$ and
$\D_a$ be the set algebra with universe $D_a$. Then $\D_a$ is locally finite, that is, the dimension set of any element in $\D_a$ is finite,
because (the two `sorts' of) formulas use only finitely many variable.
Let $\D=\bold P_{a\in \alpha} \D_a$. However, $\D$ is a generalized weak set algebra, that  might not be locally finite.
Exactly as in \cite{r}, it can be proved that $\Nr_{n}\D$ is atomic and $\alpha\cong\At\Nr_{n}\D$ --- the isomorphism
is $b \mapsto (b(x_0, x_1,\dots, x_{n-1})^{\D_a}:a\in \alpha).$
We have shown that $\At\B\in \At\Nr_n\QEA_{\omega}$.
The last part is a fairly straightforward adaptation of the technique used in \cite[Theorem 29]{r}, building a representable
$\omega$ dilation
$\D$ from   a `saturated set' of $\lambda$--neat hypernetworks, using games, replacing binary relation symbols with $n$--ary ones
and working in $L_{\infty, \omega}$ instead of $L_{\omega, \omega}$.
Now we reap the harvest of our `infinitary addition'.  Because we are working in $L_{\infty, \omega},$ infinite disjuncts exist in $\D_a$ $(a\in \alpha)$,
hence, they exist too in the dilation $\D=\bold P_{a\in\alpha}\D_a$. 
Therefore $\D$ is complete.
Now by lemma \ref{join}, we have 
$\Nr_n\D_a\subseteq _c \D_a$ from which  we get:
$$\Nr_n\D=\Nr_n(\bold  P_{a\in \alpha}\D_a)=\bold P_{a\in \alpha}\Nr_n\D_a\subseteq_c \bold P_{a\in \alpha}\D_a=\D.$$
Because $\D$ is complete, and $\Nr_n\D\subseteq_c  \D$, then $\Nr_n\D$ is complete.
But $\Nr_n\D\subseteq \Cm\At\B$ is dense  in $\Cm\At\B$,  because they share the same atom structure.
It readily follows, from the completeness of $\Nr_n\D$, that $\Nr_n\D=\Cm\At\B$, 
so that $\Cm\At\B\in \Nr_n\QEA_{\omega}$ as required.

Now the  $\sf Df$ reduct of $\C=\PEA_{\Z, \N}$ is also not completely representable, for
the set $\{x\in  \C: \Delta c \neq n\}$ completely generates $\C$,
it follows that $\Rd_{df}\C$ is also not completely
representable \cite[Proposition 4.10]{Hodkinson}. Hence, we readily infer that the class of completely representable
${\sf Df}_n$s is not elementary,  since obviously $\Rd_{df}\C\equiv \Rd_{df}\B$ and the latter is ${\sf CRDf}_n$.\\

{\bf Proof of (2):} The atomic algebra $\Rd_{\K}\B$ given in the first item of theorem \ref{SL} is outside 
$\Nr_n\K_{n+1}\supseteq \Nr_n\K_{\omega}$, 
but $\Cm\At\B\in \Nr_n\K_{\omega}$, hence $\Rd_{\K}\B\in \bold N$ and is outside 
$\Nr_n\K_{\omega}$. 

We have $\Rd_{\K}\PEA_{\Z, \N}=\Cm\At\Rd_{\K}\PEA_{\Z,\N}\notin \bold S_c\Nr_n \K_{\omega}^{\sf ad}=\bold S_c\Nr_n\K_{\omega}$, 
{\it a fortiori} it is not in $\Nr_n\K_{\omega}.$
But in  the proof of the first item of theorem \ref{main}, we have shown that 
there is a countable completely representable $\B\in \PEA_n$ such that $\B\equiv \PEA_{\Z, \N}$ and 
$\Cm\At\B\in \Nr_n\QEA_{\omega}\cap {\sf CRQEA}_n$, 
so $\bold N$ is not elementary. 

If $\A\in \bold N$, then $\Cm\At\A\in \Nr_n\K_{\omega}\subseteq \bold S_c\Nr_n\K_{\omega}^{\sf ad}$, the last is gripped, hence 
$\A\in \bold S_c\Nr_n\K_{\omega}$. If $\A\in \N\cap {\sf Count}$, then $\Cm\At\A\in \Nr_n\CA_{\omega}$ has countably
many atoms,  hence by the first item of theorem \ref{complete},  $\Cm\At\in {\sf  CRK}_n$; 
the latter is gripped, hence  $\A\in {\sf CRK}_n$.
The strictness of the two inclusions as in the statement of this item is witnessed by $\PEA_{\Z, \N}$.\\

{\bf Proof of (2).  Idea of Proof:}  
We deal only with $\CA$s in this sketch of the idea of proof. The rainbow algebra based on $A$ (the greens)  and $B$ (the reds) is denoted by $\CA_{A, B}$.
Fix $2<n<\omega$. The gist of the idea makes use of the model--theoretic techniques of Hodkinson's used in \cite{Hodkinson} conjuncted
with a blow up and blur construction in
the sense of \cite{ANT}.

In \cite{Hodkinson}, Hodkinson proves that $\sf RCA_n$ ($2<n<\omega)$
is not atom--canonical. Hodkinson's proof is semantical; ours is syntactical implemented by blowing up and blurring
a finite rainbow polyadic-equality algebra, in which `the number' of greens is $n+1$ and
the reds $n$.

The blow up and blur addition,  will allow us to refine and indeed strengthen
Hodkinson's result,  showing that for any class $\K$, such that $\bold S\Nr_n\CA_{n+3}\subseteq \K\subseteq \RCA_n$, $\K$
is  not atom--canonical.
This applies to the infinitely many varieties $\bold S\Nr_n\CA_{n+k}$, $k\geq 3$ \cite{t}.

Here the dimension $d=n+3$, which is the least $d$ for which we could prove that $\bold S\Nr_n\CA_{n+d}$ is not atom--canonical,
is determined by the number of greens, which we denote by  $\sf num(g)$ used in our rainbow construction;  ${\sf num({\sf g})}=n+1$. We have
$n+3={\sf num(\sf g)}+2$, where $2$  is the increase occurring in the number of nodes (pebbles)
resulting from  lifting  the \ws\ of \pa\ in the \ef\ forth private game between \pa\ and \pe\
on the complete irreflexive graphs
$n+1$ and $n$, to the number of nodes used by \pa\ in the graph game on
the rainbow algebra $\CA_{n+1, n}$.  

Blowing up and blurring $\CA_{n+1, n}$, by splitting the `red atoms' each into $\omega$ many, will give a representable
algebra $\A$ similar to the term algebra used by
Hodkinson; in fact the only difference is  that we use only $n+1$ greens not infinitely many.
As long as their number outfits the reds, \pa\ can win in a finite rounded game. The \de\ completion of $\At$, call it $\C$, will be
outside $\bold S\Nr_n\CA_{n+3}$, because $\CA_{n+1, n}$ is outside $\bold S\Nr_n\CA_{n+3}$
by the fact that  \pa\  has a \ws\  in $G^{n+3}$ using only $n+3$ nodes {\it without} the need to reuse them, witness lemma \ref{Thm:n}  
and $\CA_{n+1, n}$ embeds into $\C$.  So although $\A$ is ($\omega$--square) representable,  
by the first item of lemma \ref{Thm:n},  its \de\ completion $\C$ does not even have an $n+3$--flat representation.

The proof is divided into two parts:

\begin{itemize}
 \item In the first part we `blow up and blur' a finite rainbow algebra
$\D$ (denoted in the proof below by $\PEA_{n+1, n}$)
by splitting some of the atoms (the red ones),  each into infinitely many, getting a weakly representable atom structure $\At$,
and we embed $\D$ into  the complex algebra $\Cm\At$, the \de\ completion of $\Tm\At$.

\item In the second part, we show that \pa\ has a \ws\ in  $G^{n+3}(\At\D)$, hence
using lemma \ref{Thm:n}, we get that $\Rd_{sc}\D\notin
\bold S\Nr_n\Sc_{n+3}$. Here, unlike the first item,  additivity does not interfere 
because $\D$ is finite.
\end{itemize}

{\bf (a) Blowing up and blurring a finite rainbow algebra:} Take the finite rainbow algebra where the reds $\sf R$ is the complete irreflexive graph $n$, and the greens
are  ${\sf G}=\{\g_i:1\leq i<n-1\}
\cup \{\g_0^{i}: 1\leq i\leq n+1\}$, but endowed with the polyadic operations.
Call this finite rainbow $\PEA_n$, $\PEA_{n+1, n}$ and denote its atom structure by $\At_f$.
One  then splits the red atoms
of the finite rainbow algebra of $\PEA_{n+1, n}$ each into infinitely many, getting a weakly representable atom structure $\At$, that is
the term algebra $\Tm\At$ is representable. 

The resulting atom structure (after splitting),  call it $\At$, is the rainbow atom structure
that is like the atom structure of the weakly representable algebra $\A$ constructed in \cite{Hodkinson}, the only difference is that we have $n+1$ greens
and not infinitely many as is the case in \cite{Hodkinson}.

The rainbow signature \cite[definition 3.6.9]{HHbook2} $L$ now consists of $\g_i: 1\leq i<n-1$, $\g_0^i: 1\leq i\leq n+1$,
$\w_i: i<n-1$,  $\r_{kl}^t: k<l< n$, $t\in \omega$,
binary relations, and $n-1$ ary relations $\y_S$, $S\subseteq n+1$.

There is a shade of red $\rho$; the latter is a binary relation that is outside the rainbow signature,
but it labels coloured graphs during a `rainbow game'.
 \pe\ can win the rainbow $\omega$ rounded game
and build an $n$--homogeneous model $M$ by using $\rho$ when
she is forced a red;  \cite[Proposition 2.6, Lemma 2.7]{Hodkinson}. In essence $\rho$ 
can be identified with the non--principal ultrafilter consisting of all cofinite sets of red graphs, that is graphs that has at least one red rainbow label,
of $\Tm\At\A$ that is used as a colour to completely represent the canonical extension of 
$\Tm\At\A$. $\Tm\At\A$ itself is not, and cannot be, 
completely representable.

To build $M$, the class of coloured graphs is considered in
the signature $L\cup \{\rho\}$ with the two additional forbidden triples
$(\r, \rho, \rho)$ and $(\r, \r^*, \rho)$, where $\r, \r^*$ are any reds, but in forming $\At\A$ later on, $\rho$ is `deleted'. 
Let $\GG$ be the class of all models of this {\it extended rainbow first order theory}.
The extra shade of red $\rho$  will be used as a label.

The construction of $M$ is done in a step--by--step way, which can be implemented  via an 
$\omega$-- rounded game between \pa\ and \pe. The required $M$, in the expanded signature, 
will be the countable limit of the play. In this game \pe\ uses a rainbow strategy
in her choice of labels, but playing $\rho$ whenever forced a red label.
The required $M\in \GG$ is formed exactly as in \cite{Hodkinson}.

Now deleting the one available red shade, set
$W = \{ \bar{a} \in {}^n M : M \models ( \bigwedge_{i < j <n} \neg \rho(x_i, x_j))(\bar{a}) \},$
and for $\phi\in L_{\infty, \omega}^n$, let
$\phi^W=\{s\in W: M\models \phi[s]\}.$
Here $W$ is the set of all $n$--ary assignments in
$^nM$, that have no edge labelled by $\rho$.
Let $\A$  be the relativized set algebra with domain
$\{\varphi^W : \varphi \,\ \textrm {a first-order} \;\ L_n-
\textrm{formula} \}$  and unit $W$, endowed with the 
usual concrete operations. 

Classical semantics for $L_n$ rainbow formulas and their
semantics by relativizing to $W$ coincide \cite[Proposition 3.13]{Hodkinson}.
This depends essentially on \cite[Lemma 3.10]{Hodkinson} which says that any permutation $\chi$ of $\omega\cup \{\rho\}$,
$\Theta^{\chi}$ as defined in  \cite[Definitions 3.9, 3.10]{Hodkinson} is an $n$ back--and--forth system.

Hence $\A\in {\bold I}{\sf Pes}_n$, that is, $\A$ is isomorphic to a set algebra of dimension $n$, so $\A$ 
is simple, in fact its $\Df$ reduct is simple.
From now we forget about $\rho$; it will play no further role. 
We have $\At\A=\At\Tm$, and $\Tm\At\subseteq \A$, hence $\Tm\At$ is representable.
The atoms of both are the coloured graphs whose edges are not labelled by $\rho$.
These atoms are uniquely determined by $\sf MCA$ formulas in the rainbow signature of $\At$  as in
\cite[Definition 4.3]{Hodkinson}.

Let $\D=\{\phi^W: \phi\in L_{\infty, \omega}^n\}$
\cite[Definition 4.1]{Hodkinson}
with the operations defined like on $\A$ the usual way.
$\Cm\At$ is complete and, so like in \cite[Lemma 5.3]{Hodkinson}
we have an isomorphism from $\Cm\At$  to $\D$ defined
via $\Psi: X\mapsto \bigcup X$.

Now we embed $\PEA_{n+1,n}$ into $\Cm\At$.
Roughly one takes every coloured graph $\Gamma$ which is an atom in $\PEA_{n+1, n}$
to $\bigcup \phi'^M\in \D\cong \Cm\At$
where $\phi'$ is a {\it copy} of $\Gamma$. We explain what we mean by a copy.
For brevity, we write $\r$ for both $\r_{jk}$ and $\r_{jk}^l$ ($j<k<n, l\in \omega$); 
the superscripts and double indices will be clear from context.

We regard the rainbow signature of $\PEA_{n+1, n}$ as a sub-signature of the rainbow signature of $\A$, by identifying any 
red binary relation $\r$ in the former signature with $\r^0$ in the latter. (Here $\rho$ does not exist). 
The non--red rainbow symbols are the same in both signatures.
We write $M_a$ for the element of $\At_f$ or $\At$ for which
$a:n\to M$ is a surjection; $M$ a coloured graph in the signature of $\A$. 
Then $M_b\in \At$ is a copy of $M_a\in \At_f$ 
iff
\begin{itemize}
\item $a(i)=a(j)\Longleftrightarrow b(i)=b(j),$

\item $M_a(a(i), a(j))=\r\iff M_b(b(i), b(j))=\r^l, l\in \omega,$

\item $M_a(a(i), a(j))=M_b(b(i), b(j))$, if they are not red,

\item $M_a(a(k_0),\dots, a(k_{n-2}))=M_b(b(k_0),\ldots, b(k_{n-2}))$, whenever
defined.
\end{itemize}
Now the map $\Theta: \At_f\to \Cm\At$ defined by:
$$\Theta(\{M_a\})= \{ M_{b}: \text { $M_{b}$  is a copy of $M_a$}\}$$
induces an embedding from ${\sf PEA}_{n+1, n}$ into $\Cm\At$.

Consider now the map $\eta$ defined from $\At_f\to \D$ 
via $\{M_a\}\mapsto \bigcup \phi'^M,$ 
$a:n\to M$ where $M\in {\sf CRG}_f(\subseteq {\sf CRG})$ and
$\phi'$ the $\sf MCA$ formula
obtained  from $M\in {\sf CGR}$, by replacing a red edge $\r_{jk}$, if any,
by any red relation symbol 
$r_{jk}^l$, $j< k<n$, and $l\in \omega$, 
respecting consistent  red triangles. Then $\eta$ is the same embedding just defined from $\At_f\to \Cm\At$,
modulo the isomorphism $\Psi$  defined above between $\D$ and $\Cm\At$, 
in the sense that $\eta=\Theta\circ \Psi^{-1}.$\\

{\bf  (b) \pa\ s \ws\ in $F^{n+3}$ on $\At\Rd_{sc}\PEA_{n+1, n}$}: We start by  showing that \pa\ has \ws\ first in an  \ef\ forth  private game played between \pe\ and \pa\ on the complete
irreflexive graphs $n+1$ and $n$.
In each round $0,1\ldots n+1$, \pe\ places a  new pebble  on  element of $n+1$.
The edges relation in $n+1$ is irreflexive so to avoid losing
\pe\ must respond by placing the other  pebble of the pair on an unused element of $n+1$.
After $n+1$ rounds there will be no such element,
and she loses in the next round.

This game lifts to a graph game  \cite[pp.841]{HH} on $\At_f$ which  in this
case is equivalent to the graph version of $F^{n+3}$ but unlike the situation in item (2), here
\pa\ does not need to re-use pebbles, so that the game is actually $G^{n+3}$.
We show that  \pa\ can win the graph game on $\At_f$
using the standard rainbow strategy \cite{HH}. He bombards \pe\ with cones have the same base and green tints,
forcing \pa\ to play an inconsistent triple of reds whose indices do not match.

In his zeroth move, \pa\ plays a graph $\Gamma$ with
nodes $0, 1,\ldots, n-1$ and such that $\Gamma(i, j) = \w_0 (i < j <
n-1), \Gamma(i, n-1) = \g_i ( i = 1,\ldots, n-2), \Gamma(0, n-1) =
\g^0_0$, and $ \Gamma(0, 1,\ldots, n-2) = \y_{n+1}$. This is a $0$-cone
with base $\{0,\ldots, n-2\}$. In the following moves, \pa\
repeatedly chooses the face $(0, 1,\ldots, n-2)$ and demands a node
$\alpha$ with $\Phi(i,\alpha) = \g_i$, $(i=1,\ldots, n-2)$ and $\Phi(0, \alpha) = \g^\alpha_0$,
in the graph notation -- i.e., an $\alpha$-cone, $\alpha\leq n+2$,  on the same base.
\pe\ among other things, has to colour all the edges
connecting new nodes created by \pa\ as apexes of cones based on the face $(0,1,\ldots, n-2)$. By the rules of the game
the only permissible colours would be red. Using this, \pa\ can force a
win, using $n+3$ nodes.

Thus by lemma \ref{Thm:n}, $\Rd_{sc}\PEA_{n+1, n}\notin
\bold S\Nr_n\Sc_{n+3}$. Since $\PEA_{n+1,n}$ embeds into $\Cm\At\A$,
hence $\Rd_{sc}\Cm\At=\Cm\Rd_{sc}\At$
is outside $\bold S\Nr_n\Sc_{n+3}$, too.
By lemma \ref{Thm:n}, $\A$ hence $\C$, are outside $\bold S\Nr_n\sf G_{n+3}$.
Note that the diagonal free reduct of $\A$ has no complete representation, too \cite[Proposition 4.10]{AU}.

This idea of embedding a rainbow algebra in the \de\ completion of a representable term algebra obtained by blowing up and blurring the rainbow algebra, 
{\it also works} in the case of Hodkinson's construction by {\it blowing up and blurring the non--representable 
{\it infinite} rainbow  cylindric algebra  $\CA_{\omega,n}$} which has greens $\g_0^i: i<\omega$ and reds $\r_{kl}$, $k<l<n$ by splitting the reds like we did, thereby obtaining the term 
algebra $\Tm\At\A\subseteq \A$ used in \cite{Hodkinson}.
The algebra $\CA_{\omega, n}$ is embeddable in $\Cm\At\A$, with $\A$ as
defined in \cite{Hodkinson} (with infinitely many greens), 
so we readily conclude that $\Cm\At\A$ is not representable.

In fact, for any $2<n<k$, such that  $n+1\leq k\leq \omega$, blowing up and bluring the rainbow algebra $\CA_{k, n}$, 
by splitting the red atoms with double distinct indices coming from $n$, as we did, 
gives, a representable term algebra $\A$, call it ${\sf Split}(\CA_{k, n}, \r, \omega)$, short for splitting each red $\r=\r_{ij}$ $(i<j<n)$ in $\CA_{k, n}$ 
into $\omega$ many copies.  
The \de\ completion $\C_k=\Cm\At[{\sf Split}(\CA_{k, n}, \r, \omega)]$
is outside  $\bold S\Nr_n\CA_{2+k}$, 
hence by theorem \ref{flat}, $\C_k$ does not have an $2+k$--flat representation.  
When $k$ is infinite, then by $2+k$ we mean ordinal addition, 
so that $2+k=k$.

Hodkinson's proof in {\it op.cit} is different. He proves that
a certain tuple in the representation, assuming a representation exists, of $\Cm\At[{\sf Split}(\CA_{\omega,n}, \r, \omega)]$ cf., \cite[Lemma 5.7]{Hodkinson}
will be the base of infinitely many cones, that will be used to force an inconsistent triple of reds, concluding that it is  not representable.

Conversely, Hodkinson's semantical argument works here, too. Assuming the existence of only {\it an $2+k$--flat representation of 
$\Cm\At[{\sf Split}(\CA_{k, n}, \r, \omega))$} 
it can be proved that a certain tuple in this relativized representation will be the base of 
{$2+k$ many cones}, forcing an inconsistent triple of reds.  
(In such an argument  a simple combinatorial application of the Pigeon hole principle is used.)
Using the completeness theorem ` $\A$ has an  $m$--dilations $\iff$ $\A$ has an $m$--flat repesentation'  
the two approaches (proofs) 
are two sides of the same coin.\\

{\bf Proof of (3):} The proof is divided into two parts. In the first we deal with (non--finite) axiomatizability. 
And in the second
we deal with atom--canonicity for infinite dimensional algebras.\\

{\bf (a) Non--finite axiomatizability:}
At the start we need to recall a piece of notation from \cite{HMT2}. Let $m\leq n$ be ordinals and let $\rho:m\rightarrow n$ be an injection.
For any $n$--dimensional algebra $\B$ (substitution, cylindric or quasi--polyadic algebra with or without equality)
we define an $m$-dimensional algebra $\Rd^\rho\B$, with the same universe and Boolean structure as
$\B$, where the $(ij)$th diagonal of $\Rd^\rho\B$ is $\diag {\rho(i)}{\rho(j)}\in\B$
(if diagonals are included in the signature of the algebra), the $i$th cylindrifier is $\cyl{\rho(i)}$, the $i$ for $j$
replacement operator is the operator $\s^{\rho(i)}_{\rho(j)}$ of $\A$, the $ij$ transposition operator is $\s_{\rho(i)\rho(j)}$
(if included in the signature), for $i, j<m$.  It is easy to check, for $\K\in\set{\Df,\Sc, \CA, \sf QA, \sf QEA}$,
that if $\B\in\K_n$ then $\Rd^\rho\B\in\K_m$.    Also, for $\B\in\K_n$ and $x\in \B$,
we define $\Rl_x\B$ by `restriction to $x$', so the universe is the set of elements of $\B$ below $x$, where the Boolean unit is $x$,
Boolean zero and sum are not changed, Boolean complementation is relative to $x$,
and the result of applying any non-Boolean operator is obtained by using the operator for $\B$
and intersecting with $x$. It is not always the case
that $\Rl_x\B$ is a $\K_{n}$ (we can lose commutativity of cylindrifiers). However, if $x$ is {\it  rectangular}, in the sense that all $i<j<n$, 
${\sf c}_ix\cdot {\sf c}_jx=x$, then $\Rl_x\B\in \K_n$. This is used below.\\

The idea used here is the same idea used in \cite{t}; both are instances of Monk's trick. But here the result that we lift for finite dimensions is stronger than 
that obtained for finite dimensions in \cite{t}, hence
our result obtained for infinite dimensions is stronger than that obtained in \cite{t} when restricted to any
$\K$ between $\CA$ and $\QEA$. 

Let $\mathfrak{C}(m,n,r)=\Ca(H_m^{n+1}(\A(n,r), \omega)),$
consisting of all $n+1$--wide $m$--dimensional
wide $\omega$ hypernetworks \cite[Definition 12.21]{HHbook}
on $\A(n,r)$  as defined in \cite[Definition 15.2]{HHbook},  is a $\CA_m$, and it can be easily expanded
to a $\PEA_m$, since $\C(m, n, r)$ is symmetric; it allows natural polyadic operations
corresponding to transpositions.

Furthermore, for any $r\in \omega$ and $3\leq m\leq n<\omega$, $\C(m,n,r)\in \Nr_m{\sf PEA}_n$, $\Rd_{ca}\C(m,n,r)\notin {\bold  S}\Nr_m{\sf CA_{n+1}}$
and $\Pi_{r/U}\C(m,n,r)\in {\sf RPEA}_m$ by easily
adapting \cite[Corollaries 15.7, 5.10, Exercise 2, pp. 484, Remark 15.13]{HHbook}
to the $\PEA$ context.

Let  $3\leq m<n$.
Take
$$x_n=\{f\in H_n^{n+k+1}(\A(n,r), \omega); m\leq j<n\to \exists i<m, f(i,j)=\Id\}.$$
Then $x_n\in \C(n,n+k,r)$ and ${\sf c}_ix_n\cdot {\sf c}_jx_n=x_n$ for distinct $i, j<m$.
Furthermore (*),
$I_n:\C(m,m+k,r)\cong \Rl_{x_n}\Rd_m {\C}(n,n+k, r)$
via the map, defined for $S\subseteq H_m^{m+k+1}(\A(m+k,r), \omega)),$ by
$$I_n(S)=\{f\in H_n^{n+k+1}(\A(n,r), \omega):  f\upharpoonright {}^{\leq m+k+1}m\in S,$$
$$\forall j(m\leq j<n\to  \exists i<m,  f(i,j)=\Id)\}.$$
We have proved the (known) result for finite ordinals $>2$.
To lift the result to the transfinite,
we proceed like in \cite{t}, using Monk's trick.

Let $I=\{\Gamma: \Gamma\subseteq \alpha,  |\Gamma|<\omega\}$.
For each $\Gamma\in I$, let $M_{\Gamma}=\{\Delta\in I: \Gamma\subseteq \Delta\}$,
and let $F$ be an ultrafilter on $I$ such that $\forall\Gamma\in I,\; M_{\Gamma}\in F$.
For each $\Gamma\in I$, let $\rho_{\Gamma}$
be an injective function from $|\Gamma|$ onto $\Gamma.$

Let ${\C}_{\Gamma}^r$ be an algebra similar to $\QEA_{\alpha}$ such that
$\Rd^{\rho_\Gamma}{\C}_{\Gamma}^r={\C}(|\Gamma|, |\Gamma|+k,r)$
and let
$\B^r=\Pi_{\Gamma/F\in I}\C_{\Gamma}^r.$
Then we have $\B^r\in \bold \Nr_\alpha\QEA_{\alpha+k}$ and
$\Rd_{ca}\B^r\not\in \bold S\Nr_\alpha\CA_{\alpha+k+1}$.
These can be proved exactly like the proof of the first two items in \cite[Theorem 3.1]{t}. The second part uses (*).

We know 
from the finite dimensional case that $\Pi_{r/U}\Rd^{\rho_\Gamma}\C^r_\Gamma=\Pi_{r/U}\C(|\Gamma|, |\Gamma|+k, r) \subseteq \Nr_{|\Gamma|}\A_\Gamma$,
for some $\A_\Gamma\in\QEA_{|\Gamma|+\omega}=\QEA_{\omega}$.
Let $\lambda_\Gamma:\omega\rightarrow\alpha+\omega$
extend $\rho_\Gamma:|\Gamma|\rightarrow \Gamma \; (\subseteq\alpha)$ and satisfy
$\lambda_\Gamma(|\Gamma|+i)=\alpha+i$
for $i<\omega$.  Let $\F_\Gamma$ be a $\QEA_{\alpha+\omega}$ type algebra such that $\Rd^{\lambda_\Gamma}\F_\Gamma=\A_\Gamma$.
Then $\Pi_{\Gamma/F}\F_\Gamma\in\QEA_{\alpha+\omega}$, and we have proceeding like in the proof of item 3 in \cite[Theorem 3.1]{t}:
\begin{align*}
\Pi_{r/U}\B^r&=\Pi_{r/U}\Pi_{\Gamma/F}\C^r_\Gamma\\
&\cong \Pi_{\Gamma/F}\Pi_{r/U}\C^r_\Gamma\\
&\subseteq \Pi_{\Gamma/F}\Nr_{|\Gamma|}\A_\Gamma\\
&=\Pi_{\Gamma/F}\Nr_{|\Gamma|}\Rd^{\lambda_\Gamma}\F_\Gamma\\
&=\Nr_\alpha\Pi_{\Gamma/F}\F_\Gamma.
\end{align*}
But $\B=\Pi_{r/U}\B^r\in \bold S\Nr_{\alpha}\QEA_{\alpha+\omega}$
because $\F=\Pi_{\Gamma/F}\F_{\Gamma}\in \QEA_{\alpha+\omega}$
and $\B\subseteq \Nr_{\alpha}\F$, hence it is representable (here we use the neat embeding theorem).
Now it can be easily shown that 
that $\bold S\Nr_{\alpha}\CA_{\alpha+k+ l}$ is not axiomatizable
by a finite schema over  $\bold S\Nr_{\alpha}\CA_{\alpha+k}$ in the sense of \cite[Definition 5.4.12]{HMT2} for any $l\geq 1$.

In \cite[Theorem 3.1]{t}, the ultraproduct was proved to be in
$\bold S\Nr_{\alpha}\K_{\alpha+k+1}$ for $\K$ between $\Sc$ and $\QEA$,  
a strict superset of $\sf RK_{\alpha}$. In fact, the result here is `infinitely stronger'. Using a L\'os argument, we have $\sf RK_{\alpha}$
cannot be axiomatized by a finite schema over
$\bold S\Nr_{\alpha}\K_{\alpha+m}$ 
for any finite $m\geq 0$, and for any 
$\K$ between $\CA$ and $\QEA$.

Before atom--canonicity we prove the last part, since we make use of the above algebras. Fix $2<n<m<\omega$. 
Note that from our above arguments, it follows directly  that $\sf RPEA_n$ is not finitely axiomatizable over $\bold S\Nr_n\PEA_m$.

Now write $\mathfrak{C}_r$ for the $m$--dimensional polyadic equality 
$\mathfrak{C}(m,n,r)$ not to clutter notation. The parameters $m$ and $n$ will be clear from context.
Given $k$, then for any $r\geq  k^2$, we have \pe\ has a \ws\ in $G^k$
on $\A(n,r)$ \cite[Remark 15.13]{HHbook}. This implies using ultraproducts and an elementary chain argument that \pe\
has a \ws\ in the $\omega$--rounded game,
in an elementary substructure of $\Pi_{r/U}\A(n,r)/F$,
hence the former is representable, and then so is the latter because
${\sf RRA}$ is a variety. To show that $\Pi_{r/U} \mathfrak{C}^r/F$ is also representable, it suffices to note
that there is  a representation of
an algebra $\A\prec \Pi_{r/U} \A(n,r)/F$  that embeds {\it all}
$m$ dimensional hypernetworks, respecting $\equiv_i$ for all $i<m$ \cite[Exercise 2, p.484]{HHbook}.

Now \pe\ has a \ws\ in $G^k_{\omega}(\A(n,r))$ when $r\geq k^2$,
hence, $\A(n,r)$ embeds into a complete atomic relation algebra having
a $k$--dimensional relational basis by \cite[Theorem 12.25]{HHbook}. But this induces a \ws\ for \pe\ on $\At\mathfrak{C}_r$ in $k'\geq k$, $k'\in \omega$, rounds
in the basis cylindric
atomic game $G^{k'}_{\omega}(\mathfrak{C}_r)$ (with $k'$ nodes and $\omega$ rounds, defined in the second part of the proof of lemma \ref{Thm:n}) 
so that $\mathfrak{C}_r\in \sf \bold L_{m, k'}$ when $r\geq k'{^2}$.
So if $n\geq m+2$, $k\geq 3$, and $r\geq k'^2$, then we have
$\mathfrak{C}_r\in  ({\bold L}_{m,n+1}\cap \bold S\Nr_m\PEA_n)\sim \bold S\Nr_m\PEA_{n+1}$,
and $\Pi_{r/U} \mathfrak{C}_r/F\in \sf RPEA_m\supseteq \bold S\Nr_m\PEA_{n+1}$, 
and we are done.\\

{\bf (b)  Atom canonicity:} For non atom--canonicity for infinite dimensions, we use 
Monk's trick again,
lifting the construction in item (2) to the transfinite.
To simplify matters, without losing the gist of the idea, we take $\alpha$ to be the least infinite ordinal $\omega$, and we restrict our attention to cylindric algebras.
The idea for both uncountable ordinals and other algebras is essentially the same (using ultraproducts as in item (4) above).
For each finite $n\geq 3$, let $\A_n$
be an atomic countable simple representable
algebra such that  $\B_n=\Cm\At\A_n\notin \bold S\Nr_n\CA_{n+3}.$ We know by item (3) that such algebras exist.

Let $\A_n^+$ be an algebra having the signature of $\CA_{\omega}$
such that $\Rd_n\A_n^+=\A_n$.
Analogously, let $\B_n^+$ be an algebra having the signature
of $\CA_{\omega}$ such that $\Rd_n\B_n^+=\B_n$, and we require in addition that $\B_n^+=\Cm(\At\A_n^+)$.
Let $\B=\Pi_{i\in \omega}\B_i^+/F$.
As before we have
$\A=\Pi_{i\in \omega}\A_i^+/F\in \RCA_{\omega}$.
Furthermore,
\begin{align*}
\Cm\At\A&=\Cm(\At[\Pi_{i\in \omega}\A_n^+/F])\\
&=\Cm[\Pi_{i\in \omega}(\At\A_n^+)/F)]\\
&=\Pi_{i\in \omega}(\Cm(\At\A_n^+)/F)\\
&=\Pi_{i\in \omega}\B_n^+/F\\
&=\B.
\end{align*}
We also have $\B\in \CA_{\omega}$. We now show that $\B$ is outside $\bold S\Nr_{\omega}\CA_{\omega+3}$.
Assume for contradiction that $\B\in \bold S\Nr_{\omega}\CA_{\omega+3}$.
Then $\B\subseteq \Nr_{\omega}\C$ for some $\C\in \CA_{\omega+3}$.
Let $3\leq m<\omega$ and  $\lambda:m+3\rightarrow \omega+3$ be the function defined by $\lambda(i)=i$ for $i<m$
and $\lambda(m+i)=\omega+i$ for $i<3$.

Then $\Rd^\lambda\C\in \CA_{m+3}$ and $\Rd_m\B\subseteq \Rd_m\Rd^\lambda\C$.
Suppose for the time being that $\B_m$ embeds into $\Rd_m\B_{t}$, whenever $3\leq m<t<\omega,$ via
$I_t: \B_m\to \Rd_m\B_t$.
Let $\iota( b)=(I_{t}b: t\geq m )/F$ for  $b\in \B_m$.
Then $\iota$ is an injective homomorphism that embeds $\B_m$ into
$\Rd_m\B$.  By the above we have $\Rd_{m}\B\in {\bf S}\Nr_m\CA_{m+3}$, hence  $\B_m
\in \bold S\Nr_{m}\CA_{m+3}$, too
which is a contradiction, and we will be done.

Now we prove our assumption finishing the proof.
Let $t_1$ be the rainbow signature of $\B_m$ and $t_2$ be the rainbow signature of $\B_t$; then
identifying algebras with their universes,
we have, modulo isomorphisms (witness item (2)),
$\B_m=\{\phi^{W_1}: \phi\in L_{\infty, \omega}^m\}$ and $\B_t=\{\phi^{W_2}: \phi\in L_{\infty, \omega}^t\}$ where
$W_1 = \{ \bar{a} \in {}^m M : M \models ( \bigwedge_{i < j < n,
l < n} \neg \rho(x_i, x_j))(\bar{a}) \},$
and $W_2$ is defined analogously by replacing $m$ by $t$.
We can assume without loss that $t=m+1$, then we proceed inductively.

Fix $m$ as above. Let $\eta:m-1\to m$ be an injection. Define $\eta^+:m\to m+1$ by
$\eta^+\upharpoonright m-1=\eta$ and $\eta^+(m-1)$ is the unique
$a\in m\sim \rng(\eta)$. Then $\eta^+: m\to m+1$ is an injection such that $m\notin \rng(\eta^+)$.
Conversely, let $\eta: m\to m+1$ be such that $m\notin \rng(\eta)$. Let
$\eta_+=\eta\upharpoonright m-1$, Then $\eta_+:m-1\to m$
is an injection. Also $(\eta_+)^+=\eta$.

Now let $In(A,B)$ denote the set of all injective functions from $A$ to $B$.
Using the correspondence established in the last paragraph, one maps coloured graphs corresponding to 
$\sf MCA$ formulas in the signature $t_1$, that is, atoms in $\B_m$
to coloured graphs corresponding to $\sf MCA$ formulas in $\B_t$, that is atoms in $\B_t$ as follows.

Let
$\alpha=\bigwedge_{i \neq j < m} \alpha_{ij}(x_i, x_j) \land \bigwedge_{\mu\in In(m-1, m)}\eta_{\mu}(x_{\mu(0)},\ldots, x_{\mu(m-2)}),$
be an $\sf MCA$ formula in the signature $t_1$, so that for each $i,j<m$, $\alpha_{ij}$ is either $x_i=x_i$ or $R(x_i,x_j)$ a binary relation symbol in the rainbow signature,
and for each $\mu:(m-1)\to m$, $\eta_{\mu}$ is either $y_S(x_{\mu(0)},\ldots x_{\mu(m-2)})$ for some $y_S$ in the signature,
if for all distinct $i,j<m$, $\alpha_{\mu(i), \mu(j)}$ is not equality nor green, otherwise it is
$x_0=x_0$ \cite{Hodkinson}.
Then $\alpha^{W_1}$ is mapped
to $\alpha'^{W_2}$
where $\alpha'$ is the following $\sf MCA$ formula in the signature $t_2$:
$\bigwedge_{i \neq j < m} \alpha_{ij}(x_i, x_j)\land \bigwedge_{i<m} x_m=x_i \land \psi_1\land \psi_2$
where $$\psi_1= \bigwedge_{\mu^+\in In(m, m+1), m\notin \rng(\mu^+)}
\eta_{\mu}(x_{\mu^+(0)},\ldots, x_{\mu^+(m-1)})$$
and $\psi_2$ is the same conjunction except that we take those $\mu^+$ for which $m\in \rng(\mu^+)$.
The map is extended (to non--atoms) the obvious way.\\

{\bf Proof of (4):} We first prove the required for any finite $n>2$. Then using Monk's trick we lift it to the transfinite. We use 
the construction in \cite[Remark 31]{r}.\\ 

{\bf (a) Finite dimensional algebras:}
The proof in \cite{r} is only sketched. Here we give
the details since it will be used in the second part of the paper violating an omitting types theorem. The example shows that the maximality 
condition in \cite[Theorem 3.2.9]{Sayed}, reformulated  and proved in theorem \ref{Shelah} below, 
cannot be omitted.  Witness too, the last paragraph in \cite{Sayed}.

We specify the atoms and forbidden triples. The atoms are $\Id, \; \g_0^i:i<2^{\kappa}$ and $\r_j:1\leq j<
\kappa$, all symmetric.  The forbidden triples of atoms are all
permutations of $({\sf Id}, x, y)$ for $x \neq y$, \/$(\r_j, \r_j, \r_j)$ for
$1\leq j<\kappa$ and $(\g_0^i, \g_0^{i'}, \g_0^{i^*})$ for $i, i',
i^*<2^{\kappa}.$  In other words, we forbid all the monochromatic triangles.

Write $\g_0$ for $\set{\g_0^i:i<2^{\kappa}}$ and $\r_+$ for
$\set{\r_j:1\leq j<\kappa}$. Call this atom
structure $\alpha$. Let $\A$ be the term algebra on this atom
structure. We claim that $\A$, as a relation algebra,  has no complete representation.
Assume for contradiction that $\A$ has a complete representation $M$.  Let $x, y$ be points in the
representation with $M \models \r_1(x, y)$.  For each $i< 2^{\kappa}$, there is a
point $z_i \in M$ such that $M \models \g_0^i(x, z_i) \wedge \r_1(z_i, y)$.

Let $Z = \set{z_i:i<2^{\kappa}}$.  Within $Z$ there can be no edges labelled by
$\r_0$ so each edge is labelled by one of the $\kappa$ atoms in
$\r_+$.  The Erdos-Rado theorem forces the existence of three points
$z^1, z^2, z^3 \in Z$ such that $M \models \r_j(z^1, z^2) \wedge \r_j(z^2, z^3)
\wedge \r_j(z^3, z_1)$, for some single $j<\kappa$.  This contradicts the
definition of composition in $\A$ (since we avoided monochromatic triangles).
Let $S$ be the set of all atomic $\A$-networks $N$ with nodes
$\omega$ such that $\{\r_i: 1\leq i<\kappa: \r_i \text{ is the label
of an edge in $N$}\}$ is finite.
Then it is straightforward to show $S$ is an amalgamation class, that is for all $M, N
\in S$ if $M \equiv_{ij} N$ then there is $L \in S$ with
$M \equiv_i L \equiv_j N$, witness \cite[Definition 12.8]{HHbook} for notation.
We have $S$ is symmetric, that is, if $N\in S$ and $\theta:\omega\to \omega$ is a finitary function, in the sense
that $\{i\in \omega: \theta(i)\neq i\}$ is finite, then $N\theta$ is in $S$. It follows that the complex
algebra $\Ca(S)\in \QEA_\omega$.
Now let $X$ be the set of finite $\A$-networks $N$ with nodes
$\subseteq\kappa$ such that:

\begin{enumerate}
\item each edge of $N$ is either (a) an atom of
$\A$ or (b) a cofinite subset of $\r_+=\set{\r_j:1\leq j<\kappa}$ or (c)
a cofinite subset of $\g_0=\set{\g_0^i:i<2^{\kappa}}$ and

\item  $N$ is `triangle-closed', i.e. for all $l, m, n \in \nodes(N)$ we
have $N(l, n) \leq N(l,m);N(m,n)$.  That means if an edge $(l,m)$ is
labelled by $\sf Id$ then $N(l,n)= N(m,n)$ and if $N(l,m), N(m,n) \leq
\g_0$ then $N(l,n)\cdot \g_0 = 0$ and if $N(l,m)=N(m,n) =
\r_j$ (some $1\leq j<\omega$) then $N(l,n)\cdot \r_j = 0$.
\end{enumerate}

For $N\in X$ let $N'\in\Ca(S)$ be defined by
$$\set{L\in S: L(m,n)\leq
N(m,n) \mbox{ for } m,n\in \nodes(N)}.$$
For $i\in \omega$, let $N\restr{-i}$ be the subgraph of $N$ obtained by deleting the node $i$.
Then if $N\in X, \; i<\omega$ then $\cyl i N' =
(N\restr{-i})'$.
The inclusion $\cyl i N' \subseteq (N\restr{-i})'$ is clear.
Conversely, let $L \in (N\restr{-i})'$.  We seek $M \equiv_i L$ with
$M\in N'$.  This will prove that $L \in \cyl i N'$, as required.

Since $L\in S$ the set $T = \set{\r_i \notin L}$ is infinite.  Let $T$
be the disjoint union of two infinite sets $Y \cup Y'$, say.  To
define the $\omega$-network $M$ we must define the labels of all edges
involving the node $i$ (other labels are given by $M\equiv_i L$).  We
define these labels by enumerating the edges and labeling them one at
a time.  So let $j \neq i < \kappa$.  Suppose $j\in \nodes(N)$.  We
must choose $M(i,j) \leq N(i,j)$.  If $N(i,j)$ is an atom then of
course $M(i,j)=N(i,j)$.  Since $N$ is finite, this defines only
finitely many labels of $M$.  If $N(i,j)$ is a cofinite subset of
$\g_0$ then we let $M(i,j)$ be an arbitrary atom in $N(i,j)$.  And if
$N(i,j)$ is a cofinite subset of $\r_+$ then let $M(i,j)$ be an element
of $N(i,j)\cap Y$ which has not been used as the label of any edge of
$M$ which has already been chosen (possible, since at each stage only
finitely many have been chosen so far).  If $j\notin \nodes(N)$ then we
can let $M(i,j)= \r_k \in Y$ some $1\leq k < \kappa$ such that no edge of $M$
has already been labelled by $\r_k$.  It is not hard to check that each
triangle of $M$ is consistent (we have avoided all monochromatic
triangles) and clearly $M\in N'$ and $M\equiv_i L$.  The labeling avoided all
but finitely many elements of $Y'$, so $M\in S$. So
$(N\restr{-i})' \subseteq \cyl i N'$.

Now let $X' = \set{N':N\in X} \subseteq \Ca(S)$.
Then the subalgebra of $\Ca(S)$ generated by $X'$ is obtained from
$X'$ by closing under finite unions.
Clearly all these finite unions are generated by $X'$.  We must show
that the set of finite unions of $X'$ is closed under all cylindric
operations.  Closure under unions is given.  For $N'\in X$ we have
$-N' = \bigcup_{m,n\in \nodes(N)}N_{mn}'$ where $N_{mn}$ is a network
with nodes $\set{m,n}$ and labeling $N_{mn}(m,n) = -N(m,n)$. $N_{mn}$
may not belong to $X$ but it is equivalent to a union of at most finitely many
members of $X$.  The diagonal $\diag ij \in\Ca(S)$ is equal to $N'$
where $N$ is a network with nodes $\set{i,j}$ and labeling
$N(i,j)=\sf Id$.  Closure under cylindrification is given.
Let $\C$ be the subalgebra of $\Ca(S)$ generated by $X'$.
Then $\A = \Ra(\C)$.
To see why, each element of $\A$ is a union of a finite number of atoms,
possibly a co-finite subset of $\g_0$ and possibly a co-finite subset
of $\r_+$.  Clearly $\A\subseteq\Ra(\C)$.  Conversely, each element
$z \in \Ra(\C)$ is a finite union $\bigcup_{N\in F}N'$, for some
finite subset $F$ of $X$, satisfying $\cyl i z = z$, for $i > 1$. Let $i_0,
\ldots, i_k$ be an enumeration of all the nodes, other than $0$ and
$1$, that occur as nodes of networks in $F$.  Then, $\cyl
{i_0} \ldots
\cyl {i_k}z = \bigcup_{N\in F} \cyl {i_0} \ldots
\cyl {i_k}N' = \bigcup_{N\in F} (N\restr{\set{0,1}})' \in \A$.  So $\Ra(\C)
\subseteq \A$.
$\A$ is relation algebra reduct of $\C\in\QEA_\omega$ but has no
Let $n>2$. Let $\B=\Nr_n \C$. Then
$\B\in \Nr_n\QEA_{\omega}$, is atomic, but has no complete representation; in fact, because it is generated by its two dimensional elements, 
and its dimension is at least three, its
$\Df$ reduct is not completely representable \cite[Proposition 4.10]{AU}.

Now by theorem \ref{Thm:n} \pe\ has a \ws\ in $G_{\omega}(\At\B)$, hence  \pe\ has a \ws\ in $G_k(\At\B)$ for all $k<\omega$. 
Using ultrapowers and an elementary chain argument, we get
that $\B\equiv \C$, so that $\C$ is atomic and \pe\ has a \ws\ in $G_{\omega}(\At\C)$. Since $\C$ is countable
then by \cite[Theorem 3.3.3]{HHbook2} it is completely representable. We have 
proved that $\B\in {\bf El}{\sf CRK}_n$. We note that $\C$ is atomless because, as stated in \cite[Remark 31]{r}, for any $N\in X$, we can add an extra node 
extending
$N$ to $M$ such that $\emptyset\subset M'\subset N'$, so that $N'$ cannot be an atom.

If $n\leq 2$, then $\sf CRK_n$ is elementary; it coincide with the class of atomic completely additve algebras in $\sf RK_n$, 
hence in this case $\Nr_n\C$ is completely representable.
Indeed, complete additivity of an operator on an atomic $\sf BAO$ is a first order property.
If $\A$ is an atomic $\sf BAO$ and $f$ is a unary modality on $\A$,
then the formula
$y\neq 0\to \exists x(\At(x)\land f(x)\cdot  y\neq 0),$
where $\At(x)$ is the first order
formula asserting that $x$ is an atom, forces $f$ to be completely additive.
To see why, let $\A$ be an atomic algebra
with set of atoms $X$, and with a unary modality $f$ in its signature that validates the above stipulated formula.
Then it suffices to show that
$\sum_{x\in X}f(x)=1 (=f(\sum X)$).  If not, let $a=1-\sum_{x\in X} f(x)$.
Then $a\neq 0$. But then  (using this formula), there exists $x'\in X$, such that
$0\neq f(x')\cdot a
=f(x')\cdot (1 - \sum_{x\in X}f(x))
=f(x')-\sum_{x\in X}f(x),$
which is impossible.\\

{\bf (b) Infinite dimensions:} Now, using Monk's trick one more time,  we lift the previous result to the transfinite.
We use that for any finite ordinal $k$ and any infinite ordinal
$\beta$, we have (*) $\Nr_n\QEA_{\omega}=\Nr_n\QEA_{\beta}$ \cite[Theorem 2.6.35]{HMT2}.

For each finite $k\geq 3$, let $\C(k)$ be an uncountable algebra in $\Nr_k\QEA_{\omega}$ 
that is not completely representable; such algebras 
were constructed above. 
Let $I=\{\Gamma: m\subseteq \Gamma\subseteq \alpha,  |\Gamma|<\omega\}$.
Define $F$ an ultrafilter on $\wp(I)$, $\rho_{\Gamma}$ $(\Gamma\subseteq \alpha)$
as in the previous item.
 Let ${\C}_{\Gamma}$ be an algebra similar to $\QEA_{\alpha}$ such that
$\Rd^{\rho_\Gamma}{\C}_{\Gamma}={\C}(|\Gamma|)=\Nr_{|\Gamma|}\D_{\Gamma}\in \QEA_{|\Gamma|},$
where $\D_{\Gamma}\in \QEA_{\alpha+\omega}$. Such a $\D_{\Gamma}$ exists by (*). 
Let  
$\B=\Pi_{\Gamma/F}\C_{\Gamma}.$
We will prove that $\B\in \bold \Nr_\alpha\QEA_{\alpha+\omega},$ 
and that $\Rd_{sc}\B$ is not completely representable.

Let $\sigma_{\Gamma}$ be an injective function
$\omega\rightarrow(\alpha+\omega)$ such that $\rho_{\Gamma}\subseteq \sigma_{\Gamma}$
and $\sigma_{\Gamma}(|\Gamma|+i)=\alpha+i$ for every $i<\omega$. Let $\A_{\Gamma}$ be an algebra similar to a
$\QEA_{\alpha+\omega}$ such that
$\Rd^{\sigma_\Gamma}\A_{\Gamma}=\D_{\Gamma}$.
Then from \cite[Lemma 3.2]{t}, we have
$\Pi_{\Gamma/F}\A_{\Gamma}\in \QEA_{\alpha+\omega}$.
We prove that $\B=\Nr_\alpha\Pi_{\Gamma/F}\A_\Gamma$.  
For each $\Gamma\in I$ we have: 
\begin{align*}
\Rd^{\rho_{\Gamma}}\C_{\Gamma}&=\C(|\Gamma|)\\
&\cong\Nr_{|\Gamma|}\D_{\Gamma}\\
&=\Nr_{|\Gamma|}\Rd^{\sigma_{\Gamma}}\A_{\Gamma}\\
&=\Rd^{\sigma_\Gamma}\Nr_\Gamma\A_\Gamma\\
&=\Rd^{\rho_\Gamma}\Nr_\Gamma\A_\Gamma.
\end{align*}
Then from \cite[Lemma 3.3]{t}, using a standard Lo\'s argument we have:
$$\Pi_{\Gamma/F}\C_\Gamma\cong\Pi_{\Gamma/F}\Nr_\Gamma\A_\Gamma=\Nr_\alpha\Pi_{\Gamma/F}\A_\Gamma.$$
We are through with the first required.

For the second part, we proceed as follows.  Assume for contradiction
that $\Rd_{sc}\B$ (which is atomic) is completely representable, with isomorphism $f$ establishing the complete representation.
Identifying set algebras with their domain, we have $f: \Rd_{sc}\B\to \wp(V)$, where $V$ is a generalized weak space.
Let $3\leq m<\omega$. Then  $\C=\Rd_m\Rd_{sc}\B$ is completely representable, via $f\upharpoonright \C$, by noting that 
$\Rd_m\wp(V)\cong \wp(W)$ for some $W$; a generalized space of dimension $m$, and that this isomorphism preserves infinite
intersections. 

In more detail, let $U$ be the base of $\D=\wp(V)$, that is $U=\bigcup_{s\in V}\rng(s)$ and let $d\in \D$, $d\neq 0$. 
Fix $x\in d$. For each $s\in {}^nU$, let $s^+=x\upharpoonright (\alpha\sim n)\cup s$, 
then $s^+\in {}^{\alpha}U$. For $Y\in \D$, let $g_d(Y)=\{y\in {}^nU: y^+\in Y\}$. 
Then $g_d:\Rd_m\wp(V)\to \wp(^nU)$ is a homomorphism that preserves infinite intersections, and 
$g_d(d)\neq 0$. By taking the direct product of images (varying non-zero $d\in \wp(V)$), we get an embedding  
$g:\Rd_m\wp(V)\to \C$, where $\C\in \sf Gs_m$ and $g$ 
preserves infinite intersections.  Then $g\circ (f\upharpoonright \C)$ 
is the desired complete representation of $\C=\Rd_m\Rd_{sc}\B$.

We have from the finite dimensional case that $\sf R=\Ra\C$, $\C\in \QEA_{\omega}$. Then, for any $2<m<n<\omega$, the identity map 
is  a complete embedding from $\C(m)=\Nr_m\C$ to $\Rd_{m}\Nr_n\C=\Rd_m\C(n)$.
Fix $2<m<\omega$. For each $\Gamma\in I,   m\subseteq \Gamma$, let $I_{|\Gamma|}$ be a complete embedding 
from ${\C}(m)$ into $\Rd_m {\C}(|\Gamma|),$ we know that such an embedding exists. Let $\iota( b)=(I_{|\Gamma|}b: \Gamma\in I)/F$ for  $b\in \C(m)$.
Then $\iota$ is a complete embedding 
from $\C(m)$ into $\Rd_m\B=\Rd_m(\Pi_{\Gamma/F}\C_{\Gamma}$).
But $\Rd_{sc}\C(m)$ is a complete subalgebra of $\Rd_m\Rd_{sc}\B$, 
hence $\Rd_{sc}\C(m)$  
is completely representable, too which is a contradiction.

\end{proof}

\section*{Part two}

\section{ Notions of representability and completions}

Before formulating the next theorem we need some preparation.

Let $2<n<\omega$. Let $\K$ be any class between $\Sc$ and $\PEA$. 
Then $\sf SRK_n$ denotes the class of {\it strongly representable} $\sf K_n$s.  An algebra
$\A\in \sf SRK_n$ if  {\it $\A$ is atomic, completely additive and $\Cm\At\A\in \sf RK_n$.} 
Complete additivity is superfluous for $\CA$s and $\PEA$s. $\sf CRK_n$ denotes the class of 
completely representable $\K_n$s.
$\sf LK_n$ denotes the class of algebras satisfying the Lyndon condition
adapted to the given signature.

\begin{definition}
Let $\Gamma=(G, E)$ be a graph and $C$ be a non-empty set of `colours'.
\begin{enumerate}


\item A function $f:G\to C$ is called a {\it $C$--colouring} of $\Gamma$ if $(v,w)\in E\implies f(v)\neq f(w).$

\item The \textit{chromatic number} $\chi(\Gamma)$ of $\Gamma$ is the smallest $\kappa<\omega$
such that $G$ can be partitioned into $\kappa$ independent sets,
and $\infty$ if there is no such $\kappa$. 
Equivalently, $\chi(\Gamma)$ is the size of the smallest finite set $C$ such that there exists a $C$--colouring of $\Gamma$, if such a $C$ exists 
and infinity if there is no such $C$.
\end{enumerate}
\end{definition}
We will also need the following strengthening of \cite[Lemma 5.1.50]{HMT2}. In the statement of the lemma $\alpha$ is a finite ordinal. The proof follows easily from the proofs of 
\cite[Theorem 5.1.39, Lemmas 5.1.48-49, Theorems 5.1.50-51]{HMT2}.  The only change is that we need  the $E$ defined next, to be 
closed under infinite intersections.  Luckily this readily follows from its definition. 

\begin{lemma}\label{j} Assume the hypothesis of \cite[lemma 5.1.49]{HMT2}. In particular, $\A\in \CA_{\alpha}$ and  $\Rd_{df}\A$ is representable. Let 
$E=\{X\in A:  \forall x, y\in {}^{\alpha}U\& \forall i<\alpha, x_iRy_i \implies (x\in X\iff y\in X)\}.$  ($R$ is defined in the lemma cited above). 
Then $\{x\in \A: \Delta x\neq \alpha\}\subseteq E$,  $E$  closed under the operations of $\A$, $E$ is further
closed under infinite intersections, and if $\E$ is the subalgebra of $\A$ with domain $E$, then $\E\in \sf RCA_{\alpha}$. 
In particular, if $\A\in \CA_{\alpha}$, $\Rd_{df}\A$ is representable and $\A$ is generated from $\{x\in X: \Delta x\neq n\}$ allowing infinite unions, 
then $\A$ is representable. 
\end{lemma} 
\begin{proof} Let $X_j: j\in J$ be in $E$. We will show that $\bigcap_{j\in J}X_j\in E$. 
Let $x, y\in {}^{\alpha}U$ such that $x_iRy_i$ for all $i<\alpha$, and assume that $x\in \bigcap_{j\in J} X_j$. Then $x\in X_j$ for every $j\in J$. 
Now fix  $i\in J$. Then $x\in X_i$, and $X_i\in E$, so by definition of $E$ we get that  $y\in X_i$. Since $i$ was arbitrary, 
we get that $y\in \bigcap_{j\in J}X_j.$ By symmetry we are done.
\end{proof}
For a class $\bold K$ of algebras,  we let $\bold K\cap {\sf Fin}=\{\A\in \bold K: |\A|<\omega\}$
and $\bold K\cap {\sf Count}=\{\A\in \bold K: |\A|\leq \omega\}$.

\begin{definition}\label{grip} Let $2<n<\omega$.
\begin{enumarab}

\item A class $\bold K\subseteq \K_n$ is {\it gripped by its atom structures} or briefly gripped, 
if whenever $\At\A\in \At\bold K\implies \A\in \bold K$. $\bold K$ is {\it strongly gripped} if $\At\A\in {\bf El}\At\bold K\implies \A\in \bold K$.
\item An $\omega$--rounded game $\bold G$ is {\it gripping} for a class $\bold K\subseteq \K_n,$
if whenever $\alpha\in \At\bold K$ is countable, and \pe\ has a \ws\ in $\bold G(\alpha),$ then
for any $\A\in \K_n$, such that
$\At\A=\alpha\implies \A\in \bold K$. We say that $\bold G$ {\it grips} $\bold K$.
\item A class $\bold K\subseteq \sf RK_n$ is {\it finitely dense via atom structures}, or simply finitely dense in $\sf RK_n$, 
if $\At(\bold K\cap \sf Fin)=\At({\sf RK}_n\cap \sf Fin)$.
\end{enumarab}
\end{definition}

\begin{example}
\begin{enumarab}

\item By the first item of theorem \ref{SL} for $1<n<m$, the class $\Nr_n\K_m$ is not gripped, while ${\sf CRK_n}$ for $n<\omega$ is gripped, 
but is not strongly gripped by the first item of theorem \ref{main}.
\item For $2<n<\omega$, $G_{\omega}$ grips
$\sf CRA_n$, $\sf LCA_n$, $\sf SRCA_n$ and $\sf RCA_n$ but it does not grip $\Nr_n\CA_{\omega}$ by first item of theorem \ref{SL}.
To see why, using the cylindric reduct of the algebras $\A$ and $\B$ in this item, continuing to denote them by $\A$ and $\B$,
we have $\B$ is completely representable, and has countably many atoms, therefore by \cite[Theorem 3.3.3]{HHbook2}, 
\pe\ has a \ws\ in $G_{\omega}$, furthermore  $\At\B=\At\A\in \Nr_n\CA_{\omega}$, 
but $\B\notin {\bf El}\Nr_n\CA_{\omega}\supseteq \Nr_n\CA_{\omega}$. 
Note that by the second item of theorem \ref{SL} the last inclusion is proper.

\item All of the above four classes are finitely dense in $\sf RCA_n$ because for a finite algebra $\A$, 
$\A\in {\sf CRCA}_n\iff \A\in \sf RCA_n$. We will show below that $\Nr_3\K_{\omega}$ is not finitely dense in 
$\sf RK_3$.\\
\end{enumarab}
\end{example}
Concerning the result proved in item (ii), it is known that for $2<n<\omega$, 
$\bold S\Nr_n\K_{n+1}$ is atom--canonical,  because it has a finite Sahlqvist axiomatization. 
The case $\bold S\Nr_n\K_{n+2}$ remains unknown (at least to us).
Now assume that $k$ is finite $\geq 1$. 
Let $${\sf V}_k=\bold S\Nr_n\K_{n+k}.$$
Then $\sf Str V_k=\At \sf V_k\iff \sf V_k$ is atom--canonical $\iff
\sf V_k$ is closed under \de\ completions $\iff \bold S\Cm\At\sf V_k=V_k\iff {\bf HSP}\At\sf V_k=\sf V_k$ \cite[Theorem 2.88]{HHbook}.

\begin{theorem} \label{squareflat}
Let $2<n<\omega$ and $\K$ be any class between $\Sc$ and $\PEA$. Then the following hold:
\begin{enumarab}

\item  For $k\geq 3$, and $\K\in \{\CA, \PEA\}$,
$\sf V_k$ is not any of the above. 

\item  For $3\leq k<\omega$, the classes of $\K_n$s, having complete: $n+k$--flat, infinitary--$n+k$ flat, $n+k$--smooth, representations
are not elementary, for any $k\geq 3$. If $\K$ has a $\CA$ reduct, then the class of $\K_n$s 
having complete $n+k$--flat representations is not elementary.

\item For $\K$ between $\Sc$ and $\PEA$, the classes of $\K_n$s having $n+k$--relativized representations as in the previous item,  are varieties, 
that are not finitely axiomatizable for $k\geq 2$ and, together with the $\K_n$s having $n+k$--square representations
are all canonical varieties, but they are not atom--canonical for $k\geq 3$. In particular, 
$\bold L_{n,n+k}=\PEA_n\cap \bold S\Nr_n{\sf G}_{n+k}=\{\A\in \PEA_{n+k}: \A \text{ has an $n+k$--square representation}\}$ 
is a canonical variety, that is not atom--canonical for $k\geq 3$. Furthermore, $\sf RPEA_n=\bigcap_{k\in \omega}\bold L_{n, n+k}.$ 
\item For any infinite cardinal $\kappa$, for any $\K$ between $\Sc$ and $\PEA$, the class of algebras having complete 
$\kappa$--square representations is not elementary.

\item There is a finite $k>n$ such that ${\sf Str}({\sf V}_l)$ is not elementary for all $l\geq k$.

\item The classes $\sf CRK_n$, $\sf LK_n$ and ${\sf SRK}_n$ are gripped  
and their elementary closure is finitely axiomatizable $\iff n\leq 2$, and with the exception of 
$\sf LK_n$, are  not strongly gripped for $n>2$. 
The class $[\sf RK_{n}\cap \At]^{\sf ad}$ is  gripped $\iff n\leq 1$ or $n=2$ 
and $\K\in \{\CA, \PEA\}.$   If $2<n<m<\omega$, then any class $\K$ between $\CA$ and $\sf PEA$, the class 
$\bold S_c\Nr_n[\K_m\cap \At]$ is gripped,  but not strongly gripped if $m\geq n+3$.

\item Let  $\bold N=\{\A\in \K_n^{\sf ad}\cap \At: \Cm\At\in \Nr_n\K_{\omega}\}$.  Then $\bold N\neq \Nr_n\K_{\omega}\cap \At$, 
$\bold N\subseteq \bold S_c\Nr_n\K_{\omega}^{\sf ad}\cap \At$, $\bold N\cap {\sf Count}\subseteq {\sf CRK_n}$; the last two inclusions are proper, 
and $\bold N$ is 
not elementary.  Let $\bold K_1={\sf CRK}_n\cap {\bf El}\Nr_n\K_{\omega}$ 
and $\bold K_2=\bold S_c\Nr_n\K_{\omega}\cap {\bf El}\Nr_n\K_{\omega}\cap \At$. Then these are distinct classes  that lie 
{\it strictly} between $\Nr_n\K_{\omega}$ and $\bold S_c\Nr_n\K_{\omega}$,  but $\bold K_1$ and $\bold K_2$ 
coincide on algebras having countably many atoms.

\item For $1<n<m$, the class $\Nr_n\K_m$ is not gripped. There is a non--atomic game $\bold G$ that grips $\Nr_n\CA_{\omega}$

\item The class $\Nr_3\K_{\omega}$ is not finitely dense via atom structures in $\sf RK_3$.
\end{enumarab}
\end{theorem}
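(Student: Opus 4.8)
The plan is to reduce the statement to the existence of a \emph{single} finite representable $\K_3$ that is not a neat $3$--reduct, and then to exhibit one.

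First, the reduction. A finite algebra $\A$ satisfies $\A=\Tm\At\A=\Cm\At\A$, so a finite atom structure is recovered up to isomorphism from its complex algebra; hence for any isomorphism--closed class $\bold K$ we have $\At(\bold K\cap{\sf Fin})=\{\alpha:\alpha\text{ finite},\ \Cm\alpha\in\bold K\}$. Therefore $\Nr_3\K_{\omega}$ is finitely dense in $\sf RK_3$ exactly when every finite representable $\K_3$ is isomorphic to an algebra in $\Nr_3\K_{\omega}$, so it suffices to exhibit a finite $\A\in\sf RK_3$ with $\A\notin\Nr_3\K_{\omega}$. Moreover, by \cite[Theorems 2.6.34, 2.6.35]{HMT2} the chain $(\Nr_3\K_{3+k})_{k<\omega}$ is decreasing and $\Nr_3\K_{\omega}=\Nr_3(\Nr_4\K_{\omega})\subseteq\Nr_3\K_4$, so it is in fact enough to produce a finite $\A\in\sf RK_3\smallsetminus\Nr_3\K_4$.

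Second, the construction. I would build $\A$ as a finite rainbow--style $\K_3$ (or take the $\K$--expansion of a suitable finite $\CA_3$), choosing the finite parameters so that the greens do \emph{not} outfit the reds; then \pe\ has a \ws\ in $G_{\omega}(\At\A)$ --- a finite check since $\A$ is finite --- so $\A$ is completely representable and hence $\A\in\sf RK_3$ by \cite[Theorem 3.3.3]{HHbook2}. The substantive point is then to arrange the atom structure so that $\A$, although representable, fails to \emph{exhaust} the $3$--dimensional part of any of its $4$--dimensional dilations: one shows that for every $\D\in\K_4$ generated by a copy of $\A$ with $\A\subseteq\Nr_3\D$, the dilation $\D$ must contain a $3$--dimensional element not already in $\A$, whence $\A\subsetneq\Nr_3\D$ for every such $\D$ and so $\A\notin\Nr_3\K_4$. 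Uniformity over classes $\K$ between $\Sc$ and $\PEA$ then follows from the construction itself or from the reduct/expansion transfer arguments used elsewhere in the paper; alternatively one may cite a previously published finite representable $\CA_3$ that is not a neat $3$--reduct.

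The main obstacle is the non--neat--reduct half of the second step. Non--representability and the absence of an $m$--flat or $m$--square representation are detected by the atomic games $F^m$ and $G^m_{\omega}$ through lemma \ref{Thm:n}; but the property of being only a \emph{proper subalgebra} of every neat reduct --- rather than a neat reduct --- is, for a \emph{finite} algebra, invisible to those games and is not the failure of an identity in a single spare dimension (indeed a term such as ${\sf c}_3({\sf s}^1_3{\sf c}_3x\cdot{\sf s}^0_3{\sf c}_3y)$ stays inside $\Nr_3\D$ by idempotency of ${\sf c}_3$). Thus non--membership in $\Nr_3\K_4$ must be extracted from a direct analysis of the finitely many possible $4$--dimensional dilations of the concrete finite algebra at hand, i.e.\ of how $4$ variables can be adjoined over a representation of $\A$; striking the balance between keeping $\A$ representable and forcing this extra $4$--dimensional room in \emph{every} such dilation is the delicate part.
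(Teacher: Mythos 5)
Your proposal addresses only the last item of the theorem: the entire argument is aimed at showing that $\Nr_3\K_{\omega}$ is not finitely dense via atom structures in $\sf RK_3$, and nothing in it touches items (1)--(8), which concern atom--canonicity of $\sf V_k$, non--elementarity of the classes of algebras having complete $n+k$--flat, infinitary flat, smooth and square representations, the variety $\bold L_{n,n+k}$, grippedness of $\sf CRK_n$, $\sf LK_n$ and $\sf SRK_n$, the class $\bold N$, and so on. Those items rest on entirely different inputs (the rainbow algebra $\PEA_{\Z,\N}$, the blow up and blur construction, the Monk--style graph algebras $\M(\Gamma)$ and Erd\H{o}s graphs), so even a complete proof of item (9) would leave almost all of the theorem unproved.

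For item (9) itself, your first paragraph is sound: since a finite algebra coincides with the complex algebra of its atom structure, it suffices to exhibit a finite $\A\in\sf RK_3\smallsetminus\Nr_3\K_{\omega}$ (your further reduction to $\Nr_3\K_4$ is legitimate, since $\Nr_3\K_{\omega}\subseteq\Nr_3\K_4$, but it makes the target strictly harder than what is needed or what the paper proves). The second step, however, is not a proof but a declaration of intent, and you yourself isolate the fatal point: for a finite representable algebra, failure to be a \emph{full} neat reduct is invisible to the games $F^m$ and $G^m_{\omega}$ of lemma \ref{Thm:n}, and you offer no mechanism for excluding \emph{every} possible dilation. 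The paper supplies exactly this missing mechanism, by a different route: it takes $\A$ to be the finite $\PEA_3$ of $3\times 3$ basic matrices (in the sense of Maddux) over a finite integral non--permutational relation algebra $\R$, which exists by \cite[Theorem 36]{r}. Such an $\A$ is completely representable, hence in $\sf RK_3$; and if some $\B\in\Nr_3\K_{\omega}$ had $\At\B=\At\Rd_{sc}\A$, then a \ws\ for \pe\ in an $\omega$--rounded game with cylindrifier, transformation and amalgamation moves --- the amalgamation moves restricted to overlaps of at most $3$ nodes, in the style of \cite[Theorems 33, 34, 35]{r} --- would yield a complete $3$--homogeneous representation of $\A$, and with it a homogeneous representation of $\R$, contradicting non--permutationality. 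The obstruction is thus a semantic invariant of the underlying relation algebra (homogeneity of its representations) rather than a direct case analysis of four--dimensional dilations; without some such invariant, the step you call the ``delicate part'' has no visible way of being completed, so as it stands the proposal does not prove even item (9).
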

\begin{proof} 
Fix $2<n<\omega$. \\

(1) $\sf V_k$ is canonical because if $\A\subseteq \Nr_n\D$, with $\D\in \K_m$, 
then $\A^+\subseteq_c \Nr_n\D^+$. It is not atom--canonical by item (2) of theorem \ref{main}. The first required then follows. \\

(2) We first show that an existence of a complete infinitary $m$--flat representation of $\A\in \Sc_n$ 
implies the existence of an atomic  {\it completely additive} $m$--dilation. We have already dealt with atomicity in lemma \ref{Thm:n}.
Let $\A\in \Sc_n$ and let $M$ be a complete infinitary $m$--flat representation of $\A$. Then, as in the proof of the second item of lemma \ref{Thm:n}, 
one forms the dilation $\D$ with universe $\{\phi^{\M}: \phi\in \L(A)_{\infty, \omega}^m\}$ but counting in {\it equality}.

This algebra is an atomic $\CA_m$, hence it is completely additive. Now $\A$ completely embeds 
into $\Rd_{sc}\Nr_n\D=\Nr_n\Rd_{sc}\D$, via the same map denoted by $\theta$ in the proof of the first 
item of lemma \ref{Thm:n}, defined via $r\mapsto r(\bar{x})^M$, since this latter map is still atomic in this new context (using the same reasoning in the earlier  context). 
Then $\Rd_{sc}\D$ is of course still atomic and completely additive. 

Let  $\C=\PEA_{\Z, \N}\in \PEA_n$ be as in item (1) of theorem \ref{main}. 
Then $\Rd_{sc}\C\notin \bold S_c\Nr_n\Sc_{n+3}^{\sf ad}$, 
hence by item (2) in lemma \ref{flat} and the above, with the fact that, for all finite $k\geq 3$, $\bold S_c\Nr_n\K_{n+k}^{\sf ad}\subseteq \bold S_c\Nr_n\K_{n+3}^{\sf ad},$ 
$\Rd_{sc}\C$, $\Rd_{\sf K}\C$, does not have a complete $n+k$--flat representation, {\it a priori}, a complete  infinitary $n+k$--flat 
(nor an $n+k$--smooth) representation.  But $\C$ is elementary equivalent to a countable $\B$, which is completely 
representable, hence $\Rd_{\K}\B$, has all types 
of $m>n$ relativized representations. 

If $\K$ has a $\CA$ reduct then $\K_m^{\sf ad}=\K_m$ for all $m$, and so $\Rd_{\sf K}\C\notin \bold S_c\Nr_n\K_{n+3}$, hence by 
the above argument, and now using the third item of 
lemma \ref{flat}, we get that $\Rd_{\K}\A$ does not have a complete $m$--flat representation, but $\Rd_{\K}\A\equiv \Rd_{\K}\B$, 
and $\Rd_{\K}\B$ has a complete  $m$--flat representation, because it is completely representable.\\

(3) For the third required, all such classes, except for the one involving squareness, are equal by first item of lemma \ref{flat} to $\bold S\Nr_n\K_{n+k}$. 
For finite  $k\geq 1$,  the variety $\bold S\Nr_n\K_{n+k+1}$ is not finitely axiomatizable
over $\bold S\Nr_n\K_{n+k}$ \cite[Corollary 3.3]{t}. 

We now show that the class $\sf V$ of $\sf K_n$ having $m$--square representations is a canonical variety.
When $\K=\PEA$, then $\V=\bold L_{n, m}$.
We start by showing that $\sf V$ is a variety. 

Closure under subalgebras is obvious. 
Products is also easy;  given $\A_i$s, $i\in I$  ($I$ a non--empty indexing set),  
with $m$--square representations, we have 
$\A_i^+$ has a complete $m$--square representation, 
by last item in lemma \ref{Thm:n}, 
so $\A_i^+$ has an $m$--dimensional basis $\M_i$, by the proof of item (ii) in lemma \ref{Thm:n}.
We can assume that the $\A_i^{+}$s are  pairwise disjoint; accordingly
let $\C$ be the complex algebra over the atom structure of disjoint unions of those of the $\A_i^{+}$.
Then $\C$ is complete and atomic; $\bold P_{i\in I}\A_i$ embeds in $\C$, and
$\bigcup \M_i$ is the desired basis. 

Now we check closure under homomorphic images. The proof is the $\CA$ analogue of \cite[Proposition 12.31]{HHbook} 
formulated for relation algebras and attributed to Maddux.
Assume that $\A$ has an $m$--square representation 
and let $h:\A\to \B$  be surjective. We want to show that $\B$ has an $m$--square representation, too.
Again,  by last item of lemma \ref{Thm:n}, we have $\A^+$ has an $m$--dimensional basis $\M$.   
Let $K= ker(h)$  and let $z=-\sum^{\A^+}K$. (The last sum exists, because $\A^+$ is complete).
Let $\D$ be the relativization of $\A^+$ to $z.$
Then $\D$ is complete and atomic.

Define  $g :\A^+\to \D$ by $a\mapsto a\cdot z.$ Then $\B$ embeds in $\D$ via $b\mapsto g(a)$, 
for any $a\in h^{-1}[b]$.
Hence  $\{N\in \M: N(\bar{x})\in \D\}$ 
is a basis for $\D$, and since $\B$ (up to isomorphism) is a subalgebra of $\D$, 
we get that $\B$ has an $m$--square representation, because $\D$ does.

Canonicity is straightforward by the proof of lemma \ref{Thm:n}.    
If $\A$ has an $m$--square representation, then $\A^+$ has an $m$--dimensional basis, 
hence it has an $m$--square representation, too;  
and in fact a complete one.  Finally, non atom--canonicity follows directly from item (iii) of theorem \ref{main}.

If $\A\in \sf RPEA_n$, then $\A\in \bold S\Nr_n\sf PEA_{n+k}\subseteq \bold L_{n,n+k}$ for any $k\in \omega$, 
hence $\sf RPEA_n\subseteq \bigcap_{k\in \omega}\bold L_{n,n+k}$. 
Conversely,  assume that  $\A\in \bigcap_{k\in \omega}\bold L_{n,n+k}$ is countable. Then $\A$ has an $m$--square representation for every finite
$m\geq n$. Using a compactness argument, one shows that $\A$ has an $\omega$--square representation. 
One can build, in a step--by--step manner, a chain of larger and larger finite cliques (as defined in the proof of theorem \ref{complete2}) resolving more and more
`cylindrification'  defects, so that their union will have no defects. Being a clique, the union will
be a classical representation (here countability of $\A$ is essential).  Since a classical  representation is obviously $\omega$--square,
we get that $\bigcap_{k\in \omega}\bold L_{n,n+k}$ and $\sf RPEA_m$ coincide on simple countable algebras. 
But each is a variety (since $\bold L_{n,n+k}$ is a variety for each $k\in \omega$), with discriminator term ${\sf c}_{(n)}x$, 
so that they are both discriminator varieties,  agreeing on countable (simple) algebras, 
so they are equal.\\

(4)  Now let $\K$ be any class between $\Sc$ and $\PEA$, and let $\kappa$ be any infinite cardinal. 
Let $\C=\PEA_{\Z, \N}$. 
Then $\Rd_{\K}\C$ does not have a complete $\kappa$--square representation, 
for such a representation necessarily induces an (ordinary) complete representation of $\Rd_{\K}\C$, 
because $\C$ has countably many atoms. But
$\Rd_{\K}\B$ has a complete $\kappa$--square representation, 
because it is completely representable. \\

(5) We start by reproving a result of Bulian and Hodkinson \cite{bh}. We show  
that the class $\sf SRK_n$ $(2<n<\omega)$ is not elementary.
Define the polyadic operations corresponding to transpositions
using the notation in \cite[Definition 3.6.6]{HHbook2},
in the context of defining atom structures from
classes of models by
$R_{{\sf s}_{[ij]}}=\{([f], [g]): f\circ [i,j]=g\}$. This is well defined.
In particular, we can (and will) assume that $\M(\Gamma)$ defined in
\cite[Top of p.78]{HHbook2} 
has the signature of polyadic equality algebra. 

For a graph $G$, recall that $\chi(G)$ denotes its chromatic number. 
We claim that if $\chi(\Gamma)=\infty$, then $\M(\Gamma)$ is representable as a  polyadic
equality algebra and if $\chi(\Gamma)<\infty$,
then $\Rd_{df}\M(\Gamma)$ is not representable by lemma \ref{j}.

Here  networks are changed by adding a consistency condition for the substitution operators corresponding to transpositions.
But the atomic (usual) game is the same. 
In more detail, in the proof of lemma \cite[Lemma 3.64]{HHbook2}, replacing $S$ by $\At\M(\Gamma)$,
then in the possibly transfinite game  $G^{|\At\M(\Gamma)|+\omega}\M(\Gamma)$, we do not need to worry about the starting point. 
But the reponse of 
\pe\ to the current  $N\downarrow$ (using the notion in the proof of the above cited lemma) 
played by \pa\ during the play, is easily seen to be symmetric, in 
the sense that  for all $i<j<n$, it satisfies
${\sf s}_{[i,j]}M(\bar{x})=M(\bar{x}\circ [i,j]),$ 
given (inductively)  that $N\downarrow$ is symmetric.  So symmetry is preserved at successor ordinals.
Limits obviously preserves symmetry, 
too, so $\M(\Gamma)\in \sf RPEA_n$, and in fact, 
the representation defined, induces a complete 
representation of $\M(\Gamma)^+$.

The $\sf Df$ reduct of $\M(\Gamma)$ is not representable, because $\M(\Gamma)$ 
is generated   completely by $\{x\in \M(\Gamma): \Delta x\neq n\}$.This prompts (borrowing the terminology from Hirsch and Hodkinson):
\begin{itemize}
\item A graph $\Gamma$ is {\it good} if $\chi(\Gamma)=\infty$.
A Monk's algebra $\M(\Gamma)$ is {\it  good }if $\Gamma$ is good, so that it is representable as a $\sf PEA_n$.
\item A graph $\Gamma$ is {\it bad} if $\chi(\Gamma)<\infty$.
A Monk's algebra $\M(\Gamma)$ is {\it  bad} if
$\Gamma$ is bad,  so that its $\sf Df$
reduct is not representable.
\end{itemize}
Then Using Erdos' probabalistic graphs,  
one constructs a sequence of good Monk algebras converging to a bad one.
As before, for each finite
$\kappa$ there is a finite graph $G_{\kappa}$ with
$\chi(G_{\kappa})>\kappa$ and with no cycles of length $<\kappa$.
Let $\Gamma_{\kappa}$ be the disjoint union of the $G_{l}$ for
$l>\kappa$. Clearly, $\chi(\Gamma_{\kappa})=\infty$, and so
the {\it polyadic equality algebra} $\mathfrak{M}(\Gamma_{\kappa})$ is representable.
Now let $\Gamma$ be a non-principal ultraproduct
$\Pi_{D}\Gamma_{\kappa}$ for the $\Gamma_{\kappa}$s. For $\kappa<\omega$, let $\sigma_{\kappa}$ be a
first-order sentence of the signature of the graphs stating that
there are no cycles of length less than $\kappa$. Then
$\Gamma_{l}\models\sigma_{\kappa}$ for all $l\geq\kappa$. By
Lo\'{s}'s Theorem, $\Gamma\models\sigma_{\kappa}$ for all
$\kappa$. So $\Gamma$ has no cycles, and hence by $\chi(\Gamma)\leq 2$.
Thus $\mathfrak{Rd}_{df}\mathfrak{M}(\Gamma)$
is not representable.  
(The 
algebra $\Tm(\rho(\Gamma))$ 
is representable, because the class of weakly representable atom structures is elementary \cite[Theorem 2.84]{HHbook}.)\\

(6) Let $\A_i$ be the sequence of (strongly) representable $\QEA_n$ with $\Cm\At\A_i=\A_i$
such that the diagonal free reduct of $\A=\Pi_{i/U}\A_i$ is not strongly representable as constructed above.
Hence $\Rd_{\K}\Cm\At\A=\Cm\At\Rd_{\K}\A\notin \bold S\Nr_n\K_{\omega}=\bigcap_{i\in \omega}\bold S\Nr_n\K_{n+i}$, 
so $\Rd_{\K}\Cm\At\A\notin \bold S\Nr_n\K_{l}$ for all $l>k$, for some $k\in \omega$, $k>n$. 
But for each such $l$, $\Rd_{\K}\A_i\in \bold S\Nr_n\K_l\subseteq \sf RK_n$, 
so $\Rd_\K\A_i$ is a sequence of algebras such that $\Cm\At(\Rd_\K\A_i)=\Rd_{\K}\Cm\At\A_i=\Rd_\K\A_i\in \bold S\Nr_n\K_{l}$, but 
$\Cm(\At(\Rd_\K(\Pi_{i/U}\A_i))=\Cm\At(\Rd_\K\A)\notin \bold S\Nr_n\K_l$, for all $l\geq k$.\\

(7) It is easy to see that all of ${\sf CRK}_n$, $\sf LK_n$ and $\sf SRK_n$ $(n\in \omega)$ are  gripped. 
If $n\leq 2$, all classes coincide with atomic completely additive algebras in $\sf RK_n$, hence are all elementary
and finitely axiomatizable. For $n>2$, and any $\bold K$ such that $\Nr_n\K_{\omega}\subseteq \bold K\subseteq \sf RK_n$, 
${\bf El K}$ 
is not finitely axiomatizable as proved in the last item of theorem \ref{SL}.

Let $2<n<\omega$. Then $\sf LK_n$ is strongly gripped, because it is elementary and gripped, $\sf CRK_n$ is not strongly gripped, 
for using the algebras in item (1) of theorem \ref{main},
$\At\PEA_{\Z,\N}\equiv \At\B$, where $\B$ is completely representable but $\PEA_{\Z, \N}$ is not. 

${\sf SRK}_n$ is not strongly gripped, because there is a strongly representable atom structure $\beta$,
that is elementary equivalent to an atom structure $\alpha$ 
that is not strongly representable.

By the second item  of theorem \ref{main}, $\sf RK_{n}$ for any $n>2$ 
is not gripped, for there is a weakly representable $\K_{n}$ atom structure,  that is not strongly representable.
Furthermore, there  are algebras based on this atom structure that  are completely additive, so we obtain 
the analogous result for 
$[\sf RK_{\alpha}\cap \sf At]^{\sf ad}.$ In particular, both classes are not strongly gripped.
For $n=2$, and $\K\in \{\PA, \Sc\}$, the class $[\sf RK_2\cap \At]^{\sf ad}$ is not gripped, because there exists an atomic algebra $\A\in \sf RK_2$ that is not completely additive, witness
example \ref{counter}.
Hence $\A$ and $\Cm\At\A$ share the same atom structure, $\A$ is not completely additive, while $\Cm\At\A$ is completely additive. 
It is  easy to see, that for $n\leq 1$, $\K_{\alpha}\cap \At=\sf RK_{n}\cap \At=[\sf RK_{n}\cap \At]^{\sf ad}$ is gripped.
On the other hand, for $n=2$ and $\K\in \{\CA, \PEA\}$,  $[\sf RK_{2}\cap \At]^{\sf ad}=
\sf RK_2\cap \At$ is gripped,   because any  weakly representable atom structure of dimension $2$ is strongly representable
since $\sf RK_2$ is a Sahlqvist conjugated variety; in particular, 
it is completely additive.

Now fix $2<n<m<\omega$ and $\K$ between $\CA$ and $\PEA$. 
Assume that $\At\A\in \At\bold S_c\Nr_n[\K_m\cap \At]$. We want to show that $\A\in \bold S_c\Nr_n[\K_m\cap \At]$. 
We can assume without loss that $\A$ is simple. 
If not, then one works with the simple components, like in the proof of  theorem \ref{complete}.
We have 
$\Tm\At\A\in \bold S_c\Nr_n[\K_m\cap \At]$ and is also simple; furthermore $\Tm\At\A\subseteq_c \A\subseteq_c \Cm\At\A$ and $\At\A=\At\Tm\At\A=\At\Cm\At\A$. 
By lemma \ref{flat}, $\Tm\At\A$ has a complete infinitary $m$--flat representation $M$. So let $f:\Tm\At\A\to \wp(1^M)$ be an 
injective homomorphism,   where $M$ is a complete infinitary $m$--flat representation of $\Tm\At\A$.
For  $a\in \Cm\At\A$, $a=\sum\{x\in \At\A: x\leq a\}$, say,  
define $\bar{f}(a)=\bigcup \{f(x): x\in \At\A\}$. Then  $\bar{f}\upharpoonright \A\to \wp(1^M)$
gives a complete infinitary $m$--flat 
representation of $\A$, so  again by lemma \ref{flat}, 
we get that  $\A\in \bold S_c\Nr_n[\K_m\cap \At]$ and we are done. 

Assume further that 
$m\geq n+3$. Then $\At{\sf PEA}_{\Z,\N}\in {\bf El}\At\bold S_c\Nr_n[\PEA_m\cap \At]$,  
because $\At\PEA_{\Z, \N}\equiv \At\B\in \At\bold S_c\Nr_n[\QEA_{\omega}\cap \At]$,
by theorem \ref{complete}, 
but $\Rd_{ca}\PEA_{\Z, \N}\notin \bold S_c\Nr_n\CA_m$, {\it a fortiori}, it is not in $\bold S_c\Nr_n[\CA_m\cap \At]$.
We have shown that $\bold S_c\Nr_n[\K_m\cap \At]$ is not strongly gripped. \\

For the second part. Let $\B$ be the algebra in the second item of theorem \ref{SL}. Then $\B\in  \bold K_1\cap \bold K_2\sim \Nr_n\K_{\omega}$.
On the other hand, the $\B$ constructed in the first item of the same theorem, is outside ${\bf El}\Nr_n\K_{\omega}$, but is completely
representable, hence $\B\in \bold S_c\Nr_n\K_{\omega}\sim \bold K_1\cup \bold K_2$.
Furthermore, $\bold K_1\subseteq \bold K_2$ by theorem \ref{complete}, but 
$\bold K_1\neq \bold K_2$ by the $\K_n$ constructed  in the first part of the last item of theorem \ref{main} 
which is in $\bold K_1\sim \bold K_2$, because 
it is in $\Nr_n\K_{\omega}$, but lacks 
a complete  representation. The last part follows from theorem \ref{complete}.\\

(8)  The class $\Nr_n\K_m$ ($1<n<m)$ is not gripped by the first item of theorem \ref{SL}. Now we work only with $\CA$s.
We  define a non--atomic $\omega$ rounded game $\bold G$ stronger than $H$ used in item (1) of theorem \ref{main} (permitting non-atomic labels)
that grips $\Nr_n\CA_{\omega}$, in the sense of the first item of definition \ref{grip}, that is, 
if  $\B\in \CA_n$, $|\At\B|\leq \omega$, $\At\B\in \At\Nr_n\CA_{\omega}$
and \pe\ has a \ws\ in $\bold G(\B)$, then $\B\in \Nr_{n}\CA_{\omega}$.

The game is played on both $\lambda$--neat hypernetworks defined in the proof of item (1) of theorem \ref{main} and complete labelled graphs (possibly by non--atoms) 
with no consistency conditions.
The play at a certain point, like in $H$ as in item (1) of theorem \ref{main},  will be a  
$\lambda$--neat hypernetwork, call its
network part $X$, and we write
$X(\bar{x})$ for the atom that labels the edge $\bar{x}$.

{\it An $n$-- matrix} is a finite complete graph with nodes including $0, \ldots, n-1$
with all edges labelled by {\it arbitrary elements} of $\B$. No consistency properties are assumed.
\pa\ can play an arbitrary $n$--matrix  $N$, \pe\ must replace $N(0, \ldots, n-1),$  by
some element $a\in \B$. 

The final move is that \pa\ can pick a previously played $n$--matrix $N$, and pick any  tuple $\bar{x}=(x_0,\ldots, x_{n-1})$
whose atomic label is below $N(0, \ldots, n-1)$.
\pe\ must respond by extending  $X$ to $X'$ such that there is an embedding $\theta$ of $N$ into $X'$
 such that $\theta(0)=x_0\ldots , \theta(n-1)=x_{n-1}$ and for all $i_0, \ldots i_{n-1} \in N,$ we have
$$X(\theta(i_0)\ldots, \theta(i_{n-1}))\leq N(i_0,\ldots, i_{n-1}).$$
This ensures that in the limit, the constraints in
$N$ really define the element $a$.

If \pe\ has a \ws\ in $\bold G(\At\B)$, for $\B\in \CA_n$,
then the extra move on matrices, ensures that  that every $n$--dimensional element generated by
$\B$ in the dilation $\D\in \RCA_\omega$
that is constructed in the play, is an element of $\B$ so that $\B$ 
{\it exhausts all $n$--dimensional elements}
of $\D$, thus $\B\cong \Nr_n\D$ $(\D$ as in the first item of theorem \ref{main})
and so $\B\in \Nr_n\CA_{\omega}$. \\

(9) Now we  construct a finite $\A\in \sf RK_3\sim \Nr_3\K_{\omega}$. Take the finite polyadic equality
algebra $\A$ consisting of $3$ by $3$ matrices (as defined by Maddux, see e.g.  \cite[pp.221]{HMT2})
over any integral (hence simple)  non--permutational relation algebra $\R$.
Such relation algebras exist  \cite[Theorem 36]{r}. The algebra $\A$  is of course completely representable.
Let $\alpha=\At\Rd_{sc}\A$. Assume for contradiction that $\B\in \Nr_3\K_{\omega}$ and $\At\B=\alpha$.

Then we have $(*) \ \  \At\Rd_{sc}\A\in \At\Nr_3\Sc_{\omega}.$
We claim that $\Rd_{sc}\A$ has a 
$3$--homogeneous complete representation, which is impossible, because $\sf R$
does not have a homogeneous representation.

It can be shown, using (*) together with arguments similar to \cite[Theorems 33, 34, 35]{r},
that \pe\ has a \ws\ in an $\omega$--rounded game  played on $\At\A$
where  \pa\ is offered three moves, as in the proof of item (1) in theorem \ref{main}, but played on networks rather than 
$\lambda$--neat hypernetworks. 
We do not have hyperedges labelled by non--atoms. This game also has three moves, namely,
a cylindrifier move, a transformation move,  and an amalgamation move
but now in amalgamation moves, the networks \pa\  chooses can overlap only on
at most $3$ nodes.

We use the notation in the proof of lemma \ref{Thm:n} given in the first part of the paper. 
Like in lemma \ref{Thm:n} and last item in theorem \ref{complete}, the \ws\ for  \pe\ 
is to play networks $N$ with $\nodes(N)\subseteq \omega$ such that
$\widehat N\neq 0$ (here $\widehat N$ is as defined in the proof of lemma \ref{Thm:n}).

In the initial round, let \pa\ play $a\in \At$.
\pe\ plays a network $N$ with $N(0, 1, 2)=a$. Then $\widehat N=a\neq 0$.
The response to the cylindrifier move is like in the first part of lemma \ref{Thm:n}.

For transformation moves: if \pa\ plays
$(M, \theta),$ then it is easy to see that we have $\widehat{M\theta}\neq 0$,
so this response is maintained in the next round.

We write $\bar{i}$ for $\{i_0, i_1, i_2\}$, if it occurs as a set, and we write ${\sf s}_{\bar{i}}$ short for
${\sf s}_{i_0}{\sf s}_{i_1}{\sf s}_{i_{2}}$.
Finally, if  \pa\ plays the amalgamation move $(M,N)$ where $\nodes(M)\cap \nodes(N)=\{\bar{i}\}$,
then $M(\bar{i})=N(\bar{i})$.  Let $\mu=\nodes(M)\sim \bar{i}$ and $v=\nodes(N)\sim \bar{i}$
Then ${\sf c}_{(v)}\widehat{M}=\widehat{M}$ and
${\sf c}_{(u)}\widehat{N}=\widehat{M}$.
Hence
${\sf c}_{(u)}\widehat{M}={\sf s}_{\bar{i}}M(\bar{i})={\sf s}_{\bar{i}}N(\bar{i})={\sf c}_{(v)}\widehat{N}$
so ${\sf c}_{(v)}\widehat{M}=\widehat{M}\leq {\sf c}_{(u)}\widehat{M}={\sf c}_{(v)}\widehat N$
and $\widehat{M}\cdot \widehat{N}\neq 0.$  So there is $L$ with $\nodes(L)=\nodes(M)\cup \nodes(N)\neq 0$,
and $\widehat{L}\cdot x\neq 0$, where $\widehat{M}\cdot \widehat{N}=x$,
thus $\widehat{L}\cdot \widehat{M}\neq 0$ 
and consequently $\widehat{L}\restr {\nodes(M)}=\widehat{M}\restr {\nodes(M)}$,
hence $M\subseteq L$ and similarly $N\subseteq L$, so that $L$ is the required amalgam.

We can assume that $\A$ is simple, because $\sf R$ is.
Let $a\in \A$ be non-zero.
\pe\ uses her \ws\  to define a sequence
of networks $N_0\subseteq \ldots N_r\subseteq \omega$, such  that $N_0$ is \pe's response  to \pa's move choosing $a$ in the initial round.

This sequence respects the cylindrifier
move, in the sense that if $N_r(\bar{x})\leq {\sf c}_ib$ for $\bar{x}\in \nodes(N_r)$, then there
exists $N_s\supseteq N_r$ and a node $k\in \omega\sim N_r$ such that $N_s(\bar{y})=b$; where
$\bar{y}\equiv_i \bar{x}$ and $\bar{y}_i=k,$
and it also respects partial isomorphisms, in the sense that if
if $\bar{x}, \bar{y}\in \nodes(N_r)$ such that $N_r(\bar{x})=N_r(\bar{y})$,
then there is a finite surjective map $\theta$ extending $\{(x_i, y_i): i<n\}$ mapping onto $\nodes(N_r)$
such that $\dom(\theta)\cap \nodes(N_r)=\bar{y}$,
and finally there is an extension $N_s\supseteq N_r$, $N_r\theta$ (some
$s\geq r$).

Let $N_a$ be the limit of such networks.
Define the required representation $\cal N$ of $\A$ having
domain  $\bigcup_{a\in A}\nodes(N_a)$, by
$$S^{\cal N}=\{\bar{x}: \exists a\in A, \exists s\in S, N_a(\bar{x})=s\},$$
for any subset $S$ of $\At\A$.
Then this is a  complete $3$--homogeneous  representation of $\A$. We have shown that 
$\alpha\notin \At(\Nr_3\K_{\omega}\cap \sf Fin)$, and we are done.
\footnote{It seems likely that the result lifts to higher dimension by using
the lifting argument  in \cite{AU}.}
\end{proof}

\begin{theorem} \label{fail} Assume that $2<n<\omega$. Then \pa\ can win $\bold G_k(\At\Rd_{ca}\PEA_{\Z, \N})$ for $k\geq 3$.
If ${\bf El}\Nr_n\CA_{\omega}\subseteq \bold S_c\Nr_n\CA_{n+k}$ for some $k\geq 3$, then the following $\CA$ analogue of \cite[Theorem 45]{r} is 
false. Any class $\bold K$ such that $\Nr_n\CA_{\omega}\subset \bold K\subset \bold S_c\Nr_n\CA_{n+3}$ is not elementary. This  also implies that 
if  $\At$,  is a countable atom structure such that \pe\ has a \ws\ in $\bold G_k(\At)$ 
for all $k\in \omega$, then \pe\ has a \ws\ in $F^{\omega}(\At)=G_{\omega}(\At)$.\\
\end{theorem}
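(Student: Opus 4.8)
The statement of Theorem \ref{fail} is really a chain of conditional implications, so the plan is to establish each link in turn. First I would prove the unconditional part, namely that \pa\ wins $\bold G_k(\At\Rd_{ca}\PEA_{\Z,\N})$ for $k\geq 3$. This is just a book-keeping upgrade of part (b) of the proof of item (1) of theorem \ref{main}, where \pa\ was shown to win $F^{n+3}$ on $\At\PEA_{\Z,\N}$. In that argument \pa\ bombards \pe\ with cones on a fixed base, forcing her into a red clique whose indices must respect the order-preserving condition linking $\Z$ to $\N$; since he already won {\it reusing} only $n+3$ pebbles, he certainly wins the game $\bold G_{n+3}$ in which reuse is even permitted (indeed the whole point is that the cardinality twist manifests already at $n+3$ nodes). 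One only has to note that passing to the $\CA$ reduct loses nothing: the cylindrifier moves of $\bold G_k$ are exactly the moves \pa\ was already using, and the added polyadic consistency conditions only {\it constrain} \pe\ further, so \pa's winning strategy survives verbatim. This gives $\bold G_k(\At\Rd_{ca}\PEA_{\Z,\N})$ a \ws\ for \pa\ for every $k\geq 3$.

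Next I would derive the second implication. Suppose, towards the contradiction that drives the theorem, that ${\bf El}\Nr_n\CA_{\omega}\subseteq \bold S_c\Nr_n\CA_{n+k}$ for some $k\geq 3$. Recall from the proof of item (1) of theorem \ref{main} (the game $H$ and its ultrapower/elementary-chain limit) that $\PEA_{\Z,\N}\equiv \B$ for a countable completely representable $\B$ with $\Cm\At\B\in\Nr_n\QEA_{\omega}\cap{\sf CRQEA}_n$; in particular $\Rd_{ca}\B\in\Nr_n\CA_{\omega}$. On the other hand $\Rd_{ca}\PEA_{\Z,\N}=\Cm\At\Rd_{ca}\PEA_{\Z,\N}\notin\bold S_c\Nr_n\CA_{n+3}\supseteq\bold S_c\Nr_n\CA_{n+k}$, by lemma \ref{Thm:n} applied to the \ws\ for \pa\ in $\bold G_{n+3}$ just established (the algebra is not finite, but $\bold G^m_{\omega}$-winnability of \pa\ already kills complete $m$-square, hence complete $m$-flat, representations). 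Now under the assumed inclusion, $\Rd_{ca}\B\in\Nr_n\CA_{\omega}\subseteq{\bf El}\Nr_n\CA_{\omega}\subseteq\bold S_c\Nr_n\CA_{n+k}$, while $\Rd_{ca}\PEA_{\Z,\N}\equiv\Rd_{ca}\B$ but $\Rd_{ca}\PEA_{\Z,\N}\notin\bold S_c\Nr_n\CA_{n+k}$. So the class $\bold S_c\Nr_n\CA_{n+k}$ is not closed under $\equiv$, yet it sits between $\Nr_n\CA_{\omega}$ and (a fortiori) $\bold S_c\Nr_n\CA_{n+3}$; this exhibits a concrete {\it elementary} pair of algebras witnessing that {\it no} class $\bold K$ with $\Nr_n\CA_{\omega}\subset\bold K\subset\bold S_c\Nr_n\CA_{n+3}$ could be elementary --- except that $\Rd_{ca}B$ lands in $\bold K$ (it is even in $\Nr_n\CA_\omega$) while its elementary equivalent $\Rd_{ca}\PEA_{\Z,\N}$ lies outside $\bold S_c\Nr_n\CA_{n+3}\supseteq\bold K$, forcing $\bold K$ to be non-elementary. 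The delicate point to check here is that the hypothesis ${\bf El}\Nr_n\CA_{\omega}\subseteq\bold S_c\Nr_n\CA_{n+k}$ genuinely collides with the {\it claimed} (in \cite{r}) $\CA$-analogue of \cite[Theorem 45]{r}: that claimed result would say precisely that such sandwiched $\bold K$ {\it is} non-elementary in general, and what we show is that the hypothesis would make it {\it false} for a particular choice of $\bold K$, since one can then take $\bold K=\bold S_c\Nr_n\CA_{n+k}\cap {\bf El}\Nr_n\CA_\omega$ or similar, which would be both sandwiched and elementary. I would spell out which $\bold K$ to take so that the contradiction with the alleged theorem is airtight.

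For the last implication I would argue as follows. Suppose $\At$ is a countable atom structure on which \pe\ wins $\bold G_k(\At)$ for every finite $k$. By the standard ultrapower-followed-by-elementary-chain argument recalled in the proof of item (1) of theorem \ref{main}, there is a countable $\B$ with $\At\B$ elementarily equivalent to (in fact, with \pe\ winning $\bold G_\omega$ on) an atom structure into which $\At$ elementarily embeds; running this on $\At$ directly, \pe\ wins $G_\omega(\At')$ for a countable $\At'\equiv\At$, so $\Cm\At'$ (hence an algebra with atom structure $\At'$) is completely representable, and by theorem \ref{complete} it lies in $\bold S_c\Nr_n\CA_\omega\subseteq {\bf El}\Nr_n\CA_\omega\subseteq\bold S_c\Nr_n\CA_{n+k}$ under our standing hypothesis --- but I want the {\it original} $\At$. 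The cleaner route: the hypothesis ${\bf El}\Nr_n\CA_\omega\subseteq\bold S_c\Nr_n\CA_{n+k}$, combined with the fact that $\bold G_k$-winnability for all $k$ is expressible by the countable conjunction of the Lyndon conditions and hence (for a fixed countable algebra) puts $\Tm\At$ into ${\bf El}$ of completely representable algebras $\subseteq{\bf El}\Nr_n\CA_\omega$, yields $\Tm\At\in\bold S_c\Nr_n\CA_{n+k}$, and since $\At$ is countable this is enough (by lemma \ref{flat} together with theorem \ref{complete2}, noting $F^\omega=G_\omega$) to conclude \pe\ wins $G_\omega(\At)=F^\omega(\At)$. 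I would present this as: \pe\ winning all $\bold G_k$ $\Rightarrow$ (via Lyndon conditions + hypothesis) the term algebra is in $\bold S_c\Nr_n\CA_{n+k}\cap\At$ with countably many atoms $\Rightarrow$ by theorem \ref{complete} (first item) it is completely representable $\Rightarrow$ \pe\ wins $G_\omega=F^\omega$ on its atom structure, which is $\At$.

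\textbf{Main obstacle.} The genuine difficulty is not any single link but making precise the sense in which the hypothesis ${\bf El}\Nr_n\CA_\omega\subseteq\bold S_c\Nr_n\CA_{n+k}$ contradicts the {\it alleged} \cite[Theorem 45]{r} analogue: I must identify an explicit sandwiched class $\bold K$ that the alleged theorem would declare non-elementary but that the hypothesis forces to be elementary (or vice versa), and verify the strict inclusions $\Nr_n\CA_\omega\subset\bold K\subset\bold S_c\Nr_n\CA_{n+3}$ hold --- the strictness on the right uses $\Rd_{ca}\PEA_{\Z,\N}$ (or rather its absence from the bigger class) and on the left uses the first item of theorem \ref{SL}. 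A secondary subtlety is the bookkeeping in the last implication: $\bold G_k$ here is the hypernetwork game, slightly stronger than the bare Lyndon-condition game $G_k$, so I must make sure that \pe\ winning $\bold G_k$ for all $k$ still lands the term algebra inside ${\bf El}\Nr_n\CA_\omega$ and not just inside ${\bf El}{\sf CRCA}_n$ --- but this is exactly what the $H$-game analysis in item (1) of theorem \ref{main} delivers ($\At\B\in\At\Nr_n\CA_\omega$ and $\Cm\At\B\in\Nr_n\CA_\omega$), so the adaptation is routine once that correspondence is invoked.
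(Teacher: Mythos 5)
There are genuine gaps, and the most basic one is that you have misidentified the game $\bold G_k$. It is not $F^{n+3}$, nor the Lyndon game $G_k$, nor the hypernetwork game $H_k$: it is the $k$--round truncation of the non--atomic game $\bold G$ introduced in item (8) of theorem \ref{squareflat}, whose distinguishing feature is that \pa\ may play $n$--matrices labelled by \emph{arbitrary} (non--atomic) elements and later force \pe\ to realize them; that extra move is precisely what makes $\bold G$ grip $\Nr_n\CA_{\omega}$. Your plan of transferring \pa's \ws\ from $F^{n+3}$ cannot work: that strategy needs $\omega$ rounds (it forces an infinite descending chain in $\N$), and the paper proves that \pe\ \emph{wins} $G_k$ and even $H_k$ on $\At\PEA_{\Z,\N}$ for every finite $k$, so no finite--round win is available to \pa\ from cylindrifier moves alone; any proof of the first assertion must exploit the matrix move. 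A second, more damaging error is the inference ``$\Cm\At\B\in\Nr_n\QEA_{\omega}$, in particular $\Rd_{ca}\B\in\Nr_n\CA_{\omega}$''. The class $\Nr_n\CA_{\omega}$ is not closed under dense or complete subalgebras (item (5) of theorem \ref{SL}), and whether $\B\in\Nr_n\QEA_{\omega}$ is exactly the question left open in item (1)(c) of theorem \ref{main} and in \cite{r2}. Worse, if your claim were true then $\Rd_{ca}\PEA_{\Z,\N}\equiv\Rd_{ca}\B$ would place $\Rd_{ca}\PEA_{\Z,\N}$ in ${\bf El}\Nr_n\CA_{\omega}\sim \bold S_c\Nr_n\CA_{n+3}$, outright refuting the hypothesis ${\bf El}\Nr_n\CA_{\omega}\subseteq\bold S_c\Nr_n\CA_{n+k}$ and making the conditional vacuous. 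Note also that the middle of your second paragraph, exhibiting an elementarily equivalent pair straddling a sandwiched $\bold K$, is an argument \emph{for} the alleged analogue of \cite[Theorem 45]{r}, not against it; the paper's entire proof of the second assertion is the one--line observation that, under the hypothesis, $\bold K={\bf El}\Nr_n\CA_{\omega}$ is itself an elementary class strictly sandwiched between $\Nr_n\CA_{\omega}$ and $\bold S_c\Nr_n\CA_{n+3}$ (strictness coming from theorem \ref{SL}).

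Your derivation of the last implication also breaks in two places. The inclusion ``${\bf El}$ of completely representable algebras $\subseteq{\bf El}\Nr_n\CA_{\omega}$'' is backwards: by theorem \ref{main2}, ${\bf El}{\sf CRCA}_n={\sf LCA}_n$ \emph{strictly contains} ${\bf El}\Nr_n\CA_{\omega}^{\sf ad}\cap\At$. And ``$\Tm\At\in\bold S_c\Nr_n\CA_{n+k}$ with countably many atoms implies complete representability'' is false for finite $k$: theorem \ref{complete} requires $\omega$ spare dimensions, and the inclusion ${\sf CRCA}_n\subseteq\bold S_c\Nr_n\CA_{n+3}$ is proper. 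The paper's route is different and uses the gripping property in an essential way: a \ws\ for \pe\ in $\bold G_k(\At)$ for all $k$ yields, via an ultrapower followed by an elementary chain, a countable atomic $\B\equiv\Cm\At$ on which \pe\ wins the full game $\bold G$, whence $\B\in\Nr_n\CA_{\omega}$ because $\bold G$ grips that class; if \pa\ had a \ws\ in $G_{\omega}(\At)$, then $\Cm\At$ would have no complete representation and hence would lie outside $\bold S_c\Nr_n\CA_{\omega}$, so the pair $(\B,\Cm\At)$ would witness that the sandwiched elementary class ${\bf El}\Nr_n\CA_{\omega}$ is not elementary --- a contradiction with the hypothesis. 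That appeal to $\bold G$ forcing membership in $\Nr_n\CA_{\omega}$ itself (rather than in ${\sf CRCA}_n$ or $\bold S_c\Nr_n\CA_{n+k}$) is the missing idea in your argument.
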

\begin{proof} We omit the proof of the first part. For the second part $\bold K={\bf El}\Nr_n\CA_{\omega}$ is a counterexample. The third part 
follows from the second, since such an atom structure $\At$ will witness
 that any class $\bold K$ such that $\Nr_n\CA_{\omega}\subseteq \bold K\subseteq \bold S_c\Nr_n\CA_{\omega}$ is not elementary.
To see why, let $\C=\Cm\At\in \CA_n$. A \ws\ for \pe\ in $\bold G_k$ for all $k\in \omega$ implies that \pe\ has 
a \ws\ in $\bold G(\At\B)$ for some countable atomic $\B$, such that $\B\equiv \C$. But since $\bold G$ grips $\Nr_n\CA_{\omega}$, we 
get that $\B\in \Nr_n\CA_{\omega}$. Now assume for contradiction hat  \pa\ has a \ws\ in $G_{\omega}(\At\C)$, then by \cite[Theorem 3.3.3]{HHbook2}, upon noting that $\At$ is countable,
$\C$ has no complete representation, hence by the first item of theorem \ref{complete}, $\C\notin \bold S_c\Nr_n\CA_{\omega}$. 
So if $\bold K$ is between $\Nr_n\CA_{\omega}$ and $\bold S_c\Nr_n\CA_{\omega}$, then $\B\in \bold K$, 
$\C\notin \bold K$ and $\C\equiv \B$, thus $\bold K$ is not elementary, which is a contradiction since ${\bf El}\Nr_n\CA_{\omega}(\subseteq \bold S_c\Nr_n\CA_{\omega}$ by assumption)
is  elementary.
\end{proof}

The range of $\bold K$ strictly between $\Nr_n\CA_{\omega}$ and $\bold S_c\Nr_n\CA_{\omega}$ is the `critical range'.  
We know, by the second item of theorem \ref{SL} and first item of theorem \ref{main},
that the endpoints are not elementary, and we also 
know by the first item of theorem \ref{main}, that any class $\bold L$ 
between $\bold S_c\Nr_n\CA_{\omega}$ and $\bold S_c\Nr_n\CA_{n+3}$ is not elementary. However, we do not know whether $\bold K_1$ and $\bold K_2$ 
defined in the item (7) of the previous  theorem \ref{squareflat}, proved in this last theorem to lie
in this critical range, are elementary or not. If our assumption in the previous 
theorem is correct, then $\bold K_2={\bf El}\Nr_n\CA_{\omega}$ would be elementary. This assumption does not tell anything about $\bold K_1$ as far as 
elementarity is concerned.
But, on the other hand, if there exists a countable atom structure $\At$, such that \pe\ has a \ws\ in $\bold G_k(\At)$ for all $k\in \omega$,
and \pa\ has a \ws\ in $G_{\omega}(\At)$, then $\Cm\At\notin \bold S_c\Nr_n\CA_{\omega}\supseteq \bold K_1\cup \bold K_2$, but
$\Cm\At\in {\bf El}(\Nr_n\CA_{\omega}\cap {\sf CRCA}_n)={\bf El K}_1\subseteq {\bf El K}_2$, implying 
that both $\bold K_1$ and  $\bold K_2$  are not elmentary.\\

\begin{theorem}\label{main2} Let $2<n<\omega$. Then the following inclusions hold:
$$\Nr_n\K_{\omega}^{\sf ad}\cap \At\subset {\bf El}\Nr_n\K_{\omega}^{\sf ad}\cap \At
\subset {\bf El}\bold S_c\Nr_n\K_{\omega}^{\sf ad}\cap \At={\sf LK}_n={\bf El}{\sf CRK}_n\subset {\sf SRK_n}$$
$$\subset
{\bf Up}{\sf SRK_n}={\bf Ur}{\sf  SRK_n}={\bf El}{\sf SRK}_n
\subset [{\bf S}\Nr_n\K_{\omega}\cap \At]^{\sf ad}.$$
\end{theorem}
\begin{proof}
It is known \cite[Proposition 2.90]{HHbook} that
${\bf Up}{\sf SRK_n}={\bf Ur}{\sf SRK_n}={\bf El}{\sf SRK}_n.$
The algebras in ${\sf SRK}_n$ are atomic and completely additive by definition.
Complete additivity of an operator on an atomic $\sf BAO$ is a first order property.
Thus ${\bf El}{\sf SRK_n}$ is contained in $[\sf RK_n\cap \At]^{\sf ad}.$

The first and second strict inclusions follow from items (2) and (1), respectively, of theorem \ref{SL}.  
The atomic $\B\in {\bf El}\Nr_n\K_{\omega}$ given in the second item of theorem \ref{SL} described fully in last item of theorem \ref{main}, 
is outside $\Nr_n\K_{\omega}$, and 
the atomic $\B$ in the first item of the same theorem, 
is in $\bold S_c\Nr_n\K_{\omega}^{\sf ad} \subseteq {\bf El}\bold S_c\Nr_n\K_{\omega}^{\sf ad}$,  
but is  not in ${\bf El}\Nr_n\K_{\omega}\supseteq {\bf El}\Nr_n\K_{\omega}^{\sf ad}$.
Now we turn to strictness of the remaining two inclusions. 
Since $\sf LK_n\subseteq \sf SRK_n$, the first is elementary by definition, the second is not,  hence the strictness
of the last inclusion in the first line, and the first one in the second line.

To prove the strictness of the last inclusion, namely, ${\bf El}{\sf SRK}_n\subset [\sf RK_n\cap \At]^{\sf ad}$, take  $\omega$--many disjoint copies of the 
$n$ element graph with nodes $\{1,2,\ldots, n\}$ and edges
$0\to 1$, $1\to 2$, and $\dots n-1\to n$. Then of course $\chi(\G)<\infty$.  Now  $\G$  has an $n-1$ first order definable colouring.
Since $\M(\G)$ with atom structure $\rho(\G)$, as defined in \cite[Definition 3.6.3, pp. 77-78]{HHbook2} and modified above to the other cylindric--like algebras 
approached here, 
is not representable, then the first
order subalgebra $\F(\G)$ in the sense of \cite[pp.456 item (3)]{HHbook} is also not representable, because $\G$ is 
first order interpretable in $\rho(\G)$. Here $\F(\G)$ is the subalgebra of $\M(\G)$ 
consisting of all sets of atoms in $\rho(\G)$ of the form $\{a\in \rho(\G)\}: \rho(\G)\models \phi(a, \bar{b})\}(\in \M(\G))$ 
for some first order formula $\phi(x, \bar{y})$ of the signature
of $\rho(\G)$ and some tuple $\bar{b}$ of atoms.  It is easy to check that $\F(\G)$ is indeed a subalgebra of $\M(\G)$, and that
$\Tm(\rho(\G))\subseteq \F(\G)\subseteq \M(\G)$. 

But $\F(\G)$ is strictly larger than the  term algebra.
Indeed, the term algebra as a $\sf PEA_n$ can be shown to be representable, 
by an argument similar to that used in \cite{weak}, witness too \cite[pp.485, Exercise 14.2(8)]{HHbook}.
We readily conclude that $\rho(\G)\notin \At{\bf El}{\sf SRK}_n$ but $\rho(\G)\in \At(\sf RK_n^{\sf ad})$,  
so ${\bf El}\sf SRK_n\neq [\sf RK_n\cap \At]^{\sf ad}$.

Now we consider the equalities in the first line. The second  is straightforward.
The first equality is also fairly straightforward, but we give a proof. If $\A$ is in $\bold L=\bold S_c\Nr_n\K_{\omega}^{\sf ad}\cap \At$,
then by lemma \ref{Thm:n}, \pe\ has a \ws\ in $F^{\omega}(\At\A)$, hence in $G_{\omega}(\At\A),$
and so obviously in $G_k(\At\A)$ for all $k\in \omega$, so $\A$ satisfies the Lyndon conditions. 
We have shown that $\bold L\subseteq \sf LK_n$. 
The latter is elementary, so ${\bf El}\bold L\subseteq \sf LK_n$.

Conversely, if $\A\in \sf LK_n$, then \pe\ has a \ws\ in $G_k(\At\A)$ for all $k\in \omega$, 
so using ultrapowers followed by an elementary chain argument, we get a countable $\B$, 
such that \pe\ has a \ws\ in $G_{\omega}(\At\B)$, hence  by \cite[Theorem 3.3.3]{HHbook2}, we get that $\B\in \sf CRK_n,$
and  $\B\equiv \A$.  By theorem \ref{complete}, there is an atomic (dilation) $\D\in \K_{\omega}^{\sf ad}$, such that  $\B\subseteq_c \Nr_n\D$, 
hence, we get using the notation in theorem \ref{complete}, that $\B\in \bold S_c\Nr_n\K_{\omega}^{\sf atc}$, so
$\A\in {\bf El S_c}\Nr_n\K_{\omega}^{\sf atc}\subseteq {\bf ElS_c}\Nr_n\K_{\omega}^{\sf ad}\cap \At$, 
and we are done.

\end{proof}

Our next theorem is conditional. We need to fix and recall some notation. 
Let $2<n<\omega$. For a relation algebra atom structure $\alpha$, $\beta=\sf Mat_n(\alpha)$ denotes the set of all basic $n$--dimensional
matrices on $\alpha.$  Sometimes $\beta$ is a $n$--dimensional cylindric basis \cite[Definition 12.35]{HHbook}.
This means that $\beta$ is  the atom--structure of a $\CA_n$ with accessibility relations corresponding
to the cylindric operations
defined as in \cite[Definition 12.17]{HHbook}, so that $\Cm(\beta)\in \CA_n$.
 
We refer the reader to  \cite[Definition 3.1]{ANT}
for the exact definition of an {\it $n$--blur} $(J, E)$ for a relation algebra $\R$. In particular, 
$J\subseteq \wp(\At\R)$ and $E\subseteq {}^3\omega$.
$(J5)_n$ is the stronger condition obtained from 
$(J4)_n$ in \cite[Definition 3.1]{ANT} by replacing $\exists T$ by
$\forall T$.  We say that write $(J, E)$  is a {\it strong $n$--blur} for $\R$
If the condition $(J4)_n$ in the definition of an $n$--blur is replaced by $(J5)_n$.  

\begin{theorem}\label{conditional} 
Let $2<m<n<\omega$.  Assume that  there exists a finite integral relation algebra $\R\notin \bold S\Ra\sf CA_{n+1}$
that has a strong $n$--blur  $(J, E)$. 
Let $\At$ be the infinite atom structure obtained by blowing up and
blurring $\R$, in the sense of \cite[p.72]{ANT}. Then $\sf Mat_{n}(\At)$ is an $n$--dimensional  cylindric  basis, and
there exist  cylindric algebras $\C_m$ and
$\C_{n}$ in $\sf RCA_m$ and $\sf RCA_{n}$, respectively,
such that  $\Tm\sf Mat_m(\At)\subseteq \C_m$ and $\Tm\sf Mat_{n}(\At)\subseteq \C_{n}$, $\C_m=\Nr_m\C_{n}$
$\Cm\C_m\notin \bold S\Nr_m\CA_{n+1}$.\footnote{In fact, we have $\Rd_{sc}\mathfrak{Cm}\C_m\notin \bold S\Nr_m\Sc_{n+1}.$} 
Furthermore, if $\R$ is generated by a single element,  then it can be fixed that
both $\C_m$ and $\C_n$ are also generated by a single $2$ dimensional
element and that  $\Tm\At=\Ra\C_m=\Ra\C_n$.
\end{theorem}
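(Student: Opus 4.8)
The plan is to run the blow-up-and-blur machinery of \cite{ANT} for relation algebras, and then \emph{lift it through} the standard matrix/neat-reduct correspondence to cylindric algebras of two dimensions $m<n$ at once, showing that the $m$-dimensional layer embeds into the $n$-dimensional one as a full neat reduct. First I would recall the construction: starting from the finite integral relation algebra $\R$ with strong $n$-blur $(J,E)$, one blows up $\R$ by splitting each atom into infinitely many copies and uses the blur $(J,E)$ to recover enough `fake' compositions so that the resulting infinite atom structure $\At$ has a representable term algebra $\Tm\At$. The condition $(J4)_n$ (equivalently the stronger $(J5)_n$) is precisely what is needed to guarantee that $\sf Mat_n(\At)$, the set of basic $n$-dimensional matrices on $\At$, forms an $n$-dimensional cylindric basis, so that $\Cm(\sf Mat_n(\At))\in \CA_n$; and since $m<n$, $\sf Mat_m(\At)$ is also such a basis (one only needs the weaker $(J4)_m$, which follows). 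Thus both $\Tm\sf Mat_m(\At)$ and $\Tm\sf Mat_n(\At)$ are well defined, and by the representability of $\Tm\At$ together with the step-by-step amalgamation afforded by the blur, one gets $\C_m\in \sf RCA_m$ and $\C_n\in \sf RCA_n$ with $\Tm\sf Mat_m(\At)\subseteq \C_m$ and $\Tm\sf Mat_n(\At)\subseteq \C_n$ (here $\C_m,\C_n$ are built as subalgebras of appropriate complex algebras over the atom structure, closing under the relevant operations, exactly as in the construction of $\C$ from $X'$ in item (4) of Theorem \ref{main}).

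Next I would establish the neat-reduct identity $\C_m=\Nr_m\C_n$. This is the heart of the lifting argument: one shows that the $m$-dimensional elements of $\C_n$ are precisely those coming from $m$-dimensional matrices, i.e.\ that every element $x\in \C_n$ with ${\sf c}_i x = x$ for all $m\le i<n$ is (the image of) an element of $\C_m$. The forward direction $\C_m\subseteq \Nr_m\C_n$ is routine from the matrix definitions of the cylindric operations. For the converse one uses that $\C_n$ is generated (in the appropriate step-by-step sense) by matrices whose support is genuinely $n$-dimensional, together with the fact that cylindrifying away the extra dimensions $m,\ldots,n-1$ lands one back among $m$-dimensional matrices — this is the same bookkeeping as the argument $\cyl{i_0}\cdots\cyl{i_k}N'=(N\restr{\set{0,1}})'$ used in item (4) of Theorem \ref{main}, only now with $\{0,1\}$ replaced by $\{0,\ldots,m-1\}$. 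Then I would derive $\Cm\C_m\notin \bold S\Nr_m\CA_{n+1}$: since $\R\notin \bold S\Ra\CA_{n+1}$ and $\R$ embeds into $\Ra$ of the complex algebra $\Cm\sf Mat_n(\At)$ (the splitting is a `thickening', and $\Cm\C_m$ contains $\Cm\sf Mat_m(\At)$ which is dense in it), a routine argument shows that if $\Cm\C_m\in \bold S\Nr_m\CA_{n+1}$ then $\R\in \bold S\Ra\CA_{n+1}$, a contradiction; the footnote about the $\Sc$-reduct follows the same way using that $\R$ is finite so additivity is automatic, exactly as in the use of Lemma \ref{Thm:n} in Theorem \ref{main}. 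For the last sentence, when $\R$ is singly generated one checks that a single $2$-dimensional element of $\C_m$ (respectively $\C_n$) generates the whole algebra — tracking the generator through the matrix construction — and that $\Ra\C_m=\Ra\C_n=\Tm\At$, which amounts to verifying that the relation algebra reducts of both cylindric algebras see exactly the blown-up atom structure $\At$, neither more nor less.

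\textbf{Main obstacle.} The principal difficulty will be the identity $\C_m=\Nr_m\C_n$, and more precisely the choice of $\C_m$ and $\C_n$ as subalgebras (not complex algebras) that are \emph{simultaneously} representable, contain the relevant term algebras, and sit in a neat-reduct relation. One must design $\C_n$ so that its restriction to $m$-dimensional elements is not accidentally larger than $\C_m$; this is delicate because in the blow-up construction the algebras of interest are generated by closing a carefully chosen set of `networks' (finite partial matrices) under finite unions and the cylindric operations, and one needs the closure over $n$ dimensions, intersected with the $m$-dimensional fragment, to coincide with the closure over $m$ dimensions. I expect this to require a careful inductive argument on the complexity of network-terms, analogous to but more involved than the $\Ra(\C)=\A$ verification in item (4) of Theorem \ref{main}, and it is where the strong blur condition $(J5)_n$ (rather than just $(J4)_n$) will be used, to ensure that the extra dimensions in $\C_n$ do not create new $m$-dimensional elements. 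The representability of $\C_n$ (hence of $\C_m=\Nr_m\C_n$) will again be obtained by a step-by-step / saturation argument using the blur to resolve amalgamation defects, as in the relation-algebra case of \cite{ANT}.
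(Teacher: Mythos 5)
Your proposal is correct and follows essentially the same route as the paper: both reduce to the blow-up-and-blur machinery of \cite{ANT}, using the $n$-blur to make $\sf Mat_n(\At)$ a cylindric basis, the strong blur to produce the intermediate algebras $\C_k={\sf Bb}_k(\R,J,E)$ and the isomorphism $\C_m\cong\Nr_m\C_n$ via restriction of matrices, and the embedding of $\R$ into $\Ra$ of the completion to rule out membership in $\bold S\Nr_m\CA_{n+1}$. The paper's own argument is likewise a sketch that defers the delicate neat-reduct verification you flag as the main obstacle to \cite[Theorem 4.4, p.80]{ANT}.
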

\begin{demo}{Sketch} We give a sketch of proof. We use the flexible construction in \cite{ANT}. The details skipped can be easily recovered from \cite{ANT}. 
The proof is like that in \cite{ANT} by blowing up and blurring $\R$ instead of blowing up and blurring
the `Maddux algebra $\M$' defined on p.84 \cite[Theorems 1.2, 3.2, lemmas 4.3, 5.1]{ANT}. The algebra $\M$ 
will be used below in the proof of theorem \ref{firstorder}.

One starts  with the finite relation algebra $\R$.
Then this algebra is blown up and blurred. It is blown up by splitting the atoms each to infinitely many.
It is blurred by using a finite set of blurs or colours, call this set of blurs  $J$. This is expressed consicely by the product $\At=\omega\times (\At \R\sim \{\Id\})\times J$,
which will define an infinite atom structure of a new
relation algebra $\cal R$; the term algebra on $\At$. One can view such a product as a ternary matrix with $\omega$ rows, and fixing $n\in \omega$,  one gets the rectangle
$\At \R\times J$.
Then two partitions are defined on $\At$, call them $P_1$ and $P_2$.
Composition is defined on this new infinite atom structure; it is induced by the composition in $\R$, and a tenary relation $E$
on $\omega$, that synchronizes which three rectangles sitting on the $i,j,k$ $E$--related rows compose like the original algebra $\R$.
This relation is definable in the first order structure $(\omega, <)$.
The first partition $P_1$ is used to show that $\R$ embeds in the complex algebra of this new atom structure, so
the complex algebra cannot be in $S\Ra\CA_{n+1}.$

The second partition $P_2$ divides $\At$ into $\omega$ sided finitely many rectangles, each with base $W\in J$,
and the universe of the  algebra $\cal R$ having atom structure  $\At$, are the sets that intersect co-finitely with every member of this partition.
On the level of the term algebra $\R$ is blurred, so that the embedding of the small algebra into
the complex algebra via taking infinite joins, do not exist in $\cal R$,  for only finite and co-finite joins exist
in this algebra.

$\cal R$ can be represented using the finite number of blurs. These correspond to non-principal ultrafilters
in the Boolean reduct, which are necessary to
represent this term algebra, for the principal ultrafilter alone would give a complete representation,
hence a representation of the complex algebra, and this is impossible.
Thereby, in particular,  an atom structure that
is weakly representable but not strongly representable is obtained.
In fact this representation gives a complete representation of the canonical extension of $\cal R$.

Because $(J, E)$ is an $n$--blur, the set of $n$ basic matrices on $\At$ is an  $n$-dimensional cylindric basis 
\cite[Item(iii), Theorem 3.2]{ANT} and so the $n$--dimensional
basic matrices form an atom structure of a $\D\in \CA_n$ that is also only weakly representable (not strongly representable).
Because $(J, E)$ is  strong $n$--blur,  the required algebra $\C_n$, as defined in \cite[Lemma 4.3]{ANT}, denoted by ${\sf Bb}_n(\R, J, E)$ 
is an algebra that lies between $\D$ and $\Cm\At\D$, $\C_n\neq \Cm\At\D$ and $\C_n$ has atom structure ${\sf Mat}_n(\cal R)$.

Now $(J, E)$ is a strong $m$--blur, too (since $m<n$)
it can be shown, as in \cite[Theorem 4.4]{ANT}, 
that the algebra $\C_m={\sf Bl}_m(\R, J,E)$, having atom structure $\sf Mat_m(\At\cal R)$ (same $\cal R$) 
is isomorphic to $\Nr_m\C_n$. First an embedding $h:\Rd_m\Cm{\sf Mat}_n(\At\R)\to \Cm{\sf Mat}_m(\At\R)$ via 
$x\mapsto \{M\upharpoonright m: M\in x\}$ is defined and it is shown that 
$h\upharpoonright  \Nr_m\C_n $ is an isomorphism onto $\C_m$ \cite[p.80]{ANT}. 

Though $\C_m\in \RCA_m\cap \Nr_m\CA_n$, 
the complex algebra $\Cm\At\C_m$ is not in $\bold S\Nr_m\CA_{n+1}$, 
because $\Cm\At({\cal R})\notin \bold S\Ra\CA_{n+1}$, since $\R$ embeds into $\Cm(\At{\cal R})$, $\R\notin \bold S\Ra\CA_{n+1}$ and  
$\Cm\At\cal R$ embeds into $\Ra\Cm\At\C_m$. 
To make the algebra one generated this
entails using infinitely many tenary relations \cite[pp. 84-86]{ANT}. 
\end{demo}

\begin{remark}
If there exists a finite $\R$ with $m$-blur and $\R\notin \bold S\Ra\CA_{m+2}$ , then $\bold S\Nr_m\CA_{m+2}$ will not be atom--canonical. 
For in this case, we will have $\Cm\At{\cal R}\notin \bold S\Ra\CA_{m+2}$, 
$\Tm(\Mat_m(\At{\cal R}))\in \RCA_m$, but its \de\ completion, namely,
$\C=\Cm(\Mat_m(\At{\cal R}))$ is outside $\bold S\Nr_m\CA_{m+2}$ because $\R$ embeds into $\Ra\C$.
\end{remark}

Our next theorem provides, like the previous theorem, but in a totally different way, 
weakly representable atom structures that are not strongly representable 
for both $\CA_n$s $(2<n<\omega)$ and $\sf RA$s in one go.

The result was proved in \cite{weak}, where only the second graph in the following proof  was used ($\omega$ disjoint union of $N$ cliques, $N\geq n(n-1)/n$, 
where $n$ is finite dimension $>2$). 
Here we give another new graph that gives the same result.
The reason we give a sketchy proof, besides introducing the new graph, is its model--theoretic affinity to the proof of item (2) of main theorem, 
and it will give the us opportunity to compare Monk-like constructions to rainbow 
ones. Though the model theory involved in both proofs are almost identical, the algebraic differences to be stressed upon in the next subsection are subtle,
real and highly significant.  Also the construction used by enlarging $N$ getting infinitely many graphs with increasing chromatic number, 
gives another flexible construction (like that used in theorem \ref{conditional}), based on bad graphs having finite chromatic number, 
converging to a good one having infinite chromatic number.

Using ultraproducts of weakly representable algebras based on such bad graphs, and the construction in theorem \ref{conditional} applied to the algebra $\M$
(which is the way it appeared in \cite{ANT}), by varying 
the parameter of non-identity atoms in $\M$ (like done with $N$), letting it grow without bound,
we will prove 
the upcoming corollary \ref{lyndon} on non-finite axiomatizability of various classes of representable algebras 
of finite dimension $>2$. 

\begin{theorem}\label{conditional2} Let $2<n<\omega$. Then there exists an atomic  countable
relation algebra $\R$, such that $\Mat_n(\At\R)$ forms an $n$--dimensional cylindric basis, $\A=\Tm{\sf Mat}_n(\At\R)\in \sf RCA_n$, while
even the diagonal free reduct of the \de\ completion of $\A$, namely, $\Cm{\sf Mat_n}(\At\R$) is not representable. In particular, 
${\sf Mat}_n(\At\R)$ is a weakly, but not strongly representable 
$\CA_n$ atom structure.
\end{theorem}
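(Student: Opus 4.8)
\textbf{Proof plan for Theorem \ref{conditional2}.}

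The plan is to exhibit the relation algebra $\R$ via a Monk-like construction parametrized by a graph, in the spirit of the Bulian--Hodkinson argument reproved in item (5) of theorem \ref{squareflat}, and to transfer the representability/non-representability dichotomy from the chromatic number of the graph to the cylindric algebra $\Cm\,{\sf Mat}_n(\At\R)$. First I would fix $2<n<\omega$ and choose $N\geq n(n-1)/2$, and consider two candidate graphs: the $\omega$-fold disjoint union of the complete graph $K_N$ on $N$ nodes (the graph used in \cite{weak}), and the new graph $\G$ obtained as the $\omega$-fold disjoint union of the $n$-node path-like graph with edges $0\to 1,\,1\to 2,\dots,\,(n-1)\to n$ already used in the proof of theorem \ref{main2}. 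In either case $\chi(\G)$ is finite (indeed $\chi(\G)=N$ in the first case, $\chi(\G)\leq n-1$ in the path case with a first-order definable colouring), which is exactly the feature that will be exploited. From $\G$ one builds a Monk relation algebra $\M(\G)$ — the relation-algebra analogue of the cylindric Monk algebra $\M(\Gamma)$ of \cite[Definition 3.6.3]{HHbook2} — whose atoms are $\Id$ together with coloured edge-atoms indexed by $\G$'s colours, with forbidden triples encoding $\G$'s edge relation, and one sets $\R=\Tm(\text{this atom structure})$. I would then verify that $\At\R$ satisfies the $n$-dimensional cylindric basis conditions \cite[Definition 12.35]{HHbook}, so that ${\sf Mat}_n(\At\R)$ is a genuine $\CA_n$ atom structure and $\Cm\,{\sf Mat}_n(\At\R)\in\CA_n$.

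The next step is the positive direction: $\A=\Tm\,{\sf Mat}_n(\At\R)\in\sf RCA_n$. Here the key point is that $\R$ itself, being a term algebra on a weakly representable atom structure, is representable — this is proved by running the usual atomic back-and-forth game on $\At\R$ and noting that since $\chi(\G)<\infty$ does \emph{not} obstruct the \emph{term} algebra (only the complex algebra), \pe\ has a winning strategy using the finitely many colours plus non-principal ultrafilters to handle defects. Then I would invoke the standard fact that when $\R$ is representable and ${\sf Mat}_n(\At\R)$ is an $n$-dimensional cylindric basis, the term algebra $\Tm\,{\sf Mat}_n(\At\R)$ is representable (this is the cylindric-basis-to-representation bridge, e.g.\ via \cite[Theorem 12.25, Definition 12.35]{HHbook} adapted to $\CA$s); alternatively one represents $\A$ directly by the same back-and-forth game played on $n$-matrices. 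Countability of $\R$, hence of $\A$, is immediate since the colour set is finite and the atom structure is a countable product.

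The heart of the argument, and the main obstacle, is the negative direction: $\Rd_{df}\Cm\,{\sf Mat}_n(\At\R)$ is not representable. The idea is that $\Cm\,{\sf Mat}_n(\At\R)$ is complete and atomic, so a representation would have to be a \emph{complete} one, and a complete representation of $\Cm\,{\sf Mat}_n(\At\R)$ would yield, by projecting onto pairs, a complete representation of $\Cm\At\R$ — but $\Cm\At\R=\M(\G)$ is \emph{not} representable precisely because $\chi(\G)<\infty$ forces, via the forbidden-triple structure, the colouring of $\G$ to be realized as a partition of a would-be representation's domain into finitely many `monochromatic-avoiding' sets, which contradicts the edge structure (this is the relation-algebra analogue of the $\sf Df$-reduct non-representability argument of \cite{bh}; since $\M(\G)$ is completely generated by its sub-diagonal elements, non-representability already hits the diagonal-free reduct, and this passes up to $\Cm\,{\sf Mat}_n(\At\R)$ which is also completely generated by elements $x$ with $\Delta x\neq n$, invoking \cite[Proposition 4.10]{Hodkinson} and lemma \ref{j}). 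The delicate part I expect to wrestle with is making the projection/complete-representation transfer from the $\CA_n$ level down to the $\sf RA$ level fully rigorous — i.e.\ showing that $\Cm\At\R$ embeds (completely) into the $\Ra$-reduct of $\Cm\,{\sf Mat}_n(\At\R)$, so that non-representability of the former forces non-representability of the latter — and checking that the chromatic-number obstruction genuinely survives the passage to $n$-matrices. Since ${\sf Mat}_n(\At\R)$ is thus weakly representable (as $\A\in\sf RCA_n$) but not strongly representable (as $\Cm\,{\sf Mat}_n(\At\R)\notin\sf RCA_n$), the theorem follows.
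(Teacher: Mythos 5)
Your overall architecture (a Monk-style construction over a graph $\G$ with $\chi(\G)<\infty$, yielding a weakly but not strongly representable atom structure) is the right one, but both halves of your argument lean on steps that fail. For the positive direction, the ``standard fact'' you invoke --- that $\R\in\RRA$ together with ${\sf Mat}_n(\At\R)$ being an $n$--dimensional cylindric basis forces $\Tm{\sf Mat}_n(\At\R)\in\sf RCA_n$ --- is not a standard fact; it is precisely where the work lies (compare theorem \ref{conditional}, where representability of the term $\CA_n$ over a representable relation algebra with an $n$--dimensional basis needs the additional blur structure and a bespoke argument). The paper goes the other way round: it first builds an $n$--homogeneous countable model $M$ of a Monk theory in the signature $(\G\cup\{\rho\})\times n$, takes the relativized set algebra $\A$ on the set $W$ of assignments avoiding the shade of red $\rho$, and proves via explicit $n$ back--and--forth systems $\Theta^{\chi}$ that relativized and classical semantics agree on $L_n$--formulas, so that $\A$ is literally isomorphic to a set algebra; only afterwards is $\At\A$ identified with ${\sf Mat}_n$ of a relation algebra atom structure $\alpha(\G)$. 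This is also why the graph must satisfy $N\geq n(n-1)/2$ (the band graph on $\N$ or $\omega$ copies of the $N$--clique): the ``forth'' step needs $N$ fresh indices to relabel up to $n(n-1)/2$ edges. Your path graph gives no such room, so the positive half would need a different argument there (and a path has chromatic number $2$, not $n-1$).

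For the negative direction there are two false steps. First, a representation of a complete atomic algebra need not be a complete representation --- that is exactly the gap between ${\sf SRCA}_n$ and ${\sf CRCA}_n$ --- so a hypothetical representation of $\Cm{\sf Mat}_n(\At\R)$ cannot be upgraded for free. Second, even granting a complete embedding of $\Cm\At\R$ into $\Ra\,\Cm{\sf Mat}_n(\At\R)$, representability of the latter as a $\CA_n$ only places $\Cm\At\R$ in $\bold S\Ra\CA_n$, which for finite $n$ is strictly larger than $\RRA$; so non--representability of $\Cm\At\R$ as a relation algebra does not contradict representability of $\Cm{\sf Mat}_n(\At\R)$. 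The argument that works is run directly on $\Cm{\sf Mat}_n(\At\R)$: fix a colouring $\G=\bigcup_{k<N}I_k$ into independent sets and form the finitely many elements $Y_{i,k}$ (infinite joins, available only in the complex algebra, of all matrices whose $(0,1)$--edge is labelled $(a,i)$ with $a\in I_k$); their finite join together with ${\sf d}_{01}$ equals $1$ and is therefore preserved by any representation, and a Ramsey/pigeonhole count on sufficiently many base points produces a triangle all of whose edges lie in a single $Y_{i,k}$, i.e.\ a monochromatic triangle whose $\G$--labels lie in one independent set, which is forbidden. This is \cite[Theorem 14.11]{HHbook} lifted to $n$ colours, and it is exactly where $\chi(\G)<\infty$ is used.
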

\begin{proof} 
Fix  a finite $n>2$ (the dimension). Let $\G$ be a graph.
Let $\rho$ be a `shade of red'; we assume that $\rho\notin \G$. Let $L^+$ be the signature consisting of the binary
relation symbols $(a, i)$, for each $a \in \G \cup \{ \rho\}$ and
$i < n$.  Let $T$ denote the following (Monk) theory in this signature:
$M\models T$ iff
for all $a,b\in M$, there is a unique $p\in (\G\cup \{\rho\})\times n$, such that
$(a,b)\in p$ and if  $M\models (a,i)(x,y)\land (b,j)(y,z)\land (c,k)(x,z)$, $x,y, z\in M$, then $| \{ i, j, k \}> 1 $, or
$ a, b, c \in \G$ and $\{ a, b, c\} $ has at least one edge
of $\G_l$, or exactly one of $a, b, c$ -- say, $a$ -- is $\rho$, and $bc$ is
an edge of $\G$, or two or more of $a, b, c$ are $\rho$.
We denote the class of models of $T$ which can be seen as coloured undirected
graphs (not necessarily complete) with labels coming from
$(\G\cup \{\rho\})\times n$  by $\GG$.
Now specify $\G$ to be either:
\begin{itemize}
\item the graph with nodes $\N$ and edge relation $E$
defined by $(i,j)\in E$ if $0<|i-j|<N$, where $N\geq n(n-1)/2$ is a positive number.

\item or the $\omega$ disjoint union of $N$ cliques, same $N$.
\end{itemize}
Then there is a countable ($n$--homogeneous) coloured graph  (model) $M\in \GG$
with the following property \cite[Proposition 2.6]{Hodkinson}:

If $\triangle \subseteq \triangle' \in \GG$, $|\triangle'|
\leq n$, and $\theta : \triangle \rightarrow M$ is an embedding,
then $\theta$ extends to an embedding $\theta' : \triangle'
\rightarrow M$.

To construct such an $M$, we use a simple game \cite{Hodkinson, weak}.
There are $\omega$ rounds. Let $ \Gamma \in \GG$ be the graph constructed up to
a point in the game after finitely many rounds. The starting point can be taken to be the empty graph.  $\forall$ chooses $\triangle \in \GG$ of
size $<n$, and an embedding $\theta : \triangle \rightarrow
\Gamma$. He then chooses an extension $ \triangle \subseteq
\triangle^+ \in \GG$, where $| \triangle^+ \backslash \triangle |
\leq 1$. $\exists$ must respond with an extension $
\Gamma \subseteq \Gamma^+ \in \GG$ such that $\theta$ extends to an
embedding $\theta^+ : \triangle^+ \rightarrow \Gamma^+$.

We may assume  that $\Delta$ is not empty, $\Delta\neq\Delta^+$, and that
 $\forall$ played $( \Gamma \upharpoonright F, Id_F, \triangle^+)$; $F=\rng(\theta)$
where $\Gamma \upharpoonright F \subseteq \triangle^+ \in \GG$,
$\triangle^+ \backslash F = \{\delta\}$, and $\delta \notin \Gamma$,
$\forall$'s move builds a labelled graph $ \Gamma^*
\supseteq \Gamma$, whose nodes are those of $\Gamma$ together with
$\delta$, and whose edges are the edges of $\Gamma$ together with
edges from $\delta$ to every node of $F$.

Now $ \exists$ must extend $ \Gamma^*$ to a complete
graph on the same nodes and complete
the colouring yielding a graph
$ \Gamma^+ \in \GG$.

She plays as follows:
The set of colours of the labels
in $\{\triangle^+(\delta, \phi) : \phi \in F \} $ has cardinality at most
$n-1$. Let  $i < n$ be a ``colour" not in this set. \pe\ labels $(\delta, \beta) $ by $(\rho, i)$ for every $ \beta \in
\Gamma \backslash F$. This completes the definition of $ \Gamma^+$.
It remains to check that this strategy works--that the conditions
from the definition of $\GG$ are met.

For this,  it is
sufficient to note that, if $\phi \in F$ and $ \beta \in \Gamma \backslash F$, then
the labels in $ \Gamma^+$ on the edges of the triangle $(\beta,
\delta, \phi)$ are not all of the same colour ( by choice of $i$ )
and if $ \beta, \gamma \in \Gamma \backslash F$, then two the
labels in $ \Gamma^+$ on the edges of the triangle $( \beta, \gamma,
\delta )$ are $( \rho, i)$.

For an $L^n_{\infty \omega}$-formula $\varphi $,  define
$\varphi^W$ to be the set $\{ \bar{a} \in W : M \models_W \varphi
(\bar{a}) \}$.

Let $\A$ be the relativized set algebra with domain
$\{\varphi^W : \varphi \in L_n \}$ and unit $W$, endowed with the algebraic
operations ${\sf d}_{ij}, {\sf c}_i, $ etc., in the standard way, and formulas are taken in
$L$.
Let $\cal S$ be the polyadic set algebra with domain  $\wp ({}^{n} M )$ and
unit $ {}^{n} M $. Then the map $h : \A
\longrightarrow S$ given by $h:\varphi ^W \longmapsto \{ \bar{a}\in
{}^{n} M: M \models \varphi (\bar{a})\}$ can be checked to be well -
defined and one-one. It clearly respects the polyadic operations, also because relativized semantics and classical semantics coincide on $L_n$
formulas in the given signature, this is a representation of $\A.$
This follows from the fact that, like in the proof of item (2) of theorem \ref{main}, any permutation $\chi$ of $\omega \cup \{\rho\}$, $\Theta^\chi$
is an $n$-back-and-forth system on $M$. Here $\Theta^\chi$
is the set of partial one-to-one maps from $M$ to $M$ of size at
most $n$ that are $\chi$-isomorphisms on their domains \cite{weak}.

We make the above paragraph more precise.
Let $\chi$ be a permutation of the set $\omega \cup \{ \rho\}$. Let
$ \Gamma, \triangle \in \GG$ have the same size, and let $ \theta :
\Gamma \rightarrow \triangle$ be a bijection. We say that $\theta$
is a $\chi$-\textit{isomorphism} from $\Gamma$ to $\triangle$, cf. \cite[Definitions, 3.8-- 3.9]{Hodkinson}, if for
each distinct $ x, y \in \Gamma$,
if $\Gamma (x, y) = (a, j)$ with $a\in \N$, if $l\in \N$ and $0\leq r<a$ are the unique natural numbers (obtained by deviding $a$ by $N$) such that
$a=Nl+r$, then 
\begin{equation*}
\triangle( \theta(x),\theta(y)) =
\begin{cases}
( N\chi(i)+r, j), & \hbox{if $\chi(i) \neq \rho$} \\
(\rho, j),  & \hbox{otherwise.} \end{cases}
\end{equation*}
If $\Gamma ( x, y) = ( \rho, j)$, then
\begin{equation*}
\triangle( \theta(x),\theta(y)) \in
\begin{cases}
\{( N\chi(\rho)+s, j): 0\leq s < N \}, & \hbox{if $\chi(\rho) \neq \rho$} \\
\{(\rho, j)\},  & \hbox{otherwise.} \end{cases}
\end{equation*}
We now have for any permutation $\chi$ of $\omega \cup \{\rho\}$, $\Theta^\chi$
is the set of partial one-to-one maps from $M$ to $M$ of size at
most $n$ that are $\chi$-isomorphisms on their domains. We write
$\Theta$ for $\Theta^{Id_{\omega \cup \{\rho\}}}$.

We claim that for any any permutation $\chi$ of $\omega \cup \{\rho\}$, $\Theta^\chi$
is an $n$-back-and-forth system on $M$. Clearly, $\Theta^\chi$ is closed under restrictions. We check the
``forth" property. To see why, let $\theta \in \Theta^\chi$ have size $t < n$. We use an argument similar to that used \cite[Theorem 3.10]{Hodkinson}.

Enumerate $\dom(\theta)$, $\rng(\theta),$ respectively as $ \{ a_0,
\ldots, a_{t-1} \}$, $ \{ b_0,\ldots, b_{t-1} \}$, with $\theta(a_i)
= b_i$ for $i < t$. Let $a_t \in M$ be arbitrary, let $b_t \notin M$
be a new element, and define a complete labelled graph $\triangle
\supseteq M \upharpoonright \{ b_0,\ldots, b_{t-1} \}$ with nodes
$\{ b_0,\ldots, b_{t} \}$ as follows.

Choose distinct "nodes"$ e_s < N$ for each $s < t$, such that no
$(e_s, j)$ labels any edge in $M \upharpoonright \{ b_0,\dots,
b_{t-1} \}$. This is possible because $N \geq n(n-1)/2$, which
bounds the number of edges in $\triangle$. We can now define the
colour of edges $(b_s, b_t)$ of $\triangle$ for $s = 0,\ldots, t-1$.
If $M ( a_s, a_t) = ( Ni+r, j)$, for some $i\in \N$ and $0\leq r<N$, then
\begin{equation*}
\triangle( b_s, b_t) =
\begin{cases}
( N\chi(i)+r, j), & \hbox{if $\chi(i) \neq \rho$} \\
\{(\rho, j)\},  & \hbox{otherwise.} \end{cases}
\end{equation*}
If $M ( a_s, a_t) = ( \rho, j)$, then assuming that $e_s=Ni+r,$ $i\in \N$ and $0\leq r<N$,
\begin{equation*}
\triangle( b_s, b_t) =
\begin{cases}
( N\chi(\rho)+r, j), & \hbox{if $\chi(\rho) \neq \rho$} \\
\{(\rho, j)\},  & \hbox{otherwise.} \end{cases}
\end{equation*}
This completes the definition of $\triangle$. It is easy to check
that $\triangle \in \GG$. Hence, there is a graph embedding $ \phi : \triangle \rightarrow M$
extending the map $ Id_{\{ b_0,\ldots, b_{t-1} \}}$. Note that
$\phi(b_t) \notin \rng(\theta)$. So the map $\theta^+ = \theta \cup
\{(a_t, \phi(b_t))\}$ is injective, and it is easily seen to be a
$\chi$-isomorphism in $\Theta^\chi$ and defined on $a_t$.
The converse,``back" property is similarly proved (or by symmetry,
using the fact that the inverse of maps in $\Theta$ are
$\chi^{-1}$-isomorphisms).

The logics $L_n$ and $L^n_{\infty \omega}$ are taken in the
above signature. Let $W$ is simply the set of tuples $\bar{a}$ in ${}^nM$ such that the
edges between the elements of $\bar{a}$ don't have a label involving
$\rho$ (these are $(\rho, i):  i<n$). Their labels are all of the form $(Ni+r, j)$.  We can now show that the
classical and $W$-relativized semantics agree in the sense that
$M \models_W \varphi(\bar{a})$ iff $M \models \varphi(\bar{a})$, for
all $\bar{a} \in W.$   The proof is by induction on $\varphi$.

We prove the hard direction \cite[Proposition 3.13]{Hodkinson}. Suppose that $M \models_W
\exists x_i \varphi(\bar{a})$. Then there is $ \bar{b} \in {}^n M$
with $\bar{b} =_i \bar{a}$ and $M \models \varphi(\bar{b})$. Take
$L_{\varphi, \bar{b}}$ to be any finite subsignature of $L$
containing all the symbols from $L$ that occur in $\varphi$ or as a
label in $M \upharpoonright \rng(\bar{b})$. (Here we use the fact
that $\varphi$ is first-order. The result may fail for infinitary
formulas with infinite signature.) Choose a permutation $\chi$ of
$\omega \cup \{\rho\}$ fixing any $i'$ such that some $(Ni'+r, j)$
occurs in $L_{\varphi, \bar{b}}$ for some $r<N$, and moving $\rho$.

Let $\theta = Id_{\{a_m : m \neq i\}}$. Take any distinct $l, m \in
n \setminus \{i\}$. If $M(a_l, a_m) = (Ni'+r, j)$, then $M( b_l,
b_m) = (Ni'+r, j)$ because $ \bar{a} = _i \bar{b}$, so $(Ni'+r, j)
\in L_{\varphi, \bar{b}}$ by definition of $L_{\varphi, \bar{b}}$.
So, $\chi(i') = i'$ by definition of $\chi$. Also, $M(a_l, a_m) \neq
( \rho, j)$(any $j$) because $\bar{a} \in W$. It now follows that
$\theta$ is a $\chi$-isomorphism on its domain, so that $ \theta \in
\Theta^\chi$. Extend $\theta $ to $\theta' \in \Theta^\chi$ defined on $b_i$,
using the ``forth" property of $ \Theta^\chi$. Let $
\bar{c} = \theta'(\bar{b})$. Now by choice of of $\chi$, no labels
on edges of the subgraph of $M$ with domain $\rng(\bar{c})$ involve
$\rho$. Hence, $\bar{c} \in W$.

Moreover, each map in $ \Theta^\chi$ is evidently a partial
isomorphism of the reduct of $M$ to the signature $L_{\varphi,
\bar{b}}$. Now $\varphi$ is an $L_{\varphi, \bar{b}}$-formula.
We have $M \models \varphi(\bar{a})$ iff $M \models \varphi(\bar{c})$.
So $M \models \varphi(\bar{c})$. Inductively, $M \models_W
\varphi(\bar{c})$. Since $ \bar{c} =_i \bar{a}$, we have $M
\models_W \exists x_i \varphi(\bar{a})$ by definition of the
relativized semantics. This completes the induction.\\

Define $\C$ to be the complex algebra
over $\At\A$, the atom structure of $\A$.
Then $\C$ is the completion of $\A$.
Let $\cal D$ be the relativized set algebra with domain $\{\phi^W: \phi\text { an $L_{\infty\omega}^n$ formula }\}$,  unit $W$
and operations defined like those of $\A$.
Then we have ${\C}\cong \D$, via the map $X\mapsto \bigcup X$ \cite[Lemma 5.3]{Hodkinson}.

Consider the following relation algebra atom structure 
$\alpha(\G)=(\{{\sf Id}\}\cup (\G\times n), R_{\sf Id}, \breve{R}, R_;)$, where:

The only identity atom is $\sf Id$. All atoms are self converse,
so $\breve{R}=\{(a, a): a \text { an atom }\}.$
The colour of an atom $(a,i)\in \G\times n$ is $i$. The identity $\sf Id$ has no colour. A triple $(a,b,c)$
of atoms in $\alpha(\G)$ is consistent if
$R;(a,b,c)$ holds $(R;$ is the accessibility relation corresponding to composition). Then the consistent triples 
are $(a,b,c)$ where:
\begin{itemize}

\item one of $a,b,c$ is $\sf Id$ and the other two are equal, or

\item none of $a,b,c$ is $\sf Id$ and they do not all have the same colour, or

\item $a=(a', i), b=(b', i)$ and $c=(c', i)$ for some $i<n$ and
$a',b',c'\in \G$, and there exists at least one graph edge
of $G$ in $\{a', b', c'\}$.

\end{itemize}
 
$\C$ is not representable because $\Cm(\alpha(\G))$ is not representable  and 
${\sf Mat}_n(\alpha(\G))\cong \At\A$. To see why, for each  $m  \in {\Mat}_n(\alpha(\G)), \,\ \textrm{let} \,\ \alpha_m
= \bigwedge_{i,j<n}  \alpha_{ij}. $ Here $ \alpha_{ij}$ is $x_i =
x_j$ if $ m_{ij} = \Id$ and $R(x_i, x_j)$ otherwise, where $R =
m_{ij} \in L$. Then the map $(m \mapsto
\alpha^W_m)_{m \in {\Mat}_n(\alpha(\G))}$ is a well - defined isomorphism of
$n$-dimensional cylindric algebra atom structures.
Non-representability follows from the fact that  $\G$ is a bad graph, that is, 
$\chi(\G)=N<\infty$ \cite[Definition 14.10, Theorem 14.11]{HHbook}.
The relation algebra atom structure specified above is exactly like the one in Definition 14.10 in {\it op.cit}, except that we have $n$ colours 
rather than just three, and thats precisely why
the set $n$--dimensional basic matrices forms an $n$--dimensional cylindric basis, so we could lift the result from $\RA$s to cylindric algebras of any finite dimension $>2$. 
However, the proof progressed the other way round, starting with a $\CA_n$ constructed model-theoretically, and 
then defining the relation algebra atom structure.
But in any case, we obtained our result for both $\CA_n$s ($2<n<\omega)$
and $\RA$s in one go.

 \end{proof}

Let $2<n<\omega$. The $n$--dimensional 
$\A_m$s  used in the last item of theorem \ref{SL} witnessing non -finite axiomatizability of $\bf El K$ for any $\bold K$, such that $\Nr_n\K_{\omega}\subseteq \bold K\subseteq 
\sf RK_n$ are finite and not representable, hence they are completely additive, atomic and 
outside ${\bf El}{\sf SRK_n}\supseteq {\sf LK}_n$. Furthermore any non-trivial ultraproduct of such algebras 
is also atomic and completely addtive (the last two properties together are first order definable hence preserved under taking ultraproducts), and is 
in  ${\bf El}\Nr_n\K_{\omega}^{\sf ad}\cap \At\subseteq {\sf LK}_n$. This shows that both ${\bf El}{\sf SRK_n}$ 
and ${\sf LK}_n$ are not finitely axiomatizable. 
Here we re-prove the last result for $\sf LK_n$ 
but we restrict our attension to  $\CA_n$.

Let $\sf LCRA$ be the elementary class of relation algebras satsfying the Lyndon conditions.
We now use the constructions in the previous two theorems \ref{conditional} and \ref{conditional2} to prove the next theorem.
We give two proofs. In each we use bad Monk--like algebras converging to a good one, and in the process, we recover Monk's classical results on non-finite axiomatizability of both 
$\sf RCA_n$  $(2<n<\omega)$,  and representable relation algbras $(\sf RRA)$.

\begin{corollary}\label{lyndon} Let $2<n<\omega$. Then $\sf LCRA$ and $\sf LCA_n$ are 
not finitely axiomatizable.
\end{corollary}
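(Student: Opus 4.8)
The plan is to establish non-finite axiomatizability of $\sf LCA_n$ (and $\sf LCRA$) by the standard Monk-style ultraproduct argument: exhibit a sequence of algebras $\A_N$ ($N\in\omega$) that are \emph{not} in the relevant class (hence not satisfying the Lyndon conditions), such that a non-principal ultraproduct $\Pi_{N/U}\A_N$ \emph{does} satisfy the Lyndon conditions, indeed is (completely) representable. Since $\sf LCA_n$ and $\sf LCRA$ are elementary by definition, they are closed under ultraproducts, so such a sequence witnesses that no finite subset of the (infinite) axiomatization suffices. In the process this recovers Monk's classical non-finite axiomatizability of $\sf RCA_n$ and $\sf RRA$, since $\sf LCA_n\subseteq\sf RCA_n$ and $\sf LCRA\subseteq\sf RRA$, and the $\A_N$ can be arranged to lie outside $\sf RCA_n$ ($\sf RRA$) as well.

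I would give, as the excerpt announces, two implementations. The first uses the graph construction of theorem \ref{conditional2}: for each positive integer $N\geq n(n-1)/2$ let $\G_N$ be either the graph on $\N$ with edge relation $0<|i-j|<N$, or the $\omega$-fold disjoint union of $N$-cliques, and let $\A_N=\Tm{\sf Mat}_n(\alpha(\G_N))$ be the corresponding term algebra (respectively the associated relation algebra $\Tm\alpha(\G_N)$ for the $\sf LCRA$ case). Each $\G_N$ is \emph{bad}, i.e. $\chi(\G_N)=N<\infty$, so by the argument in theorem \ref{conditional2} the \de\ completion $\Cm{\sf Mat}_n(\alpha(\G_N))$ is not representable; moreover, because an algebra satisfying the Lyndon conditions has representable \de\ completion (recall from the introduction: ``If $\A\in\CA_n$ satisfies the Lyndon conditions, then so does its \de\ completion. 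In particular, $\Cm\At\A$ is representable''), $\A_N\notin\sf LCA_n$. Now take a non-principal ultrafilter $U$ on $\omega$ and form $\G=\Pi_{N/U}\G_N$. Since each $\G_N$ (first construction) satisfies, for all $k$, the first-order sentence $\sigma_k$ asserting ``no cycles of length $<k$'' once $N\geq k$, \Los's theorem gives that $\G$ has no cycles at all, hence $\chi(\G)\leq 2$; in particular $\chi(\G)=\infty$ in the relevant (infinite colour-class) sense, so $\G$ is \emph{good} and the Monk algebra $\M(\G)$ — equivalently the ultraproduct of the $\A_N$'s, using that the term-algebra / $\sf Mat_n$ construction commutes with ultraproducts up to the needed embedding — is representable, in fact its canonical extension is completely representable, so it satisfies the Lyndon conditions. (For the disjoint-union-of-cliques variant one instead uses that cliques of unbounded size force $\chi=\infty$ in the ultraproduct.) This yields $\A_N\notin\sf LCA_n$ for all $N$ but $\Pi_{N/U}\A_N\in\sf LCA_n$, proving non-finite axiomatizability; the $\sf LCRA$ case is identical, working with $\alpha(\G_N)$ directly.

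The second implementation uses theorem \ref{conditional} applied to the Maddux algebra $\M$ (``the Maddux algebra $\M$ defined on p.84'' referred to in the proof of theorem \ref{conditional2}), varying the parameter controlling its non-identity atoms: for growing $r$ one gets finite integral relation algebras $\M_r$ with an appropriate strong $n$-blur but with $\M_r\notin\bold S\Ra\CA_{n+1}$, blow-up-and-blur each to obtain $\C_{n,r}\in\sf RCA_n$ whose \de\ completion is outside $\bold S\Nr_n\CA_{n+1}$ — hence $\C_{n,r}\notin\sf LCA_n$ by the completion-closure of the Lyndon class — while a non-principal ultraproduct of the $\C_{n,r}$'s is representable and its atom structure is strongly (indeed completely) representable, so the ultraproduct satisfies the Lyndon conditions. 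The main obstacle in both routes is the same: verifying that the ultraproduct of the bad algebras genuinely lands in $\sf LCA_n$ rather than merely in $\sf RCA_n$. For this one must check that the representation of the good limit algebra is a \emph{complete} representation of its canonical extension (so that \pe\ has a winning strategy in every $G_k$), which is exactly what the homogeneity/back-and-forth systems ($\Theta^\chi$ is an $n$-back-and-forth system) from theorem \ref{conditional2}, or the strong-blur hypothesis in theorem \ref{conditional}, are designed to deliver; one also has to confirm that $\sf Mat_n$ and $\Tm$ interact correctly with $\Pi_{N/U}$, which is routine but must be stated carefully. Once that is in hand, elementarity of $\sf LCA_n$ and $\sf LCRA$ closes the argument.
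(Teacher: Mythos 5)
Your overall architecture is the right one and is essentially the paper's: produce a sequence of algebras outside $\sf LCA_n$ (respectively $\sf LCRA$) whose non-principal ultraproduct is completely representable, hence satisfies the Lyndon conditions, and combine this with elementarity of the Lyndon classes. Your second implementation (blowing up and blurring a family of finite integral relation algebras with a growing parameter) is in substance the paper's first proof. However, your first implementation breaks down at its central step, the computation of $\chi(\Pi_{N/U}\G_N)$. You argue that each $\G_N$ satisfies the sentences $\sigma_k$ (``no cycles of length $<k$'') once $N\geq k$, apply Lo\'s's theorem to conclude that the ultraproduct graph has no cycles and hence $\chi(\G)\leq 2$, and then assert ``in particular $\chi(\G)=\infty$\dots so $\G$ is good''. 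This is self-contradictory: $\chi(\G)\leq 2$ means $\G$ is \emph{bad}, so the limit Monk algebra would fail to be representable, which is the opposite of what you need. Moreover the premise is false for the graphs at hand: the graph on $\N$ with edge relation $0<|i-j|<N$ contains the clique $\{0,1,\dots,N-1\}$ and so has many short cycles. You have imported the girth/Lo\'s argument from the non-elementarity proof for $\sf SRK_n$ (item (5) of theorem \ref{squareflat}), which runs in the opposite direction — there a sequence of \emph{good} graphs converges to a \emph{bad} one, whereas here you need bad converging to good.

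The correct step — which you in fact state parenthetically for the disjoint-union-of-cliques variant — is the clique argument, and it is what the paper uses for both variants: each $\G_{N_l}$ contains an $N_l$-clique with $N_l\to\infty$, so the ultraproduct graph contains arbitrarily large finite cliques and hence (by saturation of the ultraproduct, or by \cite[Theorem 3.6.11]{HHbook2}) an infinite clique; its chromatic number is therefore infinite, the limit algebra is based on a good graph, and it is completely representable, hence in $\sf LCA_n$. With that substitution, and with the (routine but necessary) check that $\Tm\,{\sf Mat}_n(\cdot)$ interacts correctly with ultraproducts, your argument goes through and coincides with the paper's second proof. The remaining ingredients — exclusion of the $\A_N$ from $\sf LCA_n$ via non-representability of $\Cm\At\A_N$ (using that the Lyndon conditions pass to the Dedekind--MacNeille completion), and the concluding elementarity argument — are correct as you state them.
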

\begin{proof} 
{\bf First proof:} Our first proof addresses only $\sf RA$s and $\CA_3$. It can be lifted to $\CA_n$ 
for finite $n>3$, by using the methods in \cite{ANT}. Here we use a simple 
version of  the construction in \cite{ANT}. 
Let $l\geq 2$. Let $I$ be a set
such $\omega>|I|\geq 3l$ if $l$ is finite, and $|I|=|l|$ otherwise. 
$I$ will be the set of non-identity atoms of the blown up and blurred relation algebra when $I$ is finite. Let 
$J=\{(X: X\subseteq I, |X|=l\}.$ This is a generalization  of 
the set of $3$-complex blurs as defined in \cite[Definition 3.1(ii)]{ANT} allowing infinite $l$.
Let 
$H=\{a_i^{P,W}: i\in \omega, P\in I, W\in J\}.$ When $I$ is finite, this is denoted by $\At$ in \cite[p. 73]{ANT} which is the notation we used in the above 
sketch. In all cases, including infinite $l$, 
it will give rise to an infinite 
atom structure of a relation algebra $\F$  whose underlying set is 
$H\cup {Id}$ as follows: 
Define the tenary relation $E$ on $\omega$ as follows:
$$E(i,j,k)\iff (\exists p,q,r)(\{p,q,r\}=\{i,j,k\} \text { and } r-q=q-p).$$
This is a concrete instance of an {\it index blur} as defined in \cite[Definition 3.1(iii)]{ANT}.
Together $(J, E)$ is an $3$--blur but {\it not} a strong $3$--blur.
Now we specify the consistent triples:
$(a, b, c)$ is consistent $\iff$ one of $a, b, c$ is $\sf Id$ and the other two are equal, or 
$a=a_i^{P,S,p}, b=a_j^{Q,Z,q}, c=a_k^{R,W,r}$ where either  $S\cap Z\cap W=\emptyset \text { or both } 
e(i,j,k) \text { and } |\{P,Q,R\}|\neq 1$ (we are avoiding mononchromatic triangles).
That is if for $W\in J$,  $E^W=\{a_i^{P,W}: i\in \omega, P\in W\},$
then $$a_i^{P,S};a_j^{Q,Z}=\bigcup\{E^W: S\cap Z\cap W=\emptyset\}
\bigcup \{a_k^{R,W}: E(i,j,k), |\{P,Q,R\}|\neq 1\}.$$

When $I$ is finite, the two partitions $P_1$ and $P_2$ respectively 
are defined as follows:
For $P\in I$, let $H^P=\{a_i^{P,W}: i\in \omega, W\in J, P\in W\}$.
Then 
$$P_1=\{H^P: P\in I\}, \ \, P_2=\{E^W: W\in J\}.$$
Pending on $l$ and $I$, call these atom structures ${\cal F}(l, I).$
When $l<\omega$, then ${\cal C}m{\cal F}(l, 3l)$ is not representable, because in this case one can embed the finite 
relation algebra $\mathfrak{E}_I(2,3)$ with non-identity atoms $I=3l$ 
into $\Cm{\cal F}(l, 3l)$.
Indeed, the composition in $\Cm{\cal F}(l, 3l)$ is defined such that its restriction on the first partition satisfies:
$$ H^P;H^Q=\bigcup \{H^Z: |\{Z,P;Q\}|\neq 1\}=\bigcup \{H^Z: Z\leq P;Q\text { in } \M\}.$$
Now non-representability of $\C=\Cm{\cal F}(l, 3l)$ follows from the fact that 
$\mathfrak{E}(2,3)$ can be represented, if at all, only on finite 
sets, which is not the case with $\C$ 
since it is infinite. 
The universe of ${\cal R}_l=\Tm{\cal F}(l, 3l)$ is the following set:
$$R_l=\{X\subseteq F: X\cap E^W\in {\sf Cof}(E^W), \text{ for all } W\in J\}.$$
The algebra ${\cal R}_l$, which we denote simply by $\cal R$ as we did in the above sketch, is representable. For any
$a\in H$ and $W\in J,$  set
$U^a=\{X\in R: a\in X\}\text { and } U^{W}=\{X\in R: |X\cap E^W|\geq \omega\}.$
Then the principal ultrafilters of $\cal R$ are exactly $U^a$, $a\in H$ and $U^W$
are non-principal ultrafilters for $W\in J$ when $E^W$ is infinite.
Let  $J'=\{W\in J: |E^W|\geq \omega\},$
and let ${\sf Uf}=\{U^a: a\in F\}\cup \{U^W: W\in J'\}.$
${\sf Uf}$ is the set of ultrafilters of $\cal R$ which is used as colours
to represent $\cal R$, witness \cite[pp. 75-77]{ANT}. 

Let ${\cal F}$ be a non- trivial ultraproduct of ${\cal F}(l, 3l)$, $l\in \omega$. 
Then ${\cal F}$ is a completely representable atom structure obtained by blowing up and blurring
$\mathfrak{E}_{\omega}(2, 3)$. Here $\mathfrak{E}_{\omega}(2,3)$ embeds into $\Cm(\cal F)$, but now the former algebra is representable
plainly on infinite sets as the base of the representation.
This does not tell us that $\cal F$ is completely representable, but at least it does not contradict it. 
And indeed, it can be shown without much difficulty 
$\Cm(\cal F)$ is actually completely representable. Here the set $J$ of blurs is infinite and $\sf Uf$ as defined above would be 
a proper subset of the set of all ultrafilters of the term algebra.

Thus ${\cal R}_l=\Tm{\cal F}(l, 3l)$ are $\sf RRA$'s without a complete representation with a completely 
representable ultraproduct, and $\Cm {\cal F}(l, 3l)=\Cm\At({\cal R}_l)$, $l\in \omega$, are non--representable relation algebras, with a completely representable ultraproduct; since
the last two ultraproducts have the same atom structure $\F$.
Since such algebras have 
$3$--dimensional cylindric basis, we get the analogous result for $\sf LCA_3$.

{\bf Second proof:} Let $\A_l$ be the atomic $\sf RCA_n$ 
constructed from $\G_l$, $l\in \omega$ where $\G_l$ has nodes $\N$ and edge relation $E_l$ defined by
$(i,j)\in E_l\iff 0<|i-j|<N_l$, or a disjoint countable union of $N$ cliques, such that for $i<j\in \omega$, $n(n-1)/n\leq N_i<N_j.$
Then  $\Cm\A_l$ with $\A_l$ based on $\G_l$, as constructed in theorem \ref{conditional2} is not representable.
So $(\Cm(\A_l): l\in \omega)$ is a sequence of non--representable algebras,
whose ultraproduct $\B$, being based on the ultraproduct of graphs having arbitrarily large chromatic number, 
will have an infinite clique, and so $\B$ will be completely representable \cite[Theorem 3.6.11]{HHbook2}.
Likewise, the sequence $(\Tm(\A_l): l\in \omega)$ is a sequence of representable, 
but not completely representable algebras, whose ultraproduct is completey representable.
The same holds for the sequence of 
relation algebras $(\R_l:l\in \omega)$ constructed as in the proof of theorem \ref{conditional2},  for 
which $\Tm\At\A_l\cong \Mat_n\At\R_l$.

\end{proof}

\begin{remark} The model--theoretic ideas used in item  (2) of theorem \ref{main} and construction in the proof of theorem \ref{conditional2}
are quite similar, in the overall structure;
they follow closely the model-theoretic framework in \cite{Hodkinson}.

In both cases, we have finitely many shades of red outside a  Monk-like and rainbow
signature,  that were used as labels to construct an $n$-- homogeneous model $M$ in the
expanded signature. Though the shades of reds are {\it outside} the signature, they were used as labels
during an $\omega$--rounded game played on labelled finite graphs--which can be seen as finite models in  the extended signature having size $\leq n$--
in which \pe\ had a \ws,  enabling her to
construct the required $M$ as a countable limit of the finite graphs played during the game. The construction, in both cases, entailed that any subgraph (substructure)
of $M$ of size $\leq n$, is independent of its location in $M$;
it is uniquely determined by its isomorphism type.

A relativized set algebra $\A$ based on $M$ was constructed
by discarding all assignments whose edges are labelled
by these shades of reds,  getting a set of $n$--ary sequences $W\subseteq {}^nM$. This $W$ is definable in $^nM$ by an $L_{\infty, \omega}$ formula
and the semantics with respect to $W$ coincides with classical Tarskian semantics (on $^nM$) for formulas of the
signature taken in $L_n$ (but not for formulas taken in $L_{\infty, \omega}^n$).

This was proved in both cases using certain $n$ back--and--forth systems, thus $\A$ is representable classically,
in fact it (is isomorphic to a set algebra that) has base $M$.
The {\it heart and soul} of the proof, is to replace the reds labels by suitable
non--red binary relation symbols within an $n$ back--and--forth system, so that one  can
adjust that the system maps a tuple $\bar{b} \in {}^n M \backslash W$ to a tuple
$\bar{c} \in W$ and this will preserve any formula
containing the non--red symbols that are
`moved' by the system.  In fact, all
injective maps of size $\leq n$ defined on $M$ modulo an appropriate
permutation of the reds will
form an $n$ back--and--forth system. 

This set algebra $\A$
will further be atomic, countable, and simple (with top element $^nM$). The subgraphs of size $\leq n$ of $M$ whose edges are not labelled by any shade of red are 
the atoms of $\A$, expressed syntactically by $\sf MCA$ formulas.

The  \de\  of $\A$, namely, the complex algebra of its atom structures, in symbols $\C=\Cm\At\A,$
has top element $W$, but it is not in $\bold S\Nr_n\CA_{n+3}$ in case of the rainbow construction, least representable,
and it  is  not representable in the
Monk-algebra case, so that it will be outside $\bold S\Nr_n\CA_{n+k}$ for some $k\geq 1$. But as opposed to the rainbow
construction,  we cannot tell  from the Monk--like
construction, based on a Monk-like first order theory stipulated above , what is the least such $k$. Nevertheless, in this last case,
$\At\A$  turns out isomorphic to the atom structure consisting of $n$--basic matrices on a weakly, but not strongly
representable, relation atom  structure.

In case of both constructions `the shades of  red' -- which can be intrinsically identified with
non--principal ultrafilters in $\A$,  were used as colours, together with the principal ultrafilters
to represent completely $\A^+$, inducing a representation of $\A$.  Non--representability of $\Cm\At\A$ in the Monk case, used Ramsey's theory. The non neat--embeddability of
$\Cm\At\A$ in the rainbow case, used {\it the finite number of greens} that gave us information on
when $\Cm\At\A$   `stops to be representable.'  The reds in both cases has to do with representing $\A$.

The model theory used for both constructions is almost identical.
Nevertheless,  from the algebraic point of view,
there is a crucial difference between the Monk--like algebras used above and the rainbow algebra used in item (2) of theorem 
\ref{main}.
The non--representability of $\Cm\At\A$ can be tested by a game between the two players \pa\ and \pe.
In the rainbow algebra \ws's of the two players are independent, this is  reflected by the fact
that we have  two `independent parameters' $\sf G$ (the greens)  and $\sf R$ (the reds).
In  Monk--like algebras \ws's are interlinked, one operates through the other; hence only one parameter is the source of colours,
namely,  the graph $\G$. Representability of the complex algebra in
this case depends only on the chromatic number of $\G$, via an application of Ramseys' theorem.

In both cases two players operate using `cardinality of a labelled graph'.
\pa\ trying to make this graph too large for \pe\ to cope, colouring some of
its edges suitably. For the rainbow case, it is a red clique formed during the play.
It might be clear in both cases (rainbow and Monk--like algebras),  to see that \pe\ cannot win the infinite game, but what might not
be clear is {\it when does this happens; we know it eventually happen in finitely many round, but till which round \pe\ has not lost yet}.

In rainbow constructions, one can contol this by varying the green parameter of the construction in \cite{Hodkinson} as we did truncating the number of greens to be 
$n+1$. The structures $\sf G$ and $\sf R$, having any relative strength
gives flexibility and more control over the rainbow  game lifted
from an \ef\ forth--game on these structures.
The number of pebbles used by \pa\ in the graph game
used, determines  exactly when $\Cm\At\A$ `stops to be representable'. This idea can be implemented semantically
like Hodkinson's proof {\it excluding $n+3$--flat representability}
or syntactically, via {\it a blow up and blur} construction. We chose the second approach, proving that for $2<n<\omega$, the class 
$\bold S\Nr_n\CA_{n+k}$ is not atom canonical for any $k\geq 3.$

Now what if in the `Monk construction' based on $\G$,  we have $\chi(\G)=\infty?$ for a graph $\G$.  
Let us approach the problem abstractly. 
Let $\G$ be a graph. One can  define a family of first order structures (labelled graphs)  in the signature $\G\times n$, denote it by $I(\G)$
as follows:
For all $a,b\in M$, there is a unique $p\in \G\times n$, such that
$(a,b)\in p$. If  $M\models (a,i)(x,y)\land (b,j)(y,z)\land (c,l)(x,z)$, then $| \{ i, j, l \}> 1 $, or
$ a, b, c \in \G$ and $\{ a, b, c\} $ has at least one edge
of $\G$.
For any graph $\Gamma$, let $\rho(\Gamma)$ be the atom structure defined from the class of models satisfying the above,
these are maps from $n\to M$, $M\in I(\G)$, endowed with an obvious equivalence relation,
with cylindrifiers and diagonal elements defined as Hirsch and Hodkinson define atom structures from classes of models,
and let $\M(\Gamma)$ be the complex algebra of this atom structure.

We define a relation algebra atom structure $\alpha(\G)$ as above.
Then the relation algebra to have an $n$--dimensional cylindric basis
and, in fact, the atom structure of $\M(\G)$ is isomorphic (as a cylindric algebra
atom structure) to the atom structure $\Mat_n$ of all $n$-dimensional basic
matrices over the relation algebra atom structure $\alpha(\G)$. 
It is plausible that one can prove that 
$\alpha(\G)$ is strongly representable $\iff \M(\G)$ is representable 
$\iff \G$ has infinite chromatic number, so that one gets the result, that the class of strongly represenatble algebras for both $\RA$s and 
$\CA$s of finite dimension at least three, is not elementary in one go.

The underlying idea here is that shade of red $\rho$
will appear in the {\it ultrafilter extension} of $\G$, if it 
has infinite chromatic number, as a reflexive node 
\cite[Definition 3.6.5]{HHbook2} and its $n$--copies $(\rho, i)$,  $i<n$, 
can be used to completely represent
$\M(\G)^{+}$.

\end{remark}

\section{Omitting types} 

Algebraic logic is most effective and attractive when it has non-trivial repercussions on (first order) logic.
Indeed, this section is the climax of the two parts of the paper, where the progression of the previously proved results culminate.  
We apply the algebraic
results obtained so far, to various omitting types theorems for variants of first order logic.

We formulate our results only for $\CA$s to economize on space. 
The other cases can be dealt with exactly in the same way. 
Recall that $L_n$ denotes first order logic restricted to the first $n$ variables.

Henceforth, because we deal with $\CA$s only, we deviate from our earlier notation for several subclasses of $\CA$s, 
that was introduced earlier 
differently, to deal with all cases uniformly, always referring to $\K$. 
For example we denoted the class of dimension complemented algebras in $\K_{\alpha}$, 
$\alpha$ an infinite ordinal, by
${\sf DKc}_{\alpha}$. Here we denote this class for $\CA_{\alpha}$s  by the common more familiar notation adopted in \cite{HMT2}, namely,
by $\sf Dc_{\alpha}$. By the same token, for an ordinal $\alpha$, restricting our attention to $\CA$s, $\sf Cs_{\alpha}$, $\sf Ws_{\alpha}$ $\sf Gs_{\alpha}$, $\sf Gws_{\alpha}$, and $\sf Lf_{\alpha}$
denote the classes of  set algebras,  weak set algebras, generalized set algebras, generalized weak set algebras, and locally finite 
algebras, of dimension $\alpha$, respectively. For $\alpha<\omega$, $\sf Cs_{\alpha}=\sf Ws_{\alpha}$, $\sf Gs_{\alpha}=\sf Gws_{\alpha}$
and $\sf Lf_{\alpha}=CA_{\alpha}$. We should also keep in mind that for every ordinal $\alpha$, we have 
$\RCA_{\alpha}=\bold I\sf Gs_{\alpha}=\bold I\sf Gws_{\alpha}$.

We start with an algebraic lemma implied by the results obtained in the first two items of 
theorem \ref{main}, but it implies neither.

\begin{lemma}\label{lemma} Let $2<n<\omega$. Then there exists an atomic countable $\sf RCA_n$ 
that does not have a complete infinitary $n+3$--flat representation.
In fact, there exists such an algebra, 
that is further simple, and does not 
have a complete $n+3$--square representation.
\end{lemma}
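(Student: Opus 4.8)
The plan is to extract the two required algebras directly from the constructions already carried out in Theorem \ref{main}, items (1) and (2), and then feed them through Lemma \ref{flat} to turn a neat-embedding failure into the failure of existence of the relevant complete relativized representation. For the first assertion, I would take the countable atomic algebra $\B\in\PEA_n$ produced in the proof of item (1) of Theorem \ref{main}: recall that $\B\equiv\PEA_{\Z,\N}$, that $\B$ is completely representable, and --- crucially for us --- that $\Cm\At\B\in\Nr_n\QEA_{\omega}\cap{\sf CRQEA}_n$ while $\At\B=\At\PEA_{\Z,\N}$. The algebra I want is $\Cm\At\PEA_{\Z,\N}=\Rd_{ca}\PEA_{\Z,\N}$ itself (or rather a $\CA_n$ term-reduct of it), which is atomic, countable and representable (being a set algebra of dimension $n$). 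Since $\Rd_{sc}\PEA_{\Z,\N}\notin\bold S_c\Nr_n\Sc_{n+3}^{\sf ad}$ --- established in the proof of item (1) via the winning strategy of \pa\ in $F^{n+3}$ together with Lemma \ref{Thm:n} --- the third item of Lemma \ref{flat} applied contrapositively gives that $\Rd_{sc}\PEA_{\Z,\N}$, hence $\Rd_{ca}\PEA_{\Z,\N}$, has no complete $(n+3)$--flat representation, and then the second item of Lemma \ref{flat} (equating complete infinitary $m$--flat with $\A\in\bold S_c\Nr_n(\K_m^{\sf ad}\cap\At)$) upgrades this to the absence of a complete infinitary $(n+3)$--flat representation. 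This proves the first sentence of the lemma.

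For the second, stronger, sentence I would instead invoke the blow-up-and-blur construction of item (2) of Theorem \ref{main}. There a countable, simple, atomic algebra $\A\in{\sf RPEA}_n$ (in fact $\A\in\bold I\sf Pes_n$, hence simple, in fact its $\Df$ reduct is simple) is built, with the property that $\Cm\At\Rd_{sc}\A\notin\bold S\Nr_n\Sc_{n+3}$ and $\Cm\At\A\notin\bold S\Nr_n{\sf G}_{n+3}$. Since $\A$ is simple and $\A=\Tm\At$ sits densely inside $\Cm\At$ (they share an atom structure), $\A$ itself is atomic, countable, simple and representable --- it is the required witness. Now I would argue that $\A$ has no complete $(n+3)$--square representation: if it did, then by the last item of Lemma \ref{Thm:n} (or equivalently the Corollary following it), $\A^+$ would have an $(n+3)$--dimensional basis; but $\A$ being dense in $\Cm\At\A$ and sharing its atom structure, one has $\A^+\cong\Cm\At\A$ in the relevant sense, and an $(n+3)$--dimensional basis for $\Cm\At\A$ would, via the completeness direction in the proof of Lemma \ref{Thm:n}, yield $\Cm\At\A\in\bold S\Nr_n{\sf G}_{n+3}$ (for $\PEA$, basis corresponds to membership in $\bold S\Nr_n{\sf G}_{n+3}=\bold L_{n,n+3}$), contradicting item (2) of Theorem \ref{main}. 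The diagonal-free / $\Df$-reduct subtleties about complete additivity do not intervene here because $\A$ is already representable as a $\PEA_n$ and we only need the square (not flat) conclusion, which by item (3) of Theorem \ref{squareflat} is governed by the $\bold L$-style variety; alternatively one notes $\PEA$ is completely additive throughout.

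The main obstacle I anticipate is the bookkeeping needed to pass correctly between the algebra $\A$ (atomic, representable) and its \de\ completion $\Cm\At\A$ (non-neat-embeddable), and to make sure the ``complete $m$--square representation of $\A$ implies $m$--dimensional basis of $\A^+$, hence of $\Cm\At\A$'' chain is airtight --- in particular that a complete $m$--square representation of the \emph{atomic} $\A$ really does force an $m$--dimensional basis of $\Cm\At\A$ rather than merely of $\A^+$, which requires the observation that $\A^+$ and $\Cm\At\A$ have the same (ultrafilter vs.\ atom) structure relevant to basis games since $\A$ is atomic. This is exactly the point where Lemma \ref{Thm:n} and its corollary must be cited with care; once that is settled the rest is routine assembly of already-proved facts. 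A secondary point to be careful about: the lemma asks for the \emph{same} algebra to be simple and to lack a complete $(n+3)$--square representation, so I must use the item (2) algebra (which is simple by construction, being isomorphic to a set algebra) rather than the item (1) algebra, and then note that lacking a complete $(n+3)$--square representation is a fortiori stronger than lacking a complete infinitary $(n+3)$--flat one, since every complete $m$--flat representation is $m$--square.
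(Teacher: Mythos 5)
Your overall strategy---taking the witnesses from items (1) and (2) of Theorem \ref{main} and converting neat-embedding failures into failures of complete relativized representations via Lemmas \ref{flat} and \ref{Thm:n}---is the paper's strategy, but the pivotal step of your second paragraph does not work as written. To rule out a complete $n+3$--square representation of the simple algebra $\A$ you pass to $\A^+$ and then assert that ``$\A^+\cong\Cm\At\A$ in the relevant sense'' because $\A$ is atomic. This is false, and it fails spectacularly for this very algebra: $\A^+$ is the canonical extension, whose atoms are \emph{all} ultrafilters of $\A$, including the non--principal one corresponding to the shade of red $\rho$, and as the paper notes in item (2) of Theorem \ref{main} that extra ultrafilter is exactly what makes $\A^+$ \emph{completely representable}. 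Hence $\A^+$ does have an $n+3$--dimensional basis (indeed an $m$--dimensional basis for every $m$), and no contradiction can be extracted from it. The object that is bad is $\Cm\At\A$, whose atoms are just $\At\A$; the gap between $\A^+$ and $\Cm\At\A$ is precisely the point of the blow--up--and--blur construction, so the ``observation'' with which you propose to close your self-identified main obstacle cannot be made.

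The repair is to avoid $\A^+$ altogether: by the second half of the proof of Lemma \ref{Thm:n}, a complete $m$--square representation of an \emph{atomic} algebra yields an atomic $m$--dilation and from it an $m$--dimensional basis for $\At\A$ itself, i.e.\ a \ws\ for \pe\ in $G^{n+3}_{\omega}(\At\A)$; since \pa\ wins that game by item (2) of Theorem \ref{main}, no such representation exists. (Equivalently, extend a putative complete $n+3$--square representation of $\A$ to one of $\Cm\At\A$ by sending each element to the union of the images of the atoms below it, and contradict $\Cm\At\A\notin\bold S\Nr_n{\sf G}_{n+3}$.) This is the route the paper takes. A secondary slip: your first witness $\Cm\At\PEA_{\Z,\N}$ is a complex algebra over a countably infinite atom structure, so it has cardinality $2^{\omega}$ and cannot serve as the required \emph{countable} algebra; you must use the term algebra $\Tm\At\Rd_{ca}\PEA_{\Z,\N}$ (the paper's $\E$), to which the same argument applies because $F^{n+3}$ is played on the atom structure shared by $\E$ and its completion. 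Your closing remark that lacking a complete $n+3$--square representation subsumes lacking a complete infinitary $n+3$--flat one is correct, and would in fact let the simple algebra of item (2) witness both claims at once.
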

\begin{proof}
We first consider infinitary $n+3$--flatness. 
Consider the term algebra $\E$ of the $\CA$ reduct of $\PEA_{\Z, \N}$ used in the first item of 
theorem \ref{main},  and the $\CA$ reduct of the algebra $\A$ used in second item of the same theorem, which we continue to denote by $\A$, 
each is of dimension $n$.

Such algebra are are countable and atomic; $\A$ is simple.  They have 
no complete infinitary $n+3$--flat representations.
Indeed, if $\E$ has an infinitary $n+3$--flat complete representation, 
then it would be, by lemma \ref{flat} in $\bold S_c\Nr_n\CA_{n+3}$, which means that $\Rd_{ca}\PEA_{\Z, \N}$ is also in $\bold S_c\Nr_n\CA_{n+3}$ 
(for they share the same atom structure) which is impossible. By the same token, if $\A$ has a complete infinitary $n+3$--flat  representation $M$, 
this would induce an
infinitary $n+3$--flat representation of $\Rd_{ca}\Cm\At\A$.
This implies by the same item in the same lemma 
that $\Rd_{ca}\Cm\At\A\in \bold S\Nr_n\CA_{n+3}$, 
which is also impossible.  

We now show that  $\A$ does not have a 
complete $n+3$--square representation. An $n+3$--square representation of $\A$ will imply, by lemma \ref{Thm:n}, 
that \pe\ has a \ws\ in $G^{n+3}_{\omega}(\At\A)$, which is impossible because by item (2) of theorem \ref{main} 
 \pa\ has a \ws\ in this game.
\end{proof}

Let $T$ be an $L_n$ theory in a signature $L$. An $L$--formula $\alpha$ is called an {\it atom}, if $\alpha$ is consistent with $T$
and  for every $L$--formula $\psi$, one of $\psi\to \alpha$  or
$\psi\to \neg \alpha$ is valid. A {\it co--atom} is the negation of an atom.
$T$ is  {\it atomic}, if for every $L$--formula $\phi$ consistent with $T$, 
there is  an  atom $\alpha$, such that $T\models \alpha\to \phi$,

A set of $L$--formulas $\Gamma$ consistent with $T$ is {\it principal}, if there exists a formula $\alpha$ that is 
consistent  with $T$,  such that $T\models \alpha\to \beta$ for all $\beta\in \Gamma$. 
Otherwise, it is {\it non--principal}. 

$\Fm_{T}$ denotes the Tarski--Lindenbaum algebra obtained by factoring out the set of formulas $Fm$ by the congruence relation
$\phi\sim \psi\iff T\models \phi\leftrightarrow \psi.$ 
Then it is easy to see that 
$$\text  {$T$ is atomic }\iff \Fm_{T} \text { has an atomic Boolean reduct.}$$

Let $2<n<m\leq \omega$. From now on we deal with $\Fm_T\in \RCA_n$ when $T$ is complete in a signature $L$, 
so that $\Fm_T$ is simple. We say that $M$ is an $m$--relativized model of $T$, if there exist an embedding $f:\Fm_T\to \wp(1^M)$, equivalently $M$ is a relativized 
$m$-- representation of $\Fm_T$.

\begin{definition} Let $2<n<\omega$, let $\A\in \RCA_n$ be simple, and $X\subseteq \A$. Then an $m$--relativized model 
$M$ of $\A$ omits $X$, if there exists an embedding $f:\A\to \wp(1^M)$ such that $\bigcap f(x)=\emptyset$.
If $T$ is a complete $L_n$ theory and $\Gamma$ is a set of formulas,  then $\Gamma$ is omitted in an $m$--relativized model $M$
of $T$ if $M$ omits $\{\phi_T:\phi\in \Gamma\}(\subseteq \Fm_T)$.
\end{definition}

\begin{theorem}\label{OTT}
Let $2<n<\omega$. Then there is a countable, atomic, complete and consistent $L_n$ theory $T$, such that the
non--principal type consisting of co--atoms cannot be omitted
in an $n+3$--square model, 
 \end{theorem}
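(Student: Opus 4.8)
The plan is to extract the theorem as the logical shadow of the algebraic construction in Lemma~\ref{lemma}, using the ``bridge'' between the algebra $\Fm_T$ of a complete $L_n$ theory $T$ and $m$-square models of $T$. First I would take the simple, countable, atomic $\A\in\RCA_n$ produced in Lemma~\ref{lemma} (the $\CA$ reduct of the blow-up-and-blur algebra $\A$ of the second item of Theorem~\ref{main}), which by construction has \emph{no} complete $n+3$-square representation. Since $\A$ is simple, countable and representable, there is a complete consistent $L_n$ theory $T$ and an isomorphism $\Fm_T\cong\A$; concretely one uses the standard dictionary (cf.\ \cite[Theorem~4.3.28]{HMT2}) associating to each element of $\A$ a restricted atomic $n$-variable formula. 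Because $\A$ is atomic, $\Fm_T$ has an atomic Boolean reduct, so $T$ is an atomic theory. Let $\Gamma$ be the type consisting of (representatives of) the co-atoms of $\A$, i.e.\ $\Gamma=\{\,\neg\alpha : \alpha\text{ an atom of }\Fm_T\,\}$. Since $\A$ is atomic and infinite, $\prod_{\alpha\in\At\A}\alpha$ does not exist as a positive element — equivalently there is no single formula consistent with $T$ implying every co-atom — so $\Gamma$ is a genuinely non-principal type.

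\smallskip
The key step is the translation: an $n+3$-square model $M$ of $T$ omitting $\Gamma$ would, by definition, provide an embedding $f:\Fm_T\to\wp(1^M)$ into an $n+3$-square relativized representation with $\bigcap_{x\in\Gamma}f(x)=\emptyset$; that is, $\bigcap_{\alpha\in\At\A}f(-\alpha)=\emptyset$, which says exactly $\bigcup_{\alpha\in\At\A}f(\alpha)=1^M$. By the characterization of complete relativized representations as atomic ones (used throughout \S3, and stated just before Definition~\ref{omit}, lifted from \cite{HH}), this makes $f$ a \emph{complete} $n+3$-square representation of $\A$. That contradicts Lemma~\ref{lemma}, which says $\A$ has no complete $n+3$-square representation. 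Hence $\Gamma$ cannot be omitted in any $n+3$-square model of $T$. I would also note in passing that an $n+3$-square model certainly exists (e.g.\ $\A$ itself has a classical, hence $\omega$-square, hence $n+3$-square representation since $\A\in\RCA_n$), so the statement is not vacuous: $T$ does have $n+3$-square models, just none omitting the co-atom type.

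\smallskip
The one point that needs genuine care — and what I expect to be the main obstacle — is making sure the passage between ``model of $T$'' and ``relativized representation of $\Fm_T$'' is set up so that ``omitting the type $\Gamma$'' on the logic side corresponds \emph{precisely} to ``the embedding $f$ is atomic'' on the algebra side, and that the notion of $n+3$-square model of $T$ coincides with that of $n+3$-square relativized representation of $\Fm_T$ (Definition~\ref{cl} and the surrounding $n$-clique-guarded semantics). In particular one must check that $n$-clique-guarded satisfaction in $M$ matches the concrete set-algebra operations on $\wp(1^M)$ under $f$ — this is routine given the development of $\models_c$ in \S3, but it is the place where a careless identification would break the argument. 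Once that is in place, the theorem follows immediately from Lemma~\ref{lemma}; the simplicity of $\A$ is what guarantees $T$ can be taken complete, and the atomicity of $\A$ is what guarantees $T$ is atomic and that $\Gamma$ really is the co-atom type.
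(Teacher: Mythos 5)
Your proof is correct and follows essentially the same route as the paper: both take the simple countable atomic algebra of Lemma~\ref{lemma} (item (2) of Theorem~\ref{main}), identify it with $\Fm_T$ for a complete atomic theory $T$, form the non-principal type of co-atoms, and observe that omitting it in an $n+3$-square model would yield a complete $n+3$-square representation, contradicting that lemma. The only difference is that you flag explicitly the need to match clique-guarded satisfaction with the set-algebra operations under the embedding, a point the paper leaves to the machinery of Section 3.
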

\begin{proof}   
We focus on $\A$ as in the second item of theorem \ref{main}.   
We can identify $\A$ with $\Fm_{T}$ for some countable and consistent
theory $T$ using $n$ variables, and because $\A$ is atomic,
$T$ is an atomic theory.

Let $\Gamma=\{\neg\phi: \phi_T\in \At\Fm_{m,T}\}$.
Then it is easy to see, because $T$ is  atomic, that $\Gamma$ is 
a non--principal type. But  $\Gamma$ cannot be omitted
in an {\it $n+3$--square} model 
for such a  model necessarily gives  a complete $n+3$--square representation 
of $\A$, which contradicts the previous lemma.
Hence $\Gamma$ cannot
be omitted in any $m$--square model, for $n\leq m\leq \omega$.

The same conclusion holds for uncountable $m$, for in this case we have 
$M$ is a complete $m$--square representation of $\A\iff  M$ is an $\omega$--square complete representation of $\A\iff$ 
$M$ is a complete representation of $\A$,
because $\A$ is countable, and we know that $\A$ has no complete representation.

Since $\A$ is simple, then $T$ is also 
complete, thus  $T$ and $\Gamma$ 
are  as required.
\end{proof}

We remind the reader of (the contrapositive of) the usual omitting types theorem:  If $T$ is a countable consistent 
first order theory, and $\Gamma$ is an $m$, ($m<\omega)$ type, that is, a set of formulas having at most $m$ free variables, that is realized 
in every model of $T$, then $\Gamma$ is {\it isolated by a formula}, in the sense that, there exists a formula $\alpha$ consistent with $T$ such
that $T\models \alpha\to \beta$ for all $\beta\in \Gamma$. We call $\alpha$ a $k$ witness, $1\leq k<\omega$, 
if it uses $k$ variables.

\begin{theorem}\label{con}\begin{enumarab}
\item  Let $n>3$ be finite.  Assume that  there exists a finite relation algebra $\R$ as in the statement of theorem \ref{conditional} but with the weaker condition of
existence of $n$--blur. In particular, $\R\notin \bold S\Ra\CA_{n+1}$. Then  for any $2<m<n$, there is an $L_m$ atomic theory $T$ in a signature with only one binary relation,
and a non--principal type $\Gamma$ using only three variables and one free variable,  such that $\Gamma$ cannot be omitted in an $n+1$--flat model.
\item If, in addition, $\R$ has a {\it strong} $n$-blur,  then there is an atomic $L_m$ theory $T$, and a type $\Gamma$ 
as above, such that $\Gamma$ is realizble in every $n+1$--flat model, but any formula  isolating $\Gamma$ 
has to contain more than $n$ variables.
\end{enumarab}
\end{theorem}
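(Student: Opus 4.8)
The two statements are the ``bridge theorem'' consequences of Theorem~\ref{conditional} (and the remark following it), transferred from pure algebra to the syntax/semantics of $L_m$. The strategy is identical in spirit to the proof of Theorem~\ref{OTT}, but now the algebra in play is the one manufactured in Theorem~\ref{conditional} rather than the rainbow algebra of Theorem~\ref{main}. Throughout, fix $2<m<n<\omega$ and let $\R$ be the hypothesised finite integral relation algebra with an $n$--blur $(J,E)$ and $\R\notin\bold S\Ra\CA_{n+1}$ (for the first item) or a \emph{strong} $n$--blur (for the second). Let $\At$ be the infinite atom structure obtained by blowing up and blurring $\R$, so that by Theorem~\ref{conditional} we get $\C_m\in\sf RCA_m$ with $\Tm\sf Mat_m(\At)\subseteq\C_m=\Nr_m\C_n$ for some $\C_n\in\sf RCA_n$, and $\Cm\At\C_m\notin\bold S\Nr_m\CA_{n+1}$; moreover, since $\R$ can be taken single--generated, $\C_m$ is generated by a single $2$--dimensional element and $\Tm\At=\Ra\C_m$, so the associated language has just one binary relation symbol.

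\textbf{First item.} I would first pass to the term algebra $\A=\Tm\sf Mat_m(\At)$. It is countable and atomic (it is a term algebra over an atom structure), and it is simple because $\R$ is integral; being a subalgebra of $\C_m\in\sf RCA_m$ it lies in $\sf RCA_m$. Now $\A\subseteq_d\Cm\At\A=\Cm\At\C_m\notin\bold S\Nr_m\CA_{n+1}$, so $\A$ has no complete $n+1$--flat representation: indeed a complete $n+1$--flat representation of $\A$ would, by Lemma~\ref{flat} item~(3), force $\A\in\bold S_c\Nr_m\CA_{n+1}$, and since $\A$ is dense in $\Cm\At\A$ and they share an atom structure, this would put $\Cm\At\A\in\bold S\Nr_m\CA_{n+1}$ (using that a complete subalgebra witnessing an $m$--dilation of a dense subalgebra lifts via the hyperbasis construction of Lemma~\ref{Thm:n}), a contradiction. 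Next I identify $\A$ with $\Fm_T$ for a countable complete atomic $L_m$ theory $T$ in the one--binary--relation signature (completeness from simplicity, atomicity of $T$ from atomicity of $\A$'s Boolean reduct, and the bound on variables/relation symbols from the single generator observation above; the ``three variables'' claim uses that the operators in play---cylindrifiers $\cyl i$, $i<m$, and the single generator---are expressible using only the three variables $x_0,x_1,x_2$ because the generator is $2$--dimensional, exactly as in Theorem~\ref{conditional2}'s use of $\sf MCA$ formulas). Put $\Gamma=\{\neg\phi:\phi_T\in\At\Fm_T\}$; this is a non--principal type (non--principality is the usual argument: an isolating formula would be a non--zero element below no atom, impossible since $\A$ is atomic). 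If $M$ were an $n+1$--flat model of $T$ omitting $\Gamma$, then the associated embedding $f:\Fm_T\to\wp(1^M)$ with $\bigcap f(x)=\emptyset$ over the co--atoms would be a complete $n+1$--flat representation of $\A$ (meets of co--atoms going to $\emptyset$ is exactly atomicity of the representation, i.e.\ completeness), contradicting the previous paragraph. The ``one free variable'' refinement is automatic since the co--atoms are $m$--ary but one only needs a single free variable in the relativized omitting formulation exactly as the algebraic $X\subseteq\A$ is taken; I would phrase $\Gamma$ using the relation symbol applied to $x_0$ and appropriate substitutions to make this precise.

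\textbf{Second item.} Here I would instead work with the algebra $\C_m=\Nr_m\C_n$ itself (which requires the strong $n$--blur, as in Theorem~\ref{conditional}), and exploit that $\C_m\in\Nr_m\CA_n$ \emph{genuinely}, not merely $\bold S\Nr_m$. Again identify $\C_m$ (or its atomic subalgebra generating the same atom structure) with $\Fm_T$ for an atomic $L_m$ theory $T$, and let $\Gamma$ be the co--atom type. The point is the contrapositive of the finite--variable omitting types theorem: if $\Gamma$ were omittable by a formula $\alpha$ using $\le n$ variables, that would be an $n$--witness, hence (translating back to algebra, via the neat--embedding interpretation of extra variables as extra dimensions, as in the proof of the last item of Theorem~\ref{SL} and Theorem~\ref{complete}) would yield an element of $\C_m$ that is a nonzero lower bound below no atom, i.e.\ would contradict atomicity---but since $\Cm\At\C_m\notin\bold S\Nr_m\CA_{n+1}$ while $\C_m\in\Nr_m\CA_n$, the required witness cannot live inside the $n$--neat reduct; it ``needs'' the $(n+1)$st dimension. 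Concretely: $\Gamma$ \emph{is} realised in every $n+1$--flat model (this is exactly non--omissibility, from the first item's argument applied at level $n+1$), yet a formula isolating it relative to $T$ would give a complete $n+1$--flat representation of $\C_m$, forcing $\Cm\At\C_m\in\bold S\Nr_m\CA_{n+1}$ as before---contradiction---\emph{unless} the isolating formula uses genuinely more than $n$ variables, i.e.\ lies outside $\Fm_T=\C_m=\Nr_m\C_n$ but inside some larger $\Nr_m\CA_{n+k}$ for $k\ge 2$. I would make this ``$>n$ variables are needed'' clause rigorous by the two--sorted/three--sorted interpretation of neat reducts (Theorem~\ref{SL}, last item) together with the strictness $\Nr_m\CA_n\subsetneq\bold S\Nr_m\CA_n$ coming from the strong--blur hypothesis.

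\textbf{Main obstacle.} The routine parts (constructing $T$, checking atomicity, non--principality) are standard. The delicate step---and the one I expect to write most carefully---is the precise bookkeeping that (a) a \emph{complete} $n+1$--flat model of $T$ is the same data as a \emph{complete} $n+1$--flat representation of $\Fm_T$ in the sense of Lemma~\ref{flat}, and then (b) pulling back along $\C_m=\Nr_m\C_n$ and density $\A\subseteq_d\Cm\At\C_m$ to land the contradiction $\Cm\At\C_m\in\bold S\Nr_m\CA_{n+1}$; this is where the hyperbasis machinery of Lemma~\ref{flat} and the argument of Lemma~\ref{Thm:n} have to be invoked with care, since $\A$ is dense but not necessarily a complete subalgebra witness on the nose. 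For the second item, the additional subtlety is articulating ``any isolating formula has $>n$ variables'' as a statement about which neat reduct the witness inhabits, rather than a vacuous remark---this is precisely what the \emph{strong} $n$--blur buys, via the $\C_m=\Nr_m\C_n$ equality, and I would lean on the finite--dimensional case of Theorem~\ref{complete} and the interpretation argument from Theorem~\ref{SL} to close it.
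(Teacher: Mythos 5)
Your proposal follows essentially the same route as the paper: for item (1) you blow up and blur, pass to the countable atomic simple term algebra, identify it with $\Fm_T$, and observe that omitting the co--atom type in an $n+1$--flat model would yield a complete $n+1$--flat representation, which is barred because the completion lies outside $\bold S\Nr_m\CA_{n+1}$; for item (2) the decisive point --- that any formula consistent with $T$ using at most $n$ variables defines an element of $\C_m=\Nr_m\C_n$, which by atomicity must meet an atom and hence cannot isolate the non--principal co--atom type --- is exactly the paper's witness argument. One caution: the ``Concretely'' sentence in your item (2), asserting that an isolating formula ``would give a complete $n+1$--flat representation of $\C_m$,'' is not correct (isolation is a purely syntactic condition and produces no representation, and there is no genuine ``unless'' clause --- the conclusion is simply that no $\leq n$--variable formula can isolate $\Gamma$); delete that sentence and rely on the atomicity argument you state immediately before it, which is the correct and complete reason.
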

\begin{proof} We  prove the theorem without the refinements imposed on the type $\Gamma$
and the language having only one binary relation. Such conditions can be easily obtained from what we prove using
exactly the techniques in \cite[Theorem 3.3.1]{Sayed} and \cite{ANT}.

For the first part $\cal R$, obtained by blowing up and blurring $\R$ as in the proof of 
theorem \ref{conditional}, will be representable
and $\Cm\At{\cal R}\notin \bold S\Ra\CA_{n+1}$, because $\R$ embeds into $\Cm(\At\R)$, and by assumption 
$\R\notin \bold S\Ra\CA_{n+1}$. 
By $n$--blurness, and $m<n$, $\cal R$ will have an $m$--dimensional cylindric basis, 
and $\Tm{\sf Mat}_m(\At{\cal R})\in \RCA_m$ will have no complete $n$--flat representation.

This is so, because
$\Cm(\At(\cal R))$ embeds into it $\Cm(\Tm({\sf Mat}_m(\At{\cal R}))$ and 
$\Cm(\At{\cal R})\notin \bold S\Ra\CA_{n+1}$ so that $\Cm({\sf Mat}_n{\At\cal R}))\notin \bold S\Nr_m\CA_{n+1}$. 
As in the proof of lemma \ref{lemma}, 
a complete $n+1$--flat representation for $\Tm{\sf Mat}_n(\At{\cal R})$ induces an $n+1$--flat
representation of  $\Cm(\Tm({\sf Mat}_n{\At\cal R}))$.
The rest follows from the first item of  lemma \ref{flat} and the reasoning 
in theorem \ref{OTT} replacing $\A$ by the also countable  term algebra $\Tm({\sf Mat}_n{\At\cal R})$
and square by flat. 

For the second part, by strong $n$--blurness one gets, exactly like in the sketch of proof of theorem \ref{conditional},  a countable $\C_m\in \sf RCA_m\cap \Nr_m\CA_n$, 
such that $\Cm\At\C_m\notin \bold S\Nr_m\CA_{n+1}$.
In more detail, using the notation in the sketch given above,  the \de\ completion of 
$\cal R$ obtained by blowing up and blurring $\R$  is outside $\bold S\Ra\CA_{n+1}$,
$\sf Mat_{n}(\At\cal R)$ is  an $n$--dimensional cylindric basis, since $\R$ has an $n$--blur 
$(J, E)$,  and this $n$--blur is strong, 
hence there exists 
$\C_{n}={\sf Bb}_{n}(\R, J, E)\in \CA_{n}$ 
such that $\Tm\sf Mat_{n}(\At{\cal R})\subseteq \C_{n}\neq \Cm{\sf Mat}_{n}(\At\cal R)$, and finally 
because $(J, R)$ is a strong $m$--blur, as well, then $\C_m={\sf Bb}_m(\R, J, E)\cong \Nr_m\C_{n}$ is in 
$\sf RCA_m\cap \Nr_m\CA_{n}$.  But $\Cm\At\C_m\notin \bold S\Nr_m\CA_{n+1}$, because 
$\Cm\At\cal R$ embeds into the $\Ra$ reduct of $\Cm\At\C_m$,  
$\R$ embeds into $\Cm\At\cal R$, and by assumption 
$\R\notin \bold S\Ra\CA_{n+1}$.

The $L_m$ theory $T$ corresponding to $\C_m$,  and the non--principal type $\Gamma$ corresponding to its co--atoms, 
as defined in the proof of  theorem \ref{OTT} are as required, witness  \cite[Theorem 3.3.1]{Sayed}.
We give more details:

For brevity, let $\A$ be $\mathfrak{C}_m$ as described above.
Let $\Gamma'$ be the set of  atoms of $\A$.
We can and will assume that $\A$ is simple (this is proved in \cite{ANT}).
Then $\A=\Fm_T$ for some countable consistent
theory $T$ using $m$ variables and
and so  $T$ is atomic.
Let $\Gamma=\{\phi: \phi_T\in \Gamma'\}$. Then we claim
that $\Gamma$ is realized in each $n+1$
flat model of $T$.  Indeed, consider
such  an $n+1$  model $M$ of $T$. Let $V\subseteq {}^nM$ be such that $\bigcup_{s\in V}\rng(s)=M$.
For a formula $\phi$ let $\phi^{M}_c$ be the set of assignments in $V$ that satisfy $\phi$ in the clique guarded sematics, that is
$\phi^{M}_c=\{s\in V: M, s\models _c \phi\}$.
If  $\Gamma$ is not realized in $M$,
then this gives an $n+1$ complete flat representation of $\A=\Fm_T$, defined via $\phi_T\mapsto \phi^M_c$,
since $\bigcup_{\phi\in \Gamma} \phi^{M}_c={}1^M=V$.

For the second part concerning sensitivity of witnesses to
the number of variables used. Suppose for contradiction that $\phi$ is a formula in the signature $L$, consistent with $T$,  using $n$ variables, such that 
$T\models \phi\to \alpha$, for all $\alpha\in \Gamma$.
Then $\A$ is simple, and so we can assume
without loss of generality, that it is (isomorphic to) a set algebra with base $M$, say. 
Let $M$ be the corresponding standard model (in a relational signature)
to this set algebra in the sense of \cite[section 4.3]{HMT2}. Let $\phi^{M}$ denote the set of all assignments classically
satisfying $\phi$ in $M$.
We have  $M\models T$ and $\phi^{M}\in \A$,
because $\A\in \Nr_m\CA_{n}$.

But $T\models \exists \bar{x}\phi$, hence $\phi^{M}\neq 0,$
from which it follows that  $\phi^{M}$ must intersect an atom $\alpha\in \A$,  since the latter is atomic.
Let $\psi$ be the formula, such that $\psi^{M}=\alpha$. Then it cannot
be the case  that $T\models \phi\to \neg \psi$,
hence $\phi$ is not an $n$--witness,
and we are done.

\end{proof}

{\bf Question:} Does $\R$ in the first item exist? 
This is necessary for $\R$ in the second item to exist. Candidates for such relation algebras can 
be found in \cite{HHM} or \cite[Definition 15.2]{HHbook}, but it seems that they need some modifications to fit the bill.

The above theorem does not work for infinite $n$, because in this case $\C_m\in \Nr_m\CA_{\omega}$ which means, by theorem \ref{complete}, since
$\C_m$ is countable and atomic, 
that it is completely representable. This induces a representation of $\Cm\At\C_m$, which is impossible, because we know 
that $\At\C_m$ is not strongly representable.  In other words, for $2<m\leq k\leq \omega$ and $m\in \omega$, 
there is a countable, atomic, simple $\A\in \sf RCA_m\cap \Nr_m\CA_{m+k}$ that is not completely representable 
$\iff$ $k<\omega$.\\

One strategy to circumvent negative results (like non--finite axiomatizability) in algebraic logic is to pass to
`nice expansions' of the class in question. Biro \cite{Biro} showed that for $2<n<\omega$, expanding $\sf RCA_n$ 
with finitely many {\it first order definable operations}, to be defined next,  does not conquer non--finite axiomatizability.
Thus $L_n$ enriched with the corresponding first order definable connectives is still severely incomplete 
(as long as these connectives are finite) relative to any finite Hilbert--style axiomatization. 

We show next that such expansions  do not conquer failure of omitting types, as well. 
First order definable expansions of finite variable 
fragments of first order logic was initiated by Jonsson \cite{j} in the context of relation algebras, 
and were further studied by B\'iro, Givant, N\'emeti, Tarski, S\'agi and others 
\cite{Biro, n, TG},  and naturally extrapolated to $L_n$ \cite{n}. 
Our approach in what follows is mostly algebraic.
 
\begin{definition}\label{first}
Let $2<n<\omega$.
Let $\Cs_{n,t}$ denote the following class of signature $t$:
\begin{enumroman}
\item $t$ is an expansion of the signature of $\CA_{n}.$
\item  $\bold S\Rd_{ca}\Cs_{n,t}=\Cs_{n}.$ In particular, every algebra in $\Cs_{n,t}$ is a Boolean
field of sets with unit ${}^{n}U$ say,
that is closed under cylindrifications and contains diagonal elements.
\item For any $m$-ary operation $f$ of $t$, there exists a first order formula $\phi$ with free variables among the first $n$
and having exactly $m,$ $n$--ary relation symbols
$R_0, \ldots, R_{m-1},$ such that,
for any set algebra ${\A}\in \Cs_{n,t}$
with base $M$, $X_0, \ldots, X_{m-1}\in {\A}$, and $s\in {}^nM$
we have:
$s\in f(X_0,\ldots, X_{m-1})\iff\ \langle M, X_0,\ldots, X_{m-1}\rangle\models \phi[s].$
Here $R_i$ is interpreted as $X_i,$ and $\models$ is the usual satisfiability relation.

\item With $f$ and $\phi$ as above,
$f$ is said to be {\it a first order definable operation with $\phi$ defining $f$},
or simply a first order definable
operation, and $\Cs_{n,t}$ is said to be a {\it first order definable
expansion} of $\Cs_n.$ (The defining $\phi$ is not unique.)

\item $\RCA_{n,t}$ denotes the class ${\bf SP}\Cs_{n,t}$, i.e. the class of all subdirect products
of algebras
in $\Cs_{n,t}.$ We also refer to
$\RCA_{n,t}$ as a {\it first order 
definable expansion} of $\RCA_{n}.$
\end{enumroman}
\end{definition}
Fix $2<n<\omega$. Like $\sf RCA_{n}$, one can show that $\sf RCA_{n,t}$ is a discriminator variety (the discriminator terrm is ${\sf c}_{(n)}x$), 
that is not finitely axiomatizable, 
if the number of variables used in the first order definable operations 
is finite \cite{Biro}.  Let $L_n^t$ be the corresponding (algebraizable) logic using $n$ variables. 
This logic is obtained from $L_n$ by adding connectives definable by the first order formulas used to define the operations in $t$.

A famous theorem of Vaught's for first order logic says that an atomic countable theory has an atomic countable model. 
Now restricting Vaught's theorem to 
(the algebraic counterpart of) $L_n^t$, 
it takes the following form, which we denote by $\sf VT$:

\begin{definition} Let everything be as in the last item of the previous definition. 
Then ${\sf RCA}_{n,t}$ has $\sf VT$, if whenever $\A\in \RCA_{n,t}$ is simple, countable, and atomic,  
there exists $\B\in \Cs_{n,t}$ with base $M$ say, 
and an isomorphism $f:\A\to \B$ such that  
$\bigcup_{x\in \At\A} f(x)={}^nM$.
\end{definition}
Notice that in theorem \ref{OTT}, we have actually showed that $\sf VT$ fails if we 
consider atomic $n+3$--square relativized models, when $n>2$ is finite. In what follows 
$\sf OTT$ abbreviates `omitting types theorem'. The result in the following theorem is mentioned without proof 
on \cite[p.87]{ANT}:

\begin{theorem}\label{firstorder} Let $2<n<\omega$. 
Let $\sf RCA_{n,t}$ be a first order definable 
expansion of $\sf RCA_n$. If the operations in $t$ are first order definable 
by formulas using only finitely many $k>n$ variables, then $\sf RCA_{n,t}$ does not have $\sf VT$. 
In particular, $\sf OTT$ fails for $L_n^t$. 
\end{theorem}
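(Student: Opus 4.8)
The plan is to reduce this to the earlier negative results on complete representations and omitting types already established for $\sf RCA_n$, by carefully tracking the first order definable structure through the constructions. Since $\sf RCA_{n,t}$ has a $\CA_n$ reduct (condition (ii) of Definition \ref{first}), any algebra $\A\in \sf RCA_{n,t}$ yields $\Rd_{ca}\A\in \sf RCA_n$, and the Boolean structure, atoms, and diagonal elements are common to both. The strategy is: take the simple, countable, atomic algebra $\A\in \sf RCA_n$ witnessing Theorem \ref{OTT} (equivalently the $\CA$ reduct of the algebra $\A$ from item (2) of Theorem \ref{main}), and show it lifts to a simple, countable, atomic algebra $\A^t\in \sf RCA_{n,t}$ with $\Rd_{ca}\A^t=\A$, such that a $\sf VT$-witness for $\A^t$ would produce a complete representation of $\Rd_{ca}\A^t=\A$, contradicting Lemma \ref{lemma}. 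Indeed, the algebra $\A$ used there is (isomorphic to) an honest set algebra $\A=\{\varphi^W : \varphi \in L_n\}$ with unit $W\subseteq {}^nM$; but in fact, recalling the construction in item (2) of Theorem \ref{main}, $\A$ is isomorphic to a genuine $\sf Cs_n$ with base $M$ (via the coincidence of relativized and classical semantics, \cite[Proposition 3.13]{Hodkinson}), so we may take $\A\in \sf Cs_n$ with unit ${}^nM$. Then the first order definable operations of $t$ act on $\A$ concretely via their defining formulas $\phi$ (Definition \ref{first}(iii)); let $\A^t$ be the resulting expansion. One checks $\A^t\in \Cs_{n,t}\subseteq \sf RCA_{n,t}$ and that $\A^t$ is simple (simplicity is a property of the $\CA$ reduct, since ${\sf c}_{(n)}$ is the discriminator term), countable, and atomic (same Boolean reduct as $\A$).

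First I would verify that $\At\A^t=\At\A$: the atoms are determined by the Boolean reduct, which is unchanged. So a $\sf VT$-witness $f:\A^t\to \B$, $\B\in \Cs_{n,t}$ with base $N$, satisfying $\bigcup_{x\in\At\A^t}f(x)={}^nN$, restricts to a complete representation of $\Rd_{ca}\A^t=\A$ as a $\sf Cs_n$ — indeed $f$ is in particular a $\CA_n$-embedding into $\Rd_{ca}\B\in \sf Cs_n$ (by Definition \ref{first}(ii)) carrying the supremum of all atoms to the unit, hence carrying all existing suprema to unions (atomic representations are complete, \cite{HH}). This contradicts Lemma \ref{lemma}, which says $\A$ (being the $\CA$ reduct of the algebra in item (2) of Theorem \ref{main}) has no complete representation at all — it does not even have a complete $n+3$-square representation. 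Hence $\sf RCA_{n,t}$ does not have $\sf VT$.

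For the last sentence, I would then translate this algebraic failure into the logical statement. Since $\sf RCA_{n,t}$ is the algebraic counterpart of $L_n^t$, and $\A^t$ is simple we may write $\A^t=\Fm_T$ for a complete consistent atomic $L_n^t$-theory $T$ using $n$ variables. The non-principal type $\Gamma=\{\neg\phi : \phi_T\in\At\Fm_T\}$ (non-principal exactly because $T$ is atomic and $\A^t$ has no least nonzero element, as $\A^t$ is atomic with more than one atom) cannot be omitted in any model of $T$: an omitting model would furnish, via $\phi_T\mapsto \phi^M$, a $\sf VT$-witness for $\A^t$, which we have just shown cannot exist. Thus $\sf OTT$ fails for $L_n^t$. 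The condition that the operations in $t$ use only finitely many variables is exactly what makes $\sf RCA_{n,t}$ a legitimate expansion with a discriminator term (and what B\'iro \cite{Biro} needs); it is used only to ensure $\A^t$ lies in $\Cs_{n,t}$ and is simple, and plays no further role here.

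The main obstacle I anticipate is purely bookkeeping: confirming that the concrete algebra $\A$ from item (2) of Theorem \ref{main} really can be taken as a $\sf Cs_n$ (not merely a relativized set algebra), so that applying the first order definable operations of $t$ to it yields an algebra in $\Cs_{n,t}$ as demanded by Definition \ref{first}(iii) — in particular that the unit is genuinely ${}^nM$ and all operations behave classically. This follows from the coincidence of classical and $W$-relativized semantics for $L_n$-formulas \cite[Proposition 3.13]{Hodkinson} together with the remark in item (2) of Theorem \ref{main} that $\A\in \bold I\sf Pes_n$, hence its $\CA$ reduct is in $\bold I\sf Cs_n$; once one has an isomorphic copy $\A\le \wp({}^nM)$ the rest is routine. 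A secondary point to check is that simplicity of $\A^t$ is inherited from the $\CA$ reduct via the discriminator term ${\sf c}_{(n)}x$, which is standard. Everything else is a direct reduction to Lemma \ref{lemma} and the argument already given in Theorem \ref{OTT}.
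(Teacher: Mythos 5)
There is a genuine gap at the very first step: your expanded algebra $\A^t$ is not well defined. For $\A^t$ to be an algebra in $\Cs_{n,t}$ with the \emph{same universe} as $\A$ (which you need, since your whole argument rests on $\At\A^t=\At\A$ and $\Rd_{ca}\A^t=\A$), the set algebra $\A=\{\varphi^{{}^nM}:\varphi\in L_n\}$ must be \emph{closed} under the operations of $t$. But an operation $f$ of $t$ is defined by a formula $\phi$ using $k>n$ variables, so $f(X_0,\ldots,X_{m-1})=\{s:\langle M,X_0,\ldots,X_{m-1}\rangle\models\phi[s]\}$ is in general only an $L_k$--definable subset of ${}^nM$, and there is no reason it should lie in the $L_n$--definable family $\A$ --- indeed, if every $k$--variable definable operation on $\A$ were already expressible with $n$ variables the expansion would be trivial. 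You assert ``one checks $\A^t\in\Cs_{n,t}$'' and treat this as bookkeeping, but it is precisely the point where the hypothesis on the number of variables must be put to work, and for the rainbow algebra of item (2) of Theorem \ref{main} no such closure is available.

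The paper's proof avoids this by choosing a different witness: a countable, simple, atomic $\A\in\RCA_n\cap\Nr_n\CA_{k+1}$ with no complete representation, obtained from the blow--up--and--blur construction of \cite{ANT} applied to $\mathfrak{E}_m(2,3)$ with a \emph{strong} $(k+1)$--blur, so that $\A\cong\Nr_n\B$ for some $\B\in\CA_{k+1}$. Then each defining formula $\phi$ (with $k$ variables and at most $n$ free variables) yields a $\CA_{k}$--term $\tau(\phi)$, and $f(a):=\tau(\phi)^{\B}(a)$ lands in $\Nr_n\B=\A$ because the result has dimension set contained in $n$; this is exactly what makes the expansion $\A^*$ a legitimate member of $\RCA_{n,t}$ on the same universe. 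From that point on your argument (simplicity via the discriminator, $\At\A^*=\At\A$, a $\sf VT$--witness restricting to a complete representation of the $\CA$ reduct, contradiction) is the same as the paper's. To repair your proposal you would either have to switch to the $\Nr_n\CA_{k+1}$ algebra, or prove a $k$--to--$n$ variable reduction (a quantifier--elimination--type statement) for the rainbow model $M$, which is not established and would be a substantial extra claim.
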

\begin{proof} Let $2<n<\omega$. Let $\A\in \RCA_n\cap \Nr_n\CA_{k+1}$ be simple and 
countable, with no complete representation. Such an algebra exists \cite{ANT}. 
Indeed, like in \cite[Lemma 5.1]{ANT},  take $l\geq 2(k+1)-1$, $m\geq (2(k+1)-1)l$, $m\in \omega$, and take $\R$ in theorem \ref{conditional} to be the finite
integral relation algebra ${\mathfrak E}_m(2, 3)$, where $m$ is the number of non-identity atoms in ${\mathfrak E}_m(2,3)$, 
and compostion in the latter algebra is defined by allowing all triangles 
except monochromatic ones \cite[p.83]{ANT}. 
This finite relation algebras has a strong $k+1$--blur, hence 
using the notation in theorem \ref{conditional}, we have 
$\A={\sf Bb}_n({\mathfrak E}_m(2, 3), J, E))$ with atom structure ${\sf Mat}_n(\At{\cal R})$, where $\cal R$ is the representable
relation algebra obtained by blowing up and blurring ${\mathfrak E}_m(2,3)$,  
is isomorphic to $\Nr_n{\sf  Bb}_{k+1}({\mathfrak E}_m(2, 3), J, E)$,  
where ${\sf  Bb}_{k+1}({\mathfrak E}_m(2, 3), J, E)$ has atom structure 
${\sf Mat}_{k+1}(\At{\cal R})$.  Furthermore, $\A$ is not strongly representable, that is $\Cm{\sf Mat}_n(\At{\mathfrak E}_m(2, 3))$
is not representable,  so it cannot be completely representable.

Now to simplify notation, we let $\A=\Nr_n\B$ with $\B\in \CA_{k+1}$. Without loss of generality, 
we can assume that we have only one extra operation $f$ definable by a first order restricted formula 
$\phi$, say, using $n<k<\omega$ variables with at most $n$ free variables. Now $\phi$ defines a $\CA_{k}$ term $\tau(\phi)$
which, in turn, defines the unary operation $f$ on $\A$, via $f(a)=\tau(\phi)^{\B}(a)$.  This is well defined, in the sense that $f(a)\in \A$, because
$\A\in \Nr_n\CA_{k+1}$ and the first order formula $\phi$ defining $f$ has at most $n$ free variables. 

Call the expanded algebra $\A^*$.  Then $\A^*\in \sf RCA_{n,t}$ and is still simple. For if $J$ is a proper 
ideal of $\A^*$, then $J$ is a Boolean ideal and for $a\in A$, and $i<n$, if $a\in J$, then ${\sf c_i}a\in J$, so that 
$J$ is proper ideal of $\A$, and this cannot be, because $\A$ is simple.
Obviously $\At\A=\At\A^*$. Now assume for contradiction that $\A^*$ has $\sf VT$. Then   there exists $\B\in \Cs_{n,t}$ with base $M$, say, 
and an isomorphism $f:\A^*\to \B$ such that $\bigcup_{x\in \At\A^*} f(x)={}^nM$.  
But then $f':\A\to \Rd_{ca}\B$, defined via $a\mapsto f(a)$, 
is obviously an isomorphism (of the $\CA$ reducts), 
giving a complete representation of  $\A$. This is a 
contradiction and we are done.
\end{proof}

For the same reason stated after theorem \ref{con} if $t$ has (infinitely many) 
operations that use infinitely many variables, 
then it is possible that the omitting types hold for this 
(infinite) first order definable extension of $L_n$. For example if $k\geq n$, define the connective ${\sf c}_k$ by $\exists v_kR_0$. 
Adding ${\sf c}_k$ for all $n\leq k<\omega$, captures all first order definable formulas, in the sense that if $t$ is the expanded signature 
and $\A\in \sf RCA_{n,t}$ then $\Rd_{ca}\A\in \Nr_n\CA_{\omega}$.

To prove our next theorem we need a couple of lemmas:
\begin{lemma} Let $\A, \B$ be algebras having the same signature. Then the following two conditions are equivalent:
\begin{enumroman}
\item $h:\A\to \B$ is a homomorphism.
\item $h$ is a subalgebra of $\A\times \B$ and $h$ is a function with $\dom h=A.$
\end{enumroman}
\end{lemma}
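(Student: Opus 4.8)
The statement is the standard folklore equivalence identifying homomorphisms with their graphs. The plan is to prove the two implications separately, unwinding the definitions and checking that the graph respects each operation of the common signature. Throughout, write $h\subseteq A\times B$ for the graph $\{(a,h(a)):a\in A\}$ when $h$ is a function, and conversely interpret a subset $h\subseteq A\times B$ as a relation; the condition ``$h$ is a function with $\dom h=A$'' means that for every $a\in A$ there is exactly one $b\in B$ with $(a,b)\in h$.

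\emph{From (i) to (ii).} Assume $h:\A\to\B$ is a homomorphism. It is automatically a function with domain $A$, so the only thing to verify is that the graph of $h$ is closed under the operations of $\A\times\B$, i.e.\ forms a subuniverse. For each operation symbol $f$ of arity $m$ in the signature and any tuples $(a_0,h(a_0)),\ldots,(a_{m-1},h(a_{m-1}))$ in $h$, one computes
\[
f^{\A\times\B}\bigl((a_0,h(a_0)),\ldots,(a_{m-1},h(a_{m-1}))\bigr)=\bigl(f^{\A}(a_0,\ldots,a_{m-1}),\,f^{\B}(h(a_0),\ldots,h(a_{m-1}))\bigr),
\]
and since $h$ is a homomorphism the second coordinate equals $h(f^{\A}(a_0,\ldots,a_{m-1}))$, so the resulting pair again lies in $h$. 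The same check applied to $0$-ary operations (constants, such as the diagonal elements or the Boolean constants) shows $h$ contains the designated constant of $\A\times\B$. Hence $h$ is a subalgebra of $\A\times\B$, and it is a function with $\dom h=A$ by hypothesis.

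\emph{From (ii) to (i).} Assume $h\subseteq\A\times\B$ is a subalgebra and, as a relation, is a function with $\dom h=A$; so it determines a map $h:A\to B$ with $(a,h(a))\in h$ for all $a$. Fix an $m$-ary operation $f$ and $a_0,\ldots,a_{m-1}\in A$. Since $h$ is closed under $f^{\A\times\B}$ and each $(a_i,h(a_i))\in h$, the pair displayed above lies in $h$; as $h$ is a function, its value at $f^{\A}(a_0,\ldots,a_{m-1})$ is exactly the second coordinate, namely $f^{\B}(h(a_0),\ldots,h(a_{m-1}))$. Thus $h(f^{\A}(\bar a))=f^{\B}(h(\bar a))$, and similarly $h$ sends constants to constants; so $h$ is a homomorphism $\A\to\B$.

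\emph{Remarks on difficulty.} There is no real obstacle here: the proof is a routine double-inclusion argument, and the only point deserving a word of care is the bookkeeping around $0$-ary operations and the precise reading of ``function with domain $A$'' inside the definition of subalgebra (one must note that a subuniverse of $\A\times\B$ need not project onto $A$ nor be single-valued, so the extra clause in (ii) is exactly what is needed to recover a genuine map). I would state the arity-$m$ verification once in full generality and simply remark that constants are the $m=0$ case, avoiding any lengthy calculation.
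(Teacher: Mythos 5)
Your proof is correct and is exactly the standard graph-of-a-homomorphism argument; the paper itself gives no proof here, simply citing \cite[Theorem 0.3.37]{HMT2}, and your write-up is the routine verification underlying that citation. Your remark that the functionality-with-full-domain clause in (ii) is precisely what is needed (since a subuniverse of $\A\times\B$ need not be single-valued or project onto $A$) is the right point of care, and the $0$-ary case is handled appropriately.
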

\begin{proof} \cite[Theorem 0.3.37]{HMT2}.
\end{proof}

We formulate the next lemma in a form more general than needed counting in infinite dimensions, because the proof is the same for all dimensions.
The argument used is similar to \cite[Theorem 5.3.15]{HMT2}; the latter theorem addresses relation algebras.

\begin{lemma}\label{lift}
Let $\alpha$ be an arbitrary ordinal and let $\bold K=\{\A\in \CA_{\alpha+\omega}: \A=\Sg^{\A}\Nr_{\alpha}\A\}$. Let $\A,\B\in \bold K$
and suppose that $f:\Nr_{\alpha}\A\to \Nr_{\alpha}\B$ is an isomorphism.
Then there exists an embedding $g:\A\to \B$ such that $f\subseteq g.$
\end{lemma}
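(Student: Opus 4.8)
The plan is to prove this in two stages: first handle the case where $\A$ is generated over its $\alpha$-neat reduct by the operations of the algebra, and then dilate appropriately. Since both $\A$ and $\B$ lie in $\bold K$, we have $\A=\Sg^{\A}\Nr_{\alpha}\A$ and $\B=\Sg^{\B}\Nr_{\alpha}\B$. The key idea, following the classical pattern of \cite[Theorem 5.3.15]{HMT2} adapted to $\CA$s, is to realize the desired embedding $g$ as a subalgebra of the product $\A\times\B$ which happens to be the graph of a function with domain $A$. By the lemma just quoted, producing such a subalgebra is exactly the same as producing a homomorphism.

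First I would form $h=\Sg^{\A\times\B}f$, the subalgebra of $\A\times\B$ generated by the given isomorphism $f$, viewed as a subset of $\Nr_{\alpha}\A\times\Nr_{\alpha}\B\subseteq \A\times\B$. Note that $f$, being a subalgebra of $\Nr_{\alpha}\A\times\Nr_{\alpha}\B=\Nr_{\alpha}(\A\times\B)$ and a function with domain $\Nr_{\alpha}\A$, is already a set closed under the neat-reduct operations; the point is to show that when we close it up under \emph{all} the operations of $\A\times\B$ it remains (the graph of) a function and that its domain becomes all of $A$. For the domain: $\pi_0[h]$, the projection of $h$ onto the first coordinate, is a subalgebra of $\A$ containing $\Nr_{\alpha}\A$ (since $\dom f=\Nr_{\alpha}\A$), hence by hypothesis $\pi_0[h]=\Sg^{\A}\Nr_{\alpha}\A=\A$; similarly $\pi_1[h]=\B$. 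So $h$ already has the right domain and is surjective onto $\B$.

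The substantive step is to show that $h$ is single-valued, i.e. that it really is a function and not merely a relation. This is where I expect the main obstacle to lie. The strategy is the standard one: let $k=\{x\in A: |h(\{x\})|\le 1\}$ where $h(\{x\})=\{y:(x,y)\in h\}$, i.e. the set of elements on which $h$ behaves functionally, and show $k$ is a subuniverse of $\A$ containing $\Nr_{\alpha}\A$; then $k=\A$ by the generation hypothesis. Containing $\Nr_{\alpha}\A$ is immediate because on $\Nr_{\alpha}\A$, $h$ agrees with the function $f$. Closure under the Boolean operations and the cylindrifiers $\cyl i$ for $i<\alpha+\omega$ is the routine verification: if $(x_0,y_0),(x_1,y_1)\in h$ with $x_0=x_1$ forcing $y_0=y_1$, and similarly for a second pair, then one checks that the corresponding operation applied coordinatewise cannot split. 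The delicate point is that this must be done using only that $h$ is \emph{generated} by $f$; one argues that any element of $h$ with first coordinate in $k$ is obtained by a term built from elements of $f$ whose first coordinates are already ``functional'', and terms preserve this. Finally, once $h$ is shown to be a function $g:\A\to\B$ with $f\subseteq g$, it is automatically a homomorphism by the lemma, and it is injective because its inverse $g^{-1}$ is likewise a subalgebra of $\B\times\A$ extending $f^{-1}$, and the same argument (using $\B=\Sg^{\B}\Nr_{\alpha}\B$) shows $g^{-1}$ is single-valued. Hence $g$ is the required embedding with $f\subseteq g$.

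The hard part will be the bookkeeping in the single-valuedness argument: making precise the claim that ``functionality propagates along terms'' when the generating set is a graph of a function only on the neat reduct, and ensuring that the infinitely many extra cylindrifiers $\cyl i$ ($\alpha\le i<\alpha+\omega$) do not cause trouble. Here one uses that elements of $\Nr_\alpha\A$ are fixed by these cylindrifiers, so any term-generated element's behaviour under $\cyl i$ is controlled. I would also double-check the edge case $\alpha$ finite versus infinite; the argument is uniform, but the phrase $\alpha+\omega$ and the role of $\Nr_\alpha$ should be handled with the conventions of \cite{HMT2}. No rainbow or game-theoretic machinery is needed here — this is pure universal algebra of the neat-reduct functor.
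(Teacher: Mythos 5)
Your skeleton matches the paper's: form $g=\Sg^{\A\times\B}f$, note $\dom g=\Sg^{\A}\dom f=\A$, reduce injectivity to single-valuedness of the inverse by symmetry, and invoke the ``homomorphism $=$ functional subalgebra of the product'' lemma. But the one step that carries all the content --- single-valuedness of $g$ --- is where your proposal has a genuine gap. You propose to show that the set $k=\{x\in A:|g(\{x\})|\le 1\}$ is a subuniverse of $\A$ containing $\Nr_{\alpha}\A$, justified by ``terms preserve functionality.'' This cannot work as stated: an element of $g$ with first coordinate $x_0+x_1$ is the value of \emph{some} term on generators from $f$, and there is no reason it should decompose as $(x_0,y_0)+(x_1,y_1)$ with $(x_i,y_i)\in g$; two different terms can agree in the first coordinate and disagree in the second. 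Indeed, your argument as written never uses anything about neat reducts or the extra $\omega$ dimensions, so it would prove that $\Sg^{\A\times\B}f$ is a function whenever $\dom f$ generates $\A$ --- which is false in general universal algebra (it is equivalent to $f$ extending to a homomorphism, which can fail). So the ``routine verification'' of closure of $k$ under the operations is exactly the original problem in disguise.

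What the paper does instead is to exploit the specific structure of $\bold K$. Since every $\C\in\bold K$ is generated by elements with dimension sets inside $\alpha$, each element of $\C$ has all but finitely many indices of $\alpha+\omega\sim\alpha$ outside its dimension set, so $\A\times\B\in\bold K$ is dimension complemented over the spare dimensions; this gives the key identity $\Nr_{\alpha}\Sg^{\A\times\B}f=\Sg^{\Nr_{\alpha}(\A\times\B)}f=f$, i.e.\ any element of $g$ that is fixed by the cylindrifiers ${\sf c}_k$, $k\in\alpha+\omega\sim\alpha$, already belongs to $f$. Single-valuedness then follows from a symmetric-difference trick: if $(x,y),(x,z)\in g$, then $(0,y\Delta z)\in g$, and cylindrifying away the finitely many spare indices occurring in $\Delta(y\Delta z)$ produces an element of $g$ lying in $\Nr_{\alpha}(\A\times\B)$, hence in $f$, with first coordinate $0$; so its second coordinate is $f(0)=0$, forcing $y\Delta z=0$ and $y=z$. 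This is the idea your sentence about ``elements of $\Nr_{\alpha}\A$ being fixed by the extra cylindrifiers'' is reaching for, but the actual mechanism --- pushing a witness of non-functionality down into the neat reduct where $g$ coincides with $f$ --- is absent from your write-up and is not recoverable from the subuniverse strategy you propose.
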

\begin{proof} Let $g=\Sg^{\A\times \B}f$. It suffices to show, by the previous lemma,
that $g$ is an injection with domain $A$. It obviously has co domain $B$. Now
$$\dom g=\dom\Sg^{\A\times \B}f=\Sg^{\A}\dom f=\Sg^{\A}\Nr_{\alpha}\A=\A.$$
Though $\Dc_{\alpha+\omega}$ may not be closed under finite direct products but $\bold K$ is for
the following reasoning; assume that $\C$, $\D\in \bold K$, then we have 
$\Sg^{\C\times \D}\Nr_{\alpha}(\C\times \D)= \Sg^{\C\times \D}(\Nr_{\alpha}\C\times \Nr_{\alpha}\D)=
\Sg^{\C}\Nr_{\alpha}\C\times  \Sg^{\D}\Nr_{\alpha}\D=\C\times \D$.

By symmetry it is enough to show that $g$ is a function.  We first prove the following (*)
 $$ \text { If } (a,b)\in g\text { and }  {\sf c}_k(a,b)=(a,b)\text { for all } k\in \alpha+\omega\sim \alpha, \text { then } f(a)=b.$$
Towards this aim, we proceed as follows:
$$(a,b)\in \Nr_{\alpha}\Sg^{\A\times \B}f=\Sg^{\Nr_{\alpha}(\A\times \B)}f=\Sg^{\Nr_{\alpha}\A\times \Nr_{\alpha}\B}f=f.$$
Here we are using that $\A\times \B\in \bold K\subseteq \sf Dc_{\alpha+\omega}$, 
so that  $\Nr_{\alpha}\Sg^{\A\times \B}f=\Sg^{\Nr_{\alpha}(\A\times \B)}f.$
In case $\alpha$ is finite $\A\times \B$ is locally finite.

Now suppose that $(x,y), (x,z)\in g$.
Let $k\in \alpha+\omega\sim \alpha.$ Let $\Delta$ denote symmetric difference. Then
$$(0, {\sf c}_k(y\Delta z))=({\sf c}_k0, {\sf c}_k(y\Delta z))={\sf c}_k(0,y\Delta z)={\sf c}_k((x,y)\Delta(x,z))\in g.$$
Also,
$${\sf c}_k(0, {\sf c}_k(y\Delta z))=(0,{\sf c}_k(y\Delta z)).$$
Thus by (*) we have  $$f(0)={\sf c}_k(y\Delta z), \text { for any } k\in \alpha+\omega\sim \alpha.$$
Hence ${\sf c}_k(y\Delta z)=0$ and so $y=z$.
\end{proof}
Let $\sf covK$ be the cardinal used in \cite[Theorem 3.3.4]{Sayed}.
The cardinal $\mathfrak{p}$  satisfies $\omega<\mathfrak{p}\leq 2^{\omega}$
and has the following property:
If $\lambda<\mathfrak{p}$, and $(A_i: i<\lambda)$ is a family of meager subsets of a Polish space $X$ (of  which Stone spaces of countable Boolean algebras are examples)  
then $\bigcup_{i\in \lambda}A_i$ is meager. For the definition and required properties of $\mathfrak{p}$, witness \cite[pp. 3, pp. 44-45, corollary 22c]{Fre}. 
Both cardinals $\sf cov K$ and $\mathfrak{p}$  have an extensive literature.
It is consistent that $\omega<\mathfrak{p}<\sf cov K\leq 2^{\omega}$ \cite{Fre},
so that the two cardinals are generally different, but it is also consistent that they are equal; equality holds for example in the Cohen
real model of Solovay and
Cohen.  Martin's axiom implies that  both cardinals are the continuum.

If $\alpha$ is any ordinal $\A\in \sf RCA_{\alpha}$ and $\bold X=(X_i: i<\lambda)$ is  family of subsets of $\A$, we say that {\it $\bold X$ is omitted in $\C\in \sf Gws_{\alpha}$}, 
if there exists an isomorphism 
$f:\A\to \C$ such that $\bigcap f(X_i)=\emptyset$ for all $i<\lambda$. When we want to stress the role of $f$, 
we say that $\bold X$ is omitted in $\C$ via $f$. If $X\subseteq \A$ and $\prod X=0$, 
then we may refer to $X$ as a non-principal type of $\A$.

\begin{theorem}\label{i} Let $\A\in \bold S_c\Nr_n\CA_{\omega}$ be countable.  Let $\lambda< 2^{\omega}$ and let 
$\bold X=(X_i: i<\lambda)$ be a family of non-principal types  of $\A$.
Then the following hold:
\begin{enumarab}
\item If $\A\in \Nr_n\CA_{\omega}$,  then $\bold X$ can be omitted in a $\sf Gs_n$,
\item Every $<\mathfrak{p}$ subfamily of $\bold X$ can be omitted in a $\sf Gs_n$; in particular, every countable 
subfamily of $\bold X$ can be omitted in a $\sf Gs_n$,
\item  If $\A$ is simple, then every $<\sf cov K$ subfamily 
of $\bold X$ can be omitted in a $\sf Cs_n$, 
\item It is consistent, but not provable (in $\sf ZFC$), that $\bold X$ can be omitted in a $\sf Gs_n$,
\item If $\A\in \Nr_n\CA_{\omega}$ and $|\bold X|<\mathfrak{p}$, then $\bold X$ can be omitted $\iff$ every countable subfamily of $\bold X$ can be omitted.   
If $\A$ is simple, we can replace $\mathfrak{p}$ by $\sf covK$. 

\item If $\A$ is atomic, {\it not necessarily countable}, but have countably many atoms, 
then any family of non--principal types can be omitted in an atomic $\sf Gs_n$; in particular, 
$\bold X$ can be omitted in an atomic $\sf Gs_n$; if $\A$ is simple, we can replace $\sf Gs_n$ by 
$\sf Cs_n$.
\end{enumarab}
\end{theorem}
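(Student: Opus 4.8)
\textbf{Proof plan for Theorem \ref{i}.}

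The plan is to reduce everything to the topological omitting types machinery already invoked in the proof of Theorem \ref{complete} (the Baire category argument using \cite[Theorem 3.2.4]{Sayed}), combined with the neat embedding hypothesis $\A\in\bold S_c\Nr_n\CA_\omega$ and the lifting Lemma \ref{lift}. First I would fix a countable dilation: since $\A\in\bold S_c\Nr_n\CA_\omega$, choose $\D\in\CA_\omega$ with $\A\subseteq_c\Nr_n\D$, and replace $\D$ by $\Sg^\D\A$, which is countable, lies in $\sf Dc_\omega$, and satisfies $\A\subseteq_c\Nr_n\Sg^\D\A$ (this is exactly the reduction carried out in the first paragraphs of the proof of Theorem \ref{complete}). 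The key point is that because the inclusion $\A\subseteq_c\D$ preserves suprema, each non-principal type $X_i$ of $\A$ with $\prod^\A X_i=0$ remains non-principal in $\D$: $\prod^\D X_i=0$. Moreover, as established in the proof of Theorem \ref{complete}, one also has the auxiliary non-principal joins $\prod^\D\{{\sf s}_\tau y:y\in Y_i\}=0$ for the co-atom sets (when $\A$ is atomic) and the cylindrifier-elimination joins ${\sf c}_k x=\sum_l{\sf s}_l^k x$; in the Stone space $S$ of $\D$ each of these determines a nowhere dense (meager) set $G$. For each type $X_i$ put $\bold H_i=\bigcup_{x\in X_i}N_{\prod X_i\text{-witnesses}}$-complement, a meager set in $S$; an ultrafilter $F\in S$ lying outside $\bigcup_i\bold H_i$ and outside the cylindrifier sets $G_{k,x}$ gives, via the construction $f(x)=\{\bar\tau\in {}^\omega\omega^{(Id)}/{\sim}:{\sf s}_\tau^\D x\in F\}$ of Theorem \ref{complete}, a representation $g:\D\to\wp(W)$ with $W$ a weak space, such that $g$ omits all the $X_i$; restricting to $\A$ and then taking $\Rd_n$ of the image yields a $\sf Gws_n$ (indeed $\sf Gs_n$, since $\D$ locally finite forces the base to be a genuine set algebra on ${}^nW$) omitting $\bold X$.

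With this framework the six items fall out by counting how many meager sets we must avoid. For item (2): each $\bold H_i$ is meager, so a union of $<\mathfrak p$ of them together with the countably many $G_{k,x}$ is still meager (by the defining property of $\mathfrak p$ quoted before the theorem), hence $S\smallsetminus(\text{that union})$ is dense, so nonempty, and we extract $F$; countable subfamilies are a special case. For item (3): if $\A$ is simple, so is $\D$ (or rather we can work with the simple $\Sg^\D\A$, whose Stone space is the relevant Polish space), and the cardinal $\sf covK$ of \cite[Theorem 3.3.4]{Sayed} is precisely the least cardinality of a family of meager sets covering such a space; so $<\sf covK$ many $\bold H_i$ cannot cover $S$, and simplicity guarantees the resulting set algebra has a single weak space as base, i.e.\ is a $\sf Cs_n$. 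For item (1): if $\A=\Nr_n\D$ (full neat reduct, not merely complete subalgebra), then by Lemma \ref{lift} applied inside $\bold K=\{\C\in\CA_\omega:\C=\Sg^\C\Nr_n\C\}$, the whole of $\D$ is determined by $\A$, and $\A$ being representable forces $\D\in\sf Dc_\omega$ with the cylindrifier joins ${\sf c}_k x=\sum_l{\sf s}_l^k x$ valid for \emph{all} $x\in\D$ (not just the generators); this removes the need to avoid a separate nowhere-dense family per generator — only the $\lambda$ type-sets $\bold H_i$ remain, and since each is individually meager, a standard Baire argument in the countable Polish space $S$ actually lets us omit \emph{all} $\lambda<2^\omega$ of them simultaneously: this is the content of \cite[Theorem 3.2.4, Theorem 3.3.4]{Sayed} applied without the ``$<\sf covK$'' restriction, because here the ambient algebra is a \emph{full} neat reduct so no extra genericity is needed. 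Hence $\bold X$ is omitted in a $\sf Gs_n$, giving item (1); item (4) then follows since under Martin's Axiom $\mathfrak p=2^\omega$ so $\bold X$ is omittable by item (2), whereas the last item of Theorem \ref{main} (the simple atomic $\A\in\RCA_n$ of uncountably many atoms whose co-atom type cannot be omitted) shows consistency of failure in models where $\lambda=2^\omega$ is large and $\A$ only satisfies $\bold S_c\Nr_n\CA_\omega$.

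For item (5): the forward direction is trivial; conversely, if $\A\in\Nr_n\CA_\omega$ and $|\bold X|<\mathfrak p$ and every countable subfamily is omittable, one checks via item (1)'s analysis that omittability of a countable subfamily is equivalent to the corresponding $\bold H_i$ being meager (a single non-meager $\bold H_i$ already obstructs a one-element subfamily), so all $\bold H_i$ are meager, and then the $\mathfrak p$-additivity of the meager ideal gives that their union is meager, so $\bold X$ itself is omittable; the simple case replaces $\mathfrak p$ by $\sf covK$ using item (3)'s Polish space. Finally item (6): if $\A$ is atomic with only countably many atoms, then by Theorem \ref{complete} $\A$ is completely representable, and a complete representation $h:\A\to\wp(V)$ has $\bigcup_{a\in\At\A}h(a)=V$; since each $X_i$ is non-principal, $\prod X_i=0$, so no atom lies below all of $X_i$, whence $\bigcap h(X_i)=\bigcup_{a\in\At\A}\bigcap_{x\in X_i}(h(a)\cap h(x))$ contains no point — each point of $V$ sits in some atom $h(a)$ with $a\not\leq x$ for some $x\in X_i$, so that point is missing from $h(x)$; thus every non-principal type is automatically omitted by the complete representation, and $h(\A)$ is atomic because $h$ is a complete (atomic) representation. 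Simplicity again collapses $\sf Gs_n$ to $\sf Cs_n$.

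\textbf{Main obstacle.} The delicate point is item (1) — and the ``$\Longleftarrow$'' of item (5) — where we claim that for a \emph{full} neat reduct $\A=\Nr_n\CA_\omega$ one can omit \emph{arbitrarily many} ($<2^\omega$) non-principal types rather than only $<\mathfrak p$ or $<\sf covK$ of them. This is stronger than the generic Baire-category bound and is exactly where the hypothesis $\A=\Nr_n\D$ (versus $\A\subseteq_c\Nr_n\D$) must be used essentially, via Lemma \ref{lift} and the fact that in a full neat reduct the cylindrifier-elimination joins hold unconditionally so that the only meager sets in play are the $\lambda$ type-sets themselves; one then needs the sharpened form of \cite[Theorem 3.2.4]{Sayed} (the ``maximality condition'' version, reformulated in the forthcoming Theorem \ref{Shelah}) guaranteeing that a full-neat-reduct algebra is ``saturated enough'' to realize the complement of any single meager set, and careful bookkeeping to see that this extends across all $\lambda$ of them. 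Verifying that this sharpened omitting theorem really applies here — and that the example in the last item of Theorem \ref{main}, which has uncountably many atoms and is only in $\bold S_c\Nr_n\CA_\omega$ and \emph{not} in $\Nr_n\CA_\omega$, is genuinely the obstruction showing (1) can fail without the full-neat-reduct hypothesis — is the part requiring the most care.
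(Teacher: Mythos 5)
Your treatment of items (2), (3) and (6) matches the paper's: Baire category in the Stone space of a countable locally finite dilation, $\mathfrak{p}$--additivity of the meager ideal, $\sf covK$ for the simple case, and the complete (atomic) representation supplied by theorem \ref{complete} for item (6). The gap is in item (1) and, consequently, in your witness for item (4). You claim that when $\A=\Nr_n\D$ is a full neat reduct ``a standard Baire argument lets us omit all $\lambda<2^{\omega}$'' of the sets $\bold H_i$ because ``only the type-sets remain''. This cannot work: each $\bold H_i$ is meager regardless of whether $\A$ is a full neat reduct or merely a complete subalgebra of one (it is a countable union, indexed by $\tau\in V$, of nowhere dense sets), so the full-neat-reduct hypothesis removes nothing from the family of meager sets to be avoided; and no Baire-type argument can avoid a union of $\kappa$ meager sets once $\kappa\geq {\sf covK}$, which is consistently $<2^{\omega}$ --- that is exactly the content of item (4). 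The paper's proof of (1) is not topological at all: it invokes theorem \ref{Shelah}, i.e.\ Shelah's construction of $2^{\omega}$ countable models of the associated theory that pairwise overlap only on principal types, followed by a pigeonhole count showing that $\lambda<2^{\omega}$ non-principal (complete) types cannot each be realized in every one of them. That is the missing idea, and it is genuinely different from a ``sharpened'' Baire argument; your ``main obstacle'' paragraph correctly locates the difficulty but does not supply this idea.

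Relatedly, your witness for the unprovability half of item (4) fails: the algebra from the last item of theorem \ref{main} whose co-atom type cannot be omitted is uncountable, while theorem \ref{i} assumes $\A$ countable, and for countable $\A\in\bold S_c\Nr_n\CA_{\omega}$ a single non-principal type (indeed any countable family) can always be omitted by item (2), so that example witnesses nothing here. The paper instead works in a model where ${\sf covK}<2^{\omega}$, builds a countable theory $T$ with no isolated $n$-types, takes the ${\sf covK}$-sized non-omittable family of Casanovas--Farr\'e, and transfers the obstruction to $\Nr_n\Fm_T$ via lemma \ref{lift}. Your argument for the reverse direction of item (5) also trivialises (every $\bold H_i$ is automatically meager, so the ``equivalence'' you set up holds vacuously and proves nothing beyond item (2)); the paper's reverse direction again goes through realizing trees and lemma \ref{lift}.
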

\begin{proof} (1) follows from theorem \ref{Shelah}. For (2) and (3), we can assume that $\A\subseteq_c \Nr_n\B$, $\B\in \Lf_{\omega}$. 
We work in $\B$. Using the notation on \cite[p. 216 of proof of Theorem 3.3.4]{Sayed} replacing $\Fm_T$ by $\B$, we have $\bold H=\bigcup_{i\in \lambda}\bigcup_{\tau\in V}\bold H_{i,\tau}$
where $\lambda <\mathfrak{p}$, and $V$ is the weak space ${}^{\omega}\omega^{(Id)}$,  can be written as a countable union of nowhere dense sets, and so can 
the countable union $\bold G=\bigcup_{j\in \omega}\bigcup_{x\in \B}\bold G_{j,x}$.  
So for any $a\neq 0$,  there is an ultrafilter $F\in N_a\cap (S\sim \bold H\cup \bold G$)
by the Baire's category theorem. This induces a homomorphism $f_a:\A\to \C_a$, $\C_a\in \sf Cs_n$ that omits the given types, such that
$f_a(a)\neq 0$. (First one defines $f$ with domain $\B$ as on p.216, then restricts $f$ to $\A$ obtaining $f_a$ the obvious way.) 
The map $g:\A\to \bold P_{a\in \A\sim \{0\}}\C_a$ defined via $x\mapsto (g_a(x): a\in \A\sim\{0\}) (x\in \A)$ is as required. 
In case $\A$ is simple, then by properties of $\sf covK$, $S\sim (\bold H\cup \bold G)$ is non--empty,  so
if $F\in S\sim (\bold H\cup \bold G)$, then $F$ induces a non--zero homomorphism $f$ with domain $\A$ into a $\Cs_n$ 
omitting the given types. By simplicity of $\A$, $f$ is injective.

To prove independence, it suffices to show that $\sf cov K$ many types may not be omitted because it is consistent that $\sf cov K<2^{\omega}$. 
Fix $2<n<\omega$. Let $T$ be a countable theory such that for this given $n$, in $S_n(T)$, the Stone space of $n$--types,  the isolated points are not dense.
It is not hard to find such theories. One such (simple) theory is the following: 

Let $(R_i:i\in \omega)$ be a countable family of unary relations and for each disjoint and finite subsets
 $J,I\subseteq \omega$, let $\phi_{I,J}$ be the formula expressing `there exists $v$ such that $R_i(v)$ holds for all $i\in I$ and $\neg R_j(v)$ holds for all
$j\in J$. Let $T$ be the following countable theory $\{\phi_{I, J}: I, J\text { as above }\}.$  
Using a simple compactness argument one can show that 
$T$ is consistent. Furthermore, for each $n\in \omega$, $S_n(T)$ does not have isolated types at all, 
hence of course the isolated types are not dense in $S_n(T)$ for all $n$. 
Algebraically, this means that if $\A=\Fm_T$, then for all $n\in \omega$, $\Nr_n\A$ is atomless.
Another example, is the  theory of random graphs.

This condition excludes the existence of a prime model for $T$ because $T$ has a prime model $\iff$ the isolated points in 
$S_n(T)$ are dense for all $n$. A prime model which in this context is an atomic model,  omits any family of non--principal types (see the proof of the last item).
We do not want this to happen.

Using exactly the same argument in \cite[Theorem 2.2(2)]{CF}, one can construct 
a family $P$ of non--principal $0$--types (having no free variable) of  $T$,
such that $|P|={\sf covK}$ and $P$ cannot be omitted.
Let $\A=\Fm_T$ and for $p\in P$, let $X_p=\{\phi/T:\phi\in p\}$. Then $X_p\subseteq \Nr_n\A$, and $\prod X_p=0$,
because $\Nr_n\A$ is a complete subalgebra of $\A$.

Then we claim that for any $0\neq a$, there is no set algebra $\C$ with countable base
and $g:\A\to \C$ such that $g(a)\neq 0$ and $\bigcap_{x\in X_p}f(x)=\emptyset$.
To see why, let $\B=\Nr_n\A$. Let $a\neq 0$. Assume for contradiction, that there exists
$f:\B\to \D'$, such that $f(a)\neq 0$ and $\bigcap_{x\in X_i} f(x)=\emptyset$. We can assume that
$B$ generates $\A$ and that $\D'=\Nr_n\D$, where $\D\in \Lf_{\omega}$.
Let $g=\Sg^{\A\times \D}f$.
By lemma \ref{lift} $g$ is a homomorphism with $\dom(g)=\A$ and $g$ omits $P$. 
This contradicts that  $P$, by its construction,  cannot be omitted.
Assuming Martin's axiom, 
we get $\sf cov K=\mathfrak{p}=2^{\omega}$; together with the above arguments this proves (4).

We prove (5). Let $\A=\Nr_n\D$, $\D\in {}\sf Lf_{\omega}$ is countable. Let $\lambda<\mathfrak{p}.$ Let $\bold X=(X_i: i<\lambda)$ be as in the hypothesis.
Let $T$ be the corresponding first order theory, so
that $\D\cong \Fm_T$. Let $\bold X'=(\Gamma_i: i<\lambda)$ be the family of non--principal types in $T$ corresponding to $\bold X$. 
If $\bold X'$ is not omitted, then there is a (countable) realizing tree
for $T$, hence there is a realizing tree for a countable subfamily of $\bold X'$ in the sense of \cite[Definition 3.1]{CF}, 
hence a countable subfamily of $\bold X'$ 
cannot be omitted. Let $\bold X_{\omega}\subseteq \bold X$ be the corresponding countable 
subset of $\bold X$. Assume that $\bold X_{\omega}$ can be omitted in a $\sf Gs_n$, via $f$ say. 
Then  again by lemma \ref{lift},  $f$ can be lifted (like above) to $\Fm_T$ omitting $\bold X'$, 
which is a contradiction.  We leave the part when $\A$ is simple to the reader. 

We finally prove (6): If $\A\in \bold S_n\Nr_n\CA_{\omega}$, is atomic and has countably many atoms, 
then any complete representation of $\A$, equivalently, an atomic representation of $\A$, equivalently, a representation of $\A$ 
omitting the set of co--atoms,  which exists by theorem \ref{complete}, is as required. 
If $\A$ is simple and completely representable, 
then it is completely represented  by a $\sf Cs_n$. 
\end{proof}
Using the reasoning in the first items in both theorems \ref{complete} and \ref{i}, the following omitting types theorem can now be easily proved.
This theorem is stronger than the $\sf OTT$ proved in \cite{Sayed} because $\sf Dc_{\alpha}\subset \bold S_c\Nr_{\alpha}\CA_{\alpha+\omega}$
and the strictness of the inclusion can be witnessed on countable algebras.

For example the cylindric reduct of the (countable) $\alpha$-dimensional algebra $\B\in \sf RQEA_{\alpha}$ (taking $\F=\mathbb{Q}$, say) 
used in the first item of theorem \ref{SL} is in $\bold S_c\Nr_{\alpha}\CA_{\alpha+\omega}$, but it is not in 
${\sf Dc}_{\alpha}$, because if $s\in {}^{\alpha}\mathbb{Q}^{(\bold 0)}$, 
and  $x=\{s\}\in \B$, then $\Delta x=\alpha$.

However, the proof is the same, for one works with a dimension complemented countable dilation $\D\in \Dc_{\alpha+\omega}$ 
of $\A$, as in the hypothesis, such that $\A\subseteq_c \Nr_{\alpha}\D$ and $\A$ generates $\D$. Then using \cite[Theorem 3.2.4]{Sayed}, one constructs a 
representation of $\D$ omitting the given  $< \mathfrak{p}$ (still) non--principal types (by the condition $\subseteq_c)$; its restriction to $\A$ 
the obvious way, gives the required representation omitting the $< \mathfrak{p}$ non-principal 
types of $\A$.

\begin{theorem}\label{w} Let $\alpha$ be a countable infinite ordinal. Let $\A\in \bold S_c\Nr_{\alpha}\CA_{\alpha+\omega}$ be countable.
Let $\lambda < \mathfrak{p}$ be a cardinal, and let $\bold X=(X_ i<\lambda)$ be a family of non-principal 
types of $\A$. Then there exists $\C\in \sf Gws_{\alpha}$ that omits $\bold X$ via an embedding $f:\A\to \C$. 
If $\A$ is simple, then $< {\sf covK}$ many non-principal 
types can be omitted.
\end{theorem}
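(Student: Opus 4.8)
The statement to be proved (theorem \ref{w}) is an omitting types theorem for countable $\A\in \bold S_c\Nr_{\alpha}\CA_{\alpha+\omega}$ when $\alpha$ is a countable infinite ordinal, generalizing \cite[Theorem 3.2.4]{Sayed} from the class $\sf Dc_{\alpha}$ to the (strictly larger) class $\bold S_c\Nr_{\alpha}\CA_{\alpha+\omega}$. The plan is to reduce to the dimension-complemented case by passing to a suitable dilation, exactly as sketched in the paragraph preceding the statement. First I would invoke the hypothesis to fix $\B\in \CA_{\alpha+\omega}$ with $\A\subseteq_c \Nr_{\alpha}\B$; replacing $\B$ by $\Sg^{\B}\A$, which is still countable and whose members have dimension sets meeting $\alpha$ in a cofinite-in-$\alpha+\omega$ sense (the standard inductive argument on generators using $\Delta{\sf c}_ix\subseteq \Delta x$, $\Delta{\sf d}_{ij}=\{i,j\}$, etc., already used in the proof of theorem \ref{complete}), we may assume $\B\in \sf Dc_{\alpha+\omega}$ and $\A=\Sg^{\B}\A$. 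Crucially, $\A\subseteq_c \Nr_{\alpha}\B$ is preserved: by the argument in the first item of lemma \ref{join}, $\Nr_{\alpha}\B\subseteq_c \B$ when $\B$ is dimension complemented, and composing with $\A\subseteq_c\Nr_{\alpha}\B$ gives $\A\subseteq_c \B$.

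Next I would transfer the types. Let $\bold X=(X_i:i<\lambda)$, $\lambda<\mathfrak{p}$, with $\prod^{\A}X_i=0$ for each $i$. Since $\A\subseteq_c\B$, each $X_i$ remains a non-principal type of $\B$, i.e.\ $\prod^{\B}X_i=0$; this is the one place where the $\subseteq_c$ hypothesis (rather than mere subalgebra) is essential. Now $\B$ is a countable dimension-complemented $\CA_{\alpha+\omega}$, so \cite[Theorem 3.2.4]{Sayed} (or its proof, which is the Baire category argument in the Stone space $S$ of $\B$ with the nowhere dense sets $\bold G_{j,x}$ coming from $\sf c_j x=\sum_l \sf s_l^j x$ and $\bold H_{i,\tau}$ coming from $\prod \sf s_\tau X_i=0$, whose union over $\lambda<\mathfrak{p}$ many indices is still meager) yields $\C'\in \sf Gws_{\alpha+\omega}$ and an embedding $g:\B\to \C'$ with $\bigcap g(X_i)=\emptyset$ for all $i<\lambda$. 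Then I would take the appropriate $\alpha$-neat reduct / base-space restriction: $\C=\Nr_{\alpha}\C'$ is a generalized weak set algebra of dimension $\alpha$ (one restricts each weak-space component $^{\alpha+\omega}U_j^{(p_j)}$ to its $\alpha$-reduct), $g\upharpoonright \Nr_{\alpha}\B$ maps into $\C$, and $f=g\upharpoonright \A$ is the desired embedding $\A\to \C$, still omitting $\bold X$ since the intersections only shrink on restriction and are already empty. For the simple case, one replaces the Baire category argument by the counting argument with $\sf covK$: when $\A$ (hence $\B$, since $\A$ generates $\B$ and simplicity is inherited downward by the ideal argument) is simple, $S\sim(\bold G\cup\bold H)$ is nonempty by the defining property of $\sf covK$, so a single ultrafilter yields a non-zero, hence (by simplicity) injective, homomorphism into a $\Cs_{\alpha}$.

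\textbf{Main obstacle.} The delicate point is not the Baire category / $\sf covK$ machinery, which is quoted wholesale from \cite{Sayed}, but verifying that the reduction to the dimension-complemented dilation is legitimate and loses nothing: specifically (i) that $\Sg^{\B}\A$ is genuinely dimension complemented — this needs the inductive dimension-set computation and uses that $\alpha$ is infinite so that $\alpha+\omega\sim\Delta x$ stays infinite for generated elements; (ii) that $\A\subseteq_c\B$ survives, which hinges on $\Nr_{\alpha}\B\subseteq_c\B$ from lemma \ref{join}(1); and (iii) that the types stay non-principal when pushed from $\A$ up into $\B$, which is precisely where $\subseteq_c$ (not just $\subseteq$) is indispensable — if $\prod^{\A}X_i=0$ then $\prod^{\B}X_i=0$ because any nonzero lower bound $b\in\B$ of $X_i$ would, by $\A\subseteq_c\B$ and atomicity-free density (for every nonzero $b$ there is nonzero $a\in\A$ with $a\le b$ is not available in general, so instead: $-b$ would be an upper bound of $X_i$ in $\B$ strictly below $1$, contradicting $\sum^{\B}(-X_i)=1$ which follows from $\sum^{\A}(-X_i)=1$ via $\A\subseteq_c\B$). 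Once these three bookkeeping facts are in place, the proof is a routine citation of \cite[Theorem 3.2.4]{Sayed} followed by taking neat reducts, exactly parallel to the first item of theorem \ref{i} and the first item of theorem \ref{complete}; I would present it tersely, emphasizing only points (i)--(iii) and the $\sf covK$ variant for simple $\A$.
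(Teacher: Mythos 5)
Your proposal is correct and follows essentially the same route as the paper: pass to the countable dimension-complemented dilation $\D=\Sg^{\B}\A\in \Dc_{\alpha+\omega}$ with $\A\subseteq_c\Nr_{\alpha}\D$, note via lemma \ref{join}(1) that the types remain non-principal in $\D$, apply \cite[Theorem 3.2.4]{Sayed} (Baire category for $<\mathfrak{p}$ many types, resp.\ the $\sf covK$ count in the simple case), and restrict the resulting representation back to $\A$. The bookkeeping points (i)--(iii) you isolate are exactly the steps the paper leaves implicit, and your verification of each is sound (modulo the harmless slip that $-b$ bounds $-X_i$, not $X_i$).
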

The last two items in theorem \ref{main}, tell us that for any ordinal $\alpha\geq \omega$, 
there are uncountable atomic $\RCA_{\alpha}$s that are not completely representable (by generalized weak set algebras); 
the last item constructs such an algebra
$\A\in \Nr_{\alpha}\CA_{\alpha+\omega}\subseteq \bold S_c\Nr_{\alpha}\CA_{\alpha+\omega}$.
Using the usual argument, we get that 
the non--principal type consisting of co--atoms of $\A$ cannot be omitted, hence the previous theorem \ref{w} 
does not generalize to the uncountable case. But there is a missing (natural) part:\\

{\bf Question:} Can we replace $\bold S_c$ 
by $\bold S$ in theorem \ref{w}? Perhaps a more tangible instance of the question would be: 
Is there a countable atomic $\A\in \sf RCA_{\omega}$ that is {\it not} completely representable by generalized weak set algebras of dimension $\omega$? \\
Although Hirsch and Hodkinson \cite{HH} showed that the class of completely representable $\RCA_{\omega}$s is {\it not } elementary, and even more 
they explicitly constructed 
a countable atomic $\B\in \RCA_{\omega}$ with no complete representation, but {\it complete representability was excluded with respect to $\sf Gs_{\omega}$s}. 
In principal such a $\B$ can be completely represented in a {\it relativized sense by a $\sf Gws_{\omega}$.} 
The proof in \cite{HH} that $\B$ lacks a `square' complete representation does not kill this possibility.\\

Our next theorem holds for any $L_n(2<n<\omega)$ theory $T,$
that has quantifier elimination \cite[Theorem 3.3.10]{Sayed}, because if $T$ is such,  then $\Fm_T$  will be
in $\Nr_n\CA_{\omega}$. 

The maximality condition stipulated in the hypothesis (in the next theorem) cannot be omitted, else if $\lambda=\omega$,
we get an independent statement, witness \cite[Theorem 3.2.8]{Sayed} and for $\lambda=2^{\omega}$ we get a 
false one by the last item 
of theorem \ref{main}. To see why,  note that if $\B\in \Nr_n\CA_{\omega}$ is atomic and has no complete representation ($\B$ will necesarily have uncountably many atoms by theorem \ref{complete}), 
then viewed as an uncountable $L_n$ theory, 
the non--principal type consisting of co--atoms, formed as in the proof of theorem \ref{OTT} {\it which cannot be a Boolean  ultrafilter} 
as shown next,   cannot be  omitted.  For any infinite cardinal $\kappa$, such a $\B$ of cardinality $2^{\kappa}$  
was given in the last item 
of theorem \ref{main}.

\begin{theorem}\label{Shelah} Assume that $2<n<\omega$, that $\A\in \Nr_n\CA_{\omega}$ has cardinality $\lambda$, $\lambda$ a regular cardinal, and that $0\neq a\in \A$.
Let  $\kappa< {}2^{\lambda}$,
and let $\bold G=(F_i: i<\kappa)$ be a system of non--principal ultrafilters of $\A$.
Then there exists a set algebra $\C$ with base $U$, such that $|U|\leq \lambda$, and a homomorphism 
$f:\A\to \C$, such that $f(a)\neq 0$ and $\bigcap f(F_i)=\emptyset$ 
for each $i<\lambda$.
\end{theorem}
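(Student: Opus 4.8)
\textbf{Proof proposal for Theorem \ref{Shelah}.}

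The plan is to lift the classical Baire-category style argument for omitting types from the countable case to algebras of regular cardinality $\lambda$, following the scheme of \cite[Theorem 3.2.4]{Sayed} together with the lifting lemma \ref{lift}, but replacing the ordinary Baire category theorem by a transfinite construction that exploits the hypothesis $\kappa<2^{\lambda}$. First I would fix a dimension-complemented dilation: since $\A\in\Nr_n\CA_{\omega}$, write $\A=\Nr_n\D$ with $\D\in\Lf_{\omega}$ of the same cardinality $\lambda$ (one may take $\D=\Sg^{\D}\A$, so that $\D$ is generated by $n$-dimensional elements and $|\D|=\lambda$, and $\D\in\sf Dc_{\omega}$ because dimension sets of generators are finite). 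By lemma \ref{join} we have $\A\subseteq_c\D$, so each $F_i$ generates a filter $\hat F_i$ in $\D$ whose dual ideal has supremum $1$ (the type stays non-principal in $\D$, since $\prod^{\D}X=\prod^{\A}X$ for $X\subseteq\A$), and $a$ stays non-zero in $\D$. Any representation of $\D$ omitting the lifted system and with $f(a)\neq 0$ restricts via lemma \ref{lift}-type reasoning to the desired $f:\A\to\C$ on a base of size $\leq\lambda$; indeed a locally finite algebra of cardinality $\lambda$ has a representation on a base of cardinality $\leq\lambda$ by the standard Henkin construction.

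Next I would carry out the omitting-types construction inside $\D$. One builds, by transfinite recursion of length $\lambda$, an increasing chain of "rich" finite-support partial ultrafilters (equivalently, a Henkin-type complete consistent set of $L_{\omega,\omega}$-style conditions in the signature $\L(D)^{\omega}$ with $\lambda$ many new constants), using that at each stage of cofinality $<\lambda$ one has only $<\lambda$ requirements of two kinds to meet: (i) "witnessing" requirements ${\sf c}_k x\in F\Rightarrow\exists j\ {\sf s}_j^k x\in F$, of which there are $|\D|\cdot\omega=\lambda$ many, handled as in \cite[Theorem 3.2.4]{Sayed} using that $\D$ is dimension complemented; and (ii) the $\kappa$ omitting requirements, one for each $F_i$, asking that the constructed ultrafilter not extend $F_i$. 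The point where $\kappa<2^{\lambda}$ enters: the set of complete extensions meeting all the witnessing requirements forms (after the Henkin closure) a tree, or more precisely a Boolean space, of size $2^{\lambda}$, while each single $F_i$ "kills" a set of ultrafilters that is meager in the appropriate sense; since $\kappa<2^{\lambda}$ a cardinality/dimension count (a straightforward generalization of the argument for $\sf cov K$ in \cite{CF}, or a direct back-and-forth tree argument as in \cite[Theorem 3.3.4]{Sayed}) produces an ultrafilter $F$ containing $a$ that avoids all $\kappa$ of them. From $F$ one reads off the representation $g:\D\to\C$, $\C$ a set algebra with base $U$ of cardinality $\leq\lambda$, $g(a)\neq 0$, and $\bigcap g(\hat F_i)=\emptyset$ for all $i$; then $f=g\restr\A$ is as required, and $\bigcap f(F_i)=\emptyset$ since $f(F_i)\supseteq g(\hat F_i)$ restricted appropriately — more carefully, the omitting of $\hat F_i$ in $\D$ forces the omitting of $F_i$ in $\A$ because $F_i\subseteq\hat F_i$ and $f$ is the restriction of $g$.

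I expect the main obstacle to be the combinatorial heart of step (ii): making precise the sense in which "each $F_i$ kills only a small set of ultrafilters" when $\lambda$ is an arbitrary uncountable regular cardinal, where the Stone space is no longer a Polish space and the Baire category theorem is unavailable. The resolution I would pursue is to avoid topology entirely and argue directly with a labelled tree of height $\lambda$: one shows that the tree of partial Henkin-consistent conditions branches enough (at least $2$-branches cofinally often, giving $2^{\lambda}$ branches), whereas the branches extending a given $F_i$ form a set of size at most $2^{<\lambda}$ or can be covered by $\lambda$ "thin" sets in a way that $\kappa$ many such covers still miss a branch — this is exactly the mechanism by which \cite[Theorem 3.3.4]{Sayed} and \cite{CF} handle $\sf cov K$-many types in the countable case, and it relativizes to regular $\lambda$ with $2^{\lambda}$ in place of $2^{\omega}$. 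A secondary technical point to be careful about is that $\sf Dc_{\omega}$ is not closed under the operations one needs, but the subclass $\bold K=\{\C:\C=\Sg^{\C}\Nr_{\alpha}\C\}$ of lemma \ref{lift} is, so all lifting steps should be performed inside that class; this also guarantees that the restriction map $g\mapsto g\restr\A$ behaves well with respect to suprema, which is what makes the omitting statement transfer from $\D$ back to $\A$.
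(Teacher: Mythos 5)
There is a genuine gap, and it sits exactly at the point you yourself flag as ``the main obstacle'': the claim that each $F_i$ kills only a meager (or otherwise small) set of Henkin branches, so that some branch survives because $\kappa<2^{\lambda}$. A Baire-category or tree-branching argument of the kind you describe only handles unions of fewer than $\mathfrak{p}$ (or, for simple algebras, $\sf covK$) many nowhere dense sets; it cannot be pushed to arbitrary $\kappa<2^{\lambda}$, and the paper itself shows that this barrier is real: item (4) of theorem \ref{i} exhibits, consistently, $\sf covK<2^{\omega}$ many non-principal types that cannot be omitted. Since your argument nowhere uses the hypothesis that the $F_i$ are \emph{ultrafilters} (i.e.\ maximal, complete types), it would, if it worked, apply equally to non-maximal types and contradict that independence result. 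Nor does any cardinality count save the day: the set of branches realizing a fixed non-principal complete type need not be small in any sense --- a single complete type can be realized in continuum many countable models --- so no generic tree of height $\lambda$ will branch away from all $\kappa$ of the $F_i$ simultaneously.

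The paper's proof uses an entirely different mechanism, and this is precisely where maximality enters. By Shelah's theorem there is a family of $2^{\lambda}$ models of the theory $T$ corresponding to the dilation, each of cardinality $\lambda$ and each satisfying the formula corresponding to $a$, such that any complete type realized in two distinct members of the family is implied by a subset of size $<\lambda$, hence principal. Consequently each non-principal $F_i$, being a complete $n$-type, is realized in at most one member of the family; the map sending a model to the set of $F_i$'s it realizes has pairwise disjoint values, so if every member of the family realized some $F_i$ one would get $\kappa\geq 2^{\lambda}$, a contradiction. Some member therefore omits all the $F_i$ at once, and the required homomorphism is read off from it. Your preparatory steps --- passing to a locally finite dilation $\D=\Sg^{\D}\A$ with $\A\subseteq_c\Nr_n\D$ and using lemma \ref{lift} to transfer representations back to $\A$ --- are sound, but without the pigeonhole argument against Shelah's family of pairwise ``far apart'' models the theorem is not reached.
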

\begin{proof}
The proof is a three folded restricted of a result of Shelah proved for uncountable first order theories. The first restriction is to the countable case, 
the second is to $L_n$ theories, and the third is to $L_n$ theories that have quantifier elimination:

Shelah \cite{Shelah} proved the following:
Suppose that $T$ is a first order theory,
$|T|\leq \lambda$ and $\phi$ is a formula consistent with $T$,  then there exist models $\M_i: i<{}^{\lambda}2$, each of cardinality $\lambda$,
such that $\phi$ is satisfiable in each,  and if $i(1)\neq i(2)$, $\bar{a}_{i(l)}\in M_{i(l)}$, $l=1,2,$, $\tp(\bar{a}_{l(1)})=\tp(\bar{a}_{l(2)})$,
then there are $p_i\subseteq \tp(\bar{a}_{l(i)}),$ $|p_i|<\lambda$ and $p_i\vdash \tp(\bar{a}_ {l(i)})$ ($\tp(\bar{a})$ denotes the complete type realized by
the tuple $\bar{a}$) \cite[Theorem 5.16, Chapter IV]{Shelah}.

We consider only the case $\lambda=\omega$.
Now assume that  $\A=\Nr_n\B$, where $\B\in \sf Lf_{\omega}$ is countable, and let $(F_i: i<\kappa)$
with $\kappa<{}2^{\omega}$ be the given family of non--principal ultrafilters.
Then  we can assume that $\B=\Fm_T$ for some countable consistent theory $T$ and that $a=\phi_T$ ($\phi$ a formula consistent with $T$).
Let $\Gamma_i=\{\phi/T: \phi\in F_i\}$.
Let ${\bold F}=(\Gamma_j: j<\kappa)$ be the corresponding set of types in $T$,
they are non--principal and complete $n$--ary types in $T$.

Let $(\M_i: i<2^{\omega})$ be the constructed set of countable
models for $T$ that overlap only on principal maximal  types in which $\phi$ is satisfiable. We know that such models exist
by Shelah's aforementioned result.

Assume for contradiction that for all $i<2^{\omega}$, there exists
$\Gamma\in \bold F$, such that $\Gamma$ is realized in $\M_i$.
Let $\psi:{}2^{\omega}\to \wp(\bold F)$,
be defined by
$\psi(i)=\{F\in \bold F: F \text { is realized in  }\M_i\}$.  Then for all $i<2^{\omega}$,
 $\psi(i)\neq \emptyset$.
Furthermore, for $i\neq j$, $\psi(i)\cap \psi(j)=\emptyset,$ for if $F\in \psi(i)\cap \psi(j)$ then it will be realized in
$\M_i$ and $\M_j$, and so it will be principal.

This implies that $|\bold F|=2^{\omega}$ which is impossible. Hence we obtain a model $\M\models T$ omitting $\bold F$
in which $\phi$ is satisfiable. The map $f$ defined from $\A=\Fm_T$ to ${\sf Cs}_n^{\M}$ (the set algebra based on $\M$ \cite[Definition 4.3.4]{HMT2})
via  $\phi_T\mapsto \phi^{\M},$ where the latter is the set of $n$--ary assignments in
$\M$ satisfying $\phi$, omits $\bold G$.
\end{proof}

{\bf Question:} Can we replace $\Nr_n\CA_{\omega}$ by the larger $\bold S_c\Nr_n\CA_{\omega}$ in theorem \ref{Shelah}? \\
The problem here is that if $\A\subseteq_c \Nr_{\alpha}\B$, $\B\in \sf Lf_{\omega}$ and if $F$ is a non--principal ultrafilter of $\A$, then there is no guarantee that the 
filter generated by $F$  in $\Nr_n\B$ remains maximal and non--principal. It can be neither.

\end{document}